\newtheorem{theorem}{Theorem}[section]
\newtheorem{lemma}[theorem]{Lemma}
\newtheorem{proposition}[theorem]{Proposition}
\newtheorem{assumption}[theorem]{Assumption}
\newtheorem{question}[theorem]{Question}
\newtheorem{questions}[theorem]{Questions}
\theoremstyle{remark}
\newtheorem{remark}[theorem]{Remark}
\newtheorem{definition}[theorem]{Definition}
\newtheorem{claim}[theorem]{Claim}
\numberwithin{equation}{section}
\newcommand{\R}{\mathbb{R}}
\newcommand{\N}{\mathbb{N}}
\newcommand{\Z}{\mathbb{Z}}
\def\P{\mathbb{P}} 
\newcommand{\E}{\mathbb{E}}
\newcommand{\id}{\mathds{1}}
\newcommand{\Var}{\mathrm{Var}}
\newcommand{\eps}{\varepsilon}
\newcommand{\arm}{\text{\textup{Arm}}}
\newcommand{\ind}{\mathds{1}}
\newcommand{\wick}[1]{\;{:}#1{:}\;}
\begin{document}

\title[Limit theorems for the cluster count of the GFF]{Limit theorems for the number of sign and level-set clusters of the Gaussian free field}
\author{Michael McAuley\textsuperscript{1}}
\address{\textsuperscript{1}School of Mathematics and Statistics, Technological University Dublin} 
\email{m.mcauley@cantab.net}
\author{Stephen Muirhead\textsuperscript{2}}
\address{\textsuperscript{2}School of Mathematics, Monash University}
\email{stephen.muirhead@monash.edu}
\subjclass[2010]{60G60, 60G15, 60F05}
\keywords{Gaussian free field, sign clusters, limit theorems, chaos expansion} 
\thanks{}
\begin{abstract}
We study the limiting fluctuations of the number of sign and level-set clusters of the Gaussian free field on $\Z^d$, $d \ge 3$, that are contained in a large domain. In dimension $d \ge 4$ we prove that the fluctuations are Gaussian at all non-critical levels, while in dimension $d=3$ we show that fluctuations may be Gaussian or non-Gaussian depending on the level. We also show that the sign clusters experience a form of Berry cancellation in all dimensions, that is, the fluctuations of the sign cluster count is suppressed compared to generic levels.

Our proof is based on controlling the Weiner-It\^{o} chaos expansion of the cluster count using percolation theoretic inputs; to our knowledge this is the first time that chaos expansion techniques have been applied to analyse a non-local functional of a strongly correlated Gaussian field. 
\end{abstract}

\date{\today}

\maketitle


\section{Introduction}

The \textit{Gaussian free field} (GFF) on $\Z^d$, $d \ge 3$, is the centred stationary Gaussian field $f$ on $\Z^d$ with covariance kernel
\begin{equation}
    \label{e:g}
\E[f(x) f(y)] = G(x-y) , 
\end{equation}
where $G(x) = \sum_{k \ge 0} \P[X_k = x \, | \, X_0 = 0]$ is the Green's function for the simple random walk $(X_k)_{k \ge 0}$ on $\Z^d$. The GFF is a central object in probability theory and mathematical physics; see \cite{bis20,wp22} for recent introductions. One of its characteristic features is the presence of strong non-integrable correlations decaying as $G(x) \sim c_d |x|^{2-d}$ as $|x| \to \infty$.

\smallskip
The \textit{sign clusters} of the GFF are the connected components of the sets $\{f > 0\} := \{ x \in \Z^d: f(x) > 0\}$ and $\{ f < 0 \} := \{ x \in \Z^d: f(x) < 0\}$; equivalently, neighbouring vertices $x,y \in \Z^d$ are in the same sign cluster if and only if $f(x) f(y) > 0$. More generally, the \textit{level-set clusters} at level $\ell \in \R$ are the connected components of the excursion sets $\{ f > \ell\}$ and $\{ f < \ell\}$.

\smallskip
The sign clusters of the GFF exhibit deep connections to the simple random walk and related objects via the \textit{isomorphism theorems} of Dynkin-BFS and Le Jan (see \cite{lup16} for an overview). The geometry of the sign and level-set clusters of the GFF has been the object of extensive study over the last 40 years \cite{ls86,blm87}.  \cite{rs13,dpr18} have established the existence of a \textit{phase transition} in the connectivity of level-set clusters, namely that there exists a \textit{critical level} $\ell_c \in (0,\infty)$ such that, if $\ell > \ell_c$, all components of $\{f > \ell\}$ are bounded, whereas if $\ell < \ell_c$ then almost surely $\{ f > \ell\}$ contains a (unique) unbounded component. \cite{szn15,pr15,nit18,szn19,cn20,grs22} have studied fine properties of the so-called \textit{strongly subcritical and supercritical} phases. Recently \cite{dgrs23} has confirmed a long-standing prediction that the phase transition is \textit{sharp}, meaning that the boundaries of the strongly subcritical and supercritical phases coincide: as a consequence, if $\ell \neq \ell_c$ then all bounded components of $\{f > \ell\}$ have `small diameter' (see Section \ref{s:tae} for a precise statement).

\smallskip
In this paper we consider the \textit{number} of bounded sign and level-set clusters of the field that are contained inside a large domain (the \textit{cluster count}). Precisely, we define $N_R^+(\ell)$ (resp.\ $N_R^-(\ell)$) to  be the number of connected components of $\{f > \ell\}$ (resp.\ $\{f < \ell\}$) that intersect $\Lambda_R$ but not $\partial \Lambda_R$, where $\Lambda_R = [-R,R]^d \cap \Z^d$, and $\partial D = \{x \in D: \exists y \notin D, x \sim y\}$ denotes the inner boundary of $D \subset \Z^d$. The cluster count is $N_R(\ell) = N_R^+(\ell) + N_R^-(\ell)$. See Section \ref{s:bc} for comments on our choice of boundary conditions. Note that $N_R(\ell)$ and $N_R(-\ell)$ have the same distribution by symmetry.

\smallskip
It is straightforward to establish (see Proposition \ref{p:mu}) that $N_R(\ell)$ satisfies a law of large numbers: as $R \to \infty$,
\begin{equation}
    \label{e:lln}
 \frac{ N_R(\ell) }{\mathrm{Vol}(\Lambda_R)}  \to \mu(\ell)  \qquad \text{a.s.\ and in $L^1$},
 \end{equation} 
where $\mu(\ell) \in (0,1)$ is the \textit{cluster density} at level $\ell$. The \textit{fluctuations} of the cluster count turn out to be more subtle. Our main result establishes the limiting distribution at all non-critical levels; perhaps surprisingly, the limiting fluctuations may be Gaussian or non-Gaussian depending on the dimension and the level.

\subsection{Limit theorems for the cluster count}
Our first result shows that, in dimensions $d \ge 4$, the cluster count  has asymptotically Gaussian fluctuations at all non-critical levels. Let $Z$ denote a standard Gaussian random variable, and $\Rightarrow$ convergence in law. 

\begin{theorem}
\label{t:fluc1}
Let $d \ge 4$ and $\ell \notin \{-\ell_c,\ell_c\}$. Then as $R \to \infty$,
\begin{equation}
    \label{e:gl}
\frac{N_R(\ell) - \E[N_R(\ell)]}{\sqrt{ \Var[N_R(\ell)]} } \quad \Longrightarrow \quad Z  .
\end{equation}
In particular \eqref{e:gl} holds for sign~clusters.
\end{theorem}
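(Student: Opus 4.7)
The strategy is to analyse $N_R(\ell)$ through its Wiener--It\^o chaos expansion, combined with percolative localisation from the sharpness of the level-set phase transition. Writing
\[
N_R(\ell) - \E[N_R(\ell)] \;=\; \sum_{q\ge 1} J_q^{(R)},
\]
with $J_q^{(R)}$ the projection of $N_R(\ell)$ onto the $q$-th Wiener--It\^o chaos of $f$, orthogonality gives $\Var(N_R(\ell))=\sum_q\Var(J_q^{(R)})$. The goal is to isolate a dominant chaos, show it is asymptotically Gaussian, and show the rest is variance-negligible.

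The first step is localisation. By sharpness, at $\ell\ne\pm\ell_c$ the bounded clusters of $\{f>\ell\}$ and $\{f<\ell\}$ have (stretched-)exponential diameter tails, so $N_R(\ell)$ is well approximated in $L^2$ by $\sum_{x\in\Lambda_R^\circ}\xi_x^{(M)}$, where $\xi_x^{(M)}$ is $f|_{B(x,M)}$-measurable (for instance, $|C(x)|^{-1}$ times the indicator that the cluster through $x$ is bounded, of diameter $\le M$, and contained in $\Lambda_R$), with $M=M(R)\to\infty$ slowly. The chaos projections inherit this structure: $J_q^{(R)}=\sum_x I_q(\phi^{(M)}_{q,x})$ with $\phi^{(M)}_{q,x}$ supported in $B(x,M)^q$.

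The first chaos $J_1^{(R)}=\sum_y b_y f(y)$ is linear and hence automatically Gaussian, with local charge $\bar c(\ell)=-\partial_\ell\mu(\ell)$ identified by a Stein/Cameron--Martin computation. For $\ell\notin\{0,\pm\ell_c\}$ one expects $\bar c(\ell)\ne 0$ (a separate non-degeneracy lemma), yielding
\[
\Var(J_1^{(R)})\;\asymp\;\bar c(\ell)^2\sum_{y,y'\in\Lambda_R}G(y-y')\;\asymp\;R^{d+2},
\]
since $\sum_{|z|\le R}G(z)\asymp R^2$. For $q\ge 2$, Breuer--Major-type estimates on the local kernels give $\Var(J_q^{(R)})\lesssim R^d\sum_{|z|\le CR}G(z)^q$, which is $O(R^d\log R)$ in $d=4$ (borderline at $q=2$) and $O(R^d)$ in $d\ge 5$; hypercontractivity controls the tail in $q$. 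Hence $\sum_{q\ge 2}\Var(J_q^{(R)})=o(R^{d+2})$, $J_1^{(R)}$ absorbs all the variance, and the CLT follows.

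At $\ell=0$ the $f\leftrightarrow -f$ symmetry forces $\bar c(0)=0$ (Berry cancellation), suppressing $\Var(J_1^{(R)})$ below $R^{d+2}$ and making $J_2^{(R)}$ the dominant chaos (of order $R^d$ in $d\ge 5$, $R^d\log R$ in $d=4$). Since $J_2^{(R)}$ is not automatically Gaussian, the CLT must be recovered via the Nualart--Peccati fourth moment theorem, reducing to contraction estimates $\|\phi^{(R)}_2\otimes_1\phi^{(R)}_2\|=o(\|\phi^{(R)}_2\|^2)$ controlled by sums of $G$-products. The main technical obstacle throughout is propagating percolative locality through the \emph{infinite} chaos expansion of the non-local functional $\xi_x^{(M)}$: classical Breuer--Major theory assumes bounded Hermite rank, while cluster indicators have unbounded rank once field correlations are resolved, so one must combine hypercontractive (Nelson) moment bounds with combinatorial control of the kernels $\phi^{(M)}_{q,x}$ drawn from the local percolation picture.
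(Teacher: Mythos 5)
Your overall strategy — chaos expansion, percolative localisation via sharpness, first chaos Gaussian with charge $-\mu'(\ell)$, fourth moment theorem for higher chaoses — is the same as the paper's, but there are two genuine gaps in the degenerate case, which is unavoidable here because the theorem explicitly covers sign clusters, where $\mu'(0)=0$ by symmetry. First, in $d\ge 5$ with $\mu'(\ell)=0$ the second chaos does \emph{not} dominate: every chaos of order $m\ge 2$ contributes at volume order $R^d$ (since $m(d-2)>d$ for all $m\ge2$ when $d\ge5$), so you cannot reduce to a single fourth-moment computation for $J_2^{(R)}$. You need a joint CLT for all the components simultaneously (Peccati--Tudor), a way to truncate the expansion at a finite order $M$ with an error that is $\eps R^d$ uniformly in $R$, and — crucially — a separate \emph{lower} bound $\Var[N_R(\ell)]\ge cR^d$ (extensivity) to rule out that the limiting variance $\sigma^2=\sum_m\sigma_m^2$ vanishes, since each individual $\sigma_m$ could a priori be zero. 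Your proposal addresses none of these three points; without the extensivity input the normalised quantity need not converge to a \emph{standard} Gaussian. The paper proves extensivity by a reverse Efron--Stein / martingale-increment argument exploiting the i.i.d.\ component of the field.

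Second, "hypercontractivity controls the tail in $q$" is not a workable substitute for genuine tail control here. Hypercontractivity bounds higher moments of a fixed chaos by powers of its variance; it does not give summability of $\Var(J_q^{(R)})$ over $q$ with constants uniform in $R$, and the classical route (finiteness of $\sum_m h_m^2 m!$ for a Hermite expansion, or derivative-based tail inequalities) is unavailable because the cluster count is neither local nor differentiable, and the percolation inputs do not control the order-$m$ kernels uniformly in $m$. The paper's workaround is an iterated Gaussian interpolation formula that expresses $\Var[\sum_{m'\ge m}Q_{m'}]$ entirely in terms of \emph{fixed-order} joint pivotal intensities, which can then be estimated by pinned arm events; some such device is needed and your sketch does not supply one. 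A smaller point: you assume $\mu'(\ell)\ne0$ for all $\ell\ne 0$, which is not known (the paper cannot rule out further critical levels of $\mu$); this is harmless only because your $\ell=0$ argument would apply at any critical level — but then it must be repaired as above.
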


In dimension $d=3$, the limiting fluctuations exhibit a more complicated behaviour: two distinct limiting distributions are possible depending on the level. Recall that $\mu(\ell)$ denotes the cluster density defined in \eqref{e:lln}. Recently it was shown \cite{ps22} that $\ell \mapsto \mu(\ell)$ is real-analytic on $\R \setminus \{-\ell_c,\ell_c\}$ (in Proposition \ref{p:mudiff} we give an alternative proof of smoothness on $\R \setminus \{-\ell_c,\ell_c\}$, and also establish continuous differentiability at $\ell_c$). We then define
\begin{equation}
    \label{e:c}
 \mathcal{C}:= \big\{ \ell \in \R \setminus \{-\ell_c,\ell_c\}: \mu^{\prime}(\ell) = 0 \big\}  \quad \text{and} \quad \mathcal{C}':= \big\{ \ell \in \mathcal{C} : \mu^{\prime \prime}(\ell) \neq 0 \big\}  
 \end{equation}
to be respectively the set of \textit{critical points} and \textit{non-degenerate critical points} of $\mu$. 

\begin{theorem}
\label{t:fluc2}
Let $d = 3$ and $\ell \notin \{-\ell_c,\ell_c\}$. Then as $R \to \infty$,
\[ \frac{N_R(\ell) - \E[N_R(\ell)]}{\sqrt{ \Var[N_R(\ell)]} } \quad \Longrightarrow  \quad \begin{cases}  Z & \text{if } \ell \in \R \setminus \mathcal{C}' , \\ Z' & \text{if } \ell \in \mathcal{C}'  ,\end{cases}\]
where $Z'$ has a non-Gaussian order-$2$ Hermite distribution associated to the measure with density $\rho(\lambda) = |\lambda|^{-2}$ (see Definition \ref{d:ros}).
\end{theorem}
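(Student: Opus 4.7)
Our plan is to analyse $N_R(\ell)$ via its Wiener-It\^{o} chaos expansion
\[
N_R(\ell) - \E[N_R(\ell)] \;=\; \sum_{k \ge 1} I_k(\phi_{k,R,\ell}),
\]
where $I_k$ is the $k$-th multiple Wiener-It\^{o} integral with respect to $f$ and $\phi_{k,R,\ell}$ are deterministic symmetric kernels on $(\Z^3)^k$, given by the Malliavin/Stein formula $\phi_{k,R,\ell}(x_1,\ldots,x_k) = \tfrac{1}{k!}\,\E[D^k_{x_1,\ldots,x_k} N_R(\ell)]$. Sharpness of the phase transition at $\ell\neq\pm\ell_c$ forces $\phi_{k,R,\ell}$ to be essentially local: it is concentrated near the diagonal $x_1 \approx \cdots \approx x_k$ with $x_1 \in \Lambda_R$, and decays rapidly in the diameter of $\{x_1,\ldots,x_k\}$. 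Combined with a uniform-shift argument (translating $f$ by a constant $\eps$ replaces $\ell$ by $\ell-\eps$ in $N_R$), this locality promotes the integrated identities
\[
\sum_{x \in \Lambda_R} \phi_{1,R,\ell}(x) \approx \mu'(\ell)\,\mathrm{Vol}(\Lambda_R), \qquad \sum_{x,y \in \Lambda_R} \phi_{2,R,\ell}(x,y) \approx \tfrac{1}{2}\mu''(\ell)\,\mathrm{Vol}(\Lambda_R)
\]
to a robust ``diagonal'' description of $\phi_1$ and $\phi_2$ accurate enough to evaluate the individual chaos variances.

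The scaling of the variances is driven by the non-summable covariance $G(x) \sim c_3|x|^{-1}$ in $d=3$: a direct computation yields
\[
\Var\bigl(I_k(\phi_{k,R,\ell})\bigr) \;\asymp\; \begin{cases} (\mu'(\ell))^2\, R^5, & k=1, \\ (\mu''(\ell))^2\,R^4, & k=2, \\ O(R^3 \log R), & k=3, \\ O(R^3), & k \ge 4, \end{cases}
\]
and sharpness moreover yields uniform-in-$R$ control of the full chaos tail $\sum_{k \ge 1} \|\phi_{k,R,\ell}\|_2^2$.

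The two cases of the theorem now follow. If $\ell \notin \mathcal{C}'$, either $\mu'(\ell) \neq 0$, in which case $I_1(\phi_{1,R,\ell})$ dominates and is automatically Gaussian (being linear in $f$); or else $\mu'(\ell)=\mu''(\ell)=0$, in which case only the chaoses $k \ge 3$ contribute, and a chaos-by-chaos fourth-moment/contraction argument (Nualart-Peccati), controlled via the same percolation bounds that gave the variance estimates above, delivers a Gaussian limit after normalisation. If $\ell \in \mathcal{C}'$, the first chaos is negligible and $I_2(\phi_{2,R,\ell})$ dominates; the appropriately rescaled kernel converges to $\tfrac{1}{2}\mu''(\ell)\,\id_{[-1,1]^3 \times [-1,1]^3}$ along the diagonal, and a Dobrushin-Major-Taqqu non-central limit theorem, using that $G(x) \sim |x|^{-1}$ corresponds to spectral density $\rho(\lambda) \sim |\lambda|^{-2}$ near $0$, identifies the limit as the order-$2$ Hermite distribution of Definition~\ref{d:ros}.

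The main obstacle is the pointwise locality of $\phi_{1,R,\ell}$ and $\phi_{2,R,\ell}$: aggregated sums alone cannot distinguish a bulk contribution from mass ``leaking'' into or out of lower chaoses, and one must show that the Malliavin derivatives $D^k N_R$ of the inherently non-local cluster count are essentially local functionals with quantitative decay. This is precisely what the percolation-theoretic input — sharpness of the phase transition, combined with quantitative decoupling for the GFF — buys, and it is the novel technical ingredient of the argument.
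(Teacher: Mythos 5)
Your overall strategy is the same as the paper's: expand $N_R(\ell)$ into Wiener chaoses whose kernels are (suitably interpreted) expected iterated derivatives of the cluster count, use truncated-arm decay to localise these kernels near the diagonal, identify their sums with $(-1)^m\mu^{(m)}(\ell)$, and then let the first non-vanishing derivative of $\mu$ dictate which chaos dominates and hence whether the limit is Gaussian (chaos $1$, or chaos $3$ in the log-correction regime $m\alpha=d$) or the order-$2$ Hermite distribution (chaos $2$ when $\mu''\neq 0$). However, there are concrete gaps. First, your claim that when $\mu'(\ell)=\mu''(\ell)=0$ ``only the chaoses $k\ge 3$ contribute'' is false: the first chaos retains a volume-order contribution $\sigma_1^2R^3$ coming from boundary (half-space) pivotal intensities, and the second chaos retains a volume-order contribution $\sigma_2^2R^3$ because the vanishing of $\sum_x P_\infty(0,x)=\mu''(\ell)$ kills only the leading $R^4$ term, not the kernel itself. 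In particular you must prove that the second chaos, which a priori sits in the non-central regime $2<d/\alpha=3$, becomes asymptotically \emph{Gaussian} once its kernel sum vanishes; this requires the refined covariance asymptotic $G(x)=c_d|x|^{2-d}+O(|x|^{-d})$ and a separate argument (the paper's Proposition \ref{p:clt}(ii)), not the fourth-moment theorem applied naively in the regime $m\alpha>d$. Second, in this residual case you need the limiting variance constant to be strictly positive — otherwise $N_R/\sqrt{\Var[N_R]}$ is not controlled by the finitely many chaoses you retain. The paper supplies this via a separate extensivity argument ($\Var[N_R(\ell)]\ge cR^d$ at every level, Proposition \ref{p:ex}); your proposal contains nothing that rules out all the $\sigma_m$ vanishing simultaneously.

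Third, your assertion that sharpness ``yields uniform-in-$R$ control of the full chaos tail'' is exactly the step that does not follow from the stated inputs. The pointwise bounds on the order-$m$ pivotal intensities grow like $e^{cm}\sqrt{m!}$, so summing the chaos variances over $m$ with these bounds alone diverges; and the classical tail estimates for chaos expansions require differentiability that the level-set functional lacks. The paper resolves this with two devices you would need to reproduce: an iterated interpolation formula (Proposition \ref{p:wcerror}) expressing $\Var[\sum_{m'\ge m}Q_{m'}]$ through joint pivotal intensities of \emph{fixed} order $m$, and a truncation of the cluster count by cluster diameter whose tail is controlled by a direct second-moment argument (Lemma \ref{l:trunctail}). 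Without some substitute for these, the reduction of $\widetilde N_R(\ell)$ to a single dominant chaos (or to a finite sum of chaoses in the residual case) is not justified.
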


In Lemma \ref{l:high} we show that $\mathcal{C}' \subseteq \mathcal{C}$ is bounded, and hence by analyticity is finite outside a neighbourhood of $\{-\ell_c,\ell_c\}$. It is natural to expect that $\mathcal{C'}$ is non-empty, and that $\mathcal{C'} = \mathcal{C}$, but we are unable to confirm this. Also by symmetry we must have $\mu'(0) = 0$, but we are unable to rule out that $\mu^{\prime \prime}(0) = 0$, so we do not know which case the sign clusters fall into. 

\begin{questions}
In dimension $d=3$, is $Z$ or $Z'$ the limiting distribution of $N_R(0)$ after centering and rescaling? Is $\mathcal{C}'$ non-empty? What is the limiting distribution of $N_R(\ell_c)$?
\end{questions}

\subsection{Order of the fluctuations}
\label{s:fluc}
In contrast to the limiting distribution, there are several possibilities for the order of the fluctuations in all dimensions. Recall that $G(x) = G_d(x)$ is the covariance kernel \eqref{e:g} of the GFF, and satisfies $G(x) \ge 0$ and $G(x) \sim c_d |x|^{2-d}$ as $|x| \to \infty$. For $k \ge 0$, let $\beta_{d,k} > 0$ be constants defined as
\begin{equation}
    \label{e:beta}
\beta_{d,k} := \lim_{R \to \infty} \frac{ \sum_{x,y \in \Lambda_R } G(x-y)^k }{   R^{\max\{2d - k(d-2), d \} } (\log R)^{\id_{k = d/(d-2)}} } .
\end{equation}
These exist by Lemma \ref{l:rv} (see also Remark \ref{r:beta}), and could be computed explicitly. 

\begin{theorem}
\label{t:var}
Let $\ell \notin \{-\ell_c,\ell_c\}$. Recall the set $\mathcal{C}'$ from \eqref{e:c}, and define 
\[\mathcal{C}'' = \{\ell \in \R \setminus \{-\ell_c,\ell_c\}: \mu'(\ell) = \mu''(\ell) = 0, \mu'''(\ell) \neq 0\}.\]
Then there exist constants $\sigma = \sigma_{\ell,d} > 0$ such that, as $R \to \infty$:

\begin{enumerate}
    \item  If $d \ge 5$, 
\[  \Var[N_R(\ell)]  \sim  \begin{cases}   \beta_{d,1} (\mu'(\ell))^2  R^{d+2} & \text{if }  \mu'(\ell) \neq 0 , \\ \sigma^2 R^d & \text{else};\end{cases}  \]
\item If $d = 4$,
\[  \Var[N_R(\ell)]  \sim  \begin{cases}  \beta_{4,1} (\mu'(\ell))^2  R^6 & \text{if } \mu'(\ell) \neq 0, \\
\frac{ \beta_{4,2} (\mu''(\ell))^2}{2}  R^4 (\log R) & \text{if } \ell \in \mathcal{C}',  \\ \sigma^2 R^4 & \text{else}; \end{cases}  \]
\item If $d = 3$,
\[  \Var[N_R(\ell)]  \sim  \begin{cases}  \beta_{3,1} (\mu'(\ell))^2  R^5 & \text{if } \mu'(\ell) \neq 0, \\  \frac{ \beta_{3,2}   (\mu''(\ell))^2 }{2} R^4 & \text{if } \ell \in  \mathcal{C}' , \\  \frac{ \beta_{3,3} (\mu'''(\ell))^2 }{3!} R^3 (\log R) & \text{if } \ell  \in \mathcal{C}''  , \\ 
\sigma^2 R^3 & \text{else}. \end{cases}  \]
\end{enumerate}
In particular if $d \ge 5$ the sign clusters satisfy $\Var[N_R(0)] \sim \sigma^2 R^d$ for some $\sigma = \sigma_d > 0$.
\end{theorem}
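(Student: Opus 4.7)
The plan is to analyse $\Var[N_R(\ell)]$ via the Wiener-It\^o chaos expansion of $N_R(\ell)$. Writing $N_R(\ell) - \E[N_R(\ell)] = \sum_{k \ge 1} J_k^R$, where $J_k^R$ is the projection onto the $k$-th Wiener chaos, orthogonality gives $\Var[N_R(\ell)] = \sum_{k \ge 1} \Var[J_k^R]$. Each $J_k^R$ has a symmetric kernel $h_k^R \colon (\Z^d)^k \to \R$, and the variance is computed via the standard identity
\[
\Var[J_k^R] = k! \sum_{\vec{x}, \vec{y} \in (\Z^d)^k} h_k^R(\vec{x})\, h_k^R(\vec{y}) \prod_{i=1}^{k} G(x_i - y_i).
\]

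The central identity I would establish is a Kac-Rice-type relation between the kernels $h_k^R$ and derivatives of $\mu(\ell)$. Intuitively, adding a deterministic shift $\eps$ to $f$ translates the level by $-\eps$, so a suitable "spatially constant" Malliavin derivative of $N_R(\ell)/\mathrm{Vol}(\Lambda_R)$ detects derivatives of $\mu$ in $\ell$. Concretely, I expect that for $x$ far from $\partial \Lambda_R$,
\[
\sum_{x_1, \ldots, x_k \in W(x)} h_k^R(x_1, \ldots, x_k) \;\longrightarrow\; \frac{\mu^{(k)}(\ell)}{k!}
\]
on suitable localisation windows $W(x)$ around $x$, with $h_k^R$ decaying rapidly when its arguments are far apart (controlled by the percolation inputs). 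Combining this with the factorisation $\prod_i G(x_i - y_i) \approx G(x - y)^k$ at large separation of the argument clusters, and with the definition \eqref{e:beta} of $\beta_{d,k}$, yields
\[
\Var[J_k^R] \sim \frac{(\mu^{(k)}(\ell))^2}{k!}\, \beta_{d,k}\, R^{\max\{2d - k(d-2),\, d\}} (\log R)^{\id_{k = d/(d-2)}}
\]
at leading order for each chaos.

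It then remains to identify the dominant chaos in each regime. The crossover $k(d-2) = d$ occurs at $k = d/(d-2)$, which equals $3$ when $d=3$, $2$ when $d=4$, and is non-integer for $d \ge 5$. Hence for $d \ge 5$ only $k=1$ exceeds the baseline rate $R^d$; for $d=4$, $k=1$ gives $R^6$ and $k=2$ gives $R^4 \log R$; and for $d=3$, $k=1,2,3$ respectively give $R^5, R^4, R^3\log R$. Matching the first non-vanishing $\mu^{(k)}(\ell)$ with its chaos order reproduces each case of the theorem, while chaos orders with $k(d-2) > d$ all contribute $O(R^d)$ and aggregate into the "else" cases. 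The sign-cluster statement in $d \ge 5$ follows because the symmetry $\mu(-\ell) = \mu(\ell)$ forces $\mu'(0) = 0$, placing one in the "else" regime.

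The principal obstacles are (i) rigorously proving the Kac-Rice-type identity $\sum h_k^R \to \mu^{(k)}(\ell)/k!$, which must be derived via a Stein/Cameron-Martin computation combined with stationarity and an appropriate notion of locality for the cluster count, and (ii) controlling the tail $\sum_{k \ge k_*} \Var[J_k^R]$ uniformly in $R$. For (ii) I would exploit the sharpness of the phase transition from \cite{dgrs23}: for $\ell \notin \{-\ell_c, \ell_c\}$ bounded clusters have stretched-exponentially small diameter, which should translate into strong decay of the kernels $h_k^R$ when their arguments are far apart, simultaneously bounding the high-chaos tail by $O(R^d)$ and, via a non-degeneracy input, yielding the matching lower bound $\Var[N_R(\ell)] \ge c R^d$ required for the $\sigma^2 R^d$ conclusion in the "else" cases.
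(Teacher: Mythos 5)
Your overall strategy is the same as the paper's: expand $N_R(\ell)$ into Wiener--It\^o chaoses, identify the (semi-local) kernels as pivotal intensities whose total mass equals $(-1)^k\mu^{(k)}(\ell)$ (this is exactly the paper's Proposition \ref{p:mudiff} combined with Theorem \ref{t:cluster_wce}), compute each $\Var[Q_k]$ against the Green's function asymptotics to get the rates $R^{\max\{2d-k(d-2),d\}}(\log R)^{\id_{k=d/(d-2)}}$, and let the first non-vanishing derivative of $\mu$ select the dominant chaos. The crossover arithmetic in each dimension and the symmetry argument for the sign clusters are correct.

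There are, however, two genuine gaps. First, your tail control assumes that the stretched-exponential off-diagonal decay of the kernels $h_k^R$ is uniform enough in the chaos order $k$ to sum $\Var[J_k^R]$ over all $k\ge k_*$. It is not: the constants produced by the de-pinned arm estimates degrade with $k$ (the paper's bounds on $P_R$ grow like $e^{ck}\sqrt{k!}$, and the decay rate in Lemma \ref{l:decay} is not uniform in $k$), so term-by-term bounds of the form $\Var[Q_k]=O(R^d)$ do not yield $\sum_{k\ge k_*}\Var[Q_k]=O(R^d)$, let alone an asymptotic. The paper circumvents this with a different device: an iterated Gaussian interpolation formula (Proposition \ref{p:wcerror}) that expresses $\Var[\sum_{k'\ge k}Q_{k'}]$ in terms of \emph{joint pivotal intensities of fixed order} $k$, combined with a truncation of the cluster count by cluster diameter (Propositions \ref{p:statwcetrunc} and Lemma \ref{l:trunctail}). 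Some substitute for this step is needed; without it the ``else'' cases and even the error terms in the dominant cases are not controlled.

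Second, in the ``else'' regime the statement is an asymptotic $\sim\sigma^2R^d$ with $\sigma>0$, not just two-sided volume-order bounds, and two ingredients are missing from your sketch: (i) when $\mu'(\ell)=0$ the first chaos does \emph{not} become negligible at scale $R^d$ --- it contributes a boundary term governed by a half-space pivotal intensity $P^H_\infty$ (the paper's Proposition \ref{p:varasymp}), so the bulk identity $\sum h_1^R\to\mu'(\ell)=0$ alone does not dispose of $Q_1$; and (ii) the positivity $\sigma>0$ comes from a separate extensivity argument (a reverse Efron--Stein/martingale decomposition exploiting the i.i.d.\ component of the GFF, Proposition \ref{p:ex}), not from the sharpness of the phase transition as you suggest. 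Moreover, establishing that the chaoses with $k(d-2)>d-2$ and vanishing kernel mass are genuinely $\sim\sigma_k^2R^d$ (rather than merely $O(R^d)$) requires the second-order correction $G(x)=c_d|x|^{2-d}+O(|x|^{-d})$ to the Green's function, which your argument does not invoke.
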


A consequence of Theorem \ref{t:var} is that the cluster count at levels at which $\mu'(\ell) = 0$ has \textit{suppressed fluctuations} compared to generic levels at which $\mu'(\ell) \neq 0$. In particular, since  $\mu'(0) = 0$ by symmetry, fluctuation suppression occurs for the sign clusters. This is an analogue of the fluctuation suppression (`Berry cancellation') known to occur for certain other geometric functionals of nodal sets of Gaussian fields, as first observed for the nodal length of random wave models by Berry \cite{ber02}, and later confirmed rigorously \cite{wig10,kkw13}. 

As in Theorem \ref{t:fluc2}, the case $\mu'(\ell) \neq 0$ in Theorem \ref{t:var} occurs at all sufficiently high levels, and the remaining cases occur at a (non-zero) finite number of levels except for possible accumulation points at $\{-\ell_c,\ell_c\}$. We believe, but cannot prove, that only the first two cases in Theorem \ref{t:var} occur in dimensions $d \in \{3,4\}$, and so in particular the sign clusters fall into the second case.

In Proposition \ref{p:mudiff} we show that the constants $\mu^{(m)}$ appearing in Theorem \ref{t:var} can also be expressed as sums over certain $m$-point `pivotal intensities'. An expression for the constants $\sigma_{\ell,d} > 0$ appearing in the final cases of Theorem \ref{t:var} is given in \eqref{e:sigma}; this includes a non-negligible boundary contribution in all dimensions (see Remark \ref{r:be}).

\begin{question}
Is $\mathcal{C}' = \mathcal{C}$ so that only the first two cases in Theorem \ref{t:var} occur?
 \end{question}

In the critical case $\ell = \ell_c$ we establish bounds on the variance which match the orders of the extremal cases of Theorem \ref{t:var}:

\begin{theorem}
\label{t:var2}
Let $d \ge 3$. Then there exist $c_1,c_2 > 0$ such that, for all $R \ge 1$,
\begin{equation}
    \label{e:var41}
 c_1 R^d \le \Var[N_R(\ell_c)] \le c_2 R^{d+2} .
 \end{equation}
Moreover, if $\mu'(\ell_c) \neq 0$ then, as $R \to \infty$,
\begin{equation}
    \label{e:var42}
\Var[N_R(\ell_c)] \ge \beta_{d,1} (\mu'(\ell_c))^2  R^{d+2} (1 + o(1)).
\end{equation} 
\end{theorem}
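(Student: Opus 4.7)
The plan is to reuse the Wiener-It\^o chaos expansion framework developed for Theorem \ref{t:var}, relying only on inputs that remain valid at criticality. Since $N_R(\ell_c) \in L^\infty$, it admits an orthogonal chaos decomposition $N_R(\ell_c) - \E[N_R(\ell_c)] = \sum_{q \ge 1} J_q$, and hence $\Var[N_R(\ell_c)] = \sum_q \Var(J_q)$; both the upper and the lower bounds will be read off from this decomposition.

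For the upper bound $\Var[N_R(\ell_c)] \le c_2 R^{d+2}$, standard bounds on multiple Wiener-It\^o integrals give $\Var(J_q) \le C_q \sum_{x,y \in \Lambda_R} G(x-y)^q$, with $C_q$ controlled by the $L^\infty$-norm of the $q$-th chaos kernel. These kernels are expressible in terms of local pivotal probabilities for the cluster count and are therefore uniformly bounded by $1$. By Lemma \ref{l:rv} the $q=1$ sum is $\Theta(R^{d+2})$ and dominates all higher-order terms, which contribute at most $O(R^d (\log R)^{O(1)})$.

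For the lower bound $\Var[N_R(\ell_c)] \ge c_1 R^d$, I would localise via the discrete GFF Markov property. Tile the interior of $\Lambda_R$ with disjoint sub-boxes $B_i$ of fixed side length $L$, together with slightly enlarged buffers $B_i'$, obtaining $\Omega(R^d)$ such boxes. Conditioning on $\sigma(f|_{\Lambda_R \setminus B_i'})$, the field $f|_{B_i'}$ equals a discrete harmonic extension of the boundary values plus an independent centred Gaussian with uniformly non-degenerate covariance. A local resampling argument then shows that with conditional probability bounded away from $0$ and $1$ uniformly in the conditioning, the configuration inside $B_i$ can be perturbed so as to create or destroy exactly one cluster contained entirely in $B_i$, contributing $\Theta(1)$ to the conditional variance of $N_R(\ell_c)$. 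Summing over the $B_i$, whose contributions are conditionally independent given the complement, yields $c_1 R^d$. For the sharper bound \eqref{e:var42} when $\mu'(\ell_c) \neq 0$, orthogonality gives $\Var[N_R(\ell_c)] \ge \Var(J_1)$; the first-chaos kernel of $N_R$ equals $\mu'(\ell_c)$ per interior vertex by the pivotal identification of Proposition \ref{p:mudiff} (which extends to $\ell_c$ via continuous differentiability), so $\Var(J_1) = (\mu'(\ell_c))^2 \sum_{x,y \in \Lambda_R} G(x-y) + O(R^{d+1}) \sim \beta_{d,1}(\mu'(\ell_c))^2 R^{d+2}$ by \eqref{e:beta}.

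The main obstacle is the control of higher-chaos contributions at $\ell_c$: the sharp cluster-diameter decay from \cite{dgrs23} used to bound them at non-critical levels is unavailable, and must be replaced by the soft $L^\infty$ bound on pivotal kernels described above. This suffices for the crude estimates of Theorem \ref{t:var2} but not for a matching asymptotic, leaving the exact order of $\Var[N_R(\ell_c)]$ open. A secondary issue is the uniform anti-concentration used in the lower bound, which must hold over all conditionings on the complement of $B_i'$; this follows from the uniform non-degeneracy of the conditional covariance on $B_i$ under the discrete Gibbs-Markov property.
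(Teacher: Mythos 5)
Your lower bound and your treatment of \eqref{e:var42} are essentially the paper's arguments. The lower bound in the paper (Proposition \ref{p:ex}) uses the i.i.d.\ component of Assumption \ref{a:sd} rather than the Markov property, and aggregates the per-box contributions via the martingale/reverse Efron--Stein inequality $\Var[N_R]\ge\sum_i\Var[\E[N_R\mid\tilde Z_i]]$; your phrase ``conditionally independent given the complement'' glosses over this aggregation step (conditioning on the exterior of a \emph{single} $B_i'$ for each $i$ separately does not decompose the total variance), but the idea is the same and can be made rigorous along either route. For \eqref{e:var42}, the paper likewise uses $\Var\ge\Var(Q_1)$ together with Propositions \ref{p:statwcequal} and \ref{p:mudiff}; note however that at $\ell_c$ the convergence $P_R(x)\to P_\infty(0)=-\mu'(\ell_c)$ is only qualitative (no rate is available, since Lemma \ref{l:conv} gives a quantitative $\gamma$ only off criticality), so your claimed error term $O(R^{d+1})$ is unjustified --- one only gets $o(R^{d+2})$, which is still enough.

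The genuine gap is in your upper bound. For a \emph{non-local} functional the $q$-th chaotic component is $\frac{1}{q!}\sum_{x_1,\dots,x_q\in\Lambda_R}\wick{f(x_1)\cdots f(x_q)}P(x_1,\dots,x_q)$ with a kernel supported on all of $(\Lambda_R)^q$, and its variance is $\frac{1}{q!}\sum_{x,y\in(\Lambda_R)^q}\prod_iG(x_i-y_i)\,P(x)P(y)$. An $L^\infty$ bound on $P$ does \emph{not} reduce this to $C_q\sum_{x,y\in\Lambda_R}G(x-y)^q$; it only gives $\|P\|_\infty^2\,\bigl(\sum_{x,y\in\Lambda_R}G(x-y)\bigr)^q/q!\asymp R^{q(d+2)}$, which is useless. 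Reducing to the diagonal requires summable off-diagonal decay of the pivotal intensities, and that decay (Lemma \ref{l:decay}) is derived from the truncated arm estimates of \cite{dgrs23}, which are precisely what is unavailable at $\ell=\ell_c$. So your claim that higher chaoses contribute $O(R^d(\log R)^{O(1)})$ cannot be established by the soft $L^\infty$ bound, and the whole chaos-by-chaos upper bound collapses at criticality. The paper's Proposition \ref{p:varub} sidesteps this entirely: it applies the interpolation identity of Proposition \ref{p:wcerror} at order $m=1$, writing $\Var[N_R(\ell)]=\sum_{x,y\in\Lambda_R}G(x-y)\int_0^1P_R^t(x;y)\,dt$, and bounds the $1{+}1$ joint pivotal intensity by $|P_R^t(x;y)|\le (2d)^2\varphi_{f(x),f^t(y)}(\ell,\ell)\le c(1-t^2)^{-1/2}$ using only the boundedness of the discrete derivative and a Gaussian density estimate --- no percolation input at all. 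You should replace your upper-bound argument by this (or an equivalent covariance-interpolation) route.
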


Theorem \ref{t:var2} is much simpler to prove than Theorem \ref{t:var}. The upper bound in \eqref{e:var41} essentially follows from (a simplified version of) arguments developed in \cite{bmm24b} in the setting of smooth Gaussian fields. We refer to the volume-order lower bound in \eqref{e:var41} as the \textit{extensivity} of the variance, and here we give a new proof of this that is valid at every level. This is also an importance ingredient in the proof of our stronger results for non-critical levels in the case that $\Var[N_R(\ell)] \sim \sigma^2 R^d$. The bound \eqref{e:var42} follows from the same arguments used to establish the case $\mu'(\ell) \neq 0$ of Theorem \ref{t:var} (see \cite{bmm22} for an alternative approach).


\begin{questions}
Is $\mu'(\ell_c) \neq 0$? Is the lower bound in \eqref{e:var42} asymptotically tight?
 \end{questions}

\subsection{Related work}
While to our knowledge the fluctuations of the cluster count of the GFF have not been considered in the literature, several works have studied the analogous problem for closely related models.

\smallskip
For an i.i.d.\ Gaussian field on $\Z^d$ the cluster count is equivalent, up to re-parameterisation, to the count of \textit{percolation clusters} in classical Bernoulli site percolation. In this case the limiting fluctuations are known to be Gaussian, first shown for non-critical levels \cite{cg84} and later at criticality \cite{pen01,zha01}. In \cite{bmm24} the martingale method of \cite{pen01} was extended to a class of weakly-dependent smooth Gaussian fields on $\R^d$. Since the methods of \cite{cg84,pen01,zha01,bmm24} only produce Gaussian fluctuations, one cannot hope to fully characterise the limiting distributions of fields with strong correlations, such as the GFF, using such methods.

\smallskip
For general Gaussian fields, including those with strong correlations, weaker results have previously been shown. The seminal work of Nazarov-Sodin \cite{ns09,ns16} established the law of large numbers \eqref{e:lln} for essentially all stationary Gaussian fields. Recently \cite{ns20, bmm22,bmm24b} have given upper and lower bounds on the variance. Translated to the setting of the GFF, \cite{bmm24b} showed that, at all levels,
\begin{equation}
    \label{e:var1}
\Var[N_R(\ell)] \le c_1 R^{d+2} ,
\end{equation} 
and \cite{bmm22} showed that, if $\mu'(\ell) \neq 0$,
\begin{equation}
    \label{e:var2}
 \Var[N_R(\ell)] \ge   c_2 R^{d+2}  
 \end{equation}
 for some $c_1,c_2 > 0$. \cite{ns20} gave the weaker lower bound $\Var[N_R(\ell)] \ge c_3 R^\delta$ at all levels for a general class of two-dimensional fields, where $\delta > 0$ is non-explicit but small. Note that although \cite{ns09,ns16,ns20,bmm22,bmm24b} studied smooth Gaussian fields, the proofs can be adapted to the (simpler) setting of discrete fields without much difficulty.

\subsection{Strategy of the proof}

\subsubsection{Limit theory for local additive functionals}
Our proofs are inspired by the analysis of \textit{local additive functionals} of strongly-correlated stationary Gaussian fields. The classical setting is the following. Let $f$ be a centred stationary Gaussian field on $\Z^d$ with covariance kernel $K(x) = \E[f(0)f(x)]$ satisfying $K \ge 0$ and  $K(x) \sim c |x|^{-\alpha}$ as $|x| \to \infty$ for some $c, \alpha > 0$. Let $\Psi: \R \to \R$ be a function such that $\Var[\Psi(Z)] \in (0,\infty)$, where $Z$ is a standard Gaussian random variable; such a function admits a \textit{Hermite expansion}
\begin{equation}
\label{e:hermite}
\Psi(x) = h_0 + \sum_{m \ge k} \frac{h_m}{m!} H_m(x) \, , \quad h_i \in \R 
\end{equation}
into Hermite polynomials $H_m$ of order $m \ge k$, where $k \ge 1$ is the \textit{Hermite rank} of $\Psi$ (meaning that $h_k \neq 0$). An alternative expression for the coefficients is
\begin{equation}
    \label{e:cmalt}
h_m = \E\big[ H_m(Z) \Psi(Z) \big] =  \frac{d^{(m)}}{d\ell^{(m)}} \E \big[\Psi(Z + \ell) \big] .
\end{equation}
We consider the local additive functional
\[ \Psi_R := \sum_{x \in \Lambda_R} \Psi( f(x) ) \ , \qquad   \widetilde{\Psi}_R := \frac{\Psi_R - \E[\Psi_R]}{\sqrt{\Var[\Psi_R]}} .\]
The possible limiting distributions of $\widetilde{\Psi}_R$ as $R \to \infty$ were first determined by Dobrushin-Major \cite{dm79} and Breuer-Major \cite{bm83}. To analyse these, one can consider the \textit{Weiner-It\^{o} chaos expansion} (see Section \ref{ss:ce}) for details)
\begin{equation}
    \label{e:chaosloc}
 \Psi_R = \E[\Psi_R] + \sum_{m \ge 1} Q_m[\Psi_R]
\end{equation}
which decomposes $\Psi_R$ into the uncorrelated components
\begin{equation}\label{e:chaoslocal}
    Q_m[\Psi_R]=\frac{1}{m!}\sum_{x\in\Lambda_R}h_m H_m(f(x)).
\end{equation}
Using the \textit{diagram formula} (Theorem~\ref{t:df}) one can show that 
\begin{equation}
    \label{e:chaosvar}
\Var[Q_m[\Psi_R]]  =    \frac{ \id_{m \ge k} h_m^2 }{m!}   \sum_{x,y \in \Lambda_R} K(x-y)^m
\end{equation}
and so in particular (c.f.\ \eqref{e:beta}), as $R \to \infty$
\begin{equation}
    \label{e:chaosvar2}
\frac{\Var[Q_m[\Psi_R]] }{ R^{\max\{2d-m\alpha,d\}} (\log R)^{\id_{m = d/\alpha} } }  \to  \frac{ \id_{m \ge k} h_m^2}{m!}  c_{K,m}
\end{equation} 
where $c_{K,m} > 0$ (recall that $K(x) \sim c |x|^{-\alpha}$). Moreover one can show that, provided $m \ge k$ and $h_m \neq 0$, as $R \to \infty$
\begin{equation}
\label{e:chaoslim}
\frac{Q_m[\Psi_R]}{\sqrt{\Var[Q_m[\Psi_R]]}} \quad \Longrightarrow  \quad \begin{cases}
   Z  & \text{if either } m = 1  \text{ or } m \ge d/\alpha, \\
    Z'_m & \text{if } 2 \le m  < d/\alpha  ,
\end{cases} 
\end{equation} 
where $Z'_m$ has an \textit{order-$m$ Hermite distribution} (see Definition \ref{d:ros}). In the case $m \ge d/\alpha$, the convergence in \eqref{e:chaoslim} may be established by appealing to the method of moments \cite{bm83} (or its refinement: the fourth moment theorem \cite{np05}), and in the case $2 \le m < d/\alpha$ by exploiting self-similarity.

\smallskip
Combining \eqref{e:chaosloc}--\eqref{e:chaoslim} (and using the fact that $\sum_mh_m^2m!<\infty$ for any Hermite expansion to control the variance of the tail in \eqref{e:chaosloc}) one can conclude that, as $R \to \infty$,
\begin{equation}
    \label{e:wclimit}
 \widetilde{\Psi}_R  \quad \Longrightarrow  \quad \begin{cases} Z &  \text{if either } k = 1 \text{ or } k \ge d/\alpha,\\ 
Z_k' & \text{if } 2 \le k  < d/\alpha.  \\ \end{cases}  
\end{equation} 

These arguments can be extended to more general correlation structures. However to conclude \eqref{e:wclimit} it can sometimes be necessary to independently verify that many chaotic components do not vanish simultaneously; this is often done by showing that the variance is \textit{extensive} ($\Var[\Psi_R] > c R^d$).

\smallskip
In recent decades there has been a substantial development of chaos expansion theory to study other local additive functionals of stationary Gaussian fields, starting from the work of Slud on the level-crossings of Gaussian processes \cite{slu91,slu94}. These methods have proved extremely flexible; without attempting to be exhaustive, let us mention applications to the level set geometry of smooth Gaussian fields with weak \cite{el16} and oscillating correlations \cite{mw11,mprw16,npr19,mrw20}, and the zeros of Gaussian analytic functions \cite{bn22}. In particular the impact of the \textit{asymptotic vanishing} of lower order chaoses on the limit theory of geometric functionals has been well understood \cite{mw11,mprw16,npr19,mrw20}.

\smallskip

To our knowledge, outside the special case of quadratic forms \cite{ft87,tt90}, this method has never been successfully implemented to study \textit{non-local} functionals such as the cluster count. Although one can generically decompose any $L^2$ functional of a Gaussian field into chaotic components, the difficulty lies in finding a tractable expression for asymptotic analysis. 

\subsubsection{Our contributions}
Our work develops this theory in several ways:

\vspace{-0.1cm}
\subsubsection*{Chaos expansion for smooth functionals}
For a generic smooth (non-local) functional $\Phi:\R^{\Lambda_R}\to\R$, we show (Proposition \ref{p:correlated_wce}) that the $m$-th chaotic component can be expressed as
    \begin{equation}
    \label{e:introce}
        Q_m[\Phi(f)]=\frac{1}{m!}\sum_{x_1,\dots,x_m\in\Lambda_R}\E[\partial_{x_1}\dots\partial_{x_m}\Phi(f)]\wick{f(x_1)\dots f(x_m)}
    \end{equation}
    where $\wick{f(x_1)\dots f(x_m)}$ denotes the Wick product of $f(x_1)\dots f(x_m)$ (see Section~\ref{ss:ce} for the definition). Observe that for the local functional $\Phi=\Psi_R$ only the diagonal $x_1=\dots=x_m$ contributes to \eqref{e:introce}, so by passing the derivative through the expectation in \eqref{e:cmalt} and using that $\wick{Z^m}=H_m(Z)$ for a standard Gaussian variable $Z$ the above expression simplifies to \eqref{e:chaoslocal}. While it is straight-forward to derive \eqref{e:introce}, to our knowledge this expression has not appeared before in the literature (c.f.\ Remark \ref{r:alt}).

\vspace{-0.1cm}
\subsubsection*{Chaos expansion for level-set functionals}
We extend the previous expression to (non-smooth, non-local) level-set functionals $\Xi(f)$ (i.e.\ functionals that depends only on the excursion set $\{ f > \ell\}$), showing that (Theorem \ref{t:cluster_wce})
    \begin{displaymath}
        Q_m[\Xi(f)]=\frac{1}{m!}\sum_{x_1,\dots,x_m\in\Lambda_R}P(x_1,\dots,x_m)\wick{f(x_1)\dots f(x_m)}
    \end{displaymath}
    where $P$ is the \textit{(multi-point) pivotal intensity} of $\Xi$ given in Definition \ref{d:pivint}. In the case of distinct $(x_i)_i$, $P(x_1,\dots,x_m)$ is the expectation of $d_{x_1}\dots d_{x_m}\Xi$ conditionally on $f$ taking the value $\ell$ at $x_1,\dots,x_m$, where $d_x\Xi$ is the discrete derivative of $\Xi$ at $x\in\Lambda_R$ (i.e.\ the change to $\Xi$ upon adding the point $x$ to the excursion set). Repeated points are handled by passing derivatives onto a Gaussian density.

\vspace{-0.1cm}
\subsubsection*{Semi-localisation of the cluster count}
When $\Xi$ is the cluster count $N_R$, we use percolation theoretic inputs to show that the pivotal intensities decay rapidly away from the diagonal (Lemma \ref{l:decay}). This is based on the observation that, for $d_x\Xi$ to be non-zero, all of the points $x_1,\dots,x_m$ must be connected by bounded clusters of the upper/lower excursion sets (for the conditioned field). 

It has recently been shown that, for the unconditioned field at non-critical levels, such \emph{truncated arm events} exhibit rapid probability decay as the diameter of the cluster increases \cite{dgrs23}. We extend this to the conditioned field using a `de-pinning' argument (Section~\ref{s:tae}). Similar methods show that the pivotal intensities $P_R$ are well-approximated by stationary counterparts $P_\infty$ for large $R$ (Lemma \ref{l:conv}). The upshot of these arguments is that the chaotic components of the cluster count are \textit{semi-local}.

\vspace{-0.1cm}
\subsubsection*{Limit theory for semi-local chaotic components}
We extend the classical limit theory for chaotic components of local functionals (outlined in \eqref{e:chaosvar}-\eqref{e:chaoslim}) to the semi-local case (Appendix \ref{a:slaf}). In particular, for $m\alpha<d$ (with the GFF corresponding to $\alpha = d-2$) we show that, as $R\to\infty$
    \begin{displaymath}
    \Var[Q_m[\Xi(f)]]\sim \frac{1}{m!}c_m\Big(\sum\nolimits_{x_2,\dots,x_m\in\Z^d}P_\infty(0,x_2,\dots,x_m)\Big)^2R^{2d-m\alpha},
    \end{displaymath}
where $c_m$ is the same constant given in \eqref{e:chaosvar2} for $K = G$. We give a separate argument that the sum in brackets above is equal to $(-1)^m\mu^{(m)}(\ell)$, explaining why \textit{fluctuation suppression} occurs at levels for which the first derivatives of $\mu$ vanish. 

We prove corresponding variance asymptotics for the higher order chaotic components (i.e.\ $m\alpha\geq d$), although the expressions for the leading constants are less simple. We also establish limiting distributions (i.e.\ the analogue of \eqref{e:chaoslim}) in all cases. The arguments are similar to in the classical theory of local functionals, albeit with extra technicalities (including non-negligible boundary effects in some cases).

\vspace{-0.1cm}
\subsubsection*{Controlling the tail of the chaos expansion}
To prove limit theorems for the cluster count we also require control over the tail of the chaos expansion
\begin{equation}
\label{e:tailvar}
    \sum_{m^\prime\geq m}Q_{m^\prime}[\Xi(f)].
\end{equation}
For this we use an iterated interpolation formula to express the variance of \eqref{e:tailvar} in terms of certain (joint) pivotal intensities of \textit{fixed order} $m$ (Proposition \ref{p:wcerror}). This is crucial since, on the one hand the percolation theoretic inputs do not afford us sufficient uniform control on the higher chaoses directly, and on the other hand we cannot use classical tail inequalities (such as in \cite{hp95}) due to the lack of differentiability of level-set functionals.

\vspace{-0.1cm}
\subsubsection*{Extensivity of fluctuations}
We give a separate argument to establish the \textit{extensivity} of fluctuations at all levels (Theorem \ref{t:var2}), which confirms that not all chaotic components have asymptotically vanishing variance.

\smallskip
Although we focus on the cluster count, our approach is of independent interest and paves the way to study other non-local functionals of strongly correlated Gaussian fields.

\subsection{Discussion and extensions}
\label{s:bc}
\subsubsection{Excursion set counts, boundary conditions}

Our proof extends in a straightforward way in several directions.

\smallskip
First, our proof applies to the functionals $N^+_R(\ell)$ and $N^-_R(\ell)$ which count the components of the excursion sets $\{ f > \ell\}$ and $\{f < \ell\}$. In this case all of our results remain true if the density $\mu(\ell)$ defined in \eqref{e:lln} is replaced by its analogue for $N^{\pm}_R(\ell)$. In fact, for $N^-_R(\ell)$ (resp.\ $N^+_R(\ell)$) our conclusions hold also at the level $\ell_c$ (resp.\ $-\ell_c$) which is not critical with respect to this excursion set.

\smallskip
Second, we could modify the boundary conditions in the definition of the functional $N_R$ without significant change to the proof; possible alternatives are: (1) counting all the level-set clusters of the field $f$ restricted to $\Lambda_R$, or (2) counting all level-set clusters of the field $f$ which intersect $\Lambda_R$. Note however that if the variance has volume-order growth (i.e.\ the final cases of Theorem \ref{t:var}), the leading constant may be different for each of these choices (see Remark~\ref{r:be}). Note also that, with the latter choice, the functional is not measurable with respect to the field on $\Lambda_R$, but one could handle this difference by working inside a larger box $\Lambda_{2R}$ and using the percolation theoretic inputs to argue that there are very few clusters that intersect both $\Lambda_R$ and $\partial \Lambda_{2R}$.

\smallskip
Another natural choice would be to impose either \textit{Dirichlet} or \textit{periodic} boundary conditions on the GFF restricted to $\Lambda_R$ (in the latter case one considers the `zero-averaged' GFF), however extra work would be required to adapt our arguments to these settings.

\subsubsection{Other semi-local level-set functionals}
We believe our analysis could eventually be extended to cover other functionals of the level/excursion sets which are `semi-local'. One important example is the density of the unbounded component of $\{f > \ell\}$ for supercritical levels $\ell < \ell_c$: 
\[ \theta_R(\ell) = \mathrm{Vol}( \Lambda_R \cap \mathcal{U}_\ell ) \] where $\mathcal{U}_\ell$ denotes the (unique) unbounded component of $\{f > \ell\}$. While we expect a variant of our proof to apply to this functional, there are also some important differences, for instance since $\theta_R(\ell)$ is \textit{monotone} in $\ell$ we would only expect Gaussian limits.

\subsubsection{A more general class of fields}
While we focus our study on the GFF, our arguments make use of only a relatively small subset of its properties. Suppose $f$ is a stationary Gaussian field on $\Z^d$ with covariance kernel $K(x) = \E[f(0)f(x)]$. 

\begin{assumption}
    \label{a:rv}
    There exists $\alpha \in (0,d)$ and $c > 0$ such that 
    \begin{equation}
        \label{e:k}
     K(x) \sim c |x|^{-\alpha} + O \big(|x|^{-\alpha-2} \big) 
     \end{equation}
    as $|x| \to \infty$. Moreover $K \ge 0$ and $K$ is invariant under coordinate reflection and permutation.
\end{assumption}

\begin{assumption}
\label{a:sd}
 There exists a decomposition
 \begin{equation}
 \label{e:iiddecomp} 
 f \stackrel{d}{=} \kappa \tilde{f} + \hat{f} 
 \end{equation}
 where $\kappa > 0$, $\tilde{f}$ is an i.i.d.\ field of standard Gaussian random variables, and $\hat{f}$ is an independent centred Gaussian field.
 \end{assumption}

Assumption \ref{a:sd} is equivalent to the spectral measure $\mu = \mathcal{F}[K]$ having a density $\rho$ satisfying $\inf_{x \in \mathbb{T}^d} \rho(x) > 0$,  where $\mathcal{F}$ denotes the Fourier transform and $\mathbb{T}^d$ the torus. It is standard that Assumptions \ref{a:rv} and \ref{a:sd} hold true for the GFF (see \cite[Theorem 4.3.1]{ll12} for the error bound in \eqref{e:k}).

\smallskip
In fact we believe that only \eqref{e:k} is crucial to the results, and even then this could likely be weakened to an appropriate condition on the singularity of $\mu$ at the origin, and/or we could include slowly varying factors. Note that we only use the error bound in \eqref{e:k} to handle the case $\mu'(\ell) = 0$. Assumption \ref{a:sd} plays an important technical role in our proof (e.g.\ in the `de-pinning' arguments in Section \ref{s:tae}) and it would be interesting to remove it, especially as a step towards adapting the arguments to smooth fields.

\smallskip
Next we introduce the notion of \textit{truncated arm decay} from percolation theory. For a random subset $E \subseteq \Z^d$ let $\mathrm{Arm}_R(E)$ be the event that $E$ contains a component which is bounded and includes a path connecting a neighbour of $0$ and $\partial \Lambda_R$. For $\ell \in \R$, let $T_\ell$ be the property that both $\{f > \ell\}$ and $ \{ f < \ell\}$ have \textit{(super-polynomial) truncated arm decay} in the sense that
\[ \lim_{R \to \infty} \frac{-\log\P[ \mathrm{Arm}_R(E) ]}{\log R} = \infty  \]
for both $E = \{f > \ell\}$ and $ E = \{ f < \ell\}$.  It is known that, for a wide-class of fields, $T_\ell$ holds for all sufficiently large $|\ell|$ (see, e.g., \cite{ms24}), and it is expected that in general $T_\ell$ holds for all $\ell \notin \{-\ell_c,\ell_c\}$, although so far this has only been shown for the GFF \cite{dgrs23}.

\begin{theorem}
Suppose $f$ satisfies Assumption \ref{a:rv} for $\alpha = d-2$ and Assumption \ref{a:sd}. Then the conclusions of Theorems \ref{t:fluc1}-\ref{t:fluc2} and Theorem \ref{t:var} hold at all levels $\ell \in \mathcal{L}$, where $\mathcal{L} \subseteq \R$ is any open subset in which $T_\ell$ holds uniformly. Moreover the first item of Theorem \ref{t:var2} holds at every level, and the second item of Theorem \ref{t:var2} holds at every level for which $\mu'(\ell) \neq 0$.

In general the measure associated to the Hermite distribution appearing as a limit in Theorem \ref{t:fluc2} will depend on $f$ (see Definition \ref{d:ros}).
\end{theorem}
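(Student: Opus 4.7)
The strategy is to revisit the proofs of Theorems \ref{t:fluc1}-\ref{t:var2} and identify precisely which properties of the GFF are invoked, verifying that each is supplied by Assumptions \ref{a:rv} (with $\alpha=d-2$), \ref{a:sd}, and uniform truncated arm decay on $\mathcal{L}$. I would organize the argument around three modules corresponding to the three main ingredients of the proof: (i) the abstract chaos expansion of level-set functionals (Proposition \ref{p:correlated_wce}, Theorem \ref{t:cluster_wce}); (ii) the percolation-theoretic control of multi-point pivotal intensities that yields semi-localisation (Lemmas \ref{l:decay}, \ref{l:conv} and the material of Section \ref{s:tae}); and (iii) the variance asymptotics and limit theorems for semi-local chaotic components (Appendix \ref{a:slaf}).

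Module (i) uses only the Gaussian structure of $f$ and the Wick calculus, so it transfers verbatim. Module (iii) reduces, via the diagram formula, to asymptotics of sums $\sum_{x,y \in \Lambda_R} K(x-y)^m$; by Lemma \ref{l:rv} these depend only on the regular-variation estimate \eqref{e:k}, and the non-negativity of $K$ in Assumption \ref{a:rv} is used to guarantee strict positivity of the $\beta_{d,k}$-type constants and of the variance of non-vanishing chaoses. The limit laws of Definition \ref{d:ros} are defined directly in terms of the spectral measure, so the non-Gaussian limit in Theorem \ref{t:fluc2} is automatically the Hermite distribution associated to the measure with density dictated by $K$ (reducing to $|\lambda|^{-2}$ only in the GFF case).

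The technical heart of the extension is module (ii). The unconditioned input $T_\ell$ gives exactly the required super-polynomial decay for truncated arm events in $\{f > \ell\}$ and $\{f < \ell\}$, but the pivotal intensities are defined under conditioning on $f$ taking the value $\ell$ at several points. To transfer decay to the conditioned field I would reuse the de-pinning argument of Section \ref{s:tae}: write $f = \kappa \tilde f + \hat f$ as in \eqref{e:iiddecomp} and absorb the conditioning shift into the smooth part $\hat f$, so that the conditioned field can be realised as an independent modification of $f$ by a locally bounded deterministic perturbation, at the cost of shifting the effective level by a controlled amount. This is exactly why $\mathcal{L}$ must be open with $T_\ell$ uniform on $\mathcal{L}$: the de-pinning shifts the level by $O(G_{\text{eff}})$ and the decay rate must survive for all such shifts. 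The same mechanism underlies Lemma \ref{l:conv} (approximation of $P_R$ by $P_\infty$) and the tail bound of Proposition \ref{p:wcerror}, each of which reduces to an application of the conditioned arm decay.

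Finally, Theorem \ref{t:var2} is handled separately and more cheaply. The upper bound $\Var[N_R(\ell)] \le c_2 R^{d+2}$ follows from the Poincar\'e-type argument of \cite{bmm24b}, which requires only an $L^2$ bound on pointwise increments of $N_R$ and the decay $K(x) = O(|x|^{-(d-2)})$, both furnished by Assumption \ref{a:rv}. The extensivity lower bound $\Var[N_R(\ell)] \ge c_1 R^d$ is obtained by the new argument of the paper, which exploits precisely the i.i.d.\ component $\tilde f$ in Assumption \ref{a:sd} to generate independent resamplings of $f$ on a sparse sublattice of $\Lambda_R$ and to show that each produces a non-trivial change in $N_R$ with uniformly positive probability; no arm estimates are used, so this holds at every level, including $\ell_c$. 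The bound \eqref{e:var42} reproduces the proof of the $\mu'(\ell)\ne 0$ case of Theorem \ref{t:var}, which only requires non-vanishing of the first chaos and so depends on Assumptions \ref{a:rv}-\ref{a:sd} but not on $T_\ell$. The main obstacle, as described, is the uniformity in $\ell$ needed to de-pin at a controlled loss of rate; all other verifications are routine bookkeeping.
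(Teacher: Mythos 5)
Your overall plan --- re-examining each module of the argument and checking that it only consumes Assumptions \ref{a:rv}--\ref{a:sd} and $T_\ell$ --- is exactly the right (and in fact the paper's implicit) strategy, and your accounts of modules (i) and (iii) and of Theorem \ref{t:var2} are essentially correct. However, your description of the technical heart, module (ii), contains a genuine misunderstanding. The de-pinning in Section \ref{s:tae} does \emph{not} work by absorbing the conditioning into $\hat f$ and shifting the effective level: Proposition \ref{p:depinning} is a Cameron--Martin change-of-measure argument in the regression direction $k = \Sigma_{X\to Y}\Sigma_Y^{-1}\id_m$, which bounds $\P[A \mid f(\underline{y})=\ell]\,\varphi_{f(\underline{y})}(\ell)$ by $\P[A,\, f(\underline{y})\le\ell]$ up to logarithmic factors, and then \eqref{e:arminclusion} reduces the latter to unconditioned arm events \emph{at the same level} $\ell$. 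Your proposed mechanism faces a real obstacle: the conditional mean shift $\Sigma_{\cdot\to\underline{y}}\Sigma_{\underline{y}}^{-1}(\ell - f(\underline{y}))$ is a spatially varying multiple of the Green's function, not a constant level shift, so the conditioned excursion set at level $\ell$ is not the excursion set of $f$ at a nearby level, and "$T_\ell$ surviving small shifts of $\ell$" does not control it. Assumption \ref{a:sd} enters here only to keep $\lambda_{\min}$ and $\varphi_{f(\underline{y})}(\ell)$ bounded away from zero uniformly in $\underline{y}$.

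Consequently you have also misattributed the role of the hypothesis that $T_\ell$ holds \emph{uniformly on an open set} $\mathcal{L}$. The paper states this explicitly: the de-pinning, semi-localisation, and limit theorems all go through at a single level satisfying $T_\ell$; uniformity over a neighbourhood is needed only in Proposition \ref{p:mudiff}, where one differentiates $\ell\mapsto\mu(\ell)$ by integrating the stationary pivotal intensities over levels $\ell+\sum_i s_i$ for small $s_i$, so as to identify $\sum_{x_2,\dots,x_m}P_\infty(0,x_2,\dots,x_m)$ with $(-1)^m\mu^{(m)}(\ell)$. Without that identification the theorems still hold with $\mu^{(m)}(\ell)$ replaced by the pivotal-intensity sums, which is precisely the remark following the theorem. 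To close the gap you should replace your level-shift heuristic with the change-of-measure de-pinning and relocate the use of uniformity to the differentiation of the cluster density.
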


\smallskip
Most of the proof actually goes through at any level satisfying $T_\ell$; uniformity is only needed in the proof of Proposition \ref{p:mudiff}. Without assuming uniformity, the conclusions of Theorems \ref{t:fluc1}, \ref{t:fluc2} and \ref{t:var} would hold with $\mu'(\ell)$ replaced by the expression on the r.h.s.\ of~\eqref{e:mudiff}.

\smallskip
If $f$ satisfies Assumption \ref{a:rv} for $\alpha \in (0,d)$ different from $d-2$, then we believe that broadly analogous results will hold. However for $\alpha < d-2$ certain extra effects may appear compared to the $\alpha \ge d-2$ case: in general we expect that Hermite distributions of all orders $2 \le m < d/\alpha$ may appear as possible limits, and additional boundary effects will appear in the limit for small enough $\alpha$. We leave the investigation of such limits for future work.

\subsection{Outline of the paper}
In Section \ref{s:ce} we develop the theory of chaos expansions for level-set functionals such as the cluster count. In Section \ref{s:sl} we show how percolation theoretic inputs imply that the components of the chaos expansion of the cluster count can be semi-localised. In Section \ref{s:cdf} we study properties of the cluster density functional $\mu(\ell)$, and connect its derivatives to the asymptotics of the chaotic components. In Section \ref{s:vb} we give general variance bounds that hold at all levels. In Section \ref{s:mr} we complete the proof of the main results. Appendix \ref{a:mhp} establishes basic properties of Gaussian vectors and multivariate Hermite polynomials that are used in Sections \ref{s:ce} and \ref{s:sl}, Appendix \ref{a:slaf} extends the classical theory of local additive functionals of stationary Gaussian fields to semi-local additive functionals, and Appendix \ref{a:kc} contains computations used in Sections \ref{s:sl} and \ref{s:mr} and Appendix \ref{a:slaf} .

\subsection{Acknowledgements}
Part of this work was carried out while S.M.\ was a Research Fellow at the University of Melbourne, supported by the Australian Research Council (ARC) Discovery Early Career Researcher Award DE200101467. The authors also benefited from a research visit of M.M.\ to the University of Melbourne supported by this award. The authors thank Illia Donhauzer, Rapha\"{e}l Lachi\'{e}ze-Rey, Matthias Schulte, Franco Severo, Hugo Vanneuville, and Igor Wigman for comments on an earlier version, and I.W.\ for pointers to the literature on chaos expansions for geometric functionals of Gaussian fields.


\medskip
\section{Chaos expansion for the cluster count}
\label{s:ce}

In this section we derive our expression for the chaos expansion of the cluster count. Rather than restrict our attention to the GFF, we shall generalise the set-up by working with arbitrary centred Gaussian vectors.

\smallskip
For the remainder of the section we fix a finite subset $D \subset \Z^d$ and a non-degenerate centred Gaussian vector $f$ on $D$ with covariance matrix $K$, and we drop these from our notation.

\subsection{Preliminaries: Chaos expansions, Wick products, diagram formula}
\label{ss:ce}
We begin by recalling some fundamental facts about chaos expansions and Wick products which can be found in \cite[Chapters~1-3]{jan97}. Let $H$ be a real Gaussian Hilbert space defined on a probability space $(\Omega,\mathcal{F},\P)$. For $m\geq 0$, let $\mathcal{P}_m(H)$ be the set of random variables that can be expressed as a real polynomial of degree at most $m$ in finitely many elements of $H$. The \emph{$m$-th homogeneous chaos of $H$}, denoted $H^{{:}m{:}}$, is defined as the projection of $\overline{\mathcal{P}_m(H)}$ onto the orthogonal complement of $\overline{\mathcal{P}_{m-1}(H)}$, where $\overline{\cdot}$ denotes the closure in $L^2(\P)$. The Wiener-It\^o chaos expansion states that any square-integrable function that is measurable with respect to $H$ has a unique expansion in terms of elements of the homogeneous chaoses:
\begin{displaymath}
    \bigoplus_{m=0}^\infty H^{{:}m{:}}=L^2(\Omega,\sigma(H),\P)
\end{displaymath}
where $\sigma(H)$ denotes the $\sigma$-algebra generated by $H$. In other words, if $Q_m$ denotes projection in $L^2(\P)$ onto $H^{{:}m{:}}$, then for all $X\in L^2(\Omega,\sigma(H),\P)$
\begin{displaymath}
    X=\sum_{m=0}^\infty Q_m[X]
\end{displaymath}
where convergence occurs in $L^2$. In particular $Q_0[X] = \E[X]$, $\E[Q_m[X]] = 0$ for every $m \ge 1$, and $\Var[X] = \sum_{m \ge 1} \Var[Q_m[X]]$.

Given (centred) Gaussian variables $X_1,\dots,X_n\in H$, we define the \emph{Wick product}
\begin{displaymath}
    \wick{X_1\cdots X_n}:=Q_n[X_1\cdots X_n].
\end{displaymath}
It can be shown that $ : \! \! X_1\cdots X_n \! \! :$ is a polynomial of order $n$ in the variables $X_1,\dots,X_n$; e.g. $: \! \!X_1 X_2\!\! : \, = X_1 X_2 - \E[X_1 X_2]$ and $: \! \! X_1 X_2 X_3 \! \! : \, = X_1 X_2 X_3 - \E[X_1 X_2] X_3 - \E[X_1 X_3] X_2 - \E[X_2 X_3] X_1 - \E[X_1 X_2 X_3]$. By definition of the homogeneous chaoses, if $m\neq n$ then
\begin{displaymath}
    \E[\wick{X_1\cdots X_n}\wick{Y_1\cdots Y_m}]=0.
\end{displaymath}
More generally, the moments of Wick products can be computed by means of a diagram formula which we now describe.

A \emph{complete Feynman diagram} labelled by a collection of random variables $X_1,\dots,X_n$ is a graph with $n$ vertices (the $i$-th vertex is identified with $X_i$) such that each vertex is the end-point of precisely one edge. Clearly a complete Feynman diagram can only exist if $n$ is even. Suppose that a complete Feynman diagram $\gamma$ has edges $\{X_{i_k},X_{j_k}\}$ for $k=1,\dots, n/2$. Then the \emph{value} of $\gamma$ is
\begin{displaymath}
    v(\gamma):=\prod_{k=1}^{n/2}\E[X_{i_k}X_{j_k}].
\end{displaymath}
\begin{theorem}[Diagram formula {\cite[Theorem~3.12]{jan97}}]
\label{t:df}
    Let $k,I\in\N$ and for each $i=1,\dots,I$, let $\mathcal{X}_i=\wick{X_{i,1}\cdots X_{i,k}}$ where $\{X_{i,j}\}_{i,j}$ are centred jointly Gaussian variables. Then
\begin{displaymath}
    \E\left[\mathcal{X}_1\dots \mathcal{X}_I\right]=\sum_{\gamma}v(\gamma)
\end{displaymath}
where the sum is taken over all complete Feynman diagrams $\gamma$ labelled by $\{X_{i,j}\}_{i,j}$ such that no edge joins any $X_{i_1,j_1}$ and $X_{i_2,j_2}$ where $i_1=i_2$.
\end{theorem}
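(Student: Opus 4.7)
The plan is a generating-function argument. I would first recall the Wick exponential identity: for any centred Gaussian $Y$ with variance $\sigma^2$, one has
\[
\wick{e^Y} := \sum_{n \geq 0} \frac{\wick{Y^n}}{n!} = e^{Y - \sigma^2/2},
\]
which follows from $\wick{Y^n} = \sigma^n H_n(Y/\sigma)$ combined with the classical Hermite generating function $\sum_n H_n(x) t^n/n! = e^{tx - t^2/2}$.

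Next, I would introduce formal variables $t_{i,j}$ for $1 \le i \le I$, $1 \le j \le k$, and set $Y_i := \sum_j t_{i,j} X_{i,j}$. By multilinearity of the Wick product, $\mathcal{X}_i = \wick{X_{i,1}\cdots X_{i,k}}$ is the coefficient of $\prod_j t_{i,j}$ in $\wick{e^{Y_i}}$. Using the Wick exponential identity applied to each $Y_i$, together with the Gaussian moment generating function $\E[e^S] = \exp(\Var(S)/2)$ applied to $S = \sum_i Y_i$ and the decomposition $\Var(S) = \sum_i \Var(Y_i) + 2\sum_{i_1 < i_2}\E[Y_{i_1} Y_{i_2}]$, a direct computation gives
\[
\E\Big[\prod_{i=1}^I \wick{e^{Y_i}}\Big] = \exp\Big(\sum_{i_1 < i_2}\sum_{j_1,j_2} t_{i_1,j_1}\, t_{i_2,j_2}\, \E[X_{i_1,j_1} X_{i_2,j_2}]\Big),
\]
the key point being that the within-group variance contributions cancel.

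To conclude, $\E[\mathcal{X}_1 \cdots \mathcal{X}_I]$ equals the coefficient of $\prod_{i,j} t_{i,j}$ on the right-hand side. Expanding the exponential as a power series and collecting monomials in which each $t_{i,j}$ appears exactly once is combinatorial bookkeeping: each surviving term corresponds to a complete pairing of $\{X_{i,j}\}_{i,j}$ with no edge within a single group $i$ (since the exponent contains only cross terms $i_1 < i_2$), and its contribution is precisely the product of covariances along the pairing, i.e.\ $v(\gamma)$. Summing yields $\sum_\gamma v(\gamma)$, as claimed.

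The main conceptual step is the Wick exponential identity, where Gaussianity enters crucially; the remainder is formal power-series manipulation in finitely many variables and therefore free of convergence issues (alternatively, one checks that all exponentials are integrable at real values of the $t_{i,j}$ and extracts coefficients). If one prefers to avoid generating functions, an inductive alternative uses the recursion
\[
X \cdot \wick{X_1\cdots X_n} = \wick{X\, X_1 \cdots X_n} + \sum_{j=1}^n \E[X X_j]\, \wick{X_1\cdots \widehat{X_j}\cdots X_n},
\]
which peels off one factor at a time and reduces the claim to a smaller instance, matching terms with those Feynman diagrams in which $X$ is either a free endpoint or paired with some $X_j$.
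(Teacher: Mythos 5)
The paper does not prove this statement at all: it is quoted verbatim from Janson's book (Theorem~3.12 of \cite{jan97}) and used as a black box, so there is no in-paper argument to compare against. Your generating-function proof is correct and self-contained. The chain of identities checks out: $\wick{e^{Y}}=e^{Y-\Var(Y)/2}$ for centred Gaussian $Y$; by multilinearity and symmetry of the Wick product, $\mathcal{X}_i$ is the coefficient of $\prod_j t_{i,j}$ in $\wick{e^{Y_i}}$ (the $1/k!$ is absorbed by the $k!$ orderings of the multilinear monomial); the product $\prod_i e^{Y_i-\Var(Y_i)/2}$ together with $\E[e^S]=e^{\Var(S)/2}$ leaves exactly the cross-covariance terms in the exponent; and extracting the multilinear coefficient of the resulting exponential produces precisely the sum over perfect matchings with no intra-group edges, the $1/n!$ from the exponential series cancelling against the $n!$ orderings of the edge set. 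The only point requiring care is the interchange of expectation with the infinite Wick-exponential series (equivalently, that coefficient extraction commutes with $\E$); you flag this and the standard fixes — integrability of Gaussian exponentials and real-analyticity in $t$, or truncating to the finite polynomials $\wick{Y_i^k}/k!$ — both work. Your inductive alternative via the recursion $X\cdot\wick{X_1\cdots X_n}=\wick{XX_1\cdots X_n}+\sum_j\E[XX_j]\wick{X_1\cdots\widehat{X_j}\cdots X_n}$ is in fact closer to how Janson organises the proof, but either route is legitimate.
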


For example, in the case that $I=2$, if $X_1,\dots,X_k,Y_1,\dots,Y_k$ are centred jointly Gaussian variables, then
\begin{displaymath}
    \E\left[\wick{X_1\cdots X_k}\wick{Y_1\cdots Y_k}\right]=\sum_{\sigma\in S_k}\prod_{i=1}^k\E\left[X_iY_{\sigma(i)}\right]
\end{displaymath}
where $S_k$ denotes the group of permutations of $\{1,\dots,k\}$ (see Figure \ref{f:df}).

\vspace{0.3cm}
\begin{figure}[h]
    \centering
      \hspace{1.5cm}
    \begin{subfigure}[t]{0.45\textwidth}
    \begin{tikzpicture}[scale=1.2, every node/.style={font=\small}]
    \draw[fill] (0,0)  circle (1pt);
    \node[left] at (0,0) {$X_{1,1}$};
    \draw[fill] (1,0)  circle (1pt);
    \node[left] at (1,0) {$X_{1,2}$};
    \draw[fill] (2,0)  circle (1pt);
    \node[left] at (2,0) {$X_{1,3}$};
    \draw[fill] (3,0)  circle (1pt);
    \node[left] at (3,0) {$X_{1,4}$};

    \draw[fill] (0,-0.5)  circle (1pt);
    \node[left] at (0,-0.5) {$X_{2,1}$};
    \draw[fill] (1,-0.5)  circle (1pt);
    \node[left] at (1,-0.5) {$X_{2,2}$};

    \draw[fill] (2,-0.5)  circle (1pt);
    \node[left] at (2,-0.5) {$X_{2,3}$};
    \draw[fill] (3,-0.5)  circle (1pt);
    \node[left] at (3,-0.5) {$X_{2,4}$};

    \draw[thick] (0,0)--(0,-0.5);
    \draw[thick] (1,0)--(1,-0.5);

    \draw[thick] (2,0)--(2,-0.5);
    \draw[thick] (3,0)--(3,-0.5);
    \end{tikzpicture}    
    \end{subfigure}
    \hspace{-0.5cm}
    \begin{subfigure}[t]{0.45\textwidth}
        \begin{tikzpicture}[scale=1.2, every node/.style={font=\small}]
    \draw[fill] (0,0)  circle (1pt);
    \node[left] at (0,0) {$X_{1,1}$};
    \draw[fill] (1,0)  circle (1pt);
    \node[left] at (1,0) {$X_{1,2}$};
    \draw[fill] (2,0)  circle (1pt);
    \node[left] at (2,0) {$X_{1,3}$};
    \draw[fill] (2.5,0)  circle (1pt);
    \node[right] at (2.5,0) {$X_{1,4}$};

    \draw[fill] (0,-0.5)  circle (1pt);
    \node[left] at (0,-0.5) {$X_{2,1}$};
    \draw[fill] (1,-0.5)  circle (1pt);
    \node[left] at (1,-0.5) {$X_{2,2}$};

    \draw[fill] (2,-0.5)  circle (1pt);
    \node[left] at (2,-0.5) {$X_{2,3}$};
    \draw[fill] (2.5,-0.5)  circle (1pt);
    \node[right] at (2.5,-0.5) {$X_{2,4}$};

    \draw[thick] (0,0)--(0,-0.5);
    \draw[thick] (1,0)--(1,-0.5);

    \draw[thick] (2,0)--(2.5,0);
    \draw[thick] (2,-0.5)--(2.5,-0.5);
    \end{tikzpicture}
    \end{subfigure}
    \caption{Two complete Feynman diagrams on the vertices $\{X_{i,j}\}$ in the case $I=2,k=4$; only the first diagram contributes to the diagram formula.}
    \label{f:df}
\end{figure}
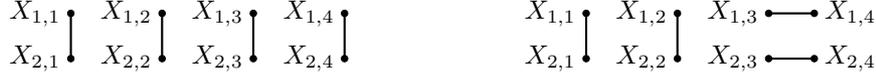

\subsection{Chaos expansion for smooth functionals}
We begin by establishing a chaos expansion for smooth functionals; this expansion is in terms of Wick products, and is different to previous approaches in the literature (see Remark \ref{r:alt}).

\begin{proposition}[Chaos expansion for smooth functionals]
\label{p:correlated_wce}
Let $\Phi:\R^D \to\R$ be a smooth function such that $\Phi$ and its derivatives of all orders have at most polynomial growth at infinity. Then $\Phi$ has the chaos expansion
\begin{displaymath}
\Phi(f) =\E[\Phi(f)]+\sum_{m=1}^\infty Q_m[\Phi(f)]
\end{displaymath}
where convergence occurs in $L^2$ and
\begin{equation}
        \label{e:qm}
        Q_m[\Phi(f)]=\frac{1}{m!}\sum_{x_1,\dots,x_m \in D}\wick{f(x_1)\cdots f(x_m)}\E[\partial_{x_1}\dots\partial_{x_m} \Phi(f)].
    \end{equation}
\end{proposition}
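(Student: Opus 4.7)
The proof will identify the claimed right-hand side as the projection onto $H^{:m:}$ by testing against a spanning family. Since $D$ is finite and $\Phi$ has polynomial growth, $\Phi(f) \in L^2$, so the Wiener--It\^{o} decomposition applies and gives $\Phi(f) = \E[\Phi(f)] + \sum_{m\ge 1} Q_m[\Phi(f)]$. The Wick products $\wick{f(y_1)\cdots f(y_m)}$ (with $y_i \in D$) lie in $H^{:m:}$, and they span this subspace because the monomials of degree $\le m$ in $\{f(x)\}_{x\in D}$ span $\overline{\mathcal{P}_m(H)}$ and Wick-ordering is a triangular change of basis. Consequently it suffices to verify, for every $m\ge 1$ and every $y_1,\dots,y_m \in D$, the matching identity
\begin{equation*}
    \E\big[\Phi(f)\wick{f(y_1)\cdots f(y_m)}\big] \;=\; \E\big[R_m \wick{f(y_1)\cdots f(y_m)}\big],
\end{equation*}
where $R_m$ denotes the right-hand side of \eqref{e:qm}.

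\textbf{The two key computations.} On the left, I would iterate Stein's lemma $\E[f(y) g(f)] = \sum_{x\in D} K(x,y)\, \E[\partial_x g(f)]$, together with the fact that contraction terms produced when Stein's lemma hits one of the $f(y_j)$ factors are exactly annihilated by the Wick ordering, to obtain
\begin{equation*}
    \E\big[\Phi(f)\wick{f(y_1)\cdots f(y_m)}\big] \;=\; \sum_{x_1,\dots,x_m \in D}\Big(\prod_{i=1}^m K(x_i,y_i)\Big)\, \E\big[\partial_{x_1}\cdots\partial_{x_m}\Phi(f)\big].
\end{equation*}
The polynomial growth assumption (on $\Phi$ and all its derivatives) justifies each differentiation under the expectation; a clean induction on $m$ is available. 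On the right, the diagram formula (Theorem~\ref{t:df}) gives
\begin{equation*}
    \E\big[\wick{f(x_1)\cdots f(x_m)}\wick{f(y_1)\cdots f(y_m)}\big] \;=\; \sum_{\sigma\in S_m}\prod_{i=1}^m K(x_i,y_{\sigma(i)}),
\end{equation*}
so inserting into the definition of $R_m$ and relabelling $x_i \mapsto x_{\sigma^{-1}(i)}$ in each term (the mixed partial derivative is symmetric in its arguments) collapses the sum over $\sigma$ into $m!$ equal copies of the same expression above, cancelling the prefactor $1/m!$.

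\textbf{Main obstacle and wrap-up.} The only non-trivial point is the Stein-lemma iteration: one must justify differentiating under the integral and carefully track which contractions survive. The combinatorics is transparent once one observes that, writing the Wick product as a sum over partial pairings, every pairing that contracts some $f(y_i)$ with another $f(y_j)$ is cancelled by the corresponding correction term produced when Stein's lemma is applied to $f(y_i)$ and the derivative hits $f(y_j)$, leaving only the fully external pairings that produce the covariance product $\prod_i K(x_i,y_i)$. This matches the right-hand side computation, yielding $\E[\Phi(f) \wick{f(y_1)\cdots f(y_m)}] = \E[R_m \wick{f(y_1)\cdots f(y_m)}]$. Since $R_m \in H^{:m:}$ and has the same inner products against a spanning set of $H^{:m:}$ as $Q_m[\Phi(f)]$, orthogonality of distinct homogeneous chaoses forces $R_m = Q_m[\Phi(f)]$, which is \eqref{e:qm}.
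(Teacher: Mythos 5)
Your proposal is correct and has the same overall architecture as the paper's proof: observe that the candidate $R_m$ lies in $H^{{:}m{:}}$, that the Wick products $\wick{f(y_1)\cdots f(y_m)}$ are total in that chaos, and then match inner products against this family, computing the right-hand side via the diagram formula and collapsing the sum over $S_m$ by symmetry of mixed partials. The one place you genuinely diverge is the left-hand-side computation. The paper diagonalises by writing $f=K^{1/2}Z$, identifies $\wick{Z_{y_1}\cdots Z_{y_m}}$ with a product of univariate Hermite polynomials, applies the classical one-dimensional Hermite integration-by-parts identity, and converts back with the chain rule; you instead iterate Stein's lemma directly on the correlated vector and argue that the contraction terms generated when the derivative hits one of the remaining $f(y_j)$ factors are exactly cancelled by the corresponding correction terms in the Wick expansion. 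Both routes are standard and land on the same identity $\E[\Phi(f)\wick{f(y_1)\cdots f(y_m)}]=\sum_{x}\prod_i K_{x_i,y_i}\,\E[\partial_{x_1}\cdots\partial_{x_m}\Phi(f)]$; the paper's version outsources the combinatorics to known facts about Hermite polynomials at the cost of an extra change of variables, while yours avoids the square root of $K$ but requires you to carry out the pairing-cancellation bookkeeping explicitly (for instance via the recursion $\wick{X_1\cdots X_n}=X_1\wick{X_2\cdots X_n}-\sum_{j\ge 2}\E[X_1X_j]\wick{X_2\cdots\widehat{X_j}\cdots X_n}$, which makes the induction on $m$ clean). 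If you write this up, that cancellation step is the one you must spell out carefully; everything else is as in the paper.
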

\begin{proof}
    Let $\widetilde{Q}_m[\Phi(f)]$ denote the right-hand side of \eqref{e:qm}. By definition of the Wick product, it is clear that $\widetilde{Q}_m[\Phi(f)]$ is in the $m$-th homogeneous chaos of the Gaussian Hilbert space generated by $f$. Since the products $\wick{f(x_1)\cdots f(x_m)}$ for $x_1,\dots,x_m\in D$ form a total set for this chaos, it is enough to show that
    \begin{equation}\label{e:correlated_wce}
        \E \big[\Phi(f)\wick{f(x_1)\cdots f(x_m)} \big] = \E \big[\widetilde{Q}_m[\Phi(f)]\wick{f(x_1)\cdots f(x_m)}\big]
    \end{equation}
    for all $m\in\N$ and all $x_1,\dots,x_m\in D$.

    Let $K^{1/2}$ be the symmetric square root of $K$ and let $(Z_x)_{x \in D}$ be a standard Gaussian random vector such that $f=K^{1/2}Z$. By linearity of the Wick product
    \begin{displaymath}
        \E \big[\Phi(f)\wick{f(x_1)\cdots f(x_m)} \big]=\sum_{y_1,\dots,y_m \in D }\left(\prod_{l=1}^m(K^{1/2})_{x_l,y_l}\right)\E[\Phi(f)\wick{Z_{y_1}\dots Z_{y_m}}].
    \end{displaymath}
    The term $\wick{Z_{y_1}\dots Z_{y_m}}$ can be identified as a product of univariate Hermite polynomials evaluated at $Z$ \cite[Theorem~3.21]{jan97}. Then using an integration by parts property of Hermite polynomials \cite[Definitions 1.2.2 and 1.4.1]{np12} and linearity once more
    \begin{align*}
        \E \big[\Phi(K^{1/2}Z)\wick{Z_{y_1}\dots Z_{y_m}} \big]&=\E\left[\frac{\partial}{\partial Z_{y_1}}\dots \frac{\partial}{\partial Z_{y_m}}\Phi(K^{1/2}Z)\right]\\
        &=\sum_{w_1,\dots w_m \in D}\left(\prod_{l=1}^m(K^{1/2})_{w_l,y_l}\right)\E\left[\partial_{w_1}\dots\partial_{w_m} \Phi(f)\right].
    \end{align*}
    Substituting this into the previous equation and using symmetry of $K^{1/2}$, we have
    \begin{equation}\label{e:correlated_wce2}
        \E \big[\Phi(f)\wick{f(x_1)\cdots f(x_m)} \big]=\sum_{w_1,\dots w_m \in D}\left(\prod_{l=1}^m K_{x_l,w_l}\right)\E\left[\partial_{w_1}\dots\partial_{w_m}\Phi(f)\right].
    \end{equation}
    Turning to the right hand side of \eqref{e:correlated_wce}, by the diagram formula (Theorem~\ref{t:df})
    \begin{align*}
        \E \big[\widetilde{Q}_m[\Phi(f)]\wick{f(x_1)\cdots f(x_m)} \big]&=\frac{1}{m!}\sum_{\sigma\in S_m}\sum_{w_1,\dots,w_m \in D}\left(\prod_{l=1}^m K_{x_l,w_{\sigma(l)}}\right) \E[\partial_{w_1}\dots\partial_{w_m} \Phi(f)].
    \end{align*}
    Since $w_1,\dots,w_m$ are summed over all indices, we see that the sum above takes the same value for each $\sigma\in S_m$ and hence the overall expression matches that of \eqref{e:correlated_wce2}. This verifies \eqref{e:correlated_wce} and so proves the proposition.
\end{proof}

\begin{remark}
\label{r:alt}
This chaos expansion can alternatively be expressed in terms of \textit{multvariate Hermite polynomials} for correlated Gaussian vectors \cite{rahman2017wiener}. For a centred Gaussian vector $X$ with non-degenerate covariance matrix $K$, and a multi-index $\alpha$, the multivariate Hermite polynomial of order $\alpha$ is defined as
\begin{equation} 
\label{e:mhp}
    H^\alpha_X(x) := H^\alpha_K(x) := (-1)^{|\alpha| }\frac{\partial^\alpha\varphi_X(x)}{\varphi_X(x))}
\end{equation}
where $\varphi_X(x)$ denotes the density of $X$. Then the $m$-th term of the chaos expansion for $\Phi(f)$ has the alternative expression
\begin{equation}\label{e:WienerChaosAlternative}
    Q_m[\Phi(f)] =\frac{1}{m!} \sum_{x_1,\dots,x_m \in D} \sum_{y_1,\dots,y_m \in D} \E[\partial^{\alpha^y} \Phi(f)]\left(\prod_{l=1}^m K_{x_l,y_l}\right) H_f^{\alpha^x}(f)
\end{equation}
where $\alpha^x$ denotes the multi-index corresponding to $x=(x_1,\dots,x_m)$. To derive this, one can write $f=K^{1/2}Z$, compute the chaos expansion of $\Phi$ in terms of the orthogonal variables $Z$ using ordinary (univariate) Hermite polynomials, and then use the chain rule to convert back to derivatives with respect to $f$. However \eqref{e:WienerChaosAlternative} seems to be of little use for our purposes: the inner product of multivariate Hermite polynomials has a complicated expression, and moreover at every point it depends on the covariance of the entire vector.
\end{remark}

\subsection{Chaos expansion for level-set functionals}
Since the cluster count is not a smooth functional, we will not be able to apply Proposition \ref{p:correlated_wce} directly. As such, we next provide an appropriate interpretation of the derivatives appearing in \eqref{e:qm} for functionals that depend only on the excursion set $\{f < \ell\}$. For this we introduce the notion of \textit{pivotal intensities}.

\smallskip

Let $\Xi: \mathcal{P}(D) \to \R$ be a function defined on the subsets of $D$, and let $\Xi(f) = \Xi(\{f > 0\})$. In particular, if $D = \Lambda_R$ and $\Xi$ is the sum of the number of components of $E$ and $D \setminus E$ that do not intersect $\partial D$, then $\Xi(f-\ell) = N_R(\ell)$. 

\subsubsection{Pivotal events and intensities}
For $y\in D$, the \textit{discrete derivative of $\Xi$ at $y$} is the function $d_y\Xi:\mathcal{P}(D)\to\R$ given by
\begin{displaymath}
    d_y\Xi(E)=\Xi(E\cup\{y\})-\Xi(E\setminus\{y\}).
\end{displaymath}
For $y \in D^m$, we let $\underline{y}$ denote the subvector of distinct elements $(y'_1,\ldots,y'_n)$, and write $d_{\underline{y}}$ to denote the iterated derivative $d_{y'_1}\dots d_{y'_n}$. Note that $d_{\underline{y}}$ only depends on $E \setminus \{y\}$, and does not depend on the order in which the derivatives are applied.

\begin{definition}(Pivotal events)
\label{d:piveve}
For $y \in D^m$ and $\sigma\in\R$, we say that a configuration $E\subseteq D$ is \emph{$\sigma$-pivotal at $\underline{y}$} if
\begin{displaymath}
d_{\underline{y}} \Xi(E)=\sigma.
\end{displaymath}
We define $\mathrm{Piv}(\underline{y},\sigma)$ to be the set of all such configurations. We emphasise that $\mathrm{Piv}(\underline{y},\sigma)$ is empty for all but finitely many values of $(\underline{y},\sigma)$. See Figure \ref{f:piv} for an illustration in the case of the cluster count.
\end{definition}

In a slight abuse of notation, given a function $g:D \to\R$ we write $d_{\underline{y}} \Xi(g)$ to abbreviate $d_{\underline{y}} \Xi(\{g > 0\} )$ and $g\in\mathrm{Piv}(\underline{y},\sigma)$ to mean that $\{g >0\} \in\mathrm{Piv}(\underline{y},\sigma)$. 

\begin{figure}[ht]
    \centering
    \hspace{0.5cm}
    \begin{subfigure}[t]{0.45\textwidth}
        \centering
        \begin{tikzpicture}
        \draw[black!20,step=0.5] (0.2,0.2) grid (4.3,4.3);
        \node[] at (4.5,3.5)  {$\mathbb{Z}^d$};
        \draw[gray, dashed] (0.35,0.35) rectangle (4.15,4.15);
        \draw[gray] (4.15,2.75) -- (4.8,2.5);
        \node[right] at (4.8,2.5) {$\Lambda_R$};
        \foreach \x in {(1,1),(1.5,1),(2,1),(1.5,1.5),(1.5,2),(2,2),(2.5,2),(2.5,2.5),(2,2.5),(1.5,3.5),(1.5,3)} {
        \fill \x circle (1.5pt);
        }
        \draw (1.5,2) circle (2pt);
        \draw (3,3.5) circle (2pt);
        \draw (2.5,2) circle (2pt);
        \node[above left] at (1.5,2) {$y_1$};
        \node[above left] at (3,3.5) {$y_2$};
        \node[below right] at (2.5,2) {$y_3$};
        \draw[thick] (1,1)--(1.5,1)--(2,1) (1.5,1)--(1.5,2)--(2.5,2)--(2.5,2.5)--(2,2.5)--(2,2);
        \draw[thick] (1.5,3.5)--(1.5,3);
    \end{tikzpicture}
    \end{subfigure}
    \;\;
    \hspace{-0.2cm}
    \begin{subfigure}[t]{0.45\textwidth}
        \centering
        \begin{tikzpicture}
        \draw[black!20,step=0.5] (0.2,0.2) grid (4.3,4.3);
        \node[] at (4.5,3.5)  {$\mathbb{Z}^d$};
        \draw[gray, dashed] (0.35,0.35) rectangle (4.15,4.15);
        \draw[gray] (4.15,2.75) -- (4.8,2.5);
        \node[right] at (4.8,2.5) {$\Lambda_R$};
        \foreach \x in {(1.5,1.5),(1.5,2),(1.5,2.5),(2,2.5),(2.5,2.5),(3,2.5),(2.5,3),(3,3),(3.5,2.5),(3.5,2),(3.5,1.5),(3,1.5),(2.5,1.5),(2.5,1),(2,1),(2,1.5),(3,1),(1.5,3.5),(2,3.5)} {
        \fill \x circle (1.5pt);
        }
          \draw (3.5,2.5) circle (2pt);
        \draw (1.5,1.5) circle (2pt);
        \node[above right] at (3.5,2.5) {$y_1$};
        \node[below left] at (1.5,1.5) {$y_2$};
        \draw[thick] (1.5,1.5)--(1.5,2.5)--(3.5,2.5)--(3.5,1.5)--(1.5,1.5) (2.5,2.5)--(2.5,3)--(3,3)--(3,2.5) (2,1.5)--(2,1)--(3,1)--(3,1.5) (2.5,1)--(2.5,1.5) ;
         \draw[thick] (1.5,3.5) -- (2,3.5);
    \end{tikzpicture}
    \end{subfigure} 
    \caption{Illustration of pivotal configurations for the cluster count in $\Lambda_R$ (i.e.\ $\Xi(E)$ is the sum of the number of components of $E$ and $\Lambda_R \setminus E$ that do not intersect $\partial \Lambda_R$, where $E$ are the black vertices). Left: The configuration is $(-1)$-pivotal at $y_1$, $1$-pivotal at $y_2$, and not pivotal at $y_3$. Right: The configuration is $1$-pivotal at $(y_1,y_2)$.}
   \label{f:piv}
\end{figure}
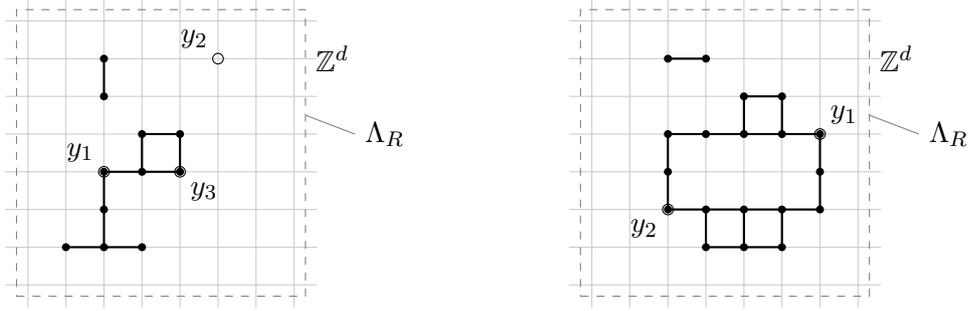

We next define the pivotal intensities. These will depend on an arbitrary fixed vector $\nu:D\to\R$; we will mostly consider the case that $\nu\equiv\ell$. For $y = (y_1,\ldots,y_m) \in D^m$, let $\alpha^y \in \N_0^D$ be its associated multi-index, and let $\tilde{\alpha}^y$ be defined by $\tilde{\alpha}^y_i=\max\{\alpha^y_i-1,0\}$. We extend our definition of $d_y\Xi(f-\nu)$ to include repeated points:
    \begin{equation}
    \label{e:itex}
d_y\Xi(f-\nu) := d_{\underline{y}} \Xi(f-\nu) H^{\tilde{\alpha}^y}_f(f)  
    \end{equation}
where $H$ is the multivariate Hermite polynomial defined in \eqref{e:mhp}. This coincides with the previous definition since if $y_1,\ldots,y_m$ are distinct then $\tilde{\alpha}^y= 0$ and $H^{\tilde{\alpha}^y}_f = 1$.

\begin{definition}[Pivotal intensities]
\label{d:pivint}
For $y \in D^m$, the \textit{pivotal intensity at $y$ (with respect to level $\nu$)} is
 \begin{align}
 \label{e:pivint}
\nonumber P(y):=P(\nu;y):=&\E\left[d_y \Xi(f-\nu) \middle|f(\underline{y})=\nu(\underline{y})\right]\varphi_{f(\underline{y})}(\nu(\underline{y})) \\
= &\sum_{\sigma \neq 0}\sigma \E\left[ \id_{f-\nu\in\mathrm{Piv}(\underline{y},\sigma)}  H^{\tilde{\alpha}^y}_f(f) \middle|f(\underline{y})=\nu(\underline{y}) \right] \varphi_{f(\underline{y})}(\nu(\underline{y}))
\end{align}
 where $\varphi_{f(\underline{y})}$ denotes the density of $f(\underline{y})$. When $\nu\equiv\ell$, we denote this intensity as $P(\ell;y)$.
\end{definition}

We emphasise that in \eqref{e:pivint} the Hermite polynomial $ H^{\tilde{\alpha}^y}_f$ is defined with respect to (the covariance matrix of) the unconditioned field $f$, but its argument has the law of the conditioned field $f | f(\underline{y})=\nu$.

\begin{remark}
\label{r:pivint}
Since $H^{\tilde{\alpha}^y}_f$ is a polynomial in $\lvert D\rvert$ variables, it can be challenging to directly analyse its limiting behaviour as the domain size increases. A useful equivalent expression for the pivotal intensity is 
\begin{equation}
\label{e:pivalt}
P(y)  =  \E\left[ d_{\underline{y}}\Xi(f-\nu)  H^{\tilde{\alpha}^y}_{f( \underline{y} ) | f(y^c) = 0 } \big(f(\underline{y})-\E[f(\underline{y})|f(y^c)]\big) \middle|f(\underline{y})=\nu(\underline{y}) \right]\varphi_{f(\underline{y})}(\nu(\underline{y}))
\end{equation}
where $y^c= D\setminus \{y\}$. This can be further simplified by observing that, by Gaussian regression,
\begin{displaymath}
    \E[f(\underline{y})|f(y^c)]=\Sigma_{\underline{y} \to y^c}\Sigma_{y^c}^{-1}f(y^c)
\end{displaymath}
where $\Sigma_{X \to Y}$ denotes the covariance matrix between $X$ and $Y$. Note that $H^{\tilde{\alpha}^y}_{f( \underline{y} ) | f(y^c) = 0 }$ is a polynomial in $\lvert\underline{y}\rvert$ variables (although its argument in \eqref{e:pivalt} depends on $f$ everywhere). The equivalence of \eqref{e:pivint}--\eqref{e:pivalt} is since, again by Gaussian regression,
\begin{align*}
   H^{\tilde{\alpha}^y}_{f(\underline{y}), f(y^c)}(\underline{x},x^c)  \varphi_{f(\underline{y}),f(y^c)}(\underline{x},x^c)   & = (-1)^{\lvert\tilde{\alpha}^y\rvert}\partial^{\tilde{\alpha}^y} \varphi_{f(\underline{y}),f(y^c)}(\underline{x},x^c)   \\ 
   & =  (-1)^{\lvert\tilde{\alpha}^y\rvert} \partial^{\tilde{\alpha}^y} \varphi_{f(\underline{y})|f(y^c) = 0}(\underline{x} - \Sigma_{\underline{y} \to y^c}\Sigma_{y^c}^{-1} x^c) \varphi_{f(y^c)}(x^c)   \\
   & =  H^{\tilde{\alpha}^y}_{f( \underline{y} ) | f(y^c) = 0 }(\underline{x} - \Sigma_{\underline{y} \to y^c}\Sigma_{y^c}^{-1} x^c)\varphi_{f(\underline{y}),f(y^c)}(\underline{x}, x^c)  .  
\end{align*}

\end{remark}

Our interest in pivotal intensities lies in their interpretation as the derivatives of $\E[\Xi(f-\ell)]$. We prove the following result later in the section:

\begin{proposition}\label{p:derpivint}
For $y_1,\ldots,y_m \in D$, 
\[ P(\nu;y_1,\ldots,y_m) = \partial_{y_1} \cdots \partial_{y_m} \E[\Xi(f-\nu)]    \] 
\end{proposition}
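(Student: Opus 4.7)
Interpret $\partial_{y_i}$ in the same spirit as in Proposition~\ref{p:correlated_wce}, namely as differentiation of $\E[\Xi(f-\nu)]$ with respect to a mean shift of $f$ at the coordinate $y_i$ (equivalently, up to a sign, with respect to $\nu(y_i)$). The Gaussian shift identity
\[\partial^\alpha_m\E\big[\Phi(f+m)\big]\big|_{m=0}=\E\big[\Phi(f)\,H^\alpha_f(f)\big],\]
valid for any bounded measurable $\Phi$ (proved by differentiating under the integral $\int\Phi(g)\varphi_f(g-m)\,dg$ and applying the defining relation~\eqref{e:mhp} of $H^\alpha_f$), then reduces the statement, upon taking $\Phi(g)=\Xi(g-\nu)$ and $\alpha=\alpha^y$, to the identity
\[\E\big[\Xi(f-\nu)\,H^{\alpha^y}_f(f)\big]=P(\nu;y).\]

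To prove this identity I would use \eqref{e:mhp} to rewrite the left side as $(-1)^{|\alpha^y|}\int\Xi(g-\nu)\,\partial^{\alpha^y}\varphi_f(g)\,dg$ and integrate by parts one coordinate derivative at a time. Let $\underline y=(y'_1,\dots,y'_n)$ denote the distinct entries of $y$, with multiplicities $a_1,\dots,a_n$. For each $y'_j$, the \emph{first} integration by parts in the $g(y'_j)$-direction is the only non-trivial one: $g(y'_j)\mapsto\Xi(g-\nu)$ is piecewise constant with a single jump of size $d_{y'_j}\Xi(\{g>\nu\}\setminus\{y'_j\})$ at $g(y'_j)=\nu(y'_j)$, so its distributional derivative contributes a Dirac mass of exactly that weight. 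After these $n$ initial steps the surviving functional $d_{\underline y}\Xi(g-\nu)$ depends on $g$ only through its restriction to $y^c=D\setminus\underline y$, so the remaining $a_j-1$ derivatives per coordinate commute past it and stay on $\varphi_f$.

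The output of this procedure is
\[\int_{\R^{y^c}}d_{\underline y}\Xi(g-\nu)\Big[\prod_{j=1}^n(-1)^{a_j-1}\partial^{a_j-1}_{g(y'_j)}\varphi_f(g)\Big]_{g(\underline y)=\nu(\underline y)}\,dg(y^c),\]
and by \eqref{e:mhp} the bracketed factor equals $H^{\tilde\alpha^y}_f(g)\varphi_f(g)\big|_{g(\underline y)=\nu(\underline y)}$. Marginalising in $g(y^c)$ conditionally on $f(\underline y)=\nu(\underline y)$ identifies this with
\[\E\big[d_{\underline y}\Xi(f-\nu)\,H^{\tilde\alpha^y}_f(f)\,\big|\,f(\underline y)=\nu(\underline y)\big]\,\varphi_{f(\underline y)}(\nu(\underline y)),\]
which is exactly $P(\nu;y)$ by Definition~\ref{d:pivint}.

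The main technical point is the lack of smoothness of $\Xi$, which must be addressed when performing the integration by parts in the second paragraph. Since $D$ is finite, $\Xi(g-\nu)$ is a bounded linear combination of products of indicators of coordinate half-spaces, so its distributional derivatives are precisely the Dirac masses appearing above and the manipulations are rigorous; alternatively, one may approximate $\Xi$ by smooth mollifications $\Xi_\eps$ (replacing each $\id_{g(x)>\nu(x)}$ by a smooth sigmoid), apply Proposition~\ref{p:correlated_wce} to $\Xi_\eps(\cdot-\nu)$, and pass to the limit $\eps\to 0$ by dominated convergence. All other steps are routine.
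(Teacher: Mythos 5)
Your proof is correct, but it takes a genuinely different route from the paper's. The paper first establishes a first-order Gateaux-derivative formula (Lemma~\ref{l:pivotal_derivative}) by a direct probabilistic argument — decomposing the event $\{\Xi(f-\nu+tg)\neq\Xi(f-\nu)\}$ into single-point level-crossing events and showing multi-point crossings contribute $o(t)$ — then handles distinct points by induction under the conditional law $f(\underline{y}_{m-1})=\nu(\underline{y}_{m-1})$, and treats repeated points by a separate computation that passes $\partial^{\tilde\alpha^y}$ through the Gaussian integral via the Leibniz rule. You instead convert all $m$ coordinate mean-shift derivatives at once into the Hermite-weighted expectation $\E[\Xi(f-\nu)H^{\alpha^y}_f(f)]$ via the Gaussian shift identity, and then perform a distributional integration by parts in which the jump discontinuities of the piecewise-constant functional $\Xi$ produce exactly the discrete derivatives $d_{\underline y}\Xi$ (one Dirac mass per distinct coordinate) while the surplus derivatives remain on $\varphi_f$ and assemble into $H^{\tilde\alpha^y}_f$. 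This treats distinct and repeated points uniformly in a single computation, and the representation $\E[\Xi(f+m-\nu)]=\int\Xi(g-\nu)\varphi_f(g-m)\,dg$ gives the smoothness of the shifted expectation for free, so existence of the mixed partials never needs a separate mollification step. The paper's route buys a derivative formula valid for an arbitrary perturbation direction $g$ (not only coordinate directions) and an argument whose probabilistic core does not use Gaussianity of the density, but for the statement at hand your approach is self-contained and arguably cleaner. Two points worth making explicit when writing it up: the boundary terms in each integration by parts vanish because $\partial^\beta\varphi_f\to0$ at infinity and $\Xi$ is bounded; and the decomposition $\partial^{\alpha^y}=\partial^{\tilde\alpha^y}\circ\partial^{\id_{\underline y}}$ should be performed \emph{before} integrating by parts, so that only one derivative per distinct coordinate is ever moved onto $\Xi$ (a second one would land on a Dirac mass). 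Your final sign bookkeeping, $(-1)^{|\alpha^y|}(-1)^{n}=(-1)^{|\tilde\alpha^y|}$, checks out, and the resulting expression matches the displayed identity for $P(\nu;y)$ that the paper itself derives at the end of its proof.
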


For later use we state a bound on the pivotal intensities that is uniform in the size of the domain $D$. Let $\| d \Xi\|_\infty := \max_{y, E \subseteq D} |d_y \Xi(E)|$, and let $\lambda_{\textrm{min}} > 0$ denote the smallest eigenvalue of $K$. We denote by $|\underline{y}|$ the cardinality of $\underline{y}$ (viewed as a subset of $D$).

\begin{lemma}
\label{l:bounds1} 
For $y_1,\ldots,y_m \in D$,
\[  \lvert d_{\underline{y}}\Xi(E)\rvert\leq 2^{m-1} \| d \Xi\|_\infty  \quad \text{and} \quad | P(y_1,\ldots,y_m) | \le \| d \Xi\|_\infty e^{c m} \sqrt{m!}  \]
where $c > 0$ depends only on $\lambda_{\mathrm{min}}$, $|\underline{y}|$, $\|\nu\|_\infty$, and $\|K\|_\infty$.
\end{lemma}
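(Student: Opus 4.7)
\textbf{Part 1} follows by induction on $n := |\underline{y}|$, the number of distinct coordinates. The base case $n=1$ is the definition of $\|d\Xi\|_\infty$. For $n \ge 2$, fix any $y^* \in \underline{y}$ and observe that
\begin{displaymath}
    d_{\underline{y}}\Xi(E) = d_{\underline{y}\setminus\{y^*\}}\Xi(E\cup\{y^*\}) - d_{\underline{y}\setminus\{y^*\}}\Xi(E\setminus\{y^*\}).
\end{displaymath}
The inductive hypothesis bounds each term on the right by $2^{n-2}\|d\Xi\|_\infty$, yielding $|d_{\underline{y}}\Xi(E)| \le 2^{n-1}\|d\Xi\|_\infty \le 2^{m-1}\|d\Xi\|_\infty$ since $n\le m$.

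For \textbf{Part 2}, I would work from the alternative expression \eqref{e:pivalt} in Remark~\ref{r:pivint}. Writing $h := f(\underline{y}) - \E[f(\underline{y})|f(y^c)]$, which is unconditionally $N(0,\Sigma_{\underline{y}|y^c})$ and independent of $f(y^c)$, conditional Cauchy--Schwarz combined with Part~1 gives
\begin{displaymath}
    |P(y)| \le 2^{|\underline{y}|-1}\|d\Xi\|_\infty \cdot \bigl(\E\bigl[H^{\tilde{\alpha}^y}_{f(\underline{y})|f(y^c)=0}(h)^2 \bigm| f(\underline{y})=\nu(\underline{y})\bigr]\bigr)^{1/2}\cdot \varphi_{f(\underline{y})}(\nu(\underline{y})).
\end{displaymath}
The density factor is at most $(2\pi\lambda_{\min})^{-|\underline{y}|/2}$ since the eigenvalues of $\Sigma_{\underline{y}}$ are at least $\lambda_{\min}$. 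By Gaussian regression the conditional law of $h$ given $f(\underline{y})=\nu(\underline{y})$ is Gaussian with mean and covariance bounded in operator norm in terms of $\|\nu\|_\infty$, $\|K\|_\infty$, $\lambda_{\min}$, and $|\underline{y}|$. The Hermite polynomial $H^{\tilde{\alpha}^y}_{\Sigma_{\underline{y}|y^c}}$ has total degree $|\tilde{\alpha}^y| = m - |\underline{y}|$ in $|\underline{y}|$ variables, so invoking a standard $L^2$ bound for multivariate Hermite polynomials (to be established in Appendix~\ref{a:mhp}) of the form $\E[H^{\gamma}_{\Sigma}(X)^2]^{1/2} \le C^{|\gamma|}\sqrt{|\gamma|!}$, uniform over Gaussian vectors $X$ with parameters bounded as above, and absorbing the $|\underline{y}|$-dependent factors together with the estimate $(m-|\underline{y}|)! \le m!$ into $e^{cm}$ yields the claim.

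The main technical obstacle is establishing the Hermite $L^2$ estimate with the sharp $\sqrt{|\gamma|!}$ growth. The natural orthogonality of $\{H^\alpha_\Sigma\}_\alpha$ under $\varphi_\Sigma$ yields precisely this order when $X$ has the natural law $\varphi_\Sigma$; under the conditioning however the law of $h$ is a shifted (and reweighted) Gaussian, not $\varphi_{\Sigma_{\underline{y}|y^c}}$. One resolves this by centring $h$ about its conditional mean and expanding the Hermite polynomial via a multivariate translation identity---$H^{\gamma}_{\Sigma}(x+\mu)$ is a degree-$|\gamma|$ polynomial in $x$ whose coefficients depend polynomially on $\mu$---after which the bound reduces to a Minkowski inequality across the resulting $2^{|\gamma|}$ terms combined with orthogonality at the centred measure, the bounded shift $\mu$ contributing only a bounded constant per degree.
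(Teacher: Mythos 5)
Your Part 1 is the paper's argument verbatim (the paper phrases the induction as $a_{m+1}\le 2a_m$ for the supremum $a_m$ over distinct points and sets, but it is the same triangle-inequality step). Your reduction in Part 2 also matches the paper: start from the expression in Remark~\ref{r:pivint}, pull out $d_{\underline{y}}\Xi$ using Part 1, bound the density factor by $\lambda_{\mathrm{min}}^{-|\underline{y}|/2}$ via Lemma~\ref{l:lambdamin}, and reduce everything to a moment bound on the conditioned Hermite polynomial (the paper's \eqref{e:bounds11}, stated there for the first absolute moment rather than the second, which is equivalent up to a factor $e^{cm}$).

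The gap is in your sketch of that moment bound. Under the conditioning $f(\underline{y})=\nu(\underline{y})$, the vector $h=f(\underline{y})-\Sigma_{\underline{y}\to y^c}\Sigma_{y^c}^{-1}f(y^c)$ equals $\nu(\underline{y})-\Sigma_{\underline{y}\to y^c}\Sigma_{y^c}^{-1}f(y^c)$, so its conditional law is Gaussian with covariance $\Sigma_{\underline{y}\to y^c}\Sigma_{y^c}^{-1}\mathrm{Cov}[f(y^c)\mid f(\underline{y})=\nu(\underline{y})]\Sigma_{y^c}^{-1}\Sigma_{y^c\to\underline{y}}$, which is \emph{not} $\Sigma_{\underline{y}|y^c}$ --- it can even vanish (if $f(\underline{y})$ and $f(y^c)$ are independent, $h$ is conditionally the constant $\nu(\underline{y})$). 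Hence after centring, the polynomials $H^{\beta}_{f(\underline{y})|f(y^c)=0}$ are \emph{not} orthogonal with respect to the centred conditional law, and the ``translation identity plus orthogonality at the centred measure'' computation does not close as written; you would need an additional comparison step (for instance, that the conditional covariance of $h$ is dominated by $\Sigma_{\underline{y}}$, hence by $|\underline{y}|\,\|K\|_\infty$, together with a moment comparison for degree-$|\gamma|$ polynomials under Gaussians with uniformly bounded mean and covariance). The paper avoids this issue entirely: Proposition~\ref{p:mhpmain1} gives the \emph{pointwise} bound $\lvert H^\alpha_X(x)\rvert\leq \mathrm{dim}(X)^{\lvert\alpha\rvert/2}\sqrt{\lvert\alpha\rvert!}\,e^{c\sqrt{\lvert\alpha\rvert}(\|x\|_2+1)}$ (obtained by diagonalising and using the classical univariate estimate $\lvert H_n(y)\rvert\leq\sqrt{n!}e^{\sqrt{n}\lvert y\rvert}$), valid for arbitrary arguments, so the moment bound reduces to controlling $\E[e^{c\sqrt{m}\|Y\|_2}]$ for the conditioned vector $Y$, i.e.\ to bounding $\lvert\E[Y_i]\rvert$ and $\Var[Y_i]$, which is done by Gaussian regression and positive definiteness of the Schur complement. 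I would adopt that pointwise route: it sidesteps exactly the measure mismatch that breaks your orthogonality step.
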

\begin{proof}
Defining $a_m$ as the supremum of $\lvert d_y\Xi(E)\rvert$ over distinct $y_1,\dots,y_m\in D$ and $E \subseteq D$, by the triangle inequality
    \begin{displaymath}
        a_{m+1}\leq\sup_{y_1,\dots,y_{m+1}}\sup_{E} \ \ \lvert d_{y_1}\dots d_{y_m}\Xi(E\cup\{y_{m+1}\})\rvert+\lvert d_{y_1}\dots d_{y_m}\Xi(E\setminus\{y_{m+1}\})\rvert\leq 2a_m,
    \end{displaymath}
    and iterating we obtain $\lvert d_{\underline{y}}\Xi(E)\rvert\leq 2^{m-1} \| d \Xi\|_\infty$. We also have
    \begin{equation}\label{e:bounds10}
        \varphi_{f(\underline{y})}(\nu)\leq \left(\det\mathrm{Cov}[f(\underline{y})]\right)^{-1/2}\leq \lambda_{\textrm{min}}^{-m/2} ,
    \end{equation}
    where we used that the smallest eigenvalue of $\mathrm{Cov}[f(\underline{y})]$ is at least $\lambda_{\textrm{min}}$ (see Lemma \ref{l:lambdamin}). Combining these observations with Remark \ref{r:pivint}, it remains to show that
    \begin{equation}
    \label{e:bounds11}
        \E\left[ \big| H^{\tilde{\alpha}^y}_{f( \underline{y} ) | f(y^c) = 0 } \big(f(\underline{y})-\Sigma_{\underline{y} \to y^c}\Sigma_{y^c}^{-1}f(y^c)\big)  \big| \middle|f(\underline{y})=\nu(\underline{y}) \right] \le e^{c_1 m} \sqrt{m!}
    \end{equation}
    where $c_1 > 0$ depends only on $\lambda_{\textrm{min}}$, $|\underline{y}|$, $\|\nu\|_\infty$, and $\|K\|_\infty$.

     Abbreviate $k = |\underline{y}|$. Applying the pointwise bound on Hermite polynomials in Proposition~\ref{p:mhpmain1}, and since $|\tilde{\alpha}^y|\le m$, the left-hand side of \eqref{e:bounds11} is bounded by
    \[  k^{|m|/2} \sqrt{m!}  \times \E \Big[ e^{c_2 \sqrt{|m|} \big( \|Y\|_2 + 1 \big) } \Big]  \]
    where $Y$ is the $k$-dimensional Gaussian vector $f(\underline{y})-\Sigma_{\underline{y} \to y^c}\Sigma_{y^c}^{-1}f(y^c)$ conditioned on $\{f(\underline{y})=\nu(\underline{y})\}$, and $c_2 > 0$ depends only on $\lambda_{\textrm{min}}$. Since we have, for any $s > 0$
   \[ \E[ e^{s \|Y\|_2 }] \le \E[ e^{s k \max_i |Y_i|} ] \le \sum_i  \E[ e^{s k |Y_i|} ] \le 2 k \max_i e^{ s k \lvert\E[Y_i]\rvert + (s^2 k^2 / 2) \textrm{Var}[Y_i] }  \]
 it remains to bound $\lvert\E[Y_i]\rvert$ and $\textrm{Var}[Y_i]$ by constants.
 
By Gaussian regression
\begin{align*}
    \E[Y_i]= \nu_i - \Sigma_{\underline{y}\to y^c }\Sigma^{-1}_{y^c}\Sigma_{\underline{y}\to y^c}^T\Sigma_{\underline{y}}^{-1}\nu(\underline{y}).
\end{align*}
Moreover, by positive definiteness of the Schur complement of a covariance matrix, the product of the first three matrices above is dominated by $\Sigma_{\underline{y}}$, and hence the above expression is bounded by a constant depending only on $\lambda_{\mathrm{min}}$, $|\underline{y}|$, $\|\nu\|_\infty$, and $\|K\|_\infty$. Finally, again by Gaussian regression and Lemma~\ref{l:lambdamin}
\[ \textrm{Var}[Y_i]  \le \textrm{Var}[\Sigma_{\underline{y}_i \to y^c}\Sigma_{y^c}^{-1}f(y^c)] = \Sigma_{\underline{y}_i \to y^c}\Sigma_{y^c}^{-1}\Sigma_{\underline{y}_i \to y^c}^T \le \lvert\underline{y}\rvert\|K\|_\infty^2\lambda_{\mathrm{min}}^{-1} \]
which completes the proof. 
\end{proof}

\subsubsection{Chaos expansion}

We can now formulate the chaos expansion for a level-set functional:

\begin{theorem}[Chaos expansion for level-set functionals]\label{t:cluster_wce}
  The functional $\Xi(f-\nu)$ has the chaos expansion
  \[   \Xi(f-\nu)=\E[\Xi(f-\nu)]+\sum_{m=1}^\infty Q_m[\Xi(f-\nu) ] \]
  where convergence occurs in $L^2$,
    \begin{displaymath}
        Q_m[\Xi(f-\nu)]= \frac{1}{m!}\sum_{x_1,\dots,x_m\in D}\wick{f(x_1)\cdots f(x_m)} P(\nu;x_1,\ldots,x_m),
    \end{displaymath}
    and $P(\nu;x_1,\ldots,x_m)$ is as in Definition \ref{d:pivint}.
\end{theorem}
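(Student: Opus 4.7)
The plan is to derive the statement from Proposition~\ref{p:correlated_wce} by a smoothing argument. Since $\Xi:\mathcal{P}(D)\to\R$ takes only finitely many values, $\Xi(f-\nu)$ is bounded, so the $L^2$ convergence of the chaos expansion $\Xi(f-\nu)=\E[\Xi(f-\nu)]+\sum_{m\ge 1}Q_m[\Xi(f-\nu)]$ is automatic. Since the Wick products $\wick{f(x_1)\cdots f(x_m)}$ for $x_1,\dots,x_m\in D$ span the $m$-th homogeneous chaos, mimicking the proof of Proposition~\ref{p:correlated_wce} it is enough to verify, for every $m\ge 0$ and every $x_1,\dots,x_m\in D$, the identity
\[
 \E\bigl[\Xi(f-\nu)\wick{f(x_1)\cdots f(x_m)}\bigr] \;=\; \sum_{w_1,\dots,w_m\in D}\Bigl(\prod_{l=1}^m K_{x_l,w_l}\Bigr) P(\nu;w_1,\dots,w_m). \quad (\star)
\]

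To prove $(\star)$, I would introduce a smooth mollification of the set indicator $\mathbf{1}_{\{f>\nu\}}$. Fix a smooth non-decreasing function $\eta_\eps^+:\R\to[0,1]$ approximating the Heaviside function with derivative supported in $[-\eps,\eps]$, set $\eta_\eps^-:=1-\eta_\eps^+$, and define the smooth proxy
\[
 \Xi_\eps(f):=\sum_{E\subseteq D}\Xi(E)\prod_{x\in E}\eta_\eps^+(f(x)-\nu(x))\prod_{x\notin E}\eta_\eps^-(f(x)-\nu(x)).
\]
Each $\Xi_\eps$ is smooth with bounded derivatives of all orders, so the identity~\eqref{e:correlated_wce2} derived in the proof of Proposition~\ref{p:correlated_wce} yields
\[
 \E\bigl[\Xi_\eps(f)\wick{f(x_1)\cdots f(x_m)}\bigr] \;=\; \sum_{w_1,\dots,w_m}\Bigl(\prod_{l=1}^m K_{x_l,w_l}\Bigr)\E\bigl[\partial_{w_1}\cdots\partial_{w_m}\Xi_\eps(f)\bigr].
\]
Since $f$ is non-degenerate, $\P[f(x)=\nu(x)]=0$ for every $x$, so $\Xi_\eps(f)\to\Xi(f-\nu)$ almost surely and, by boundedness, in $L^2$. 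In particular the left-hand side above converges to $\E[\Xi(f-\nu)\wick{f(x_1)\cdots f(x_m)}]$, so $(\star)$ reduces to the pointwise limit $\lim_{\eps\to 0}\E[\partial_{w_1}\cdots\partial_{w_m}\Xi_\eps(f)]=P(\nu;w_1,\dots,w_m)$ for each fixed choice of $w_1,\dots,w_m\in D$.

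What remains is this convergence of derivatives. When the $w_i$ are all distinct, the product rule together with $(\eta_\eps^-)'=-(\eta_\eps^+)'$ causes the sum over $E$ to collapse under the pairing $E\leftrightarrow E\triangle\{w_i\}$, and the iterated derivative factorises as $\prod_i(\eta_\eps^+)'(f(w_i)-\nu(w_i))$ times a smooth proxy for $d_{w_1}\cdots d_{w_m}\Xi(f-\nu)$ on $D\setminus\underline{w}$; in the $\eps\to 0$ limit the narrowing bumps implement simultaneous conditioning on $\{f(w_i)=\nu(w_i)\}$ together with the joint density factor $\varphi_{f(\underline{w})}(\nu(\underline{w}))$, recovering the conditional-expectation form of $P(\nu;w_1,\dots,w_m)$ from Definition~\ref{d:pivint}. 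The main obstacle is the repeated-index case: if a point $w$ appears with multiplicity $k\ge 2$, the iterated derivative produces a factor $(\eta_\eps^+)^{(k)}(f(w)-\nu(w))$ whose pointwise limit is ill-defined. To handle this I would integrate $k-1$ derivatives by parts off $\eta_\eps^+$ and onto the conditional density $\varphi_{f(\underline{y})\mid f(y^c)}$; by the defining relation~\eqref{e:mhp} these derivatives of the density generate precisely the multivariate Hermite polynomial factor $H^{\tilde{\alpha}^y}_{f(\underline{y})\mid f(y^c)=0}$ appearing in the equivalent expression~\eqref{e:pivalt} for $P(\nu;y)$. Carrying out this distributional argument while tracking the Gaussian regression identities of Remark~\ref{r:pivint} constitutes the bulk of the technical work.
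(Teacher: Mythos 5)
Your proposal is correct and follows essentially the same route as the paper: a smooth approximation of the level-set functional is fed into Proposition~\ref{p:correlated_wce}, and everything reduces to identifying the limit of the derivative-expectations with the pivotal intensities of Definition~\ref{d:pivint}. The paper uses a Gaussian mollifier ($\Xi^t: s\mapsto\P[s+tZ\in Q]$, Lemma~\ref{l:derivatives_OU}) in place of your product of sigmoids, and the ``bulk of the technical work'' you defer --- the delta-approximation for distinct points and the integration by parts onto the Gaussian density that generates the factor $H^{\tilde{\alpha}^y}$ for repeated points --- is precisely the content of the paper's Proposition~\ref{p:derpivint} and its proof.
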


Comparing to Proposition \ref{p:correlated_wce}, and given Proposition \ref{p:derpivint}, the proof of Theorem \ref{t:cluster_wce} essentially consists of justifying the formal exchange of expectation and derivatives
\[\E[ \partial_{y_1} \cdots \partial_{y_m}  \Xi(f-\nu)] = \partial_{y_1} \cdots \partial_{y_m} \E[   \Xi(f-\nu)]  \]
for level-set functionals. We do this by working with a suitable smooth approximation:

\begin{lemma}\label{l:derivatives_OU}
    There exists a collection $(\Xi^t)_{t>0}$ of smooth functions $\Xi^t : \R^D \to \R$ such that $\Xi^t(f-\nu)\to \Xi(f-\nu)$ in $L^2$ as $t \to 0$, and, for any multi-index $\alpha\in\N_0^D$,
    \begin{displaymath}
        \E[\partial^\alpha  \Xi^t(f-\nu)]=\partial^\alpha\E[ \Xi^t(f-\nu)]<\infty \, ,  \quad t > 0,
    \end{displaymath}
and
    \begin{displaymath}
        \lim_{t \to 0}\partial^\alpha\E[ \Xi^t(f-\nu)]=\partial^\alpha\E[\Xi(f-\nu)].
    \end{displaymath}
\end{lemma}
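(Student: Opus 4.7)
The plan is to smooth $\Xi$ via the Ornstein--Uhlenbeck semigroup. Let $W$ be a standard Gaussian vector on $\R^D$ independent of $f$, and define
\[ \Xi^t(g) := \E_W\!\left[\Xi\!\left(e^{-t}g + \sqrt{1-e^{-2t}}\,W\right)\right], \qquad t > 0. \]
Because $\Xi$ is defined on the finite set $\mathcal{P}(D)$ it is bounded, and a standard change of variables (shifting the $g$-dependence onto the smooth density of the perturbation $W$) shows that $\Xi^t \in C^\infty(\R^D)$ with $\|\partial^\alpha \Xi^t\|_\infty \leq C_{t,\alpha}\|\Xi\|_\infty$ for every multi-index $\alpha$.

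For the $L^2$ convergence, observe that $\Xi(g)$ depends only on $\{g > 0\}$, so $\Xi$ is continuous at every $g \in \R^D$ with no zero coordinate. Since $f-\nu$ is a non-degenerate Gaussian it a.s.\ avoids the coordinate hyperplanes, and hence $\Xi^t(f-\nu) \to \Xi(f-\nu)$ almost surely as $t \to 0$. Combined with the uniform bound $|\Xi^t| \leq \|\Xi\|_\infty$, bounded convergence yields convergence in $L^2$.

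For the derivative--expectation exchange at fixed $t > 0$, I would write
\[ \E[\Xi^t(f-\nu)] = \int_{\R^D}\Xi^t(x-\nu)\varphi_f(x)\,dx \]
and differentiate repeatedly in $\nu$ under the integral. This is legitimate because each $\partial^\alpha \Xi^t$ is uniformly bounded and $\varphi_f$ is integrable, producing exactly the claimed identity (up to a sign convention fixed by whether $\partial^\alpha$ is read on $\nu$ or on the argument of $\Xi^t$).

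Finally, for the convergence of derivatives as $t \to 0$, substitute $u = x - \nu$ to transfer the derivative onto the Gaussian density:
\[ \partial^\alpha \E[\Xi^t(f-\nu)] = \int_{\R^D}\Xi^t(u)(\partial^\alpha\varphi_f)(u+\nu)\,du, \]
and observe that the same identity with $\Xi$ in place of $\Xi^t$ is valid by the analogous differentiation-under-the-integral argument, now justified by boundedness of $\Xi$ and integrability of $\partial^\alpha\varphi_f = (-1)^{|\alpha|}H^\alpha_f\,\varphi_f$ (a multivariate Hermite polynomial times a Gaussian). Dominated convergence then delivers the limit, using Lebesgue-a.e.\ convergence $\Xi^t \to \Xi$ (again because the discontinuity set of $\Xi$ is contained in a union of coordinate hyperplanes and hence Lebesgue-null) with uniform dominator $\|\Xi\|_\infty\,|\partial^\alpha\varphi_f(\cdot+\nu)|$. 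The main delicate point is precisely this last step: one must upgrade a.s.\ convergence under the law of $f-\nu$ to Lebesgue-a.e.\ pointwise convergence of $\Xi^t$ on all of $\R^D$, so that dominated convergence applies against the (potentially sign-changing) weight $\partial^\alpha\varphi_f$; the observation that $\Xi$'s discontinuity set is Lebesgue-null is exactly what makes this work.
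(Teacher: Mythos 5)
Your proof is correct and follows essentially the same route as the paper: the paper also smooths $\Xi$ by adding independent Gaussian noise (defining $\Xi^t: s \mapsto \P[s+tZ\in Q]$ after reducing to quadrant indicators), which coincides with your Ornstein--Uhlenbeck mollification up to the harmless rescaling $e^{-t}$ (irrelevant since $\Xi$ depends only on the sign pattern of its argument). The paper leaves the verification as ``the claims follow,'' and the details you supply --- pushing $\partial^\alpha$ onto the Gaussian density and using that the discontinuity set of $\Xi$ is contained in the coordinate hyperplanes, hence Lebesgue-null --- are exactly the intended ones.
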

\begin{proof}
Since $\Xi(f)$ is a finite linear combination of indicators of quadrants, it is enough to prove the lemma for one quadrant, e.g., the case $\Xi(f) = \id_{f \in Q}$, where $Q = \cap_{x\in D}\{s \in \R^D : s(x) < \nu(x) \}$. Then we can define, e.g.\, $\Xi^t: s \mapsto \P[ s + t Z \in Q] $ where $Z = (Z_x)_{x\in D}$ is an i.i.d.\ Gaussian vector, and the claims follow.
\end{proof}

\begin{proof}[Proof of Theorem \ref{t:cluster_wce}]
Recall that $Q_m$ denotes projection onto the $m$-th homogeneous chaos (of the Gaussian Hilbert space generated by $f$). Fixing $t>0$, we apply Proposition~\ref{p:correlated_wce} to $\Xi^t(f-\nu)$ and find that
    \begin{displaymath}
    Q_m[\Xi^t(f-\nu)]=\frac{1}{m!}\sum_{x_1,\dots,x_m\in D}\wick{f(x_1)\cdots f(x_m)}\E[\partial_{x_1}\dots\partial_{x_m} \Xi^t(f-\nu)].
    \end{displaymath}
    Then by Lemma~\ref{l:derivatives_OU}
    \begin{align*}
     Q_m[\Xi^t(f-\nu)]\to\frac{1}{m!}\sum_{x_1,\dots,x_m\in D}\wick{f(x_1)\cdots f(x_m)}\partial_{x_1}\dots\partial_{x_m}\E[\Xi(f-\nu)]
    \end{align*}
    in $L^2$ as $t\to 0$. Since $\Xi^t(f-\nu)\to \Xi(f-\nu)$ in $L^2$ as $t\to 0$ and $Q_m$ is just projection onto a closed subspace of $L^2$, we conclude that as $t \to 0$
    \begin{displaymath}
        Q_m[\Xi^t(f-\nu)]\overset{L^2}{\to}Q_m[\Xi(f-\nu)].
    \end{displaymath}
    Combining these observations with Proposition~\ref{p:derpivint} proves the result.
\end{proof}

\subsubsection{Pivotal intensities as derivatives}
Towards proving Proposition \ref{p:derpivint}, we consider the effect on $\E[\Xi(f-\ell)]$ of an arbitrary perturbation of the field:

\begin{lemma}\label{l:pivotal_derivative}
For $g:D\to\R$,
\begin{displaymath}
    \frac{d}{dt} \bigg|_{t=0}\E \big[\Xi( f - \nu +tg  ) \big] = \sum_{y\in D}g(y)\E\left[d_y\Xi(f-\nu)\middle|f(y)=\nu(y)\right]\varphi_{f(y)}(\nu(y)).
\end{displaymath}
\end{lemma}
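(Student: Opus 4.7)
Since $D$ is finite and $\Xi$ is defined on the finite set $\mathcal{P}(D)$, the idea is to reduce the claim to a clean differentiation of a finite collection of Gaussian orthant probabilities with moving boundaries, and then reorganise the resulting boundary terms using the discrete derivative structure.

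First I would write, for each $t$ in a neighbourhood of $0$,
\begin{equation*}
\E \big[\Xi(f-\nu+tg)\big]
= \sum_{E\subseteq D}\Xi(E)\,\P\!\left[\{f-\nu+tg>0\}=E\right]
= \sum_{E\subseteq D}\Xi(E)\int_{R_E(t)}\varphi_f(s)\,ds,
\end{equation*}
where $R_E(t)=\prod_{y\in E}(\nu(y)-tg(y),\infty)\times\prod_{y\notin E}(-\infty,\nu(y)-tg(y)]$. This is a finite sum, so the derivative in $t$ can be passed inside. For each $E$, the region $R_E(t)$ depends on $t$ only through its half-space boundaries at $s(y)=\nu(y)-tg(y)$, so a direct computation (for instance, via the change of variables $s(y)\mapsto s(y)+tg(y)$ followed by Leibniz differentiation, which is justified by smoothness and integrability of $\varphi_f$) gives
\begin{equation*}
\frac{d}{dt}\bigg|_{t=0}\int_{R_E(t)}\varphi_f\,ds
= \sum_{y\in E}g(y)\int_{B_E(y)}\varphi_f\,d\bar s_y\;-\;\sum_{y\notin E}g(y)\int_{B_E(y)}\varphi_f\,d\bar s_y ,
\end{equation*}
where $B_E(y):=\{s\in R_E(0)\cup\partial R_E(0):s(y)=\nu(y)\}$ is the face of $R_E(0)$ at coordinate $y$ and $d\bar s_y$ is Lebesgue measure on the hyperplane $\{s(y)=\nu(y)\}$.

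Second, I would interchange the sums and pair each $E\ni y$ with $E\setminus\{y\}$. The crucial observation is that the two faces coincide, $B_E(y)=B_{E\setminus\{y\}}(y)$, so swapping the order of summation gives
\begin{equation*}
\frac{d}{dt}\bigg|_{t=0}\E[\Xi(f-\nu+tg)]
=\sum_{y\in D}g(y)\sum_{E\ni y}\bigl[\Xi(E)-\Xi(E\setminus\{y\})\bigr]\!\int_{B_E(y)}\!\!\varphi_f\,d\bar s_y
=\sum_{y\in D}g(y)\sum_{E\ni y}d_y\Xi(E)\!\int_{B_E(y)}\!\!\varphi_f\,d\bar s_y,
\end{equation*}
using the definition of the discrete derivative $d_y\Xi(E)=\Xi(E\cup\{y\})-\Xi(E\setminus\{y\})$.

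Finally, I would identify each boundary integral with a conditional probability via the standard Gaussian factorisation on the hyperplane $\{s(y)=\nu(y)\}$:
\begin{equation*}
\int_{B_E(y)}\varphi_f\,d\bar s_y = \varphi_{f(y)}(\nu(y))\,\P\!\left[\{f(z)-\nu(z)>0\}_{z\ne y}\text{ matches }E\setminus\{y\}\,\Big|\,f(y)=\nu(y)\right].
\end{equation*}
Since $d_y\Xi(E)$ depends only on $E\setminus\{y\}$, summing over $E\ni y$ assembles exactly the conditional expectation $\E[d_y\Xi(f-\nu)\mid f(y)=\nu(y)]$ (the measure-zero event $\{f(y)-\nu(y)=0\}$ does not affect the value of $d_y\Xi(f-\nu)$), yielding the stated formula.

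The conceptual steps are essentially routine given the finite-dimensional, smooth, non-degenerate Gaussian setup. The main obstacle is a bookkeeping one: carefully verifying that the face $B_E(y)$ is the same when $E$ is replaced by $E\setminus\{y\}$, and that the resulting boundary integrals assemble into the conditional expectation of $d_y\Xi(f-\nu)$ rather than a mis-weighted combination. Everything else (interchange of derivative and finite sum; Leibniz rule for a Gaussian integral with smoothly moving half-space boundaries; Gaussian conditioning on the hyperplane) is standard.
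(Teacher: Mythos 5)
Your proof is correct, but it takes a genuinely different route from the paper's. The paper argues probabilistically: it introduces the ``sign-change'' events $A_t(y)=\{(f-\nu)(y)<0\le (f-\nu+tg)(y)\}$ (or the reverse), observes that $\Xi$ can only change on $\bigcup_y A_t(y)$, shows that two simultaneous sign changes have probability $o(t)$ by non-degeneracy, and then evaluates $\lim_{t\searrow 0}t^{-1}\E[(\Xi(f-\nu+tg)-\Xi(f-\nu))\ind_{A_t^\ast(y)}]$ by conditioning on $f(y)$, which directly produces $g(y)\E[d_y\Xi(f-\nu)\mid f(y)=\nu(y)]\varphi_{f(y)}(\nu(y))$. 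You instead expand $\E[\Xi(f-\nu+tg)]$ as a finite sum of Gaussian orthant integrals over the regions $R_E(t)$, differentiate the moving half-space boundaries via Leibniz (equivalently, the divergence theorem after the change of variables $s\mapsto s+tg$), and recombine the face integrals by pairing $E$ with $E\setminus\{y\}$ --- the key bookkeeping identity $B_E(y)=B_{E\setminus\{y\}}(y)$ is correct, and the final identification of the face integral as $\varphi_{f(y)}(\nu(y))$ times a conditional probability is the standard Gaussian disintegration. Your approach buys an entirely analytic, explicit computation with no need for the $o(t)$ estimate on double crossings (that issue is absorbed into the smoothness of $\varphi_f$ and the fact that the faces are codimension one); the paper's approach buys a formulation that does not require writing down the density over exponentially many orthants and that transfers more directly to the conditional/iterated setting used in the proof of Proposition \ref{p:derpivint}, where the same lemma is reapplied under the measure $\tilde{\P}$ conditioned on $f(\underline{y}_{m-1})=\nu(\underline{y}_{m-1})$. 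Both arguments rely on the standing non-degeneracy assumption on $f$.
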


\begin{proof}
    For $y\in D$ and $t>0$ we define the events
    \begin{align*}
        A_t(y)&=\begin{cases}
            \{(f-\nu)(y)<0\leq (f-\nu+tg)(y)\} &\text{if }g(y)>0,\\
            \{(f-\nu+tg)(y)<0\leq (f-\nu)(y)\} &\text{if }g(y)\leq0,
        \end{cases}
        &A_t&=\bigcup_{y\in D}A_t(y),\\
        A_t^\ast(y)&=A_t(y)\cap\bigcap_{z\in D\setminus\{y\}}A_t^c(z), 
        &A^\ast_t&=\bigcup_{y\in D}A_t^\ast(y).
    \end{align*}
    Since $\Xi$ depends only on the excursion set of the field, we observe that $\Xi(f-\nu+tg)\neq\Xi(f-\nu)$ implies that $A_t$ occurs. We also note that since $f$ is non-degenerate, for any $z\neq y$
    \begin{displaymath}
        \lim_{t\searrow0}\frac{\P(A_t(y)\cap A_t(z))}{t}=0\qquad\text{and hence}\qquad\lim_{t\searrow0}\frac{\P(A_t\setminus A_t^\ast)}{t}=0.
    \end{displaymath}
    Using these two observations, and the fact that $\Xi$ is bounded, we have
    \begin{equation}\label{e:pivotal_derivative}
    \begin{aligned}
        \frac{d}{dt}\bigg|_{t=0}\E \big[\Xi(f-\nu+tg)\big]&        =\lim_{t\searrow 0}\frac{1}{t}\E\big[(\Xi(f-\nu+tg)-\Xi(f-\nu))\ind_{A_t}\big]\\
        &=\lim_{t\searrow 0}\frac{1}{t}\E\big[(\Xi(f-\nu+tg)-\Xi(f-\nu))\ind_{A_t^\ast}\big]\\
        &=\sum_{y\in D}\lim_{t\searrow 0}\frac{1}{t}\E\big[(\Xi(f-\nu+tg)-\Xi(f-\nu))\ind_{A_t^\ast(y)}\big].
    \end{aligned}
    \end{equation}
    By definition of $A_t^\ast(y)$
    \begin{displaymath}
        (\Xi(f-\nu+tg)-\Xi(f-\nu))\ind_{A_t^\ast(y)}=\mathrm{sgn}_{g(y)}d_y\Xi(f-\nu)\ind_{A_t^\ast(y)}
    \end{displaymath}
    where $\mathrm{sgn}_{g(y)}$ denotes the sign of $g(y)$. Therefore conditioning on the value of $f(y)$, and assuming $g(y)>0$, we have
    \begin{align*}
        \lim_{t\searrow0}\frac{1}{t}&\E\left[(\Xi(f-\nu+tg)-\Xi(f-\nu))\ind_{A_t^\ast(y)}\right]\\
        &\qquad\qquad\qquad=\lim_{t\searrow0}\frac{1}{t}\mathrm{sgn}_{g(y)}\E\left[d_y\Xi(f-\nu)\ind_{A_t^\ast(y)}\right]\\
        &\qquad\qquad\qquad=\lim_{t\searrow0}\frac{1}{t}\mathrm{sgn}_{g(y)}\E\left[d_y\Xi(f-\nu)\ind_{A_t(y)}\right]\\
        &\qquad\qquad\qquad=\lim_{t\searrow0}\frac{1}{t}\mathrm{sgn}_{g(y)}\int_{(\nu-tg)(y)}^{\nu(y)}\E\left[d_y\Xi(f-\nu)\middle| f(y)=\nu(y)\right]\varphi_{f(y)}(\nu(y))\;du\\
        &\qquad\qquad\qquad=g(y)\E\left[d_y\Xi(f-\nu)\middle| f(y)=\nu(y)\right]\varphi_{f(y)}(\nu(y)).
    \end{align*}
    In the case that $g(y)\leq 0$, the above calculation is valid provided we integrate over the interval $[\nu(y),(\nu-tg)(y)]$ instead. Substituting this into \eqref{e:pivotal_derivative} and summing over $y$ yields the statement of the lemma.
\end{proof}

\begin{proof}[Proof of Proposition \ref{p:derpivint}]
In the case that $y_1,\ldots,y_m$ are distinct, we argue by induction. Let $\underline{y}_{m-1} = (y_1,\ldots,y_{m-1})$, and  let $\tilde{\P}$ be the probability measure on $f$ under the conditioning $f(\underline{y}_{m-1})=\nu(\underline{y}_{m-1})$. Then by Lemma~\ref{l:pivotal_derivative} applied to $d_{\underline{y}_{m-1}}\Xi$ (viewed as a function on $\mathcal{P}(D\setminus\underline{y}_{m-1})$) and the inductive assumption, we have
    \begin{align*}
        \partial_{y_{1}}\dots\partial_{y_m}\E[\Xi(f-\nu)]&=\partial_{y_{m}}P(\nu;y_1,\dots,y_{m-1})\\
        &=\frac{d}{dt}\bigg|_{t=0}\tilde{\E}\left[d_{\underline{y}_{m-1}}\Xi(f-\nu+t\ind_{y_{m}})  \right]\varphi_{f(\underline{y}_{m-1})}(\nu(\underline{y}_{m-1}))\\
        &=\tilde{\E}\left[d_{\underline{y}}\Xi(f-\nu)\middle|f(y_{m})=\nu(y_m)\right]\\
        &\qquad\qquad\qquad\qquad\times\varphi_{f(y_{m})|f(\underline{y}_{m-1})}(\nu(y_m) | \nu(\underline{y}_{m-1}) )\varphi_{f(\underline{y}_{m-1})}(\nu(\underline{y}_{m-1}))\\
        &=\E\left[d_{\underline{y}}\Xi(f-\nu)\middle|f(\underline{y})=\nu(\underline{y})\right]\varphi_{f(\underline{y})}(\nu(\underline{y}))\\
        &=P(y_1,\dots,y_{m}).
    \end{align*}
    To extend to the general case, we have 
\begin{align*}
   \partial_{y_{1}}\dots\partial_{y_m}\E[\Xi(f-\nu)] &= \partial^{\tilde{\alpha}^y} \Big(\E\left[d_{\underline{y}} \Xi(f-\nu) \middle| f(\underline{y}) = \nu(\underline{y}) \right]\varphi_{f(\underline{y})}(\nu(\underline{y})\Big) \\
   &= (-1)^{\lvert\tilde{\alpha}^y\rvert}\partial^{\tilde{\alpha}^y}_u\Big( \int d_{\underline{y}}\Xi(\underline{x}-\nu(y^c))\varphi_{f(\underline{y}),f(y^c)}(u,\underline{x}) \;d\underline{x}\Big)\Big|_{u=\nu(\underline{y})}.
   \end{align*}
   On the other hand, by the definition of Hermite polynomials
   \begin{align*}
  P(\nu;y_1,\ldots,y_m) & :=  \E \left[d_{\underline{y}}\Xi(f-\nu)
H^{\tilde{\alpha}^y}_f(f)  \middle| f(\underline{y})  = \nu(\underline{y}) \right]\varphi_{f(\underline{y})}(\nu(\underline{y}))  \\
&  = \int d_{\underline{y}}\Xi(\underline{x}-\nu(y^c)) H^{\tilde{\alpha}^y}_{ f(\underline{y}), f(y^c)}(\nu(\underline{y}),\underline{x})  \varphi_{f(\underline{y}),f(y^c)}(\nu(\underline{y}),\underline{x}) \;d\underline{x} \\
&  = (-1)^{\lvert\tilde{\alpha}^{y}\rvert}\int  d_{\underline{y}}\Xi(\underline{x}-\nu(y^c))   \partial^{\tilde{\alpha}^y} \varphi_{f(\underline{y}),f(y^c)}(\nu(\underline{y}),\underline{x}) \;d\underline{x} .
\end{align*} 
It remains to justify passing the derivative $\partial^{\tilde{\alpha}^y}$ through the integral sign. For any multi-index $\beta$, $\partial^\beta\varphi_f(\nu(\underline{y}),\underline{x})$ is equal to a polynomial in $(\nu(\underline{y}),\underline{x})$ times
\begin{displaymath}
    \varphi_{f(\underline{y}),f(y^c)}(\nu(\underline{y}),\underline{x})\leq c_1 e^{-c_2 \|(\nu(\underline{y}),\underline{x})\|_2^2 }
\end{displaymath}
where $c_1,c_2 > 0$ depend only on $f$. Since this product is integrable in $\underline{x}$ uniformly over compacts in $\ell$, the passage is justified by the measure-theoretic Leibniz rule.
\end{proof}

\subsection{An integrated formula for the tail}
We end this section by deriving an `integrated' formula for the variance of the tail of the chaos expansion that depends only on derivatives/pivotal intensities \textit{of a fixed order}. This is proven by iterating an interpolation formula for the covariance between smooth functionals of Gaussian vectors \cite{cha08}.

Let $\tilde{f} : D \mapsto \R$ be an independent copy of $f$. For $t\in[0,1]$ we define the interpolated vector
\begin{equation}
\label{e:inter}
    f^t=tf+\sqrt{1-t^2}\tilde{f}
\end{equation}
which has the same distribution as $f$.

\begin{proposition}\label{p:tailboundsmooth}
    Let $m \ge 1$, $t\in[0,1]$, and let $\Phi:\R^d\to\R$ be $m$-times continuously differentiable  such that $\E[\|\nabla^k\Phi(f)\|_2^2]<\infty$ for every $k \le m$. Then
    \begin{multline*}
        \mathrm{Cov}\Big[\sum_{m' \ge m} Q_{m'}[\Phi(f)], \sum_{m' \ge m} Q_{m'}[\Phi(f^t)] \Big]=\\
        \sum_{x,y\in D^m}\prod_{i=1}^{m}K_{x_i,y_i}\int_0^t\int_0^{t_1}\dots\int_0^{t_{m-1}}\E\left[\partial_{x}\Phi(f)\partial_{y}\Phi(f^{t_{m}})\right]\;dt_{m}\dots dt_1 .
    \end{multline*}
\end{proposition}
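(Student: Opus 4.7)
The plan is to proceed by induction on $m$, with the classical Gaussian (Stein--Chatterjee) interpolation formula
\begin{equation*}
    \frac{d}{ds}\E\big[\Psi(f)\,\Xi(f^s)\big]= \sum_{x,y\in D}K_{x,y}\,\E\big[\partial_x\Psi(f)\,\partial_y\Xi(f^s)\big]
\end{equation*}
as the workhorse. To derive it, I would compute $\partial_s f^s = f-\tfrac{s}{\sqrt{1-s^2}}\tilde f$ and observe that in the ambient Gaussian Hilbert space generated by $(f,\tilde f)$ one has $\E[(\partial_s f^s)_y f_z]=K_{y,z}$ while $\E[(\partial_s f^s)_y f^s_z]=0$; Gaussian integration by parts then leaves only the $\partial_x\Psi(f)$ contribution. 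Integrating from $0$ to $t$ with $\Psi=\Xi=\Phi$ and using $f^0=\tilde f\perp f$ gives the $m=1$ case of the proposition, since $\Phi_{\ge 1}(f):=\sum_{m'\ge 1}Q_{m'}[\Phi(f)]=\Phi(f)-\E[\Phi(f)]$.

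For the inductive step, assuming the formula at level $m$, I would split the innermost expectation as
\begin{equation*}
    \E[\partial_x\Phi(f)\,\partial_y\Phi(f^{t_m})] = \E[\partial_x\Phi(f)]\,\E[\partial_y\Phi(f)] + \mathrm{Cov}[\partial_x\Phi(f),\partial_y\Phi(f^{t_m})],
\end{equation*}
using $f^{t_m}\stackrel{d}{=}f$. Applying the base-case interpolation formula to the covariance promotes the iterated integral from $m$-fold to $(m{+}1)$-fold and introduces an extra pair of summation indices with weight $K_{x_{m+1},y_{m+1}}$, producing exactly the candidate expression at order $m+1$. The remaining product-of-means term contributes, after computing the deterministic integral $\int_0^t\!\int_0^{t_1}\!\cdots\!\int_0^{t_{m-1}}dt_m\cdots dt_1=t^m/m!$, the quantity
\begin{equation*}
    \frac{t^m}{m!}\sum_{x,y\in D^m}\prod_{i=1}^m K_{x_i,y_i}\,\E[\partial_x\Phi(f)]\,\E[\partial_y\Phi(f)].
\end{equation*}
I would identify this with $\mathrm{Cov}[Q_m[\Phi(f)],Q_m[\Phi(f^t)]]$ by applying Proposition~\ref{p:correlated_wce} to both arguments (noting $f^t\stackrel{d}{=}f$) and evaluating the resulting expectation of the product of Wick polynomials by the diagram formula (Theorem~\ref{t:df}): only diagrams pairing an $f(x_i)$ with an $f^t(y_{\sigma(i)})$ survive, each contributing $\prod_i\E[f(x_i)f^t(y_{\sigma(i)})] = t^m\prod_i K_{x_i,y_{\sigma(i)}}$, and all $m!$ permutations $\sigma\in S_m$ contribute equally after relabelling.

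To close the induction, observe that $Q_m[\Phi(f)]$ and $Q_{m'}[\Phi(f^t)]$ both lie in the respective homogeneous chaoses of the ambient Hilbert space generated by $(f,\tilde f)$, and are therefore orthogonal whenever $m\ne m'$. This yields the telescoping identity
\begin{equation*}
    \mathrm{Cov}[\Phi_{\ge m}(f),\Phi_{\ge m}(f^t)] - \mathrm{Cov}[\Phi_{\ge m+1}(f),\Phi_{\ge m+1}(f^t)] = \mathrm{Cov}[Q_m[\Phi(f)],Q_m[\Phi(f^t)]],
\end{equation*}
which exactly matches the decomposition established above and completes the induction. The main obstacle is not combinatorial but analytic: each step of the induction requires a fresh application of Gaussian IBP to $\partial_x\Phi$, which must itself be $C^1$ with square-integrable gradient to permit differentiation under the expectation and the subsequent reapplication of the interpolation identity. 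The hypotheses that $\Phi\in C^m$ with $\E[\|\nabla^k\Phi(f)\|_2^2]<\infty$ for every $k\le m$ are precisely what is needed to justify the $m$ cascaded applications of IBP; once these are in hand, each iteration is routine.
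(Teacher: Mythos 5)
Your proposal is correct and follows essentially the same route as the paper: iterate the Chatterjee interpolation formula by splitting $\E[\partial_x\Phi(f)\partial_y\Phi(f^{t_m})]$ into product-of-means plus covariance, identify the product-of-means term with $\mathrm{Cov}\big[Q_m[\Phi(f)],Q_m[\Phi(f^t)]\big]$ via the chaos expansion and the diagram formula, and conclude by orthogonality of the homogeneous chaoses. The only cosmetic difference is that you re-derive the base interpolation identity by Gaussian integration by parts, whereas the paper simply cites \cite[Lemma~3.4]{cha08}.
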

\begin{proof}
    The classical interpolation formula for the covariance (see \cite[Lemma~3.4]{cha08}) states that, provided $\Phi$ is continuously differentiable and $\E[\Phi(f)^2+\|\nabla\Phi(f)\|_2^2]<\infty$, for every $t\in[0,1]$
    \begin{equation}\label{e:cov_identity_smooth}
        \mathrm{Cov}[\Phi(f),\Phi(f^t)]=\sum_{x,y\in D}K_{x,y}\int_0^t\E[\partial_x\Phi(f)\partial_y\Phi(f^s)]\;ds.
    \end{equation}
    (The cited result is stated only for the case $t=1$ and uses a different parameterisation of the interpolation $f^t$, but the proof immediately yields this statement.) We then claim that, for every $M \ge 1$,
    \begin{equation}\label{e:tailboundsmooth}
    \begin{aligned}
        \mathrm{Cov}[\Phi(f),&\Phi(f^t)]=\sum_{m=1}^{M-1}\frac{t^m}{m!}\sum_{x,y\in D^m}\prod_{i=1}^mK_{x_i,y_i}\E[\partial_x\Phi(f)]\E[\partial_y\Phi(f)]\\
        &+\sum_{x,y\in D^{M}}\prod_{i=1}^{M}K_{x_i,y_i}\int_0^t\int_0^{t_1}\dots\int_0^{t_{M-1}}\E\left[\partial_x\Phi(f)\partial_y\Phi(f^{t_M})\right]\;dt_{M}\dots dt_1.
    \end{aligned}
    \end{equation}
    This follows from applying \eqref{e:cov_identity_smooth} iteratively to
    \begin{displaymath}
    \E[\partial_x\Phi(f)\partial_y\Phi(f^t)]=\E[\partial_x\Phi(f)]\E[\partial_y\Phi(f^t)]+\mathrm{Cov}[\partial_x\Phi(f)\partial_y\Phi(f^t)]
    \end{displaymath}
    using the fact that $\E[\partial_x\Phi(f^t)]=\E[\partial_x\Phi(f)].$ From the chaos expansions for $\Phi(f)$ and $\Phi(f^t)$ (Proposition~\ref{p:correlated_wce}) and the diagram formula for Wick polynomials (Theorem~\ref{t:df}) we have
    \begin{align*}
        \mathrm{Cov} \big[Q_m[\Phi(f)],Q_m[\Phi(f^t)] \big]&=\frac{t^m}{(m!)^2} \sum_{x, y \in D^m} \Big( \sum_{\sigma \in S_m} \prod_{i=1}^m K_{x_i,y_{\sigma(i)}} \Big) \E[\partial_x\Phi(f)]\E[\partial_y\Phi(f)]\\
    &=\frac{t^m}{m!} \sum_{x, y \in D^m} \Big( \prod_{i=1}^m K_{x_i,y_i} \Big) \E[\partial_x\Phi(f)]\E[\partial_y\Phi(f)],
    \end{align*}
    where $S_m$ denotes the set of permutations of $\{1,\dots,m\}$, and the second equality uses commutativity of partial derivatives. Since the different chaoses are orthogonal, we conclude that the final term in \eqref{e:tailboundsmooth} must be the covariance of $\sum_{m\geq M}Q_m[\Phi(f)]$ and $\sum_{m\geq M}Q_m[\Phi(f^t)]$ yielding the statement of the proposition.
\end{proof}

To apply this formula to a level-set functional $\Xi:\mathcal{P}(D)\to\R$ we need a `joint' variant of the pivotal intensities $P$ with respect to a level $\ell \in \R$: 

\begin{definition}[Joint pivotal intensities]
\label{d:jpi}
For $t\in[0,1)$ and $x,y\in D^m$, the $m+m$ \textit{joint pivotal intensity at $(x,y)$ (with respect to level $\ell \in \R$)} is
\begin{displaymath}
P^t(x;y):=\E\left[d_{x,y}\Xi(f,f^t)\middle|f(\underline{x})=\ell, f^t(\underline{y})=\ell\right]\varphi_{f(\underline{x}),f^t(\underline{y})}(\ell,\ell)
\end{displaymath}
where
\begin{displaymath}
    d_{x,y}\Xi(f,f^t):=d_{\underline{x}}\Xi(f-\ell)d_{\underline{y}}\Xi(f^t-\ell) H^{\tilde{\alpha}^x,\tilde{\alpha}^y}_{f,f^t}(f,f^t) .
\end{displaymath}
\end{definition}

As in the case of a single pivotal intensity, these can be interpreted as derivatives of the moments of the level-set functional:

\begin{proposition}\label{p:derpivintjoint}
    Let $m\in\N$, $x,y\in D^m$ and $t\in[0,1)$ then
    \begin{displaymath}
        \partial^{\alpha^x}_f\partial^{\alpha^y}_{f^t}\E\big[\Xi(f-\ell)\Xi(f^t-\ell)\big]=P^t(x;y).
    \end{displaymath}
\end{proposition}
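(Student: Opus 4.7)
The plan is to reduce Proposition \ref{p:derpivintjoint} to the single-field identity in Proposition \ref{p:derpivint} by viewing the pair $(f, f^t)$ as a single non-degenerate centred Gaussian vector on an enlarged domain. The key point is that Proposition \ref{p:derpivint} is formulated for an arbitrary functional on an arbitrary non-degenerate finite-dimensional Gaussian vector, and therefore applies verbatim once the product structure is set up correctly.

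Concretely, I would let $\tilde{D} = D_1 \sqcup D_2$ be two disjoint copies of $D$, and let $g$ be the Gaussian vector on $\tilde{D}$ obtained by placing $f$ on $D_1$ and $f^t$ on $D_2$. The joint covariance matrix
\[
\tilde{K} = \begin{pmatrix} K & tK \\ tK & K \end{pmatrix}
\]
has determinant $(1-t^2)^{|D|}\det(K)^2$ by a Schur complement computation, hence $g$ is non-degenerate for $t \in [0,1)$. I would then define the product functional $\tilde{\Xi}:\mathcal{P}(\tilde{D})\to\R$ by $\tilde{\Xi}(E) = \Xi(E \cap D_1)\,\Xi(E \cap D_2)$, so that $\tilde{\Xi}(g - \ell) = \Xi(f - \ell)\,\Xi(f^t - \ell)$.

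Applying Proposition \ref{p:derpivint} to $\tilde{\Xi}$ at the concatenated point $(x, y)\in \tilde{D}^{2m}$ with constant level $\ell$ gives
\[
\partial_{(x,y)}\,\E[\tilde{\Xi}(g-\ell)] \;=\; P_{\tilde{\Xi}}(\ell; (x,y)) .
\]
It then remains only to unpack both sides. On the left, derivatives at points of $D_1$ correspond to derivatives in the $f$-variables and those at $D_2$ to derivatives in the $f^t$-variables, producing $\partial^{\alpha^x}_f\partial^{\alpha^y}_{f^t}\E[\Xi(f-\ell)\Xi(f^t-\ell)]$. On the right, the essential computation is that the discrete derivative on $\tilde{D}$ factorises as
\[
d_{\underline{(x,y)}}\tilde{\Xi}(E) \;=\; d_{\underline{x}}\Xi(E\cap D_1)\cdot d_{\underline{y}}\Xi(E\cap D_2),
\]
since the two sets of points lie on disjoint copies and the discrete derivatives commute and act independently there. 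Combined with the multi-index identification $\alpha^{(x,y)} = (\alpha^x,\alpha^y)$ (and hence $\tilde{\alpha}^{(x,y)} = (\tilde{\alpha}^x,\tilde{\alpha}^y)$) and the matching of multivariate Hermite polynomials $H^{\tilde{\alpha}^{(x,y)}}_g = H^{\tilde{\alpha}^x,\tilde{\alpha}^y}_{f,f^t}$, this recovers exactly the expression for $P^t(x;y)$ from Definition \ref{d:jpi}, via the joint density $\varphi_{f(\underline{x}),f^t(\underline{y})}(\ell,\ell) = \varphi_{g(\underline{(x,y)})}(\ell)$ and the corresponding conditional expectation.

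The only delicate point is the bookkeeping in the last step, particularly when $x$ or $y$ individually contain repeated coordinates so that the Hermite prefactors $H^{\tilde{\alpha}^x}$ and $H^{\tilde{\alpha}^y}$ contribute non-trivially. However, since repetitions in $x$ sit entirely in $D_1$ and repetitions in $y$ entirely in $D_2$, the diagonal contributions to $\tilde{\alpha}^{(x,y)}$ do not couple across the two copies, and the factorisation of both the derivative and the Hermite polynomial is preserved; no genuinely new computation is required beyond what already appears in the proof of Proposition \ref{p:derpivint}.
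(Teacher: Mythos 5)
Your proposal is correct and is exactly the paper's argument: the paper proves this in one line by applying Proposition \ref{p:derpivint} to the level-set functional $\Xi(f-\ell)\Xi(f^t-\ell)$ of the non-degenerate Gaussian vector $(f,f^t)$ on $D\times D$. You simply spell out the bookkeeping (non-degeneracy of the joint covariance, factorisation of the discrete derivative across the two copies, and the identification of the multi-index and Hermite polynomial) that the paper leaves implicit.
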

\begin{proof}
    This follows from Proposition~\ref{p:derpivint} applied to the level-set functional $\Xi(f-\ell)\Xi(f^t-\ell)$ of the non-degenerate Gaussian field $(f,f^t)$ on $D \times D$
\end{proof}

We observe some basic properties of these joint intensities:

\begin{lemma}
\label{l:jpi}
The map $t \mapsto P^t(x;y)$ is continuous on $[0,1)$. Moreover 
\[   \int_0^1\int_0^{t_0}\dots\int_0^{t_{m-2}} | P^{t_{m-1}}(x;y)| \;dt_{m-1}\dots dt_0  < \infty .\]
\end{lemma}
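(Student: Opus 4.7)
The plan is to handle the two claims separately, both ultimately resting on an analysis of the joint Gaussian covariance $\Sigma^t := \mathrm{Cov}\big((f(\underline{x}), f^t(\underline{y}))\big)$ as a function of $t\in[0,1)$.

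For continuity on $[0,1)$, I would fix $\tau \in [0,1)$ and observe that in a neighbourhood of $\tau$ the Gaussian vector $(f, f^t)$ on $D \times D$ is non-degenerate, $\Sigma^t$ is uniformly invertible, and both $\Sigma^t$ and $(\Sigma^t)^{-1}$ depend smoothly on $t$. Consequently the joint density $\varphi_{f(\underline{x}), f^t(\underline{y})}(\ell, \ell)$ is continuous in $t$ via the Gaussian density formula. The Hermite polynomial factor $H^{\tilde\alpha^x, \tilde\alpha^y}_{f, f^t}(f, f^t)$ appearing in $d_{x,y}\Xi(f,f^t)$ is a polynomial in the field whose coefficients (functions of $(\Sigma^t)^{-1}$) vary continuously with $t$, while $|d_{\underline{x}}\Xi(f-\ell)\,d_{\underline{y}}\Xi(f^t-\ell)| \le 2^{2(m-1)}\|d\Xi\|_\infty^2$ is uniformly bounded. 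Dominated convergence then yields continuity of the conditional expectation, and hence of $P^t(x; y)$.

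For integrability, I would first reduce the nested integral via Fubini:
\begin{displaymath}
\int_0^1\int_0^{t_0}\cdots\int_0^{t_{m-2}}|P^{t_{m-1}}(x;y)|\,dt_{m-1}\cdots dt_0 = \frac{1}{(m-1)!}\int_0^1 (1-s)^{m-1}|P^s(x;y)|\,ds,
\end{displaymath}
so it suffices to show $|P^s(x;y)| = O((1-s)^{-\kappa})$ as $s\to 1$ for some $\kappa < m$. Setting $k = |\underline{x} \cap \underline{y}|\le m$, the block decomposition $\det\Sigma^s = \det K_{\underline{x}, \underline{x}} \cdot \det\big(K_{\underline{y}, \underline{y}} - s^2 K_{\underline{y}, \underline{x}} K_{\underline{x}, \underline{x}}^{-1} K_{\underline{x}, \underline{y}}\big)$ shows that the second factor vanishes like $(1-s^2)^k$ as $s\to 1$, matching the rank deficiency of $\Sigma^1$. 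This gives $\varphi_{f(\underline{x}), f^s(\underline{y})}(\ell, \ell) \le C(1-s)^{-k/2}$. Combined with a quantitative bound on the conditional expectation of the Hermite polynomial (obtained by exploiting the block structure of $\Sigma^s$ rather than the crude minimum-eigenvalue bound of Lemma~\ref{l:bounds1}, and incorporating the cancellation that arises from evaluating at the symmetric point $(\ell,\ell)$), this yields $|P^s(x;y)| \le C(1-s)^{-\kappa}$ with some $\kappa<m$, rendering the integral convergent.

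The hard part will be establishing the sharp bound on $|P^s(x;y)|$. The naive estimate from Lemma~\ref{l:bounds1} uses $\lambda_{\min}(\Sigma^s)^{-m/2}$ and grows like $(1-s)^{-m}$, which is too crude. A sharper analysis must account for the fact that it is the \emph{determinant} rather than the minimum eigenvalue that governs the density blow-up, and should exploit cancellations at the evaluation point $(\ell,\ell)$ — for example by passing to the alternative conditional formulation in Remark~\ref{r:pivint}, rewriting the Hermite polynomial in terms of the conditional covariance of $(f(\underline{x}), f^s(\underline{y}))$ given $(f, f^s)$ on the rest of $D$, whose rank-$k$ degeneracy can then be tracked directly to produce the required exponent $\kappa<m$.
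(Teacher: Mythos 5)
Your overall strategy coincides with the paper's: continuity follows exactly as you say (non-degeneracy and continuous dependence of the covariance of $(f,f^t)$ on $t\in[0,1)$, plus dominated convergence), and for integrability the paper likewise collapses the nested integral to a single integral against the volume factor $(1-s)^{m-1}/(m-1)!$ and then establishes a pointwise blow-up bound $|P^s(x;y)|\le c\,(1-s)^{-\kappa}$ with $\kappa<m$ (the paper gets $\kappa=m-1/2$). Your identification of where the naive bound fails is also correct: the crude $\lambda_{\min}^{-m/2}$ estimate of Lemma~\ref{l:bounds1} only gives $(1-s)^{-m}$, which is not integrable against $(1-s)^{m-1}$.

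The gap is that the decisive estimate is asserted rather than proved. What is actually needed (and what the paper supplies as Proposition~\ref{p:mhpmain}) is the combined bound
\[
\sqrt{\E\Big[\big(H^{\tilde\alpha^x,\tilde\alpha^y}_{f,f^t}(f,f^t)\big)^2\,\Big|\,f(\underline{x})=f^t(\underline{y})=\ell\Big]}\;\varphi_{f(\underline{x}),f^t(\underline{y})}(\ell,\ell)\;\le\;c\,(1-t)^{-|\tilde\alpha^x+\tilde\alpha^y|/2-|\underline{x}\cap\underline{y}|/2},
\]
after which Cauchy--Schwarz and the identity $|\tilde\alpha^x|+|\tilde\alpha^y|+|\underline{x}\cap\underline{y}|=2m-|\underline{x}\cup\underline{y}|\le 2m-1$ give $\kappa=m-1/2$. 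Your sketch only quantifies the density factor ($(1-s)^{-k/2}$ with $k=|\underline{x}\cap\underline{y}|$, which is correct) but leaves the Hermite-polynomial factor — which can contribute up to $(1-s)^{-(m-1)}$ when all points coincide — to an unspecified ``sharper analysis'', and you never verify that the two exponents sum to something strictly below $m$; without the displayed combinatorial identity this is not obvious, since the worst case lands exactly at $m-1/2$. Your intuition about cancellation at the symmetric evaluation point $(\ell,\ell)$ is genuinely the right mechanism (in the paper it appears as the vanishing of odd-order derivatives of the centred Gaussian density at the origin in the antisymmetric eigendirections of $\Sigma_{Z,Z^t}$, whose variances are $(1+t)D$ and $(1-t)D$), but turning it into the bound above requires the conditional second-moment computation for multivariate Hermite polynomials via the generating-function/diagram argument of Propositions~\ref{p:mhp} and~\ref{p:mhpbound}, which your proposal does not carry out.
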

\begin{proof}
For the continuity, observe that for $t\in[0,1)$ the vector $(f,f^t)$ is a non-degenerate Gaussian with covariance that varies continuously in $t$. Hence by Gaussian regression the same is true of the conditional distributions in the above intensity. Then since $d_{x,y}\Xi(f,f^t)$ is bounded by a polynomial (with coefficients depending continuously on $t$) an application of the dominated convergence theorem proves the claimed continuity.

For the integrability, appealing to Proposition \ref{p:mhpmain} we have
\[ \sqrt{ \E\left[  \big( H^{\tilde{\alpha}^x,\tilde{\alpha}^y}_{f,f^t}(f,f^t)  \big)^2 \middle|f(\underline{x})=f^t(\underline{y})=\ell\right]  } \varphi_{f(\underline{x}),f^t(\underline{y})}(\ell,\ell)  \le c_{f,x,y} (1-t)^{ - |\tilde{\alpha}^x +\tilde{\alpha}^y|/2 -|\underline{x} \cap \underline{y}|/2}  . \] 
Since $d_{\underline{x}}\Xi$ is bounded, and 
\[ |\tilde{\alpha}^x + \tilde{\alpha}^y| + |\underline{x} \cap \underline{y}| = (m - |\underline{x}|) + (m - |\underline{y}|) +  |\underline{x} \cap \underline{y}|  = 2m - |\underline{x} \cup \underline{y} | \le  2m-1, \]
applying the Cauchy-Schwarz inequality to the definition of $P^t$ we see that
\[ | P^t(x;y)|   \le  c_{f,x,y,\Xi} (1-t)^{-m+1/2 }  .  \]
 It remains to observe that
\[  \tau_m  := \int_0^1 \int_0^{t_0}\dots\int_0^{t_{m-2}} \frac{1}{ (1-t_{m-1})^{m-1/2 }} \;dt_{m-1}\dots dt_0  \]
is finite. This holds since, swapping the order of integration,
\[  \tau_m  =   \int_0^1 \textrm{Vol}(E_s)  \frac{1}{ ( 1-s)^{m-1/2 }}   \, \,ds \le \int_0^1  \frac{(1-s)^{m-1}}{(1-s)^{m -1/2}} \, ds =  \int_0^1 \frac{1}{ ( 1-s)^{1/2 } } \, ds  = \pi/2 < \infty \] 
where
\begin{equation*} E_s = \big\{ x_0,\ldots,x_{m-2} \in [0,1] : x_0 \in [s,1], x_1 \in [s,x_0], \ldots, x_{m-2} \in [ s, x_{m-3} ] \big\} . \qedhere
\end{equation*}
\end{proof}

We can now state the desired formula:

\begin{proposition}
\label{p:wcerror}
  For every $m \ge 1$,
    \begin{displaymath}
        \Var\Big[\sum_{m' \geq m}Q_{m'}[\Xi(f-\ell)]\Big]=\sum_{x,y\in D^m}\prod_{i=1}^{m} K_{x_i,y_i}\int_0^1\int_0^{t_0}\dots\int_0^{t_{m-2}}P^{t_{m-1}}(x;y)\;dt_{m-1}\dots dt_0.
    \end{displaymath}
\end{proposition}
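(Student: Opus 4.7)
The plan is to apply Proposition~\ref{p:tailboundsmooth} to the OU-type smooth approximation $\Xi^\epsilon(u):=\E[\Xi(u+\epsilon Z)]$ from (the proof of) Lemma~\ref{l:derivatives_OU}, where $Z$ is an i.i.d.\ standard Gaussian vector on $D$ independent of $f$, and then to pass to the limit $\epsilon\downarrow 0$. Taking $t=1$ in Proposition~\ref{p:tailboundsmooth} with $\Phi=\Xi^\epsilon(\cdot-\ell)$ (whose derivatives of all orders are bounded, so the hypotheses hold trivially) gives
\[
\Var\Bigl[\sum_{m'\geq m} Q_{m'}[\Xi^\epsilon(f-\ell)]\Bigr] = \sum_{x,y\in D^m}\prod_{i=1}^{m}K_{x_i,y_i}\int_0^1\int_0^{t_0}\dots\int_0^{t_{m-2}} I^\epsilon(t_{m-1};x,y)\,dt_{m-1}\dots dt_0,
\]
where $I^\epsilon(s;x,y):=\E[\partial_x\Xi^\epsilon(f-\ell)\,\partial_y\Xi^\epsilon(f^s-\ell)]$ (after relabelling the integration indices $t_k\mapsto t_{k-1}$). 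Since $\Xi^\epsilon(f-\ell)\to\Xi(f-\ell)$ in $L^2$ by Lemma~\ref{l:derivatives_OU}, and the tail projection onto $\bigoplus_{m'\geq m}H^{{:}m'{:}}$ is $L^2$-continuous, the left-hand side converges to the left-hand side of Proposition~\ref{p:wcerror}.

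To identify the limit of the integrand, observe that $\Xi^\epsilon(f-\nu)=\E[\Xi(f+\epsilon Z-\nu)\mid f]$ and, using an independent copy $Z'$ of $Z$, similarly $\Xi^\epsilon(f^s-\nu')=\E[\Xi(f^s+\epsilon Z'-\nu')\mid f^s]$. Using that $\partial_{y_i}\Xi^\epsilon(f-\nu)=-\partial_{\nu_{y_i}}\Xi^\epsilon(f-\nu)$ and $(-1)^{2m}=1$, one obtains
\[
I^\epsilon(s;x,y)=\partial^{\alpha^x}_{\nu}\partial^{\alpha^y}_{\nu'}\E\bigl[\Xi(f+\epsilon Z-\nu)\,\Xi(f^s+\epsilon Z'-\nu')\bigr]\Big|_{\nu=\nu'=\ell}.
\]
Applying Proposition~\ref{p:derpivintjoint} to the product level-set functional on the non-degenerate Gaussian vector $(f+\epsilon Z,\,f^s+\epsilon Z')$ identifies this as a joint pivotal intensity $P^{s,\epsilon}(x;y)$ (with this perturbed vector in place of $(f,f^s)$ in Definition~\ref{d:jpi}). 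As $\epsilon\downarrow 0$ the covariance of $(f+\epsilon Z,\,f^s+\epsilon Z')$ converges to that of $(f,f^s)$ with uniform non-degeneracy, so Gaussian regression combined with the representation~\eqref{e:pivalt} gives $P^{s,\epsilon}(x;y)\to P^s(x;y)$ for each $s\in[0,1)$.

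The main technical obstacle is to pass this pointwise convergence through the iterated integral. The plan is to repeat the Cauchy--Schwarz argument from the proof of Lemma~\ref{l:jpi} with the vector $(f+\epsilon Z,\,f^s+\epsilon Z')$ in place of $(f,f^s)$, exploiting that the smallest eigenvalue of $\mathrm{Cov}[f+\epsilon Z]$ is bounded below by that of $\mathrm{Cov}[f]$ uniformly in $\epsilon$ (adding independent noise only enlarges eigenvalues) and that the joint covariance degenerates only as $s\to 1$. This yields a bound
\[
|P^{s,\epsilon}(x;y)|\le C_{f,x,y,\Xi}\,(1-s)^{-m+1/2},
\]
uniform in small $\epsilon>0$, whose iterated integral over the simplex is finite as computed at the end of the proof of Lemma~\ref{l:jpi}. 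The dominated convergence theorem, combined with the finiteness of the outer sum over $x,y\in D^m$, then yields the claimed identity.
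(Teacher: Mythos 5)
Your overall strategy coincides with the paper's: smooth via $\Xi^\epsilon(h)=\E[\Xi(h+\epsilon Z)]$, apply Proposition~\ref{p:tailboundsmooth}, identify the integrand as a joint pivotal intensity of the perturbed vector via Proposition~\ref{p:derpivintjoint}, and pass to the limit. The identification of $I^\epsilon(s;x,y)$ as $P^{s,\epsilon}(x;y)$ is correct and matches the paper's computation. The difference is the order of limits, and that is where your argument has a gap.

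You take $t=1$ from the outset, so your dominated convergence step requires a dominating function for $P^{s,\epsilon}(x;y)$ that is integrable over the simplex up to $s=1$ \emph{and uniform in $\epsilon$}. You assert the bound $|P^{s,\epsilon}(x;y)|\le C(1-s)^{-m+1/2}$ uniformly in small $\epsilon$ by "repeating the Cauchy--Schwarz argument from Lemma~\ref{l:jpi}", but that argument rests on Proposition~\ref{p:mhpmain} (and Proposition~\ref{p:densitybound}), which are proved only for vectors of the form $(X_I,X_J^t)$ with $X^t=tX+\sqrt{1-t^2}\tilde X$. The perturbed vector $(f+\epsilon Z,\,f^s+\epsilon Z')$ is \emph{not} of this form, because the two noises are independent of each other; the explicit diagonalisation in Lemma~\ref{l:spectral}, on which the refined exponent $(1-t)^{-|I\cap J|/2}$ in Proposition~\ref{p:densitybound}(2) and the whole of Proposition~\ref{p:mhpmain} depend, does not apply. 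The heuristic that "adding independent noise only enlarges eigenvalues" controls $\lambda_{\min}$ but does not by itself recover the precise power of $(1-s)$ in those estimates, so the claimed domination is not justified by the results you cite; you would need to re-derive the Hermite-moment and density bounds for the independently perturbed vector. The paper avoids this entirely by reversing the limits: for each fixed $t<1$ it only needs uniform boundedness of the integrand over the compact set $(s,\epsilon)\in[0,t]\times[0,1]$, which follows from plain continuity (no rate near $s=1$ required), and it then sends $t\to1$ using the $L^2$ convergence $\Xi(f^t)\to\Xi(f)$ together with the integrability of $P^{t_m}$ from Lemma~\ref{l:jpi}, which is an $\epsilon=0$ statement. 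Either adopt that order of limits, or supply the extension of Propositions~\ref{p:densitybound} and~\ref{p:mhpmain} to $(f+\epsilon Z,\,f^s+\epsilon Z')$ with constants uniform in $\epsilon$.
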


\begin{proof}
    Let $Z=(Z_x)_{x\in D}$ be an i.i.d.\ standard Gaussian vector which is independent of $f$. For $\epsilon>0$ and deterministic $h:\R^D\to\R$, define $\Xi^\epsilon(h)=\E[\Xi(h+\epsilon Z)]$. By Proposition~\ref{p:derpivint} applied to $h+\epsilon Z$, $\Xi^\epsilon$ is smooth and for any $x_1,\dots,x_m\in D$
    \begin{equation}\label{e:WCTail1}
    \begin{aligned}
        \partial_{x_1}\dots\partial_{x_m}\Xi^\epsilon(h-\ell)&=\E[d_x\Xi(h-\ell+\epsilon Z)]\varphi_{\epsilon Z(\underline{x})}(\ell-h(\underline{x}))\\
        &=\E[d_{\underline{x}}\Xi(h-\ell+\epsilon Z)]H^{\tilde{\alpha}^x}_{\epsilon Z(\underline{x})}(\ell-h(\underline{x}))\varphi_{\epsilon Z(\underline{x})}(\ell-h(\underline{x}))
    \end{aligned}
    \end{equation}
    where we have used the independence of $Z(\underline{x})$ and $Z(x^c)$ along with the fact that $d_{\underline{x}}\Xi$ does not depend on the values of its argument at $\underline{x}$. Since $d_{\underline{x}}\Xi$ and $\varphi_{\epsilon Z(\underline{x})}$ are bounded, we conclude that the above random variable is bounded by a polynomial in $h(\underline{x})$ and hence $\E\left[\|\nabla^k\Xi^\epsilon(f)\|_2^2\right]<\infty$ for all $k \in \mathbb{N}$. We may therefore apply Proposition~\ref{p:tailboundsmooth} for any $t\in[0,1]$ and $\epsilon>0$ to obtain
    \begin{equation}\label{e:WCTail2}
    \begin{aligned}
        &\mathrm{Cov}\Big[\sum_{m' \ge m} Q_{m'}[\Xi^\epsilon(f-\ell)],\sum_{m' \ge m} Q_{m'}[\Xi^\epsilon(f^t-\ell)] \Big]=\\
        & \qquad \sum_{x,y\in D^m}\prod_{i=1}^{m}K_{x_i,y_i}\int_0^t\int_0^{t_1}\dots\int_0^{t_{m-1}}\E\big[\partial^{\alpha^x}_f\Xi^\epsilon(f-\ell)\partial^{\alpha^y}_{f^{t_m}}\Xi^\epsilon(f^{t_m}-\ell)\big]\;dt_{m}\dots dt_1.
    \end{aligned}
    \end{equation}
    By Lemma~\ref{l:derivatives_OU} (or, more precisely, its proof) applied to $\Xi(f-\ell)\Xi(f^t-\ell)$ viewed as a function of $(f,f^t)$, and Proposition~\ref{p:derpivintjoint}, we see that for any $s<1$
    \begin{equation}\label{e:WCTail3}
        \E\big[\partial^{\alpha^x}_f\Xi^\epsilon(f-\ell)\partial^{\alpha^y}_{f^{s}}\Xi^\epsilon(f^{s}-\ell)\big]\to\partial^{\alpha^x}_f\partial^{\alpha^y}_{f^{s}}\E\big[\Xi(f-\ell)\Xi(f^{s}-\ell)\big]= P^{s}(x;y)
    \end{equation}
    as $\epsilon\searrow 0$. We next claim that the left hand side of this expression is uniformly bounded over $s\in[0,t]$ for any given $t<1$. Assuming the claim, we may apply the dominated convergence theorem to the right hand side of \eqref{e:WCTail2} and use the $L^2$ convergence of $\Xi^\epsilon$ as $\epsilon\to 0$ to conclude that
    \begin{multline*}
        \mathrm{Cov}\Big[\sum_{m' \ge m} Q_{m'}[\Xi(f-\ell)],\sum_{m' \ge m} Q_{m'}[\Xi(f^t-\ell)] \Big]=\\
        \sum_{x,y\in D^m}\prod_{i=1}^{m}K(x_i-y_i)\int_0^t\int_0^{t_1}\dots\int_0^{t_{m-1}}P^{t_m}(x;y)\;dt_{m}\dots dt_1.
    \end{multline*}
    Taking $t\to 1$, proves the statement of the lemma since $\Xi(f^t)\to\Xi(f)$ in $L^2$ and $P^{t_m}$ is integrable over $[0,1]$ by Lemma~\ref{l:jpi}.

    It remains to prove the claim. Let $\tilde{Z}$ be an independent copy of $Z$ and define $f_\epsilon=f+\epsilon Z$ and $f^s_\epsilon=f^s+\epsilon\tilde{Z}$. By Lemma~\ref{l:derivatives_OU}, Fubini's theorem and Proposition~\ref{p:derpivint}, the left hand side of \eqref{e:WCTail3} is equal to
    \begin{align*}
        \partial^{\alpha^x}_f\partial^{\alpha^y}_{f^{s}}\E\big[&\Xi^\epsilon(f-\ell)\Xi^\epsilon(f^{s}-\ell)\big]\\
        &=\partial^{\alpha^x}_f\partial^{\alpha^y}_{f^{s}}\E\big[\Xi(f-\ell+\epsilon Z)\Xi(f^{s}-\ell+\epsilon\tilde{Z})\big]\\
        &=\E\big[d_{\underline{x}}\Xi(f_\epsilon-\ell)d_{\underline{y}}\Xi(f^{s}_\epsilon-\ell)H^{\tilde{\alpha}^x,\tilde{\alpha}^y}_{f_\epsilon,f^s_\epsilon}(f_\epsilon,f^s_\epsilon)\big|f_\epsilon(\underline{x})=f^s_\epsilon(\underline{y})=\ell\big]\varphi_{f_\epsilon(\underline{x}),f^s_\epsilon(\underline{y})}(\ell,\ell).
    \end{align*}
    By Gaussian regression, the definition of the Hermite polynomials, and the fact that discrete derivatives of $\Xi$ of a given order are bounded, this expression is continuous in $(s,\epsilon)\in[0,1)\times[0,1]$. Hence it is bounded uniformly over $(s,\epsilon)\in[0,t]\times[0,1]$, proving the claim.
\end{proof}

\begin{remark}
    As for Theorem \ref{t:cluster_wce}, it is likely that a version of Proposition \ref{p:wcerror} remains true if $\ell$ is replaced by an arbitrary vector $\nu \in \R^D$, but we do not need such an extension.
\end{remark}


\medskip

\section{Localising the chaotic components}
\label{s:sl}

In this section we consider the chaos expansion of the cluster count of the GFF. Specialising our notation from the previous section, for $D \subset \Z^d$ a finite subset we let $\Xi_D(E)$ be the sum of the number of connected components of $E$ and $D \setminus E$ that do not intersect $\partial D$, so that $\Xi_{\Lambda_R}(f-\ell) = N_R(\ell)$. We then write 
\begin{equation}
    \label{e:cecc}
 N_R(\ell) =  \E[N_R(\ell)] + \sum_{m \ge 1} Q_m[N_R(\ell)] 
 \end{equation}
for the chaos expansion of the cluster count.

\smallskip
The main result of this section is that each component of \eqref{e:cecc} can be approximated by a `semi-local' counterpart whose coefficients $P_\infty$ are stationary and rapidly decaying away from the diagonal:

\begin{proposition}
\label{p:statwce}
Let $\ell \notin \{-\ell_c,\ell_c\}$ and $m \ge 1$. Then there exists a function $P_\infty: (\Z^d)^m \to \R$ such that, as $R \to \infty$
\begin{equation}\label{e:varbound}
\begin{aligned}
\Var \Big[  Q_m[N_R(\ell)] - \frac{1}{m!}\sum_{x_1,\dots,x_m\in \Lambda_R}&\wick{f(x_1)\cdots f(x_m)} P_\infty(x_1,\ldots,x_m)  \Big] =  \begin{cases} O(R^d) & \text{if } m=1, \\  o(R^d) & \text{if } m \ge 2. \end{cases}
 \end{aligned}
\end{equation}
Moreover $P_\infty$ is stationary, i.e.\
\begin{displaymath}
    P_\infty(x_1+y,\dots,x_m+y)=P_\infty(x_1,\dots,x_m) \, , \quad \forall y \in \Z^d,
\end{displaymath}
 permutation invariant, symmetric in the sense that $P_\infty(x) = P_\infty(-x)$, and for all $m \ge 2$ there exist $c_1,c_2,\rho > 0$ such that,
\[ | P_\infty(x_1,\dots,x_m)  | \le c_1 e^{-c_2 \mathrm{diam}_\infty(\underline{x})^\rho} \]
where $\mathrm{diam}_\infty$ denotes diameter with respect to the uniform distance $d_\infty$.
\end{proposition}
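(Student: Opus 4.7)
The plan is to construct $P_\infty$ as the infinite-volume analogue of the pivotal intensity $P_R(\ell;\cdot)$ appearing in Theorem~\ref{t:cluster_wce}, and then control the variance of the discrepancy via the diagram formula combined with diameter decay of $P_\infty$ and boundary-layer smallness of $P_R-P_\infty$.

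\textbf{Construction and basic properties.} I would set
\[
P_\infty(x_1,\dots,x_m):=\E\bigl[d_x\Xi_\infty(f-\ell)\,\bigl|\,f(\underline{x})=\ell\bigr]\,\varphi_{f(\underline{x})}(\ell),
\]
where $\Xi_\infty$ is the formal infinite-volume cluster counting functional on $\Z^d$. Although $\Xi_\infty$ is infinite, the discrete derivative $d_{\underline{x}}\Xi_\infty(E)$ is an integer whenever all components of $E$ and $E^c$ that meet $\underline{x}$ are bounded; at non-critical $\ell$, this is an almost-sure event under the conditional law, by the truncated arm decay for the conditioned field established in Section~\ref{s:tae}. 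Translation, permutation, and reflection invariance then follow immediately from the corresponding symmetries of the GFF and of $\Xi_\infty$.

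\textbf{Decay of $P_\infty$.} For $d_{\underline{x}}\Xi_\infty(E)$ to be non-zero, all points of $\underline{x}$ must sit in bounded components of $E$ or $E^c$ whose connectivity is affected by $\underline{x}$; in particular, some pair in $\underline{x}$ at distance at least $\mathrm{diam}_\infty(\underline{x})/m$ must be joined by a bounded arm. Combining the stretched-exponential truncated arm estimates for the conditioned field with the polynomial bound on the Hermite factor from Proposition~\ref{p:mhpmain1} and standard Gaussian concentration for the conditional density yields $|P_\infty(x)|\le c_1 e^{-c_2\mathrm{diam}_\infty(\underline{x})^\rho}$ for some $\rho>0$.

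\textbf{Variance bound.} Writing $\Delta_R(x):=P_R(\ell;x)-P_\infty(x)$, Theorem~\ref{t:cluster_wce} and the diagram formula (Theorem~\ref{t:df}) give
\[
\Var\Bigl[\tfrac{1}{m!}\sum_{x\in\Lambda_R^m}\wick{f(x_1)\cdots f(x_m)}\Delta_R(x)\Bigr]=\tfrac{1}{m!}\sum_{x,y\in\Lambda_R^m}\Delta_R(x)\Delta_R(y)\prod_{i=1}^m G(x_i-y_i).
\]
Two ingredients are needed: (i) $|\Delta_R(x)|$ inherits the same diameter decay as $P_\infty$, uniformly in $R$, by running the argument of the previous step for $P_R$; and (ii) for $\underline{x}$ at distance $\gg 1$ from $\partial\Lambda_R$ with bounded diameter, $\Delta_R(x)$ is small, with a rate inherited from the convergence of finite- to infinite-volume pivotal intensities (essentially Lemma~\ref{l:conv}). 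A decomposition of the sum according to $\mathrm{diam}_\infty(\underline{x})$ and $\mathrm{dist}(\underline{x},\partial\Lambda_R)$ then delivers the bound: large-diameter terms are killed by (i), bulk terms by (ii), and only a boundary layer whose contribution to the above sum is $o(R^d)$ for $m\ge 2$ (respectively $O(R^d)$ for $m=1$) remains, once the Green's-function factors $\prod_i G(x_i-y_i)$ are summed out.

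\textbf{Main obstacle.} The essential difficulty is establishing uniform truncated arm decay for the field \emph{conditioned} on $f(\underline{x})=\ell$: the conditioning perturbs $f$ globally through Green's-function tails, so the truncated arm estimates for the unconditioned GFF of~\cite{dgrs23} do not apply verbatim. This is exactly the role of the de-pinning argument in Section~\ref{s:tae}, which compares the conditioned field to the unconditioned one after subtracting an explicit small deterministic profile. Once those estimates are in hand, both the construction and the decay of $P_\infty$, as well as the approximation bound (ii) on $\Delta_R$, reduce to standard Gaussian regression and combinatorial bookkeeping.
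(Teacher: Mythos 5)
Your proposal follows essentially the same route as the paper: $P_\infty$ is the stationary pivotal intensity (the paper makes your ``formal infinite-volume derivative'' rigorous via a $\limsup$ plus a stabilisation lemma, and handles repeated points through the conditional-covariance form of the Hermite factor, since the full infinite-volume Hermite polynomial is not defined), the symmetries come from those of $f$ and the cluster count, the decay and the uniform-in-$R$ control of $P_R-P_\infty$ both reduce to pinned/truncated arm decay via the de-pinning argument you correctly identify as the key input, and the variance of the discrepancy is bounded exactly as you describe by the diagram formula together with summing out the Green's-function factors over a boundary layer. The argument is correct and matches the paper's proof in structure and in all essential steps.
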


In the case $m=1$ we refine the approximation in \eqref{e:varbound} by including boundary effects, and also give a qualitative version of \eqref{e:varbound} valid at critical levels:

\begin{proposition}
\label{p:varasymp}
Let $\ell \notin \{-\ell_c,\ell_c\}$. Then there exists a function $P_\infty^H: \N \to \R$ such that, as $R \to \infty$,
\[  \Var \Big[  Q_1[N_R(\ell)]  -  \sum_{x\in \Lambda_R} f(x)  P_\infty^H \big(d_\infty(x, \partial \Lambda_R) \big) \Big]  = o(R^d). \]
Moreover the function $P_\infty^H$ satisfies
\[ | P_\infty^H(k) - P_\infty(0) | \le c_1 e^{-c_2 k^\rho} ,\]
where $c_1,c_2,\rho > 0$ are constants and $P_\infty$ is as in Proposition \ref{p:statwce}.
\end{proposition}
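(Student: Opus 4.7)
The plan is to apply Theorem \ref{t:cluster_wce} to write $Q_1[N_R(\ell)] = \sum_{x \in \Lambda_R} f(x) P_R(\ell; x)$, where $P_R(\ell; x) = \partial_x \E[N_R(\ell)]$ is the one-point pivotal intensity, and then construct a boundary-corrected approximation so that the residual $\Delta(x) := P_R(\ell; x) - P_\infty^H(d_\infty(x, \partial \Lambda_R))$ contributes $o(R^d)$ to the variance. The key input throughout is the semi-localisation established in the previous section: the truncated arm decay of Section \ref{s:tae} implies that $P_R(\ell; x)$ is determined, up to error $c_1 e^{-c_2 r^\rho}$, by the configuration within distance $r$ of $x$.

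To define $P_\infty^H(k)$, for each $R$ I would fix a reference point $x_R^{(k)} \in \Lambda_R$ at distance exactly $k$ from one face of $\partial \Lambda_R$ and at distance at least $R/2$ from every other face, and set $P_\infty^H(k) := \lim_{R \to \infty} P_R(\ell; x_R^{(k)})$. Semi-localisation implies that this limit exists, is independent of the chosen sequence (so $P_\infty^H(k)$ is well defined), and satisfies $|P_\infty^H(k) - P_\infty(0)| \le c_1 e^{-c_2 k^\rho}$, since the deviation from the stationary value $P_\infty(0)$ can only be caused by pivotal configurations reaching the nearby face at distance $k$.

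The main technical step is the pointwise bound
\[ \big| P_R(\ell; x) - P_\infty^H(d_\infty(x, \partial \Lambda_R)) \big| \le c_1 e^{-c_2 d_2(x)^\rho}, \]
where $d_2(x)$ denotes the $d_\infty$-distance from $x$ to the \emph{second}-closest face of $\partial \Lambda_R$. This follows from running the same semi-localisation argument once more: replacing the full box $\Lambda_R$ by the half-space associated to the single face closest to $x$ affects $P_R(\ell; x)$ only via events that reach some other face of $\partial \Lambda_R$, and such events have probability at most $c_1 e^{-c_2 d_2(x)^\rho}$ by truncated arm decay. I expect this step to be the main obstacle, as it requires adapting the de-pinning arguments of Section \ref{s:tae} to conditionings that can simultaneously involve several faces.

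Given this pointwise bound, the variance estimate is direct. The exponential weight $e^{-c_2 d_2(x)^\rho}$ effectively localises $\Delta$ to neighborhoods of the $(d-2)$-dimensional edges of $\Lambda_R$. Parametrising a single edge-tube as $x = (u, v) \in \Z^{d-2} \times \Z^2$ with $u$ along the edge direction and $v$ transverse, and using $|v|^\rho + |v'|^\rho \ge c|v-v'|^\rho$ (for $\rho \ge 1$) to transfer the exponential decay onto the difference variable, one obtains
\[ \sum_{x, y \in \mathrm{tube}} G(x-y) |\Delta(x)| |\Delta(y)| \le C R^{d-2} \sum_{z \in \Z^{d-2},\, |z_i| \le 2R} (1 + |z|)^{2-d} = O(R^{d-2} \log R). \]
The logarithm comes from the fact that summing $|z|^{2-d}$ over a $(d-2)$-dimensional box of side $R$ produces only $O(\log R)$, rather than the $O(R^2)$ growth one would get from a full $d$-dimensional summation. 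Summing over the $O(d^2)$ edge-tubes, and checking that cross-contributions from distinct tubes are of the same order or smaller, then gives $\Var[\sum_x f(x) \Delta(x)] = O(R^{d-2} \log R) = o(R^d)$, as required.
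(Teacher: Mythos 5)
Your proposal is correct and follows essentially the same route as the paper: you define $P_\infty^H$ as a half-space limit of the one-point pivotal intensities, establish the pointwise bound $|P_R(\ell;x)-P_\infty^H(d_\infty(x,\partial\Lambda_R))|\le c_1e^{-c_2 d_2(x)^\rho}$ (which is exactly the $m=1$ case of the paper's Lemma \ref{l:conv}, proved there via the pinned truncated arm decay of Section \ref{s:tae} — note the conditioning is only at the single point $x$, so no multi-face de-pinning is needed), and then sum against $G$ near the codimension-two edges to get $O(R^{d-2}\log R)=o(R^d)$, matching the paper's appeal to the second item of Lemma \ref{l:rv4} with $i=d-2$, $\alpha=d-2$. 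The only cosmetic difference is that the paper defines $P_\infty^H$ intrinsically through the half-space functional $\Xi^{\mathrm{H}}_\infty$ rather than as a limit of finite-box intensities, and your restriction to $\rho\ge 1$ in the inequality $|v|^\rho+|v'|^\rho\ge c|v-v'|^\rho$ is unnecessary (subadditivity handles $\rho\le 1$), but neither point affects correctness.
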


\begin{proposition}
\label{p:statwcequal}

There exists $P_\infty(0) \in \R$ such that, as $R \to \infty$,
\[ \Var\Big[  Q_1[N_R(\ell_c)]  - P_\infty(0) \sum_{x\in \Lambda_R} f(x) \Big] = o(R^{d+2}) . \]
\end{proposition}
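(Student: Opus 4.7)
The starting point is Theorem~\ref{t:cluster_wce} which, using that $\wick{f(x)}=f(x)$, gives the identity
\[ Q_1[N_R(\ell_c)] = \sum_{x \in \Lambda_R} f(x)\,P_R(x), \]
where $P_R(x):=P(\ell_c;x)$ denotes the one-point pivotal intensity. For any candidate constant $P_\infty(0)\in\R$, setting $Q_R(x):=P_R(x)-P_\infty(0)$, the variance in the statement equals the quadratic form $\sum_{x,y\in\Lambda_R}G(x-y)Q_R(x)Q_R(y)$. Using that $G\ge 0$, the inequality $2|ab|\le a^2+b^2$, and the estimate $\max_{x\in\Lambda_R}\sum_{y\in\Lambda_R}G(x-y)=O(R^2)$ (a consequence of $G(z)\sim c|z|^{2-d}$), this is bounded by $O(R^2)\sum_{x\in\Lambda_R}Q_R(x)^2$. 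It therefore suffices to exhibit $P_\infty(0)\in\R$ for which $\sum_{x\in\Lambda_R}Q_R(x)^2=o(R^d)$.

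The plan is to define $P_\infty(0)$ via a localisation of the pivotal event. The discrete derivative $d_0\Xi$ depends only on the ``pivotal cluster'' $\mathcal C$ of $0$, that is, the connected components of $0$ and of its neighbours in $\{f>\ell_c\}\cup\{0\}$ and in $\{f<\ell_c\}\cup\{0\}$. Whenever $\mathcal C\subset \Lambda_k\subset \Lambda_R$, the value $d_0N_R(\ell_c)$ coincides with its infinite-volume counterpart, and translation invariance shows that the associated contribution $\tilde P_k$ to $P_R(0)$ is independent of $R\ge k$. Since
\[ |\tilde P_k-\tilde P_{k'}|\le \|d\Xi\|_\infty\,\varphi_{f(0)}(\ell_c)\,\P\bigl[\mathcal C\not\subset \Lambda_{\min(k,k')}\,\big|\,f(0)=\ell_c\bigr], \]
the sequence $(\tilde P_k)$ is Cauchy (provided the right-hand side tends to zero as $\min(k,k')\to\infty$), and I set $P_\infty(0):=\lim_k \tilde P_k$. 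The same comparison produces a rate function $\eta(k)\downarrow 0$ such that $|P_R(x)-P_\infty(0)|\le \eta(d_\infty(x,\partial\Lambda_R))$ uniformly in $R$. Combining with the uniform bound $\|P_R\|_\infty\le C$ from Lemma~\ref{l:bounds1} and splitting $\Lambda_R$ into a boundary annulus of width $k$ and its complement yields
\[ \sum_{x\in\Lambda_R} Q_R(x)^2 \le (2C)^2\cdot O(k R^{d-1}) + \eta(k)^2\cdot O(R^d), \]
which, fed into the quadratic-form bound, gives $O(k R^{d+1})+O(\eta(k)^2 R^{d+2})=o(R^{d+2})$ upon letting $k=k(R)\to\infty$ slowly relative to $R$.

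The main obstacle will be the percolation-theoretic input $\P[\mathcal C\not\subset\Lambda_k\mid f(0)=\ell_c]\to 0$ as $k\to\infty$. At non-critical levels the analogous statement is quantitative, following from the super-polynomial truncated arm decay established in Section~\ref{s:tae}; at $\ell_c$ no such rate is known. I would instead appeal to the qualitative fact that there is no infinite cluster in either $\{f>\ell_c\}$ or $\{f<\ell_c\}$ at the critical level --- so that the unconditioned pivotal cluster is almost surely finite --- and then transfer this to the conditioned field by a de-pinning argument modelled on Section~\ref{s:tae}. This purely qualitative control is precisely why Proposition~\ref{p:statwcequal} attains only the bound $o(R^{d+2})$, weaker than the $o(R^d)$ achieved in Proposition~\ref{p:statwce}.
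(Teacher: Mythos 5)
Your reduction is sound and close in spirit to the paper's: write $Q_1[N_R(\ell_c)]=\sum_x f(x)P_R(x)$, show $|P_R(x)-P_\infty(0)|\le\eta(d_\infty(x,\partial\Lambda_R))$ with $\eta(k)\to0$, and conclude by a kernel estimate (the paper feeds $\Gamma_R=P_R-P_\infty$ directly into Lemma~\ref{l:rv4}(1) rather than passing through $O(R^2)\sum_xQ_R(x)^2$, but your cruder bound also yields $o(R^{d+2})$). The gap is in the percolation input. You define the pivotal cluster $\mathcal C$ as the union of \emph{all} components of $\{f>\ell_c\}$ and $\{f<\ell_c\}$ adjacent to the pivotal point, and you need $\P[\mathcal C\not\subset\Lambda_k\mid f(0)=\ell_c]\to0$, which you propose to deduce from ``there is no infinite cluster in either $\{f>\ell_c\}$ or $\{f<\ell_c\}$''. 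That fact is false for $\{f<\ell_c\}$: since $\ell_c>0$, this set has the law of the supercritical excursion set $\{f>-\ell_c\}$ and a.s.\ contains an unbounded component; for $\{f>\ell_c\}$ absence of percolation at criticality is not known. Consequently a neighbour of $0$ lies in an infinite cluster with positive (conditional) probability, your event $\{\mathcal C\subset\Lambda_k\}$ fails with probability bounded away from $0$, and the rate function $\eta(k)$ you construct does not tend to zero.

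The repair is available in the paper and does not change your architecture: stabilisation of $d_0\Xi_{\Lambda_R}$ (Lemma~\ref{l:piv_stabilisation}) requires only that every \emph{bounded} cluster adjacent to the pivotal point be contained deep inside the box --- unbounded clusters always meet $\partial\Lambda_R$ and are never counted, so they are irrelevant to the discrete derivative. The event whose probability must vanish is therefore the \emph{truncated} arm event $\widetilde{\mathrm{Arm}}_{k;0}$, and $\P[\mathrm{Arm}_k]\to0$ holds trivially at every level, including $\ell_c$, by \eqref{e:tadqual} (a fixed bounded cluster cannot reach $\partial\Lambda_k$ for all $k$). The transfer to the conditioned field is then exactly the de-pinning estimate you allude to, carried out in Proposition~\ref{p:ptad}(1), which is stated at every level. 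With the stabilisation condition restated in terms of bounded clusters and the truncated arm event substituted for your $\{\mathcal C\not\subset\Lambda_k\}$, your argument closes and coincides with the paper's (Lemma~\ref{l:conv}, first item, qualitative case, followed by the kernel bound).
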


The implicit constants in \eqref{e:varbound} are not uniform in the chaos order $m$. To control the tail of the chaos expansion we use an alternate `semi-localisation' which arises by considering a truncation of the component count. Define $m_0 = \max\{2,7-d\}$, i.e.\ $m_0$ is the smallest positive integer such that $m_0 (d-2) > d$.

\begin{proposition}
\label{p:statwcetrunc}
Let $\ell \notin \{-\ell_c,\ell_c\}$ and $\eps > 0$. Then there exists $r, M > 0$, and for each $m \ge m_0$ a function $P_{\infty; \le r}: (\Z^d)^m \to \R$, such that as $R \to \infty$ eventually
\begin{equation*}
\Var \Big[ \sum_{m \ge m_0} Q_m[N_R(\ell)] - \sum_{m_0\leq m\leq M} \overline{Q}_m(R,r) \Big] \le  \eps R^d ,
\end{equation*}
where
\begin{displaymath}
    \overline{Q}_m(R,r):=\frac{1}{m!}\sum_{x_1,\dots,x_n\in\Lambda_R}\wick{f(x_1)\dots f(x_m)}P_{\infty;\leq r}(x_1,\dots,x_m).
\end{displaymath}
Moreover the functions $P_{\infty; \le r}$ are stationary, permutation invariant, symmetric, supported on the set $\{ x \in (\Z^d)^m : \mathrm{diam}_\infty(\underline{x}) \le r+2\}$, and satisfy $| P_{\infty; \le r}(x)| \le e^{c m}\sqrt{m!}$ for $c = c_{d,\ell,r} > 0$.
\end{proposition}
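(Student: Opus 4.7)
My plan is to combine a chaos-order truncation $m > M$ with a spatial truncation on the diameter of the supporting configurations, controlling each contribution separately.

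For the spatial truncation, I would define $N^{\le r}_R(\ell)$ to be the number of clusters of $\{f > \ell\}$ or $\{f < \ell\}$ contained in $\Lambda_R$ and having diameter at most $r$, using a canonical representative per cluster (e.g.\ its lexicographically minimal vertex) to avoid double-counting. Then $N^{\le r}_R$ is a local additive functional with each summand determined by the field in a box of side $O(r)$. Applying Theorem \ref{t:cluster_wce} to $N^{\le r}_R$, its chaotic components have pivotal intensities $P^{(r)}_R$ supported on $\mathrm{diam}_\infty(\underline{x}) \le r+2$, and by stationarity of the GFF these agree, for configurations at distance greater than $r+O(1)$ from $\partial \Lambda_R$, with a translation-invariant function $P_{\infty; \le r}$ computed using the infinite-volume GFF. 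The bound $|P_{\infty;\le r}(x)| \le e^{cm}\sqrt{m!}$ with $c = c_{d,\ell,r}$ follows from Lemma \ref{l:bounds1} together with the uniform bound $\|dN^{\le r}\|_\infty \le C_{d,r}$, while permutation invariance, symmetry, and stationarity of $P_{\infty;\le r}$ are inherited from the GFF. Setting $\overline{Q}_m(R,r) = \frac{1}{m!}\sum_{x\in \Lambda_R^m}\wick{f(x_1)\cdots f(x_m)} P_{\infty;\le r}(x)$ gives the desired approximating functional.

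For each $m_0 \le m \le M$, decompose $Q_m[N_R] - \overline{Q}_m(R,r) = Q_m[N_R - N^{\le r}_R] + (Q_m[N^{\le r}_R] - \overline{Q}_m(R,r))$. The first term is the $m$-th chaos of a functional counting clusters of diameter $>r$; applying Theorem \ref{t:cluster_wce}, its coefficients $P_R - P^{(r)}_R$ are non-zero only on pivotal events involving a diameter-$>r$ cluster through $\underline{x}$, and truncated arm decay for the field conditioned on $f(\underline{x})=\ell$ (from Section \ref{s:tae}) bounds them by a super-polynomially decaying function $\phi_m(r)$. Combined with the diagram formula and the $K^m$-summability $\sum_{x,y \in \Lambda_R} K(x-y)^m = O(R^d)$ (valid for $m \ge m_0$ since $m_0(d-2) > d$), this yields $\Var[Q_m[N_R - N^{\le r}_R]] \le C_{d,\ell,m}\phi_m(r)R^d$. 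The second term is a pure boundary effect, since $P^{(r)}_R$ and $P_{\infty;\le r}$ agree outside a layer of width $r+O(1)$ around $\partial \Lambda_R$; it contributes $O_{d,\ell,m,r}(R^{d-1})$. Choosing $r=r(M,\eps)$ large makes the sum over $m_0 \le m \le M$ at most $\eps R^d/2$ for all sufficiently large $R$.

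For the chaos tail $\sum_{m > M} Q_m[N_R]$, apply Proposition \ref{p:wcerror} with $m = M+1$ to express its variance as a weighted integral of order-$(M+1)$ joint pivotal intensities $P^t(x;y)$. Bound $|P^t(x;y)|$ via Cauchy--Schwarz, the Hermite polynomial estimates of Lemma \ref{l:jpi} (preserving integrability against the $(1-t)^{-(M+1)+1/2}$ singularity on the nested simplex), and truncated arm decay for the interpolated pair $(f,f^t)$, obtained by extending the de-pinning argument of Section \ref{s:tae} uniformly in $t \in [0,1)$. These estimates effectively restrict the $(x,y)$ sum to small-diameter configurations and yield a total bound of the form $\eta(M) R^d$ with $\eta(M) \to 0$ as $M \to \infty$. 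Choose $M=M(\eps)$ so that this contribution is $\le \eps R^d/2$; combined with the previous paragraph, the proposition follows. The main obstacle is precisely this last step: obtaining the joint truncated arm decay uniformly in the interpolation parameter $t$, with quantitative dependence on $M$ that dominates the Hermite singularity, is the delicate point where the percolation inputs of Section \ref{s:tae} and the Hermite machinery of Appendix \ref{a:mhp} must be combined carefully.
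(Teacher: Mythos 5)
Your spatial truncation $N_R^{\le r}$, the identification of $P_{\infty;\le r}$ with its stated properties, the per-order treatment of $Q_m[N_R]-\overline{Q}_m(R,r)$ for $m_0\le m\le M$ via arm decay plus the diagram formula, and the $O(R^{d-1})$ boundary term all match the paper's argument. The gap is in your last step, the chaos tail. You propose to bound $\Var[\sum_{m>M}Q_m[N_R(\ell)]]$ for the \emph{full} cluster count by applying Proposition \ref{p:wcerror} at order $M+1$ and claim a bound $\eta(M)R^d$ with $\eta(M)\to 0$. No mechanism in the available estimates produces decay in $M$: the truncated-arm inputs give smallness in the spatial scale $r$, not in the chaos order, and every $M$-dependent ingredient goes the wrong way. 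The order-$(M+1)$ joint pivotal intensities near the diagonal (say $x=y$ with all coordinates equal) involve Hermite polynomials of order $\sim 2M$ whose conditional second moments scale like $(1-t)^{-2M}$ (Proposition \ref{p:mhpmain}), the discrete-derivative bounds grow like $2^M$ (Lemma \ref{l:bounds1}), and the iterated $t$-integral over the nested simplex yields a constant that is bounded but does not decay in $M$. So the diagonal contribution to the integrated formula at order $M+1$ is of order $C_M R^d$ with $C_M$ not small, and the claimed $\eta(M)\to 0$ fails.

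The paper avoids this by swapping the order of the two truncations in the tail. It writes the error as $I_1+I_2+I_3$ with $I_1=\sum_{m>M}Q_m[N_{R;\le r}(\ell)]$ and $I_2=\sum_{m\ge m_0}Q_m[N_{R;>r}(\ell)]$. For $I_1$ the chaos tail is taken of the \emph{spatially truncated} count, which decomposes as $\sum_x\Theta_x$ with each $\Theta_x$ a bounded functional of $f|_{x+\Lambda_{r+1}}$; then $\Var[\sum_{m>M}Q_m[\Theta_0]]\to 0$ as $M\to\infty$ by elementary $L^2$ convergence of the chaos expansion of a fixed local random variable, and an off-diagonal covariance-decay argument (Lemma \ref{l:trunctail}) upgrades this to $\eps_M R^d$ uniformly in $R$. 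For $I_2$, which contains all chaos orders of the large-cluster remainder, Proposition \ref{p:wcerror} is applied at the \emph{fixed} order $m_0$ (not at $M+1$), and smallness comes from the factor $e^{-c r^\rho}$ in the order-$m_0$ joint pivotal intensities. To repair your argument you would need to replace your tail $\sum_{m>M}Q_m[N_R]$ by $\sum_{m>M}Q_m[N_R^{\le r}]$ (absorbing the orders $m>M$ of the diameter-$>r$ part into a fixed-order application of Proposition \ref{p:wcerror}) and then prove the analogue of Lemma \ref{l:trunctail}; as written, the step does not go through.
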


The remainder of the section is devoted to proving Propositions \ref{p:statwce}--\ref{p:statwcetrunc}.

\subsection{Stationary pivotal intensities}
\label{ss:stat}

In this section we define the functions $P_\infty$, $P_\infty^+$ and $P_{\infty; \le r}$ appearing in Propositions \ref{p:statwce}--\ref{p:statwcetrunc}. We emphasise that these definitions are valid at every level, including criticality.

\subsubsection{Stationary pivotal events and intensities}
Let $\mathrm{Piv}_R(\underline{y},\sigma)$ and $P_R$ be respectively the pivotal events and intensities for the functional $N_R(\ell) = \Xi_{\Lambda_R}(f-\ell)$ given in Definitions~\ref{d:piveve} and \ref{d:pivint}. We shall define $P_\infty$ to be the stationary analogue of $P_R$; for this we first need a stationary version of the pivotal events:

\begin{definition}[Stationary pivotal events]
For $y \in (\Z^d)^m$ and $E \in \mathcal{P}(\Z^d)$ define
\begin{displaymath}
    d_{\underline{y}} \Xi_\infty(E)=\limsup_{R\to\infty}d_{\underline{y}}\Xi_{\Lambda_R}(E).
\end{displaymath}
For $\sigma \in\R$, we say that a configuration $E\subseteq \Z^d$ is \emph{stationary-$\sigma$-pivotal at $\underline{y}$} if
\begin{displaymath}
    d_{\underline{y}} \Xi_\infty(E)=\sigma
\end{displaymath}
and define $\mathrm{Piv}_\infty(\underline{y},\sigma)$ to be the set of all such configurations.
\end{definition}

We next give a stationary extension of the iterated derivatives defined in \eqref{e:itex}. For $y \in (\Z^d)^m$, let $y^c = \Z^d \setminus \{\underline{y}\}$ and let $(f(\underline{y})|f(y^c)=0)$ be defined as the projection of $f(\underline{y})$ onto the orthogonal complement of the subspace of $L^2$ spanned by $f(y^c)$. Since $f$ has an i.i.d.\ component, so will the previous projection and hence $(f(\underline{y})|f(y^c)=0)$ is a non-degenerate Gaussian vector. In particular the Hermite polynomial $H^{\tilde{\alpha}^y}_{f(\underline{y}) | f(y^c) =  0}$ is well-defined, where $\tilde{\alpha}^y$ is as in \eqref{e:itex}. Then we define
\[  d_y \Xi_\infty(f-\ell) = d_{\underline{y}} \Xi_\infty(f-\ell) H^{\tilde{\alpha}^y}_{f(\underline{y}) | f(y^c) = 0} \big(f(\underline{y})-\E[f(\underline{y})|f(y^c)]\big) . \]
Note that this corresponds to the equivalent formulation of the Hermite polynomial from Remark~\ref{r:pivint}; this is necessary since the Hermite polynomial $H_{f(\underline{y}), f(y^c)}$ is not well-defined for an infinite vector.

\smallskip
We can now define the stationary pivotal intensities appearing in Proposition \ref{p:statwce}--\ref{p:statwcequal}. 

\begin{definition}[Stationary pivotal intensities]
\label{d:statpivint}
For $y \in (\Z^d)^m$, the \textit{stationary pivotal intensity at $y$} is 
  \begin{displaymath}
P_\infty(y) = \E\left[ d_y\Xi_\infty(f-\ell) \middle|f(\underline{y})=\ell\right]\varphi_{f(\underline{y})}(\ell) .
\end{displaymath}
\end{definition} 

Next we confirm that $P_\infty$ is indeed stationarity, permutation invariant, and symmetric. For this we provide a `stabilisation' characterisation of the stationary pivotal events: for large enough domains the discrete derivative is eventually constant. Intuitively this holds because we are only interested in bounded clusters touching the pivotal points, which must be determined on some bounded domain.

\begin{lemma}[Stabilisation]
\label{l:piv_stabilisation}
    Let $E\subseteq\Z^d$, $m \ge 1$, and $y\in(\Z^d)^m$. Let $D\subset\Z^d$ be a finite subset such that every bounded cluster of $E\setminus\underline{y}$ and $E^c\setminus\underline{y}$ which intersects a neighbour of some $y_i$ is contained in $D\setminus\partial D$. Then $d_{\underline{y}}\Xi_\infty(E)=d_{\underline{y}}\Xi_D(E)$.
\end{lemma}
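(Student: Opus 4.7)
The plan is to prove the stronger statement that $d_{\underline{y}} \Xi_{\Lambda_R}(E) = d_{\underline{y}} \Xi_D(E)$ for every $R$ large enough that $D \subseteq \Lambda_R \setminus \partial \Lambda_R$, from which the $\limsup$ formula defining $d_{\underline{y}} \Xi_\infty(E)$ immediately yields the claim. I would begin by expanding the iterated discrete derivative as an inclusion-exclusion sum,
\[
d_{\underline{y}} \Xi_F(E) = \sum_{S \subseteq \underline{y}} (-1)^{|\underline{y}|-|S|}\, \Xi_F(E'_S), \qquad E'_S := (E \setminus \underline{y}) \cup S,
\]
for either choice $F = D$ or $F = \Lambda_R$. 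Since $\sum_{S \subseteq \underline{y}} (-1)^{|\underline{y}|-|S|} = 0$ whenever $\underline{y} \neq \emptyset$, it suffices to verify that the difference $\Delta_S := \Xi_{\Lambda_R}(E'_S) - \Xi_D(E'_S)$ is independent of $S \subseteq \underline{y}$.

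Next I would reinterpret each $\Xi_F(E'_S)$ as a count of clusters of $E'_S$ (and of its complement) in the whole of $\Z^d$ that are contained in $F \setminus \partial F$; this identification is immediate since a subset of $F$ avoiding $\partial F$ has all its neighbours in $F$, so its connectivity in $F$ and in $\Z^d$ coincide. Consequently, for $R$ large enough,
\[
\Delta_S = \big|\big\{C \text{ cluster of } E'_S \text{ in } \Z^d : C \subseteq \Lambda_R \setminus \partial \Lambda_R,\ C \not\subseteq D \setminus \partial D\big\}\big| + \text{(same for $(E'_S)^c$)}.
\]

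The core of the argument is to show that every cluster $C$ contributing to $\Delta_S$ is actually a cluster of $E \setminus \underline{y}$ disjoint from the closed neighbourhood of $\underline{y}$, and therefore is the same for every $S$. I would argue by contradiction. If $C$ is disjoint from $\underline{y}$ but contains a neighbour of some $y_i$, then $C$ is a bounded cluster of $E \setminus \underline{y}$ meeting a neighbour of $y_i$, so the hypothesis forces $C \subseteq D \setminus \partial D$, contradicting $C \not\subseteq D \setminus \partial D$; hence such a $C$ is disjoint from the neighbourhood of $\underline{y}$, and is a cluster of $E \setminus \underline{y}$ regardless of $S$. If instead $C$ contains some $y_i \in S$, write $S' := C \cap S$ and decompose $C \setminus S'$ as a union of clusters of $E \setminus \underline{y}$, each touching a neighbour of some $y_j \in S'$; the hypothesis forces any bounded such component cluster to lie in $D \setminus \partial D$, so either all components are bounded (implying $C \subseteq D \setminus \partial D$, a contradiction) or one is unbounded (implying $C$ itself is unbounded, also a contradiction). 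The analogous dichotomy applied to $(E'_S)^c$ and $\underline{y} \setminus S$ handles the complementary clusters. In every case $\Delta_S$ is revealed to be a count depending only on $E \setminus \underline{y}$, hence independent of $S$.

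The main obstacle I expect is closing off the second case cleanly: one must ensure the inclusion $S' \subseteq D \setminus \partial D$ used to deduce $C \subseteq D \setminus \partial D$. This is tantamount to requiring $\underline{y} \subseteq D \setminus \partial D$, which is either an implicit convention (consistent with the fact that every application of the lemma takes $D$ large enough to comfortably contain the pivotal set in its interior) or follows from the stated hypothesis once one observes that each $y_i$ has a neighbour lying in a bounded component of $E \setminus \underline{y}$ or $E^c \setminus \underline{y}$ in all non-degenerate cases. Modulo this bookkeeping, the remainder of the argument is a straightforward inclusion-exclusion cancellation.
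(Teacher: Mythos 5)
Your argument is correct and rests on the same key mechanism as the paper's proof: the hypothesis forces every bounded cluster that straddles the region between $D\setminus\partial D$ and the larger box to avoid the neighbourhood of $\underline{y}$, so the straddling-cluster count is insensitive to the configuration at the pivotal points (the paper packages this as a two-sided monotonicity bound on the first differences of an auxiliary count $\tilde{\Xi}^+$ of clusters contained in the big box that meet its intersection with the complement of $\mathrm{int}\,D$, whereas you identify the straddling clusters directly after an inclusion--exclusion expansion of $d_{\underline{y}}$, but the substance is identical). The bookkeeping point you flag --- that clusters contained entirely in $\underline{y}$ force one to assume $\underline{y}\subseteq D\setminus\partial D$ --- is a genuine implicit hypothesis that the paper's proof shares (its assertion that a ``new single-point cluster at $y_i$'' cannot increase $\tilde{\Xi}^+$ likewise needs $y_i\in D\setminus\partial D$), and it is harmless in practice because every application takes $D$ large enough to contain $\underline{y}$ in its interior.
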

In particular, this implies that the limit superior in the definition of $d_{\underline{y}}\Xi_\infty$ can be replaced by a genuine limit.
\begin{proof}
    Let $D\subset\Z^d$ satisfy the conditions of the lemma and let $D^\prime\supseteq D$. It is enough to show that $d_{\underline{y}}\Xi_{D^\prime}(E)=d_{\underline{y}}\Xi_{D}(E)$. By definition of the cluster count,
    \begin{equation}\label{e:piv_stabilisation}
        \Xi_{D^\prime}(E)=\Xi_{D}(E)+\tilde{\Xi}^+(E)+\tilde{\Xi}^+(E^c)
    \end{equation}
    where $\tilde{\Xi}^+(E)$ denotes the number of clusters of $E$ which are contained in $\mathrm{int}D^\prime:=D^\prime\setminus\partial D^\prime$ and intersect $D^\prime\setminus\mathrm{int}D$.

    By definition of the discrete derivative, $d_{\underline{y}}\tilde{\Xi}^+(E)$ can be expressed as a linear combination of terms of the form
    \begin{displaymath}
        \tilde{\Xi}^+(\tilde{E}\cup\{y_i\})-\tilde{\Xi}^+(\tilde{E}\setminus\{y_i\})
    \end{displaymath}
    for some $i=1,\dots,m$ and some $\tilde{E}$ that agrees with $E$ except possibly at the pivotal points (that is, $\tilde{E}\Delta E\subseteq\underline{y}$). We will show that all such terms are zero and hence that $d_{\underline{y}}\tilde{\Xi}^+(E)=0$.

    Observe that if we add $y_i$ to $\tilde{E}\setminus\{y_i\}$ this point can either join an existing cluster, make a connection between multiple existing clusters or form a new single-point cluster at $y_i$. In all cases we see that $\tilde{\Xi}^+$ cannot increase, so
    \begin{displaymath}
        \tilde{\Xi}^+(\tilde{E}\cup\{y_i\})\leq\tilde{\Xi}^+(\tilde{E}\setminus\{y_i\}).
    \end{displaymath}
    Now consider any cluster of $\tilde{E}\setminus\{y_i\}$ contained in $\mathrm{int}D^\prime$ which intersects $D^\prime\setminus\mathrm{int}D$. By the first condition in the statement of the lemma, this cluster cannot intersect any neighbour of $y_i$ and so it must also form a cluster in $\tilde{E}\cup\{y_i\}$. By taking the union over all such clusters, we have
    \begin{displaymath}
        \tilde{\Xi}^+(\tilde{E}\setminus\{y_i\})\leq\tilde{\Xi}^+(\tilde{E}\cup\{y_i\}).
    \end{displaymath}
    Combined with the previous equation and the earlier observation, we conclude that $d_{\underline{y}}\tilde{\Xi}^+(E)=0$. A near-identical argument, using the second condition in the statement of the lemma, shows that $d_{\underline{y}}\tilde{\Xi}^+(E^c)=0$. Thus applying $d_{\underline{y}}$ to \eqref{e:piv_stabilisation} completes the proof of the lemma.
\end{proof}

\begin{lemma}
    \label{l:sym}
    The pivotal intensities $P_\infty$ are stationary, permutation invariant, and symmetric. 
\end{lemma}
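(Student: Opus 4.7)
The plan is to derive each of the three claimed symmetries from a corresponding invariance of the GFF---translation, permutation of indices, and coordinate reflection---combined with Lemma \ref{l:piv_stabilisation}, which reduces the a priori infinite-domain quantity $d_{\underline{y}}\Xi_\infty$ to a finite-domain one to which finite-volume equivariance directly applies.

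The key preliminary step is a ``translation covariance'' identity for the stationary derivative: for any $y_0\in\Z^d$ and any $E\subseteq\Z^d$,
\[
d_{\underline{y}+y_0}\Xi_\infty(E+y_0)\;=\;d_{\underline{y}}\Xi_\infty(E).
\]
Indeed, if $D$ is a finite stabilising domain for $(\underline{y},E)$ in the sense of Lemma \ref{l:piv_stabilisation}, then $D+y_0$ is a stabilising domain for $(\underline{y}+y_0,E+y_0)$, and the identity reduces to the translation equivariance of the finite-domain cluster count $\Xi_{D+y_0}(E+y_0)=\Xi_D(E)$. Applied with $E=\{f>\ell\}$, and using the substitution $g(\cdot):=f(\cdot-y_0)$ which has the same law as $f$ by stationarity of the GFF, the conditional expectation in the definition of $P_\infty(y+y_0)$ matches that in $P_\infty(y)$. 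The remaining Hermite polynomial and Gaussian density factors in Definition \ref{d:statpivint} depend only on the conditional covariance of $f(\underline{y})$ given $f(y^c)$, which is translation invariant, so they are likewise unchanged. This yields stationarity.

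Permutation invariance is then immediate from inspection of the definitions: the unordered set $\underline{y}$, the multi-index $\tilde{\alpha}^y$, and the iterated discrete derivative $d_{\underline{y}}$ all depend only on the multiset $\{y_1,\dots,y_m\}$, and partial derivatives commute. Symmetry $P_\infty(x)=P_\infty(-x)$ follows by repeating the stationarity argument with the coordinate reflection $x\mapsto -x$ in place of translation: the analogue $d_{-\underline{y}}\Xi_\infty(-E)=d_{\underline{y}}\Xi_\infty(E)$ again follows from Lemma \ref{l:piv_stabilisation} combined with the reflection equivariance of $\Xi_D$, and the GFF covariance kernel $G$ is invariant under coordinate reflection (a feature the GFF inherits from Assumption \ref{a:rv}), so $g(\cdot):=f(-\cdot)$ has the same law as $f$ and provides the required substitution in the conditional expectation and covariance-dependent factors.

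No step presents a substantial obstacle; the entire argument is careful bookkeeping of how each of the three factors in the definition of $P_\infty$---the discrete derivative $d_{\underline{y}}\Xi_\infty(f-\ell)$, the Hermite polynomial, and the Gaussian density---transforms under the relevant symmetry. The only non-trivial ingredient is Lemma \ref{l:piv_stabilisation}, without which the infinite-domain derivative $d_{\underline{y}}\Xi_\infty$ could not be manipulated via finite-volume equivariance.
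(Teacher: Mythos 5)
Your proof is correct and follows essentially the same route as the paper: permutation invariance by inspection of the definition, stationarity via the translation covariance $d_{\underline{y}+y_0}\Xi_\infty(E+y_0)=d_{\underline{y}}\Xi_\infty(E)$ obtained from the stabilisation Lemma \ref{l:piv_stabilisation} together with stationarity of $f$, and symmetry from the reflection invariance of the field. Your treatment is if anything slightly more explicit than the paper's (which compresses the symmetry claim to one sentence), but there is no substantive difference in approach.
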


\begin{proof}
From the definition, we see that $P_\infty(y)$ depends only on $\underline{y}$, which is invariant under permutations of $y$. The symmetry of $P_\infty(y)$ follows from the symmetry of the field $f$.

To prove stationarity, let $E\subseteq\Z^d$, $y\in(\Z^d)^m$, and $x\in\Z^d$. By definition of the cluster count and Lemma~\ref{l:piv_stabilisation}
\begin{displaymath}
    d_{\underline{y}+x}\Xi_\infty(E+x)=\lim_{R\to\infty}d_{\underline{y}+x}\Xi_{\Lambda_R}(E+x)=\lim_{R\to\infty}d_{\underline{y}}\Xi_{\Lambda_R-x}(E)=d_{\underline{y}}\Xi_\infty(E).
\end{displaymath}
Then by inspecting the definition of $P_\infty$, and using the fact that $f$ is stationary, it is clear that $P_\infty(y)=P_\infty(y+x)$.
\end{proof}

 \subsubsection{Half-space pivotal intensities}
\label{ss:hspi}

In the case $m=1$ we will also need `half-space' analogues of the stationary pivotal events and intensity. Let $\Z^d_+ := \{ x = (x_1,\ldots,x_d) \in \Z^d : x_1 \ge 0\}$ denote the upper half-space.

\begin{definition}[Half-space pivotal events]
\label{d:hspiveve}
For $y \in \Z^d_+$ and $E \in \mathcal{P}(\Z^d)$, define
\begin{displaymath}
    d_y \Xi^{\mathrm{H}}_\infty(E)=\limsup_{R\to\infty}d_y\Xi_{\Lambda_R \cap \Z^d_+}(E).
\end{displaymath}
For $\sigma \in \R$ we say that a configuration $E\subseteq \Z^d$ is \emph{half-space-$\sigma$-pivotal at $y$} if
\begin{displaymath}
    d_y \Xi^{\mathrm{H}}_\infty(E)=\sigma.
\end{displaymath}
We define $\mathrm{Piv}^{\mathrm{H}}_\infty(y,\sigma)$ to be the set of all such configurations.
\end{definition}

\begin{definition}[Half-space pivotal intensity]
\label{d:hspivint}
For $k \in \N_0$, the \textit{half-space pivotal intensity at height $k$} is
  \begin{displaymath}
P_\infty^{\mathrm{H}}(k) = \E\left[ d_y\Xi^{\mathrm{H}}_\infty(f-\ell) \middle|f(y)=\ell\right]\varphi_{f(y)}(\ell) ,
\end{displaymath}
where $y$ denotes the point $(k,0,\ldots,0)$.
\end{definition} 

\subsubsection{Truncated pivotal intensities}
The function $P_{\infty;\le r}$ in Proposition \ref{p:statwcetrunc} is defined as the analogue of $P_\infty$ for a \textit{truncated cluster count}. More precisely, we let $N_{R;\le r}(\ell)$ denote the sum of the number of clusters of $\{f > \ell\}$ and of $\{f<\ell\}$ in $\Lambda_R$ which are of diameter at most~$r$ (and do not intersect $\partial \Lambda_R)$. We define $N_{R;>r}(\ell)$ analogously for clusters of diameter strictly larger than $r$. We then define $P_{R; \le r}$ (resp.\ $P_{R; > r}$) and $P_{\infty; \le r}$ (resp.\ $P_{\infty; > r}$) in the same way as $P_R$ and $P_\infty$ with the functionals $N_{R; \le r}(\ell)$ (resp.\ $N_{R; > r}(\ell)$) in place of $N_R(\ell)$.

\begin{lemma}
    \label{l:stationarytrunc}
    For every $r,R \geq 1$, $P_{\infty;\leq r}$ is stationary, permutation invariant, symmetric, and both $P_{R;\leq r}$ and $P_{\infty;\leq r}$ are supported on the set $\{ x \in (\Z^d)^m : \mathrm{diam}_\infty(\underline{x}) \le r+2\}$.
\end{lemma}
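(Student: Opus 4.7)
The plan is to mirror the proof of Lemma \ref{l:sym}, replacing the percolation-based stabilisation Lemma \ref{l:piv_stabilisation} with a simple locality statement that the truncation affords for free; the same locality will simultaneously yield the support claim.

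The key input is that for every $y\in\Z^d$, $E\subseteq\Z^d$, and $R\ge 1$ with $y\in\Lambda_R$, the discrete derivative $d_y\Xi_{\Lambda_R;\le r}(E)$ depends on $E$ only through $E\cap B_\infty(y,r+2)$ (and on whether $B_\infty(y,r+2)$ meets $\partial\Lambda_R$), where $B_\infty(y,s)$ denotes the closed $\ell^\infty$ ball of radius $s$ about $y$. The reason is that flipping the state of $y$ can only affect connected components of $\{f>\ell\}$ or $\{f<\ell\}$ that contain $y$ or one of its neighbours; any such component of diameter at most $r$ must lie in $B_\infty(y,r+1)$, and verifying that it really is a maximal connected component requires inspecting the configuration at its outer boundary in $\Z^d$, which lies in $B_\infty(y,r+2)$.

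Given this locality the support claim is immediate. If $\mathrm{diam}_\infty(\underline{y})>r+2$, pick $y_i,y_j\in\underline{y}$ with $d_\infty(y_i,y_j)>r+2$, so $y_j\notin B_\infty(y_i,r+2)$; then $d_{y_i}\Xi_{\Lambda_R;\le r}(E)$ is insensitive to the state of $y_j$, whence $d_{y_j}d_{y_i}\Xi_{\Lambda_R;\le r}(E)\equiv 0$. Commutativity of iterated discrete derivatives gives $d_{\underline{y}}\Xi_{\Lambda_R;\le r}(E)\equiv 0$ for every $R\ge 1$, and this identity passes to the $R\to\infty$ limit, forcing both $P_{R;\le r}(y)=0$ and $P_{\infty;\le r}(y)=0$; repeated entries in the full tuple $y\in(\Z^d)^m$ are absorbed into the multivariate Hermite factor without changing the conclusion.

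For the remaining properties of $P_{\infty;\le r}$ I will use the same locality observation to obtain a stabilisation analogue of Lemma \ref{l:piv_stabilisation}: once $R$ is large enough that $\bigcup_i B_\infty(y_i,r+2)\subseteq\Lambda_R\setminus\partial\Lambda_R$, the value $d_{\underline{y}}\Xi_{\Lambda_R;\le r}(E)$ no longer depends on $R$, so $d_{\underline{y}}\Xi_{\infty;\le r}(E)$ equals the common stabilised value and the $\limsup$ in its definition is in fact a genuine limit. With this replacement in place, the proof of Lemma \ref{l:sym} transfers verbatim: stationarity follows from the identity $d_{\underline{y}+z}\Xi_{\Lambda_R-z;\le r}(E+z)=d_{\underline{y}}\Xi_{\Lambda_R;\le r}(E)$ combined with the stationarity of $f$; permutation invariance from the fact that $d_{\underline{y}}$ depends on $y$ only through the underlying set $\underline{y}$; and the reflection symmetry $P_{\infty;\le r}(x)=P_{\infty;\le r}(-x)$ from the distributional identity $f(\cdot)\stackrel{d}{=}f(-\cdot)$. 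There is no substantive obstacle: the truncation reduces the genuinely geometric stabilisation argument behind Lemma \ref{l:piv_stabilisation} to an elementary locality observation.
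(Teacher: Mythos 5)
Your proposal is correct and follows essentially the same route as the paper: the locality observation that $d_{y}\Xi_{\Lambda_R;\le r}$ depends only on the configuration in $y+\Lambda_{r+2}$ is exactly the paper's argument for the support claim, and the stationarity, permutation invariance, and symmetry are obtained, as in the paper, by transferring the proof of Lemma \ref{l:sym} (your explicit stabilisation remark for the truncated functional is a small but welcome elaboration of what the paper leaves implicit).
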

\begin{proof}
    The proofs of stationarity, permutation invariance, and symmetry are identical to that of Lemma~\ref{l:sym} so we omit the details. For the final claim, by the definition of stationary pivotal intensities (Definitions~\ref{d:pivint} and~\ref{d:statpivint}) it is enough to show that for any $E\subseteq\Z^d$
\begin{displaymath}
    d_{\underline{y}}\Xi^\pm_{R;\leq r}(E)=0
\end{displaymath}
whenever $\mathrm{diam}_\infty(y)>r+2$. Given a configuration $E\subseteq\Z^d$ and a point $y_1\in\Z^d$, we observe that every cluster of $E\setminus\{y_1\}$ must satisfy (at least) one of the following conditions: have diameter greater than $r$, not contain a neighbour of $y_1$, or be contained in $y_1 + \Lambda_{r+1}$. If we add the point $y_1$ to our configuration, then clearly only clusters satisfying the third condition can contribute to a change in $\Xi^+_{R;\leq r}$. This means that $d_{y_1}\Xi^+_{R;\leq r}(E)$ depends only on $y_1 + \Lambda_{r+2}$ (in other words if $E_1$ and $E_2$ are two configurations which agree on $y_1 + \Lambda_{r+2}$, then $d_{y_1}\Xi^+_{R;\leq r}(E_1)=d_{y_1}\Xi^+_{R;\leq r}(E_2)$). Hence for any point $y_2\notin y_1 + \Lambda_{r+2}$ and any $E\subseteq\Z^d$ we have $d_{y_2}d_{y_1}\Xi^+_{R;\leq r}(E)=0$. If $y=(y_1,\dots,y_m)\in(\Z^d)^m$ has diameter greater than $r+2$, then we can find $y_i$ and $y_j$ at distance greater than $r+2$. Then by definition of the discrete derivative, we can express $d_{\underline{y}}\Xi^+_{R;\leq r}(E)$ as a linear combination of terms of the form $d_{y_i}d_{y_j}\Xi^+_{R;\leq r}(E^\prime)$ for different configurations $E^\prime$. Since each of these are zero, we have proven the desired conditions for $\Xi^+$. The argument for $\Xi^-$ is entirely analogous.
\end{proof}

An advantage of the truncated intensities is that they admit a uniform bound:

\begin{lemma}
\label{l:bounds}
Let $\ell \in \R$ and $r \ge 1$. Then there exists $c = c_{d,\ell,r} > 0$ such that, for all $m, R \ge 1$, and $y \in (\Lambda_R)^m$,
\[ \max\big\{ |P_{R;\le r}(y)|, |P_{\infty;\le r}(y) | \big\} \le  e^{c m} \sqrt{m!} . \]
\end{lemma}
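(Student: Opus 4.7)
The plan has three steps. First, by Lemma \ref{l:stationarytrunc}, both $P_{R;\le r}$ and $P_{\infty;\le r}$ vanish unless $\mathrm{diam}_\infty(\underline{y}) \le r+2$, so I may assume $\underline{y}$ lies in a box of side length $r+2$, and in particular $k := |\underline{y}| \le k_0 := (2r+5)^d$, a constant depending only on $d$ and $r$. Second, I claim that $\|d N_{R;\le r}\|_\infty$ and its infinite-volume analogue are each bounded by some $C_{d,r}$, uniformly in $R$. Indeed, adding or removing a point $y$ from $E$ can only affect small clusters that meet a neighbour of $y$; any cluster of diameter $\le r$ which does so must lie inside $y + \Lambda_{r+1}$, which contains at most $(2r+3)^d$ vertices and hence at most that many clusters.

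For $P_{R;\le r}$ I then apply Lemma \ref{l:bounds1} directly to the functional $N_{R;\le r}(\ell)$. The constants appearing in that lemma are $\lambda_{\min}$, $|\underline{y}|$, $\|\nu\|_\infty$, and $\|K\|_\infty$. By Assumption \ref{a:sd}, $\lambda_{\min} \ge \kappa^2$ uniformly in $R$; $|\underline{y}| \le k_0$ by step one; $\|\nu\|_\infty = |\ell|$; and $\|K\|_\infty = G(0)$. Combined with $\|d N_{R;\le r}\|_\infty \le C_{d,r}$ from step two, Lemma \ref{l:bounds1} yields $|P_{R;\le r}(y)| \le e^{cm}\sqrt{m!}$ with $c = c_{d,\ell,r}$, as required.

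For $P_{\infty;\le r}$ I reproduce the computation of Lemma \ref{l:bounds1} in the stationary infinite-dimensional setting. Following Remark \ref{r:pivint}, I write $P_{\infty;\le r}(y)$ as the expectation of $d_{\underline{y}} \Xi_{\infty;\le r}(f-\ell)$ times $H^{\tilde{\alpha}^y}_{f(\underline{y}) | f(y^c) = 0}\bigl(f(\underline{y}) - \E[f(\underline{y}) | f(y^c)]\bigr)$ under the conditioning $f(\underline{y}) = \ell$, multiplied by $\varphi_{f(\underline{y})}(\ell)$. The crucial point is that Assumption \ref{a:sd} gives $\mathrm{Cov}[f(\underline{y}) \,|\, f(y^c)] \ge \kappa^2 I$ \emph{uniformly in the infinite ambient domain}, so the regression is well defined with quantitative bounds independent of $R$. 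Hence $\varphi_{f(\underline{y})}(\ell) \le \kappa^{-k_0}$, and Proposition \ref{p:mhpmain1} controls the Hermite polynomial by $k_0^{m/2}\sqrt{m!}\,e^{c_1\sqrt{m}(\|Y\|_2 + 1)}$, where $Y$ is the conditioned vector in $\R^k$ and $c_1$ depends only on $\kappa$. Exactly as at the end of the proof of Lemma \ref{l:bounds1}, Schur complement positivity combined with stationarity bounds $|\E Y|$ and $\Var[Y]$ by constants depending only on $\ell$, $G(0)$, $\kappa$ and $k_0$; a Gaussian moment-generating-function computation then gives $\E[e^{c_1\sqrt{m}\|Y\|_2}] \le e^{c_2 m}$, yielding $|P_{\infty;\le r}(y)| \le e^{cm}\sqrt{m!}$.

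The main obstacle is the infinite-dimensional step: one must verify that all Gaussian regression estimates from the proof of Lemma \ref{l:bounds1}---in particular the uniform lower bound on the conditional covariance and the upper bounds on conditional means and variances---remain valid when the conditioning vector $f(y^c)$ is infinite-dimensional. This is handled cleanly by the i.i.d.\ component in Assumption \ref{a:sd}, which bounds the conditional covariance below by $\kappa^2 I$ independently of the ambient domain, while stationarity bounds $\Var[f(y_i)]$ above by $G(0)$.
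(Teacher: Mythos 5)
Your proof is correct and follows the same route as the paper's: reduce to Lemma \ref{l:bounds1} via the support property of Lemma \ref{l:stationarytrunc} (which bounds $|\underline{y}|$ by a constant depending on $d$ and $r$), a uniform bound on the discrete derivative of the truncated count, and the lower bound on the smallest eigenvalue supplied by the i.i.d.\ component in Assumption \ref{a:sd}. Your explicit verification that the Gaussian regression estimates for $P_{\infty;\le r}$ survive the passage to an infinite conditioning vector is a detail the paper leaves implicit, but it is exactly the intended argument (this is why the stationary intensity is defined through the formulation of Remark \ref{r:pivint}).
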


\begin{proof}
For any $R>0$, $y\in\Lambda_R$, and $E\subseteq\Lambda_R$, since $y$ has at most $2d$ neighbours, we have $\lvert d_y\Xi_{\Lambda_R}(E)\rvert\leq 2d$. The bound then follows from Lemma \ref{l:bounds1} and the observations that (i) by Lemma \ref{l:stationarytrunc}, $P_{R;\le r}(y)$ and $P_{\infty;\le r}(y)$ are supported on configurations $y$ for which $\underline{y}$ has bounded cardinality, and (ii) the covariance matrix of any subset of the GFF has smallest eigenvalue bounded below by a constant, which follows from considering the i.i.d.\ component (Assumption~\ref{a:sd}).
\end{proof}

\subsection{Truncated arm estimates}
\label{s:tae}
Our control of the pivotal intensities hinges on \textit{truncated arm decay} estimates. For a subset $E \subset \Z^d$, recall the \textit{truncated arm event} $\mathrm{Arm}_R(E)$ that $E$ contains a component which is (i) bounded, and (ii) includes a path connecting $0$ and $\partial \Lambda_R$. It is clear from the definition that for every $\ell \in \R$, as $R \to \infty$,
\begin{equation}
    \label{e:tadqual}
\P[ \mathrm{Arm}_R(\{f > \ell\} ) ] \to 0 .
\end{equation} 
In the off-critical regimes a much stronger statement is true:

\begin{theorem}[Truncated arm decay; \cite{dgrs23}]
\label{t:tad}
For every $\ell \neq \ell_c$, there exists $c,\rho > 0$ such that, for every $R \ge 1$,
\begin{equation}
    \label{e:tadquant}
 \P[ \mathrm{Arm}_R(\{f > \ell\} ) ] \le e^{-c R^\rho} . 
 \end{equation}
 The constants $c,\rho > 0$ can be chosen uniformly over compact subsets of $\R \setminus \{\ell_c\}$.
\end{theorem}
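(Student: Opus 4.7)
The plan is to handle the two off-critical regimes $\ell > \ell_c$ and $\ell < \ell_c$ separately, as they require genuinely different inputs, and then use monotonicity to upgrade to uniformity over compacts.

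\textbf{Subcritical regime ($\ell > \ell_c$).} In this regime every cluster of $\{f > \ell\}$ is a.s.\ finite, so up to a null event the truncated arm event $\mathrm{Arm}_R(\{f > \ell\})$ coincides with the one-arm event $\{0 \leftrightarrow \partial \Lambda_R$ in $\{f>\ell\}\}$. I would first establish sharpness of the phase transition: for every $\ell > \ell_c$, there exists $s(\ell) > 0$ with $\E[ \mathrm{diam}_\infty(\mathcal{C}_0(\ell))^s ] < \infty$. A natural route is an OSSS-type inequality adapted to the GFF using the i.i.d.\ decomposition of Assumption~\ref{a:sd} to provide a sequence of independent revealable coordinates; this yields a differential inequality in $\ell$ for the crossing probabilities $\phi_R(\ell) := \P[\mathrm{Arm}_R(\{f>\ell\})]$ of the form $\partial_\ell \phi_R \ge c \phi_R (1-\phi_R) / \max_x \mathrm{Inf}_x \phi_R$, which upon integration forces $\phi_R \to 0$ strictly above a finite moment threshold $\ell^\star$, and a separate argument identifies $\ell^\star = \ell_c$. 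The qualitative decay is then turned into stretched-exponential decay by a standard renormalization on a dyadic sequence of scales $L_k = L_0^{\delta^k}$, combined with the Rodriguez--Sznitman decoupling inequalities to bound the cost of replacing the GFF by independent copies on well-separated boxes. The decoupling cost (polynomial in the separation) is overcome by the combinatorial gain from the number of disjoint blocks that must be crossed, giving $\phi_R \le e^{-cR^\rho}$ for some $\rho < 1$.

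\textbf{Supercritical regime ($\ell < \ell_c$).} Here $\{f > \ell\}$ contains a unique unbounded component $\mathcal{U}$, and $\mathrm{Arm}_R$ forces the existence of a bounded cluster of diameter at least $R$ touching the origin, which in particular is disjoint from $\mathcal{U}$. The key step is to prove the \emph{local uniqueness} estimate: for $L$ large, with probability $1 - e^{-cL^\rho}$, the set $\{f>\ell\} \cap \Lambda_L$ contains a unique cluster of diameter $\ge L/2$, and this cluster is part of $\mathcal{U}$. This can be achieved by combining a Russo--Seymour--Welsh-style box-crossing estimate (proved via gluing of half-space crossings and a renormalization on annuli using the decoupling inequalities) with a uniqueness-of-the-infinite-cluster input, both again leveraging Assumption~\ref{a:sd}. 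If $\mathrm{Arm}_R$ holds, then along an annular shell of radii $[kL, (k+1)L]$ for $k = 1,\dots,R/L$ the local uniqueness event must fail, because otherwise the bounded arm would have to coincide with the unique large cluster in that shell and hence be part of $\mathcal{U}$, contradicting boundedness. Applying the decoupling inequality to a well-separated subcollection of shells converts this into a product of failure probabilities, producing the desired $e^{-cR^\rho}$ bound.

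\textbf{Uniformity over compacts.} The constants $c, \rho$ extracted above depend on the expected sizes of finite clusters and the rates in the local uniqueness estimate. Both quantities are monotone in $\ell$, so taking the worst value on a compact subset of $\R \setminus \{\ell_c\}$ gives constants that work uniformly. One also needs continuity of $\phi_R(\ell)$ in $\ell$ to run the coupling arguments on a common probability space, which follows from monotonicity and stochastic domination properties of the GFF.

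\textbf{Main obstacle.} The central difficulty is the supercritical case. Unlike Bernoulli percolation, the GFF exhibits long-range positive correlations and has no FKG-based independence between disjoint regions, so the decoupling inequalities give only approximate independence at a polynomial cost in the separation. The main technical challenge is to balance this cost against the combinatorial gain over many scales, which forces a stretched-exponential rate $\rho < 1$ rather than a genuine exponential bound and requires careful renormalization. Establishing the local uniqueness estimate quantitatively is in turn the heart of this argument, as it requires a full RSW theory for the GFF that does not rely on planar duality.
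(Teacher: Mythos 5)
This statement is not proved in the paper at all: it is imported verbatim as an external input from \cite{dgrs23} (the recent sharpness paper for GFF level-set percolation), and the authors use it as a black box. So there is no internal proof to compare your proposal against; the only meaningful question is whether your sketch would actually constitute a proof of the cited theorem. It would not, for the following reason.

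The crux of Theorem \ref{t:tad} is precisely that the stretched-exponential decay holds for \emph{every} $\ell > \ell_c$ and the local-uniqueness/supercritical estimates hold for \emph{every} $\ell < \ell_c$, i.e.\ that the ``strongly subcritical'' and ``strongly supercritical'' regimes (where such estimates were already known from \cite{pr15,szn15,dpr18} etc.) extend all the way to the critical point. Your sketch buries this in the phrase ``a separate argument identifies $\ell^\star = \ell_c$'' — but that identification \emph{is} the theorem; it is the entire content of \cite{dgrs23} and resisted proof for a decade. Everything else in your outline (renormalization on scales $L_k$, decoupling inequalities à la Rodriguez–Sznitman to pay a polynomial cost against a combinatorial gain, bootstrapping qualitative decay to stretched-exponential decay) is the standard machinery that was already available and only yields the result above/below some a priori non-critical thresholds $\bar\ell \ge \ell_c \ge \underline\ell$. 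As a secondary point, the specific mechanism you propose for closing the gap — an OSSS differential inequality using the i.i.d.\ component as revealable coordinates — is not how \cite{dgrs23} proceeds (OSSS-type arguments have notoriously failed to handle the non-integrable correlations of the GFF); their proof runs through a delicate interpolation/coupling with an auxiliary cell system built from the random-walk representation of the field. Finally, ``RSW theory'' is a planar notion; in $d \ge 3$ the supercritical input is a quantitative local-uniqueness estimate proved by renormalization, not box-crossing duality. In short: your proposal correctly describes the scaffolding but omits the load-bearing wall.
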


A `two-arm' version of \eqref{e:tadqual} also holds for \textit{non-truncated arm events}: for every $\ell \in \R$, as $R \to \infty$,
\begin{equation}
    \label{e:twoarm}
\P[ \mathrm{TwoArm}_R(\{f > \ell\} ) ] \to 0 
\end{equation} 
where $\mathrm{TwoArm}_R(E)$ is the event that $(E \cap \Lambda_R) \setminus \{0\}$ contains two distinct components which each include a path connecting a neighbour of $0$ and $\partial \Lambda_R$. As is well-known, \eqref{e:twoarm} follows from the a.s.\ uniqueness of the unbounded component of $\{f > \ell\}$ and the `finite-energy' property of fields with an i.i.d.\ component (see \cite[Definition 12.1 and Theorem 12.2]{hj06}).

\smallskip

We shall need `pinned' variants of \eqref{e:tadqual}--\eqref{e:twoarm} for the GFF conditioned at pivotal points. For $E \subset \Z^d$ and $y=(y_1,\dots,y_m)\in(\Z^d)^m$, let $\widetilde{\mathrm{Arm}}_{r;\underline{y}}(E)$ denote the event that $E \setminus \underline{y}$ contains a component which is (i) bounded, and (ii) includes, for at least one $1 \le i \le m$, a path connecting a neighbour of $y_i$ to $y_i + \partial \Lambda_r$. For later use we note that
\begin{equation}
    \label{e:arminclusion}
\big\{ \widetilde{\mathrm{Arm}}_{r;\underline{y}}(\{f > \ell\} )   \big\} \cap \{ f(\underline{y}) \le \ell \}   \quad \Longrightarrow \quad \bigcup_{i=1}^m\bigcup_{z \sim y_i} \{z + \mathrm{Arm}_{r-1}(\{f > \ell\} )  \}
\end{equation} 
where $z + A$ denotes the occurrence of $A$ for the shifted field $f(\cdot-z)$. For $t \in [0,1]$, let $f^t$ be the interpolated field defined in \eqref{e:inter}, that is
\[ f^t = tf + \sqrt{1-t^2} \tilde{f} \]
where $\tilde{f}$ is an independent copy of the GFF.

\begin{proposition}[Pinned truncated arm decay]
\label{p:ptad}
$\,$
\begin{enumerate}
\item For every $\ell \in \R$ and $m \ge 1$, as $r \to \infty$
\[ \sup_{y_1,\ldots,y_m\in\Z^d} \P \big[ \widetilde{\mathrm{Arm}}_{r;\underline{y}}(\{f > \ell\} ) \big| f(\underline{y}) = \ell \big] \to 0 . \]
\item For $\ell \neq \ell_c$, there exists $c,\rho > 0$ such that, for $y_1,\dots,y_m\in\Z^d$ and $r \ge 1$,
\[ \P \big[ \widetilde{\mathrm{Arm}}_{r;\underline{y}}(\{f > \ell\} ) \big| f(\underline{y}) = \ell \big] \le e^{-c r^\rho} . \]

 \item For $\ell \neq \ell_c$, there exists $c,\rho > 0$ such that, for $x_1,\ldots,x_m,y_1,\dots,y_m\in\Z^d$, $r \ge 1$, and $t \in [0,1)$,
\[ \P \big[ \widetilde{\mathrm{Arm}}_{r;\underline{x}}(\{f > \ell\} ) \big| f(\underline{x}) = f^t(\underline{y}) = \ell \big] \le (1-t)^{-m} e^{-c r^\rho}  . \] 
 \end{enumerate}
The constants $c,\rho > 0$ can be chosen uniformly over compact subsets of $\R \setminus \{\ell_c\}$.
\end{proposition}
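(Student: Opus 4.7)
My plan is to use a ``de-pinning'' argument that reduces the conditional arm events to unconditional ones via the i.i.d.\ decomposition of Assumption~\ref{a:sd}, and then invoke Theorem~\ref{t:tad} (for parts~(2)--(3)) or its qualitative version~\eqref{e:tadqual} together with~\eqref{e:twoarm} (for part~(1)). My first step will be to observe that the conditioning $f(\underline{y})=\ell$ satisfies $f(\underline{y})\le\ell$, so that the inclusion~\eqref{e:arminclusion} and a union bound over at most $2dm$ choices of $z$ reduce each part to a bound on $\P[z+\mathrm{Arm}_{r-1}(\{f>\ell\})\mid f(\underline{y})=\ell]$ for $z$ a neighbour of some $y_i$.

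To perform the de-pinning itself, I will write $f=\kappa\tilde{f}+\hat{f}$ and condition on $\hat{f}$: the constraint $f(\underline{y})=\ell$ fixes only $\tilde{f}(\underline{y})=(\ell-\hat{f}(\underline{y}))/\kappa$, which by i.i.d.-ness leaves the distribution of $\tilde{f}$ (and hence of $f$) on $\Z^d\setminus\underline{y}$ unchanged. Since the conditioning also forces $y_j\notin\{f>\ell\}$, the clusters of $\{f>\ell\}$ in the conditioned field coincide with those of $\{f>\ell\}\setminus\underline{y}$ for the unconditioned field. Averaging over the marginal of $\hat{f}$ will then reduce the problem to bounding
\[ \P\big[z+\mathrm{Arm}_{r-1}(\{f>\ell\}\setminus\underline{y})\big] \]
with respect to the unconditioned field~$f$. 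I will split this event according to whether the cluster of $z$ in $\{f>\ell\}$ is bounded or unbounded. The bounded case is precisely $z+\mathrm{Arm}_{r-1}(\{f>\ell\})$, controlled directly by~\eqref{e:tadqual} or Theorem~\ref{t:tad}. In the unbounded case, some $y_j$ with $f(y_j)>\ell$ must sit on the infinite cluster and act as a cut point separating $z$ from infinity; this forces a two-arm configuration at $y_j$ consisting of a bounded arm of length $\gtrsim r$ into the $z$-side piece and an arm to infinity. Up to a finite-energy correction at the single vertex~$y_j$, this is dominated by the truncated arm event of Theorem~\ref{t:tad}; a union bound over $j=1,\dots,m$ and the $2d$ candidate neighbours of each $y_j$ will then yield the stated decay, with uniformity over $\ell$ inherited from Theorem~\ref{t:tad}.

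For part~(3), I plan to apply the same strategy to the pair $(f,f^t)$ using the joint i.i.d.\ decomposition obtained from $f=\kappa\tilde{f}_1+\hat{f}_1$ and $\tilde{f}=\kappa\tilde{f}_2+\hat{f}_2$ (independent of $f$) together with $f^t=tf+\sqrt{1-t^2}\tilde{f}$. Conditioning on $(\hat{f}_1,\hat{f}_2)$, the double constraint $f(\underline{x})=f^t(\underline{y})=\ell$ becomes a linear constraint on the Gaussian vector $(\tilde{f}_1(\underline{x}\cup\underline{y}),\tilde{f}_2(\underline{x}\cup\underline{y}))$ which leaves $(\tilde{f}_1,\tilde{f}_2)$ i.i.d.\ on the complement. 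The extra factor $(1-t)^{-m}$ will emerge from the blow-up of the joint Gaussian density of $(f(\underline{x}),f^t(\underline{y}))$ at $(\ell,\ell)$ as $t\to 1$: a Schur-complement computation controls this by $(1-t^2)^{-|\underline{x}\cap\underline{y}|/2}\le (1-t)^{-m}$, after which the analysis proceeds exactly as in the single-field case applied jointly to $f$ and $f^t$.

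The main obstacle will be the ``cut-point'' case in Step~3: I need to verify that when the unbounded cluster of $\{f>\ell\}$ is disconnected at $z$ by the removal of the finitely many points~$\underline{y}$, the configuration necessarily produces a two-arm event at one of the~$y_j$ whose probability still enjoys the truncated-arm decay of Theorem~\ref{t:tad}. A secondary difficulty specific to part~(3) is the precise extraction of the $(1-t)^{-m}$ factor from the joint density of $(f(\underline{x}),f^t(\underline{y}))$ as $t\to 1$, which will demand a careful Schur-complement analysis of the relevant $2m\times 2m$ covariance matrix in the worst case $\underline{x}=\underline{y}$.
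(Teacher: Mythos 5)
Your opening reduction is the right instinct, but the de-pinning step you propose creates a genuine gap. Conditioning on $\hat f$ and using the i.i.d.\ component does yield $\P[A\mid f(\underline y)=\ell]\le C_{m,\ell}\,\P[A]$ for events $A$ measurable with respect to $f(y^c)$ (the reweighting of the law of $\hat f$ is bounded because the conditional density of $f(\underline y)$ given $f(y^c)$ is at most $(2\pi\kappa^2)^{-m/2}$ while $\varphi_{f(\underline y)}(\ell)$ is bounded below). But this discards exactly the information that makes \eqref{e:arminclusion} usable: that inclusion holds only on the event $\{f(\underline y)\le\ell\}$, and after your de-pinning you are left with the \emph{unconditional} probability of $\widetilde{\mathrm{Arm}}_{r;\underline y}(\{f>\ell\})$, on which some $y_j$ may satisfy $f(y_j)>\ell$. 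Then the bounded component of $\{f>\ell\}\setminus\underline y$ carrying the long path can be a bounded appendage of the \emph{unbounded} cluster of $\{f>\ell\}$, and no truncated arm event for $\{f>\ell\}$ occurs, so Theorem~\ref{t:tad} does not apply. Your proposed repair does not close this case: (i) the ``finite-energy correction'' at $y_j$ carries no uniform constant, since $\P[f(y_j)\le\ell\mid f(y_j^c)]$ is a Gaussian tail whose conditional mean is unbounded over configurations; (ii) for $m\ge2$ the $z$-side piece and the infinite part need not be distinct components of $(\{f>\ell\}\cap(y_j+\Lambda_{r'}))\setminus\{y_j\}$, because they can be reconnected through another point $y_{j'}$ of $\underline y$, so even the single-point two-arm event can fail; and (iii) even where a two-arm configuration does occur, the paper only has the qualitative statement \eqref{e:twoarm}, which cannot produce the superpolynomial bounds required in items (2)--(3).

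The paper takes a different de-pinning route precisely to avoid this: Proposition~\ref{p:depinning} uses a Cameron--Martin shift in the level, averaged over $Y\in[\ell-\delta,\ell]$, to show $\P[A\mid Y=\ell]\varphi_Y(\ell)\le c\,\lambda_{\mathrm{min}}^{-m}P\max\{1,(\log 1/P)^{m/2}\}$ with $P=\P[A,\,Y\le\ell]$. The point is that the constraint $\{Y\le\ell\}$ is retained \emph{inside} the unconditional probability, so \eqref{e:arminclusion} applies directly to $P$, and a union bound over the $2dm$ neighbours finishes via Theorem~\ref{t:tad} (or via \eqref{e:tadqual} for item (1)); the logarithmic correction is exactly the price of comparing a conditional density to the measure of the quadrant $\{Y\le\ell\}$ when the conditional mean on $A$ may be large. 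For item (3) your Schur-complement heuristic for the $(1-t)$-dependence is in the right spirit (the paper's Proposition~\ref{p:densitybound} supplies $\lambda_{\mathrm{min}}(f(\underline x),f^t(\underline y))\ge c_d(1-t)$ and a uniform lower bound on the joint density at $(\ell,\ell)$), but without the corrected de-pinning the same cut-point obstruction blocks the argument there too.
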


\begin{proposition}[Pinned two-arm decay]
For every $\ell \in \R$, as $r \to \infty$
\label{p:ptwoarm}
\[  \P \big[ \mathrm{TwoArm}_{r}(\{f > \ell\} ) \big| f(0) = \ell \big] \to 0 . \]
\end{proposition}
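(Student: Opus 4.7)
Since $\mathrm{TwoArm}_{r}(\{f > \ell\})$ is decreasing in $r$ (two distinct arm-components at scale $r+1$ restrict to two distinct arm-components at scale $r$), the conditional probability converges as $r\to\infty$ to $\P[A \mid f(0) = \ell]$, where $A := \bigcap_r \mathrm{TwoArm}_r(\{f > \ell\})$. A pigeonhole/compactness argument over the $2d$ neighbours of $0$ identifies $A$ with the event that $\{f > \ell\} \setminus \{0\}$ contains two distinct unbounded connected components each containing a neighbour of $0$: on $A$, some pair $z_1,z_2$ of neighbours lies in distinct arm-components in $(\{f>\ell\}\cap\Lambda_r)\setminus\{0\}$ for infinitely many~$r$; these arm-components are monotone non-decreasing in $r$ and hence grow to two disjoint (thus distinct) infinite components of $\{f > \ell\}\setminus\{0\}$. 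Conversely, any two such infinite components produce $\mathrm{TwoArm}_r$ at every scale by truncating paths at $\partial\Lambda_r$.

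\textbf{Reduction to conditional uniqueness.} Decompose
\begin{equation*}
  \mathrm{TwoArm}_r(\{f > \ell\}) \ \subseteq\ \widetilde{\mathrm{Arm}}_{r;0}(\{f > \ell\}) \ \cup\ E_r^{\mathrm{same}} \ \cup\ U,
\end{equation*}
where $E_r^{\mathrm{same}}$ is the event that $\mathrm{TwoArm}_r$ occurs with both arm-components sitting inside a common unbounded component of $\{f>\ell\}\setminus\{0\}$, and $U$ is the event that $\{f>\ell\}\setminus\{0\}$ has at least two unbounded components. The conditional probability of the first term tends to $0$ by Proposition~\ref{p:ptad}(1) applied with $m=1$ and $y=0$. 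For $E_r^{\mathrm{same}}$, any realisation with a single unbounded component containing two neighbours of~$0$ admits a finite connecting path between them inside that component; for $r$ large enough this path lies in $\Lambda_r$, forcing the two neighbours into the same arm-component, so $\mathbf{1}_{E_r^{\mathrm{same}}}\to 0$ almost surely and dominated convergence gives vanishing conditional probability. It thus suffices to prove $\P[U\mid f(0)=\ell]=0$; since $f(0)=\ell$ a.s.\ under the conditioning implies $0\notin\{f>\ell\}$, this is equivalent to uniqueness of the unbounded component of $\{f>\ell\}$ under $\P[\cdot\mid f(0)=\ell]$.

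\textbf{Main step: conditional uniqueness via de-pinning.} Under the unconditional law $\P$, uniqueness of the unbounded component is classical (Burton-Keane, which applies to the GFF by virtue of the finite-energy property supplied by Assumption~\ref{a:sd}; see \cite{hj06} and the discussion preceding~\eqref{e:twoarm}). Fubini then gives $\P[U\mid f(0)=t]=0$ for Lebesgue-a.e.\ $t$, and the task is to extend this to the specific value $t=\ell$. For $d\in\{3,4\}$ the conditional mean shift $\ell G(\cdot)/G(0)$ is not in $\ell^2(\Z^d)$, so the conditional and unconditional laws of $f|_{\Z^d\setminus\{0\}}$ are mutually singular and absolute continuity cannot be invoked directly. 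Instead, we argue by de-pinning in the spirit of the other estimates of this section: Assumption~\ref{a:sd} ensures that $\{\tilde f(x):x\neq 0\}$ is unaffected by the conditioning (supplying finite energy on $\Z^d\setminus\{0\}$), while the conditional mean $\ell G(\cdot)/G(0)$ and covariance perturbation $G(\cdot)G(\cdot)/G(0)$ both vanish as one moves away from $0$, allowing a coupling of the conditional and unconditional fields far from~$0$ through which the uniqueness is transferred inward. This transfer is the main obstacle: Burton-Keane's trifurcation argument intrinsically requires translation invariance that the pinned law lacks, and the de-pinning technique is precisely what lets one exploit the asymptotic stationarity of the conditional measure together with the surviving i.i.d.\ component to close the argument.
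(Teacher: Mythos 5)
Your reduction to the three events $\widetilde{\mathrm{Arm}}_{r;0}$, $E_r^{\mathrm{same}}$ and $U$ is sound, and your treatment of the first two terms (Proposition~\ref{p:ptad}(1) for the arm event; pigeonhole over pairs of neighbours plus dominated convergence for $E_r^{\mathrm{same}}$) would go through. But the argument has a genuine gap exactly where you flag it: you never prove $\P[U \mid f(0)=\ell]=0$. The final paragraph describes a strategy (couple the pinned and unpinned fields far from the origin, transfer uniqueness inward) and then concedes that "this transfer is the main obstacle" and that Burton--Keane does not apply to the pinned law. A proof cannot end by naming its own missing step; as written, the central claim of the proposition is not established. (A side remark: the mutual-singularity claim for $d\in\{3,4\}$ is also not justified as stated --- the relevant criterion for a Gaussian mean shift is membership in the Cameron--Martin space, not $\ell^2(\Z^d)$, and the pinned law also has a perturbed covariance --- but since you do not use this step constructively it is not the real issue.)

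The fix is to notice that you never need uniqueness under the pinned measure at all. The event $\mathrm{TwoArm}_r(\{f>\ell\})$ is defined in terms of $(\{f>\ell\}\cap\Lambda_r)\setminus\{0\}$ and therefore does not depend on $f(0)$. Hence Proposition~\ref{p:depinning} applies directly with $Y=f(0)$ and $A=\mathrm{TwoArm}_r(\{f>\ell\})$, giving
\[
\P\big[\mathrm{TwoArm}_r(\{f>\ell\})\,\big|\,f(0)=\ell\big]\;\le\;c_{d,\ell}\,P\max\big\{1,(\log 1/P)^{1/2}\big\},
\qquad P=\P\big[\mathrm{TwoArm}_r(\{f>\ell\})\big],
\]
and the right-hand side tends to $0$ because the \emph{unconditional} two-arm probability \eqref{e:twoarm} does (this is where Burton--Keane uniqueness and finite energy are used, under the translation-invariant law where they are available). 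This is the paper's proof: one application of the quantitative de-pinning estimate converts the unpinned decay into the pinned decay, with no decomposition and no need to establish uniqueness for the conditioned field. Your instinct to reach for de-pinning was right; the point you missed is that it can be applied to the two-arm event itself rather than to an intermediate uniqueness statement.
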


Propositions \ref{p:ptad} and \ref{p:ptwoarm} will follow from \eqref{e:tadqual}--\eqref{e:twoarm} and the following estimate:

\begin{proposition}[De-pinning]
\label{p:depinning}
Let $(X,Y)$ be an $(n+m)$-dimensional centred Gaussian vector of unit variance such that $Y$ is non-degenerate, and let $\Sigma$ be its covariance matrix. Then for every $\ell$ and every event $A$ that depends only on $X$,
\[ \P[ A \, | \, Y = \ell ] \varphi_{Y}(\ell) \le c_{\ell,m} \lambda_{\mathrm{min}}^{-m}    P  \max\Big\{ 1, (\log 1/P)^{m/2} \Big\} \]
where $P := \P[A , Y \le \ell ]$ and $\lambda_{\mathrm{min}}$ is the smallest eigenvalue of $\Sigma$. The constant $c_{\ell,m} >0$ can be chosen uniformly for $\ell$ in compact subsets of $\R$.
\end{proposition}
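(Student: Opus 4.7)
The plan is to compare $\P[A \mid Y = \ell] \varphi_Y(\ell)$ with $P$ by expressing both as integrals against the joint density of $(X,Y)$. Writing $A = \{X \in A_0\}$ and $\mu(x) := \Sigma_{YX}\Sigma_X^{-1} x$ for the conditional mean of $Y$ given $X = x$, one has
\begin{equation*}
\P[A \mid Y = \ell]\varphi_Y(\ell) = \int \id_{A_0}(x)\, \varphi_X(x)\, \varphi_{Y\mid X = x}(\ell)\, dx, \qquad P = \int \id_{A_0}(x)\, \varphi_X(x)\, \P[Y \le \ell \mid X = x]\, dx .
\end{equation*}
A naive pointwise bound $\varphi_{Y\mid X=x}(\ell) \lesssim \P[Y \le \ell \mid X=x]$ fails when $\mu(x)$ lies far above $\ell$, so I introduce a parameter $T \ge 1$ (to be optimised) and split $A = A_1 \sqcup A_2$ with
\[ A_1 := A \cap \{X : \P[Y \le \ell \mid X] \ge e^{-c_1 T^2}\}, \qquad A_2 := A \setminus A_1, \]
for an appropriate constant $c_1 = c_1(\ell, m, \lambda_{\mathrm{min}})$.

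The key technical input is a reverse-Mill-type lower bound. Using $\P[Y \le \ell \mid X] \ge \P[Y \in [\ell - 1, \ell]^m \mid X]$ and directly bounding the Gaussian density on this unit box --- with $\lambda_{\max}(\Sigma_{Y\mid X}) \le \mathrm{tr}(\Sigma_Y) \le m$ (since the diagonals of $\Sigma$ are unit) and $\lambda_{\min}(\Sigma_{Y\mid X}) \ge \lambda_{\mathrm{min}}$ (Schur-complement monotonicity) --- yields
\[ \P[Y \le \ell \mid X] \ge c_{m,\lambda_{\mathrm{min}}} \exp\bigl(-\|\mu(X)-\ell\|^2/\lambda_{\mathrm{min}}\bigr) . \]
On $A_1$ this forces $\|\mu(x) - \ell\| \lesssim T$. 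Bounding $\P[Y \le \ell \mid X = x] \ge \int_{[\ell - a, \ell]^m} \varphi_{Y\mid X=x}(y)\, dy$ and comparing $\varphi_{Y\mid X=x}(y)$ to $\varphi_{Y\mid X=x}(\ell)$ over the box (the ratio is at least $\exp(-c a(T + a)/\lambda_{\mathrm{min}})$), the choice $a = c'/T$ produces $\P[Y \le \ell \mid X = x] \ge c T^{-m} \varphi_{Y\mid X=x}(\ell)$ on $A_1$, and hence
\[ \int_{A_1} \id_{A_0}(x)\, \varphi_X(x)\, \varphi_{Y\mid X=x}(\ell)\, dx \le c T^m P . \]

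On $A_2$ the reverse-Mill inequality applied contrapositively gives $\|\mu(X) - \ell\| \gtrsim T\sqrt{\lambda_{\mathrm{min}}}$. Under the conditioning $Y = \ell$, $\mu(X)$ is Gaussian with mean $\ell - \Sigma_{Y\mid X}\Sigma_Y^{-1}\ell$ and covariance bounded in terms of $\ell, m, \lambda_{\mathrm{min}}$ (by Gaussian regression applied to $(X,Y)$), so standard Gaussian concentration yields $\P[\|\mu(X) - \ell\| \gtrsim T \mid Y = \ell] \le e^{-c_3 T^2}$ for $T$ sufficiently large; combined with $\varphi_Y(\ell) \le (2\pi\lambda_{\mathrm{min}})^{-m/2}$ this gives
\[ \int_{A_2} \id_{A_0}(x)\, \varphi_X(x)\, \varphi_{Y\mid X=x}(\ell)\, dx \le c\lambda_{\mathrm{min}}^{-m/2} e^{-c_3 T^2} . \]
Summing the two estimates and optimising $T$ (taking $T = 1$ when $P \ge \lambda_{\mathrm{min}}^{-m/2} e^{-c_3}$, and $T^2 = \log(c\lambda_{\mathrm{min}}^{-m/2}/P)/c_3$ otherwise so the two contributions balance) yields the claimed bound $\P[A \mid Y = \ell]\varphi_Y(\ell) \le c_{\ell,m} \lambda_{\mathrm{min}}^{-m} P \max\{1, (\log 1/P)^{m/2}\}$.

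The main obstacle will be choosing the constants $c_1, c_3$ compatibly, so that $c_3 > c_1$ and the optimisation in $T$ actually yields the $(\log 1/P)^{m/2}$ factor rather than some worse polynomial factor. This requires carefully tracking the dependence on $\lambda_{\mathrm{min}}$ in both estimates --- in particular, exploiting the unit-variance hypothesis to keep $\lambda_{\max}(\Sigma_{Y\mid X})$ bounded (by $m$) in the reverse-Mill inequality --- and verifying that the $\lambda_{\mathrm{min}}$-factors produced by the optimisation can be absorbed into the prefactor $\lambda_{\mathrm{min}}^{-m}$ in the statement.
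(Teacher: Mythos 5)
Your strategy---disintegrating over $X$ and comparing $\varphi_{Y\mid X=x}(\ell)$ with $\P[Y\le\ell\mid X=x]$ pointwise in $x$---is genuinely different from the paper's proof. The paper never disintegrates over $X$: it perturbs the conditioning level along the single Cameron--Martin direction $k=\Sigma_{X\to Y}\Sigma_Y^{-1}\id_m$, shows by a one-dimensional change of measure that $h(s):=\P[A\mid Y=\ell-s]$ satisfies $h(s)\ge e^{-2m\lambda_{\mathrm{min}}^{-1/2}sT}h(0)-e^{-T^2/2}$, and then compares $\inf_{s\in[0,\delta]}h(s)$ with $\P[A,\,Y\in[\ell-\delta,\ell]]\le P$. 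Because that route only ever manipulates the $m$-dimensional density of $Y$, the $\lambda_{\mathrm{min}}$-bookkeeping is short and the $(\log 1/P)^{m/2}$ factor drops out of the choice $\delta=\lambda_{\mathrm{min}}/\sqrt{-\log P}$.

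There is, however, a concrete false step in your outline. Membership in $A_1$, i.e.\ $\P[Y\le\ell\mid X=x]\ge e^{-c_1T^2}$, does \emph{not} force $\|\mu(x)-\ell\|\lesssim T$: the forward Gaussian tail only yields the one-sided bound $(\mu_i(x)-\ell_i)_+\le\sqrt{2c_1}\,T$, while $\mu(x)$ may lie arbitrarily far \emph{below} $\ell$ and still have $\P[Y\le\ell\mid X=x]$ close to $1$. Your box comparison of $\varphi_{Y\mid X=x}(y)$ with $\varphi_{Y\mid X=x}(\ell)$ over $[\ell-a,\ell]^m$ genuinely needs the two-sided control, through the cross term $(y-\ell)^T\Sigma_{Y\mid X}^{-1}(\ell-\mu(x))$, whose magnitude is of order $a\,\lambda_{\mathrm{min}}^{-1}\|\ell-\mu(x)\|$ and is therefore uncontrolled in the far-below regime. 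That regime must be handled separately, using the decay of $\varphi_{Y\mid X=x}(\ell)\le(2\pi\lambda_{\mathrm{min}})^{-m/2}e^{-\|\ell-\mu(x)\|^2/(2m)}$ itself rather than the box comparison; this is doable but missing, and it is exactly where exponentially bad factors like $e^{m/\lambda_{\mathrm{min}}}$ threaten to enter if one is careless. Relatedly, I would not run the $A_2$ step by conditioning on $Y=\ell$ and invoking concentration of $\mu(X)$: the conditional mean of $\mu(X)$ is offset from $\ell$ by as much as $m|\ell|\lambda_{\mathrm{min}}^{-1/2}$, which forces $T\gtrsim|\ell|\lambda_{\mathrm{min}}^{-1}$ before the tail bound applies and then inflates the $A_1$ contribution to order $\lambda_{\mathrm{min}}^{-3m/2}P$ when $P$ is not very small---incompatible with the required $\lambda_{\mathrm{min}}^{-m}$ prefactor. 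Bounding $\int_{A_2}\varphi_X(x)\varphi_{Y\mid X=x}(\ell)\,dx$ directly by the pointwise decay displayed above avoids this constraint entirely.
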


\begin{proof}
Let $\lambda_{\mathrm{min}}(Y)$ be the smallest eigenvalue of the covariance matrix $\Sigma_Y$ of $Y$. We note that $\lambda_{\mathrm{min}} \le \lambda_{\mathrm{min}}(Y) \le \text{Var}(Y_i) = 1$ (see Lemma \ref{l:lambdamin}). For $s \ge 0$, consider the function
\[ h(s)  :=  \P \big[A  \big| Y = \ell-s \big]  . \]
We claim that for every $s, T \ge 0$,
\begin{equation}
    \label{e:ptad2}
h(s) \ge   e^{- 2 m  \lambda_{\mathrm{min}}^{-1/2}   s T} h(0)   - e^{-T^2/2} . 
\end{equation} 
 Assuming \eqref{e:ptad2}, let us complete the proof of the proposition. Fix a $\delta \in (0,1]$. Then
\begin{equation}
    \label{e:ptad3}
   \inf_{s \in [0,\delta]} h(s) \le  \P \big[A \,  \big| \, Y \in [ \ell-\delta,  \ell ] \big]  = \frac{   \P \big[A , Y \in [ \ell-\delta,  \ell ] \big] }{ \P\big[ Y \in [ \ell-\delta,  \ell  ] \big] } \le \frac{\delta^{-m}  P }{ \min_{y \in [\ell-\delta,\ell]} \varphi_Y(y)}   .
\end{equation}
We also have
\begin{equation}
\label{e:ptad4}
\min_{y \in [\ell-\delta,\ell]} \varphi_Y(y) / \varphi_Y(\ell) \ge \min_{p \in [0,\delta]} e^{- |p^T \Sigma_Y^{-1} \ell | } e^{- \frac{1}{2}|p^T \Sigma_Y^{-1} p| } \ge e^{- m \ell \delta \lambda_{\min}^{-1} - m \delta^ 2 \lambda_{\min}^{-1}/2 }  
\end{equation}
and
\begin{equation}
    \label{e:ptad5}
 \varphi_Y(\ell) \le  \lambda_{\min}^{-m/2} .
 \end{equation}
Combining \eqref{e:ptad2}--\eqref{e:ptad5} we have
\[ h(0) \varphi_Y(\ell) \le e^{2m \lambda_{\mathrm{min}}^{-1/2} \delta T} \Big( e^{m \delta \lambda_{\mathrm{min}}^{-1} (\ell + 1/2) }  \delta^{-m} P  +  \lambda_{\min}^{-m/2}  e^{-T^2/2} \Big) . \]
We may assume that $P \le 1/e$ otherwise the result is immediate. Then setting $T = 2 \sqrt{-\log P}$ and $\delta =  \lambda_{\mathrm{min}}/ \sqrt{-\log P}$ we have
\begin{align*}
h(0) \varphi_Y(\ell) & \le e^{4m} \Big( e^{m(\ell+1/2)} \lambda_{\mathrm{min}}^{-m} P (\log 1/P)^{m/2} +  \lambda_{\min}^{-m/2}  P^2  \Big) \\
& \le 2 e^{m(\ell+9/2)}  \lambda_{\mathrm{min}}^{-m} P (\log 1/P)^{m/2} , 
    \end{align*} 
which gives the result.

Towards proving \eqref{e:ptad2} we consider the vector
\[ k(i) = \Sigma_{X_i \to Y} \Sigma_Y^{-1} \id_m   \ , \quad i = 1,\ldots,n  ,\]
where $\id_m = (1,\ldots,1)$ is the vector of ones. We claim that 
\begin{equation}
        \label{e:cm}
\kappa:= \|(k,0)\|_{H(X,Y)} < 2m \lambda_{\min}^{-1/2} < \infty 
\end{equation} 
where $H(X,Y)$ denotes the RKHS of the vector $(X,Y)$. To prove this, consider that 
\[  \| ( \Sigma_{X \to Y} \Sigma_Y^{-1} \id_m, \id_m) \|_{H(X,Y)} =  \| \id_m \|_{H(Y)} = \| \Sigma_{Y}^{-1/2} \id_m \|_{L^2}  \le \sqrt{m} \lambda_{\mathrm{min}}^{-1/2}  , \]
and moreover, for every $j = 1,\ldots,m$,
\[ \| (0,\id_{\cdot = j})\|_{H(X,Y)} =  \| \Sigma^{-1/2} (0,\id_{\cdot = j}) \|_{L^2} \le \lambda_{\mathrm{min}}^{-1/2}  .  \]
Hence by the triangle inequality
\begin{equation*}
  \kappa  \le  \| (k,\ind_m) \|_{H(X,Y)} + \sum_{j=1,\ldots,m} \| (0, \id_{\{\cdot = j\}} \|_{H(X,Y)}  \le (\sqrt{m}+m) \lambda_{\mathrm{min}}^{-1/2}  
  \end{equation*}
as required. This yields the decomposition
\begin{equation}
    \label{e:decomp}
(X,Y) \stackrel{d}{=} (Z / \kappa) (k,0)   + (\bar{X},\bar{Y}) , \end{equation} 
where $Z$ is a standard Gaussian random variable, and $(\bar{X},\bar{Y})$ is an independent Gaussian vector. Then for every $s \ge 0$ we have, by Gaussian regression,
\begin{align*}
h(s) & := \P \big[X \in A  \, \big| \, Y = \ell-s \big] \\
& = \P \big[X - s k \in A \, \big|  \, Y = \ell \big]  \\
& = \P \big[(X,Y) - s (k,0)  \in A \,   \big| \, Y = \ell \big]  
\end{align*}
where the final equality is since $A$ does not depend on $Y$. Then using \eqref{e:decomp}, the independence of $Z$ and $(\bar{X},\bar{Y})$, and a change of measure 
\begin{align*} 
h(s) &= \P \Big[ (Z-s \kappa)  \kappa^{-1} (k,0)  + (\bar{X},\bar{Y}) \in A \, \big| \, \bar{Y} = \ell \Big] \\
 & = \E \Big[ \frac{ \varphi(Z+s\kappa)}{\varphi(Z)} \id_{Z \kappa^{-1} (k,0)  + (\bar{X},\bar{Y}) \in A} \, \Big| \, \bar{Y}= \ell \Big] \\
& =  e^{ s^2 \kappa^2 / 2} \E \Big[ e^{ - Z s \kappa }  \id_{Z \kappa^{-1} (k,0)  + (\bar{X},\bar{Y}) \in A} \, \Big| \, \bar{Y}= \ell \Big] .
\end{align*}
 Then for every $T > 0$ we have
\begin{align*}
 h(s) &\ge   e^{- s T \kappa } \P \Big[ Z \kappa^{-1} (k,0)  + (\bar{X},\bar{Y}) \in A \, \Big| \, \bar{Y} = \ell \Big]  -  \P[ Z > T ] \\
 & \ge   e^{-  s T \kappa } h(0)   - e^{-T^2/2} ,
 \end{align*}
 which, combined with \eqref{e:cm}, completes the proof of \eqref{e:ptad2}.
\end{proof}

\begin{proof}[Proof of Proposition \ref{p:ptad}]
Let $\ell \in\R$, $r \ge 2$, $m \ge 1$, and $y = (y_1,\dots,y_m)$ be given. Since the GFF has an i.i.d.\ component (Assumption~\ref{a:sd}), it satisfies $\varphi_{f(\underline{y})}(\ell) \ge c_{d,\ell,m}$ for a constant $c_{d,\ell,m} > 0$ that can be chosen uniformly for $\ell$ in a compact subset of $\R$, and also $\lambda_{\mathrm{min}}(f(x)) \ge c_d > 0$ uniformly over any subset $x \subset \Z^d$. Then applying Proposition \ref{p:depinning} (rescaling the GFF by a constant before applying the proposition)
\[\P \big[f \in \widetilde{\mathrm{Arm}}_{r;\underline{y}}(\{f > \ell\} )  \big| f(\underline{y}) = \ell \big]   \le  c_{d,\ell,m}  P  \max \Big\{ 1, \log  (1/P)^{m/2} \Big\}    , \]
    where $P = \P \big[f \in \widetilde{\mathrm{Arm}}_{r;\underline{y}}(\{f > \ell\} ) , f(\underline{y}) \le \ell \big]  $. Combining with \eqref{e:arminclusion} and the union bound, we further have
    \[ P \le 2d m \P[ \arm_{r-1}(\{f>\ell\}) \]
    and so the first two items of the proposition follow immediately from \eqref{e:tadqual} and \eqref{e:tadquant}.

For the third item we argue similarly, except we use the estimates
\[ \lambda_{\mathrm{min}} (f(\underline{x}),f^t(\underline{y}) ) \ge c_d (1-t)  \qquad \text{and} \qquad \varphi_{(f(\underline{x}),f^t(\underline{y}))}(\ell) \ge c_{d,\ell,m}  \]
which follow respectively from the first and third items of Proposition \ref{p:densitybound}.
\end{proof}

\begin{proof}[Proof of Proposition \ref{p:ptwoarm}]
Similarly to the proof of Proposition \ref{p:ptad}, applying Proposition~\ref{p:depinning} we have 
\[\P \big[f \in \mathrm{TwoArm}_{r}(\{f > \ell\} )  \big| f(0) = \ell \big]   \le  c_{d,\ell} P \max \Big\{1, \log  (1/P)^{1/2}  \Big\}   , \]
where  $P = \P [f \in \mathrm{TwoArm}_{r}(\{f > \ell\} ) ] $. Combining with \eqref{e:twoarm}, this gives the result.
\end{proof}

\subsection{Convergence and decay of the pivotal intensities}

We now use the estimates in the previous section to deduce convergence and decay properties of the pivotal intensities. For $0 \le i \le d-1$, let $F_R^i$ denote the union of the $i$-dimensional boundary faces of $\Lambda_R$, so that in particular $\partial \Lambda_R = F_R^{d-1}$.  

\begin{lemma}[Convergence]
\label{l:conv}
 For every $\ell \in \R$ and $m \ge 1$ the following hold:
\begin{itemize}
    \item There exists a function $\gamma$ satisfying $\gamma(r) \to 0$ as $r \to \infty$ such that, for every $R \ge 1$ and $y  \in (\Lambda_R)^m$,
    \[ | P_R(y) - P_\infty(y) | \le \gamma \big(d_\infty(\underline{y},\partial \Lambda_R) \big)  ,\]
   and if $m = 1$
       \[ | P_R(y)  - P^{\mathrm{H}}_\infty(d_\infty(y,\partial \Lambda_R) ) | \le \gamma \big(d_\infty(y,F_R^{d-2}) \big)  .\]
Moreover if $\ell \notin \{-\ell_c,\ell_c\}$, $\gamma$ can be taken as $\gamma(r) = c_1e^{-c_2 r^\rho}$ for some $c_1,c_2,\rho > 0$ which can be chosen uniformly over compact subsets of $\R\setminus\{-\ell_c,\ell_c\}$.
\item For every $R,r \ge 1$ and $y \in (\Lambda_R)^m$,
\[ | P_{R; \le r}(y) - P_{\infty; \le r}(y) | \le c_{m,d}\id_{d_\infty(\underline{y},\partial \Lambda_R) \le r+2} . \]
\end{itemize} 
\end{lemma}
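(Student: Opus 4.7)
The key ingredients are the stabilisation lemma for pivotal derivatives (Lemma~\ref{l:piv_stabilisation}), the pinned truncated arm estimates (Proposition~\ref{p:ptad}), and the derivative characterisation of pivotal intensities (Proposition~\ref{p:derpivint}).

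For the first item, write
\[
P_R(y) - P_\infty(y) = \varphi_{f(\underline{y})}(\ell)\, \E\Big[ d_y\Xi_{\Lambda_R}(f-\ell) - d_y\Xi_\infty(f-\ell) \,\Big|\, f(\underline{y}) = \ell\Big]
\]
and split the integrand as $(d_{\underline{y}}\Xi_{\Lambda_R} - d_{\underline{y}}\Xi_\infty) H_R + d_{\underline{y}}\Xi_\infty(H_R - H_\infty)$, where $H_R$ and $H_\infty$ denote the Hermite polynomial factors from (the equivalent form of) Definition~\ref{d:pivint} and Definition~\ref{d:statpivint}, involving respectively the regressions of $f(\underline{y})$ onto $f(\Lambda_R \setminus \underline{y})$ and onto $f(y^c)$. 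By Lemma~\ref{l:piv_stabilisation} the first summand is supported on the pinned arm event $\widetilde{\mathrm{Arm}}_{r;\underline{y}}(\{f>\ell\}) \cup \widetilde{\mathrm{Arm}}_{r;\underline{y}}(\{f<\ell\})$ with $r = d_\infty(\underline{y}, \partial\Lambda_R)$. Cauchy--Schwarz together with a uniform $L^2$ bound on $H_R$ (via Proposition~\ref{p:mhpmain1}) and Proposition~\ref{p:ptad} then yields a bound $\gamma_1(r)$ that vanishes as $r\to\infty$, exponentially off criticality. The second summand vanishes when all indices in $y$ are distinct (so $\tilde{\alpha}^y = 0$); for repeats it is controlled by comparing the conditional Gaussian distributions underlying $H_R$ and $H_\infty$ via Gaussian regression. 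For the half-space refinement when $m=1$, proceed analogously after translating so that the face of $\Lambda_R$ nearest $y$ coincides with $\partial\Z^d_+$; the stabilisation argument then forces a bounded cluster touching $y$ to reach one of the \emph{other} faces of $\Lambda_R$, whose minimal distance to $y$ is (up to a constant) $d_\infty(y, F_R^{d-2})$, and the same arm argument gives the claimed bound.

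For the second item, invoke Proposition~\ref{p:derpivint} (and its stationary analogue) to express
\[
P_{R;\le r}(y) - P_{\infty;\le r}(y) = \partial^{\alpha^y}_\nu \big( \E[\Xi^{\le r}_{R}(f-\nu)] - \E[\Xi^{\le r}_\infty(f-\nu)] \big) \big|_{\nu = \ell \cdot \ind}.
\]
The moment difference on the right-hand side sees only clusters of diameter at most $r$ that touch $\partial\Lambda_R$, and therefore depends on $\nu(z)$ only for $z$ within $r+1$ of $\partial \Lambda_R$. Hence if $d_\infty(\underline{y}, \partial \Lambda_R) > r+2$ the $\nu$-derivatives at $\underline{y}$ vanish, yielding the exact equality $P_{R;\le r}(y) = P_{\infty;\le r}(y)$; otherwise the uniform bound of Lemma~\ref{l:bounds} supplies the constant $c_{m,d}$.

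The main obstacle is the Hermite polynomial comparison for repeated indices in the first item: Gaussian regression yields convergence of the conditional variances at rate $O(d_\infty(\underline{y},\partial\Lambda_R)^{2-d})$ via Green's function decay, which is polynomial and hence slower than the exponential arm decay. Recovering the exponential rate likely requires exploiting the i.i.d.\ component of the GFF (Assumption~\ref{a:sd})---the i.i.d.\ contribution is common to both conditional variances, so only a subleading correction remains---combined with careful Cauchy--Schwarz bookkeeping against the arm estimate.
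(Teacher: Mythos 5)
Your second item is essentially the paper's argument, and the part of your first item that handles the difference of discrete derivatives is also on the right track: by Lemma~\ref{l:piv_stabilisation} that difference is supported on the pinned arm event, and Cauchy--Schwarz against the uniform $L^2$ bound on the Hermite factor plus Proposition~\ref{p:ptad} gives the exponential rate for that piece.

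The gap is exactly where you locate it, and your proposed repair does not close it. The cross term $d_{\underline{y}}\Xi_\infty\,(H_R-H_\infty)$ arising from your splitting is \emph{not} supported on any arm event, and the discrepancy between the two conditional laws (both the covariance of $f(\underline{y})\mid f(\Lambda_R\setminus\underline{y})=0$ versus $f(\underline{y})\mid f(y^c)=0$, and the arguments $X_R$ versus $X_\infty$) is governed by the tail of the covariance kernel, hence decays only polynomially. Sharing the i.i.d.\ summand from Assumption~\ref{a:sd} does not cancel this: the i.i.d.\ part guarantees non-degeneracy of the conditional laws but contributes identically to both, so the \emph{difference} is still the polynomially small correction coming from the correlated part. (For the GFF one could rescue your decomposition by the spatial Markov property, which makes $H_R=H_\infty$ and $X_R=X_\infty$ exactly once the neighbours of $\underline{y}$ lie in $\Lambda_R$; but you do not invoke this, and the paper deliberately avoids it so that the argument survives for non-Markov fields.) The paper's proof sidesteps the comparison of Hermite polynomials altogether: it bounds $\lvert P_\infty(y)-P_R(y)\rvert\le\sup_{S>R}\lvert P_S(y)-P_R(y)\rvert$, uses Proposition~\ref{p:derpivint} to write $P_S(y)-P_R(y)=\partial_{y_1}\cdots\partial_{y_m}\E[\Xi_{\Lambda_S}-\Xi_{\Lambda_R}]$, and decomposes $\Xi_{\Lambda_S}=\Xi_{\Lambda_R}+\Xi_{\Lambda_S\setminus\Lambda_{R-1}}+\Xi_{\Lambda_S,\partial\Lambda_R}$. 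The middle term is independent of $f$ near $\underline{y}$, so its level-derivatives vanish, and the whole difference becomes the pivotal intensity of the single functional $\Xi_{\Lambda_S,\partial\Lambda_R}$ (clusters meeting $\partial\Lambda_R$). Its discrete derivative at $\underline{y}$ is supported on the pinned arm event to distance $d_\infty(\underline{y},\partial\Lambda_R)$, and only one Hermite polynomial (the one for the $\Lambda_S$-field) ever appears, controlled uniformly in $L^2$ by \eqref{e:conv_4}; Cauchy--Schwarz then delivers the full exponential rate, repeated indices included. You should adopt this ``difference of functionals'' reformulation rather than the ``difference of Hermite polynomials'' one.
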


For $t \in [0,1)$, let $P^t_{R;> r}(x;y)$ be the joint pivotal intensity given in Definition \ref{d:jpi} with respect to $N_{R;>r}(\ell)$.

\begin{lemma}[Decay]
\label{l:decay}
 For every $\ell \in \R$, $m \ge 1$, and $\delta > 0$, there exists a function $\gamma$ satisfying $\gamma(r) \to 0$ as $r \to \infty$ such that, for every $R,r \ge 1$, $x,y \in (\Lambda_R)^m$, and $t \in [0,1)$
    \[ \max \{ |P_R(y)| , |P_\infty(y)|  \} \le \gamma \big( \mathrm{diam}_\infty(\underline{y}) \big)  \]
    and
    \[ |P^t_{R; >r}(x;y) | \le  (1-t)^{-m+1/2+\delta} \gamma \big( \max\big\{r, \max_{z = x,y} \mathrm{diam}_\infty(\underline{z}) \big\} \big)  . \]
    Moreover if $\ell \notin \{-\ell_c,\ell_c\}$, $\gamma$ can be taken as $\gamma(r) = c_1 e^{-c_2 r^\rho}$ for some $c_1,c_2,\rho > 0$ which can be chosen uniformly over compact subsets of $\R\setminus\{-\ell_c,\ell_c\}$.
\end{lemma}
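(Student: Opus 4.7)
The proof of both inequalities rests on the principle that pivotality of the cluster count forces the occurrence of truncated arm events for bounded clusters of $\{f > \ell\}$ or $\{f < \ell\}$, which are then controlled by Proposition~\ref{p:ptad}.

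For the first inequality, let $r = \mathrm{diam}_\infty(\underline{y})$. The key geometric observation is that $d_{\underline{y}}\Xi \neq 0$ implies the truncated arm event at scale $r/(2m)$. Indeed, by the stabilisation lemma (Lemma~\ref{l:piv_stabilisation}) the derivative $d_{\underline{y}}\Xi$ depends only on the bounded clusters of $\{f \gtrless \ell\}$ touching the points $\underline{y}$; if all such clusters had diameter less than $r/(2m)$, then the clusters near two extreme points of $\underline{y}$ (which are at distance $r$) would lie in disjoint balls, so the discrete derivatives at those extreme points would decouple and $d_{\underline{y}}\Xi$ would vanish. Combining this with Cauchy-Schwarz, the Hermite polynomial moment bound of Proposition~\ref{p:mhpmain1}, and Proposition~\ref{p:ptad} items (1)--(2) yields
\[
\max\{|P_R(y)|, |P_\infty(y)|\} \le C_m\, \varphi_{f(\underline{y})}(\ell)\,\E\bigl[(H^{\tilde{\alpha}^y}_f(f))^2 \bigm| f(\underline{y})=\ell\bigr]^{1/2} \P\bigl[\widetilde{\mathrm{Arm}}_{r/(2m);\underline{y}} \bigm| f(\underline{y})=\ell\bigr]^{1/2} \le \gamma(r),
\]
with $\gamma$ exponentially decaying when $\ell \neq \ell_c$. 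Uniformity in $R$ is afforded by Assumption~\ref{a:sd}, which provides $R$-independent bounds on $\lambda_{\mathrm{min}}$ and on the conditional Gaussian density.

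For the second inequality, the same geometric principle, applied to the truncated cluster count $N_{R;>r}$, shows that $d_{x,y}\Xi_{R;>r}(f,f^t) \neq 0$ forces a truncated arm event at scale $\max\{r, \mathrm{diam}_\infty(\underline{x})/(2m)\}$ for $\{f \gtrless \ell\}$ at $\underline{x}$, and an analogous event for $\{f^t \gtrless \ell\}$ at $\underline{y}$. Proposition~\ref{p:ptad} item (3) controls the conditional probability of these arm events under the joint pinning $f(\underline{x}) = f^t(\underline{y}) = \ell$, contributing $(1-t)^{-m}\gamma(\cdot)$. To match the claimed $(1-t)$-exponent, I combine this arm event bound with the Hermite moment estimate of Proposition~\ref{p:mhpmain} (which, as in Lemma~\ref{l:jpi}, gives $(1-t)^{-(m-1/2)}$) using a H\"older decomposition with exponents tuned to the parameter $\delta$: the decomposition trades a small $(1-t)^\delta$ loss on the Hermite side for a fractional power of the arm event decay, which still yields a spatially decaying $\gamma$ after absorbing the fractional power into its definition.

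The main obstacle is the $(1-t)$-bookkeeping in the second inequality: a direct Cauchy-Schwarz between the arm event indicator and the Hermite polynomial gives a strictly worse exponent than $-m+1/2+\delta$, so recovering the sharp bound requires either a refined H\"older balance or a tighter analysis of the conditional covariance of $(f(\underline{x}), f^t(\underline{y}))$ that exploits the block structure as $\underline{x} \cap \underline{y}$ varies. A secondary technical point is the rigorous treatment of the pivotality-implies-arm principle for higher-order discrete derivatives ($m \ge 3$), which relies on observing that $d_{\underline{y}}\Xi$ factorises over connected components of a suitable interaction graph on $\underline{y}$ and vanishes whenever any vertex of this graph is isolated.
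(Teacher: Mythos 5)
Your proposal follows the paper's proof essentially step for step: both inequalities are reduced, via Cauchy--Schwarz/H\"older against the Hermite-moment bounds (Propositions \ref{p:mhpmain1} and \ref{p:mhpmain} and the uniform estimate \eqref{e:conv_4}), to the observation that a non-vanishing iterated discrete derivative forces a pinned truncated arm event at scale comparable to $\mathrm{diam}_\infty(\underline{y})/m$, which is then fed into Proposition \ref{p:ptad}.

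Two caveats. First, your ``interaction graph'' principle must be stated as: $d_{\underline{y}}\Xi^{\pm}$ vanishes whenever the graph is \emph{disconnected}, not merely when some vertex is isolated. The contrapositive ``nonzero derivative $\Rightarrow$ no isolated vertex'' would not exclude the pivotal points pairing off into small, widely separated clusters, in which case no long arm is present. The paper proves the full statement: if the $y_i$ do not all lie in one cluster of $\{f\gtrless\ell\}\cup\underline{y}$ then already a mixed second derivative (hence the full derivative) is zero; then a self-avoiding path of diameter $\mathrm{diam}_\infty(\underline{y})$ in the common cluster, with the $m$ points deleted, leaves at most $m+1$ sub-paths, one of diameter at least $\mathrm{diam}_\infty(\underline{y})/(m+1)$ adjacent to some $y_i$. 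Since you do assert the factorisation over connected components, this is a slip in the stated consequence rather than a missing idea, but it is the consequence you actually need. Second, your worry about the $(1-t)$ exponent is well founded but points at a sign discrepancy in the statement rather than at a gap you can close: the H\"older balance you describe (raising each arm probability, which carries the factor $(1-t)^{-m}$ from Proposition \ref{p:ptad}(3), to the power $\delta/(2m)$) produces $(1-t)^{-m+1/2-\delta}$, and this is also what the paper's own proof yields and what the downstream application uses (the iterated integral in the proof of Proposition \ref{p:statwcetrunc} only needs integrability of $(1-s)^{-1/2-\delta}$ for $\delta<1/2$). No argument of this type can improve on the exponent $-m+1/2$ coming from the Hermite factor when $\underline{x}=\underline{y}$ is a single repeated point, so the ``$+\delta$'' in the lemma should be read as ``$-\delta$''; there is no refined balance to be found, and you should not spend effort looking for one.
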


\begin{remark}
    Although the constants $c_1,c_2,\rho > 0$ in the definition of $\gamma$ in Lemmas \ref{l:conv} and \ref{l:decay} may depend on $\ell$, $m$, and $\delta$, this will not matter for our later arguments.
\end{remark}

\begin{proof}[Proof of Lemma \ref{l:conv}]
We first show the pointwise convergence $P_R(y) \to P_\infty(y)$ as $R \to \infty$ for every $y \in(\Z^d)^m$, which is equivalent to the existence of a $\gamma(r) \to 0$ as in the first statement of the lemma. Let $y\in(\Z^d)^m$ be given, and assume $R$ is sufficiently large so that $y \subseteq \Lambda_R$. We recall the alternate expression for the pivotal intensity given in Remark~\ref{r:pivint}:
\begin{displaymath}
    P_R(y)=\E\big[d_{\underline{y}}\Xi_{\Lambda_R}(f-\ell)H^{\tilde{\alpha}^y}_{f(\underline{y})|f(\Lambda_R\setminus \underline{y})=0}(X_R)\big|f(\underline{y})=\ell\big]\varphi_{f(\underline{y})}(\ell)
\end{displaymath}
where $X_R:=f(\underline{y})-\E[f(\underline{y})|f(\Lambda_R\setminus y)]$. Let $\overline{f}_\ell:=(f|f(\underline{y})=\ell)$ and $X_\infty:=f(\underline{y})-\E[f(\underline{y})|f(y^c)]$. We now make the following claims: 
\begin{align}
    \label{e:conv_4}
    \sup_{R>1}\sup_{y\in(\Z^d)^m}\E&\left[\left(H^{\tilde{\alpha}^y}_{f(\underline{y})|f(\Lambda_R\setminus\underline{y})=0}(X_R)\right)^2\middle|f(\underline{y})=\ell\right]<\infty,\\
    \label{e:conv_1}
    d_{\underline{y}}\Xi_{\Lambda_R}(\overline{f}_\ell-\ell)&\to d_{\underline{y}}\Xi_{\infty}(\overline{f}_\ell-\ell)\quad\text{almost surely as }R\to\infty,\\
    \label{e:conv_2}
    H^{\tilde{\alpha}^y}_{f(\underline{y})|f(\Lambda_R\setminus \underline{y})=0}&\to H^{\tilde{\alpha}^y}_{f(\underline{y})|f(y^c)=0}\quad\text{pointwise as }R\to\infty\text{, and}\\
    \label{e:conv_3}
    (X_R|f(\underline{y})=\ell)&\to(X_\infty|f(\underline{y})=\ell)\quad\text{almost surely as }R\to\infty.
\end{align}
By \eqref{e:conv_4} and the fact that $\lvert d_{\underline{y}}\Xi_{\Lambda_R}\rvert\leq 2^m d$, we see that the integrands in the definition of $P_R$ are uniformly integrable. Then by \eqref{e:conv_1}--\eqref{e:conv_3} and Vitali's convergence theorem, as $R\to\infty$
\begin{displaymath}
    P_R(y)\to\E[d_{\underline{y}}\Xi_\infty(f-\ell)H^{\tilde{\alpha}^y}_{f(\underline{y})|f(y^c)=0}(f(\underline{y})-\E[f(\underline{y})|f(y^c)])|f(\underline{y})=\ell]\varphi_{f(\underline{y})}(\ell)=P_\infty(y).
\end{displaymath}

We now verify the four claims. Claim \eqref{e:conv_4} was established in  the proof of Lemma~\ref{l:bounds1}, (specifically in \eqref{e:bounds11}). The latter equation actually bounded the first absolute moment of the Hermite polynomial in \eqref{e:conv_4}, but since the arguments are Gaussian this is equivalent to bounding the second moment up to some constant depending only on $m$.

The stabilisation property of the cluster count (Lemma~\ref{l:piv_stabilisation}) immediately yields \eqref{e:conv_1}. Turning to \eqref{e:conv_2}, we note that $(f(\underline{y})|f(\Lambda_R\setminus\underline{y})=0)$ is simply the $L^2$ projection of $f(\underline{y})$ onto the orthogonal complement of the closed linear span of $f(\Lambda_R\setminus\underline{y})$. Since this subspace is decreasing in $R$, a standard Hilbert space argument shows that
\begin{displaymath}
    (f(\underline{y})|f(\Lambda_R\setminus\underline{y})=0)\to(f(\underline{y})|f(y^c)=0)
\end{displaymath}
in $L^2$ as $R\to\infty$. In particular the covariance matrices of the former converge to that of the latter, which is non-degenerate by Assumption~\ref{a:sd}. This yields pointwise convergence of the corresponding Hermite polynomials as stated in \eqref{e:conv_2}.

For \eqref{e:conv_3}, by Gaussian regression and martingale convergence
\begin{align*}
    (X_R|f(\underline{y})=\ell)&=X_R+\mathrm{Cov}[X_R,f(\underline{y})]\mathrm{Cov}[f(\underline{y})]^{-1}(\ell-f(\underline{y}))\\
    &\to X_\infty+\mathrm{Cov}[X_\infty,f(\underline{y})]\mathrm{Cov}[f(\underline{y})]^{-1}(\ell-f(\underline{y}))=(X_\infty|f(\underline{y})=\ell).
\end{align*}
This completes the proof of the claims.

We now establish the quantitative bound on $\gamma$. By Proposition~\ref{p:derpivint}
\begin{displaymath}
    \lvert P_\infty(y)-P_R(y)\rvert\leq\sup_{S>R}\lvert P_S(y)-P_R(y)\rvert=\sup_{S>R}\lvert\partial_{y_1}\dots\partial_{y_m}\E[\Xi_{\Lambda_S}-\Xi_{\Lambda_R}]\rvert
\end{displaymath}
where we have suppressed the argument $f-\ell$ from these level set functionals to ease notation (and will continue to do so below). By definition of the cluster count, we can express
\begin{displaymath}
    \Xi_{\Lambda_S}=\Xi_{\Lambda_R}+\Xi_{\Lambda_S\setminus\Lambda_{R-1}}+\Xi_{\Lambda_S,\partial\Lambda_R}
\end{displaymath}
where the last term above is defined to be the number of clusters contained in $\Lambda_S$ which intersect $\partial\Lambda_R$ (and do not intersect $\partial\Lambda_S$). Since $\Xi_{\Lambda_S\setminus\Lambda_{R-1}}$ does not depend on $f|_{\Lambda_{R-1}}$, combining the last two equations and applying Proposition~\ref{p:derpivint} once more shows that
\begin{displaymath}
    \lvert P_\infty(y)-P_R(y)\rvert\leq\sup_{S>R}\lvert \partial_{y_1}\dots\partial_{y_m}\E[\Xi_{\Lambda_S,\partial\Lambda_R}]\rvert =\sup_{S>R}\lvert\E[d_y\Xi_{\Lambda_S,\partial\Lambda_R}|f(\underline{y})=\ell]\rvert\varphi_{f(\underline{y})}(\ell).
\end{displaymath}
Using the alternative expression for pivotal intensities given in Remark~\ref{r:pivint}, the Cauchy-Schwarz inequality, and \eqref{e:conv_4}, the latter expression is at most
\begin{equation}\label{e:conv1}
\begin{aligned}
    c_{\ell,m}\sup_{S>R}\E\big[&(d_{\underline{y}}\Xi_{\Lambda_S,\partial\Lambda_{R}})^2\big|f(\underline{y})=\ell\big]^{1/2}\varphi_{f(\underline{y})}(\ell).
\end{aligned}
\end{equation}
We observe that if $d_{\underline{y}}\Xi_{\Lambda_S,\partial\Lambda_R}\neq 0$ then there must exist a bounded cluster of $\{f>\ell\}\setminus\underline{y}$ (or $\{f<\ell\}\setminus\underline{y}$) connecting a neighbourhood of some $y_i$ to $\partial\Lambda_R$. Hence, since $\lvert d_{\underline{y}}\Xi_{\Lambda_S,\partial\Lambda_R}\rvert\leq c_{m,d}$, 
\begin{align*}
    \E\big[(d_{\underline{y}}\Xi_{\Lambda_S,\partial\Lambda_{R}})^2\big|f(\underline{y})=\ell\big]\leq c_{m,d}\Big(\P&\left[\widetilde{\mathrm{Arm}}_{d_\infty(\underline{y},\partial\Lambda_R);\underline{y}}(\{f>\ell\})|f(\underline{y})=\ell\right]\\
    &+\P\left[\widetilde{\mathrm{Arm}}_{d_\infty(\underline{y},\partial\Lambda_R);\underline{y}}(\{f<\ell\})|f(\underline{y})=\ell\right]\Big).
\end{align*}
The first statement of the lemma now follows from pinned truncated arm decay (Proposition~\ref{p:ptad}) and boundedness of $\varphi_{f(\underline{y})}(\ell)$ (see \eqref{e:bounds10}).

The statement in the case $m=1$ follows from a very similar argument, so we will only highlight the points of difference. Given $y\in\Lambda_R$, we relabel our coordinate axes so that one of the $(d-1)$-dimensional boundary faces of $\partial\Lambda_R$ which $y$ is closest to lies in $[-R,-R+1]\times\Z^{d-1}$ (by Assumption~\ref{a:rv} this relabelling does not affect the distribution of $f$). Then by stationarity of $f$
\begin{displaymath}
    P_R(y)=\E[d_{y+Re_1}\Xi_{\Lambda_R+Re_1}(f-\ell)|f(y+Re_1)=\ell]\varphi_{f(y)}(\ell)
\end{displaymath}
where $e_1$ denotes the unit vector in the positive direction of the first axis. We note that $\Lambda_R+Re_1\subset\Z^d_+$ and that the first coordinate of $y+Re_1$ is $d_\infty(y,\partial\Lambda_R)$. The remainder of the argument proceeds as before; we express
\begin{displaymath}
    P_\infty^{\mathrm{H}}(d_\infty(y,\partial\Lambda_S))=\lim_{S\to\infty}\E[d_{y+Re_1}\Xi_{\Lambda_S+Se_1}(f-\ell)|f(y+Se_1)=\ell]\varphi_{f(y)}(\ell),
\end{displaymath}
decompose $\Xi_{\Lambda_S+Se_1}$ into different cluster counts and use pinned truncated arm decay to control the derivative of the expected difference.

Finally we consider the pivotal intensities for the truncated cluster count. We let $\Xi^{\leq r}_D$ denote the analogue of $\Xi_D$ for which we count only the clusters of diameter at most $r$. By an analogous argument to that given for $P_R$,
\begin{displaymath}
    \lvert P_{R;\leq r}(y)-P_{\infty;\leq r}(y)\rvert\leq\sup_{S>R}\left\lvert\partial_{y_1}\dots\partial_{y_m}\E\left[\Xi_{\Lambda_S,\partial\Lambda_{R}}^{\leq r}\right]\right\rvert.
\end{displaymath}
Since $\Xi^{\leq r}_{\Lambda_S,\partial\Lambda_R}$ counts only clusters which intersect $\partial\Lambda_R$ and have diameter at most $r$, it does not depend on the value of $f$ at points of distance greater than $r+2$ from $\partial\Lambda_R$. In particular if $d_\infty(y,\partial\Lambda_R)>r+2$ then $\Xi^{\leq r}_{\Lambda_S,\partial\Lambda_R}$ does not depend on at least one of the $y_i$ and so the above expression is zero. Since $d_y\Xi^{\leq r}_{\Lambda_R}$ is bounded by a constant depending only on $m$ and $d$, the same is true of $P_{R;\leq r}$ and $P_{\infty;\leq r}$, which gives the last statement.
\end{proof}

\begin{proof}[Proof of Lemma \ref{l:decay}]
Since $P_R(y)\to P_\infty(y)$ as $R\to\infty$, we have
\begin{align*}
    \max\{\lvert P_R(y)\rvert,\lvert P_\infty(y)\rvert\}&\leq\sup_{S\geq R}\E[d_{\underline{y}}\Xi_{\Lambda_S}H^{\tilde{\alpha}^y}_{f(\Lambda_S)}(f(\Lambda_s))|f(\underline{y})=\ell]\varphi_{f(\underline{y})}(\ell)\\
    &\leq c_{m,\ell,d,\lambda_{\mathrm{min}}}\sup_{S\geq R}\E\left[\big(d_{\underline{y}}\Xi_{\Lambda_S}\big)^2\middle| f(\underline{y})=\ell\right]^{1/2}
\end{align*}
where the second inequality follows from Cauchy-Schwarz and \eqref{e:conv_4}.

We next claim that
\begin{displaymath}
    d_{\underline{y}}\Xi_{\Lambda_S}\neq 0\qquad\text{implies}\qquad \widetilde{\mathrm{Arm}}_{\mathrm{diam}_\infty(y)/(m+1);\underline{y}}(\{f>\ell\})\cup\widetilde{\mathrm{Arm}}_{\mathrm{diam}_\infty(y)/(m+1);\underline{y}}(\{f<\ell\}).
\end{displaymath}
Assuming the claim, the first statement of the lemma follows from pinned truncated arm decay (Proposition~\ref{p:ptad}) and the fact that $\lvert d_{\underline{y}}\Xi_{\Lambda_S}\rvert\leq 2^md$.

We now prove the claim. Fixing $S$, let $\Xi^+$ denote the number of clusters of $\{f>\ell\}$ in $\Lambda_s$ which do not intersect $\partial\Lambda_S$. Suppose $d_{\underline{y}}\Xi^+\neq 0$, we first argue that each $y_i$ is in the same cluster of $\{f>\ell\}\cup \underline{y}$. If this were not the case, then we could partition $\{f>\ell\}\cup \underline{y}$ into $C_1$ and $C_2$ where $d_\infty(C_1,C_2)>1$ and (after possibly relabelling our indices) $y_1\in C_1$ and $y_2\in C_2$. Then for any $E\subseteq\{f\geq\ell\}\cup \underline{y}$
\begin{displaymath}
    d_{y_1}d_{y_2}\Xi^+(E)=d_{y_2}d_{y_1}\Xi^+(C_1\cap E)+d_{y_1}d_{y_2}\Xi^+(C_2\cap E)=0
\end{displaymath}
where the first equality follows because $C_1$ and $C_2$ are separated and the second uses the fact that $d_{y_1}\Xi^+(C_1\cap E)$ is unaffected by adding/removing points in $C_2$. Since we can expand $d_{y_3}\dots d_{y_m}\Xi^+$ into a linear combination of terms of the form $\Xi^+(E)$ for $E\subseteq \{f>\ell\}\cup\underline{y}$, the above equality implies that $d_{\underline{y}}\Xi^+=0$, yielding a contradiction. Let $C$ denote the cluster of $\{f>\ell\}\cup\underline{y}$ containing each $y_i$. There exists a path in $C$ (without self-intersections) with diameter at least $\mathrm{diam}_\infty(y)$ which intersects $\underline{y}$. If we remove the points $y_1,\dots,y_m$ from the path we are left with at most $m+1$ paths in $\{f>\ell\}$, each of which is adjacent to some $y_i$. One of these paths must have diameter at least $\mathrm{diam}_\infty(y)/(m+1)$ and so 
\begin{displaymath}
    \widetilde{\mathrm{Arm}}_{\mathrm{diam}_\infty(y)/(m+1);\underline{y}}(\{f>\ell\})
\end{displaymath}
holds. The same argument applies to $\Xi^-$, the number of clusters of $\{f<\ell\}$. Since $\Xi=\Xi^++\Xi^-$, if $d_{\underline{y}}\Xi\neq 0$ then the same must be true of $\Xi^+$ or $\Xi^-$. Combining these observations completes the proof of the claim.

Turning to the joint pivotal intensity, applying H\"older's inequality to Definition~\ref{d:jpi}
\begin{equation}\label{e:decay1}
\begin{aligned}
    \lvert P^t_{R;>r}(x,y)\rvert\leq\Big(\tilde{\E}^t[\lvert d_{\underline{x}}\Xi_{R;>r}\rvert^4]\Big)^{\frac{1}{4}}\;&\Big(\tilde{\E}^t[\lvert d_{\underline{y}}\Xi^t_{R;>r}\rvert^4]\Big)^{\frac{1}{4}}\cdot\\
    &\Big(\tilde{\E}^t\left[\big(H^{\tilde{\alpha}^x,\tilde{\alpha}^y}_{f(\underline{x}),f^t(\underline{y})}(f(\underline{x}),f^t(\underline{y}))\big)^2\right]\Big)^{\frac{1}{2}}\varphi_{f(\underline{x}),f^t(\underline{y})}(\ell,\ell)
\end{aligned}
\end{equation}
where $\tilde{\E}^t$ denotes expectation conditioning on $f(\underline{x})=\ell$ and $f^t(\underline{y})=\ell$. If $\underline{x}\cap\underline{y}=\emptyset$ then $(f(\underline{x}),f^t(\underline{y}))$ is non-degenerate, uniformly in $\underline{x}$, $\underline{y}$ and $t$. In this case a straightforward argument using pointwise bounds on Hermite polynomials (Proposition~\ref{p:mhpmain1}) and Gaussian regression shows that the last line of \eqref{e:decay1} is bounded by a constant depending only on $m$, $\ell$ and the distribution of $f$ (which is uniform over $\ell$ in any compact interval). If $\underline{x}\cap\underline{y}\neq\emptyset$, then Proposition~\ref{p:mhpmain} states that the second line of \eqref{e:decay1} is bounded by $c(1-t)^{-m+1/2}$ where $c>0$ depends only on $m$ and the distribution of $f$. Since $\lvert d_{\underline{y}}\Xi_{R;>r}\rvert\leq 2^md$, we then have for any $\delta<1/2$
\begin{displaymath}
    \lvert P^t_{R;>r}(x,y)\rvert\leq c_{d,m,\ell} \Big(\tilde{\P}^t(d_{\underline{x}}\Xi_{R;>r}\neq 0)\Big)^{\frac{\delta}{2m}}\Big(\tilde{\P}^t(d_{\underline{y}}\Xi^t_{R;>r}\neq 0)\Big)^{\frac{\delta}{2m}}.
\end{displaymath}
From the argument given earlier in the proof, if $d_{\underline{x}}\Xi_{R;>r}\neq 0$, then one of the points in $\underline{x}$ must be the origin of an arm event of length at least $\mathrm{diam}_\infty(\underline{x})/(m+1)$. Since we count only (finite) clusters of length at least $r$, a near-identical argument shows that
\begin{displaymath}
    d_{\underline{x}}\Xi_{R;>r}\neq 0\quad\text{implies}\quad\widetilde{\mathrm{Arm}}_{r\vee\mathrm{diam}_\infty(x)/(m+1);\underline{y}}(\{f>\ell\})\cup\widetilde{\mathrm{Arm}}_{r\vee\mathrm{diam}_\infty(x)/(m+1);\underline{y}}(\{f<\ell\}).
\end{displaymath}
A similar property holds for $d_{\underline{y}}\Xi^t_{R;>r}$ and so applying Proposition~\ref{p:ptad} completes the proof of the second statement of the lemma. (Note that this proposition also applies to arm events for $f^t$ if we simply swap the roles of the fields $f$ and $\tilde{f}$ used in the interpolation.)
\end{proof}

\subsection{Localisation of the components}
We now complete the proof of Propositions \ref{p:statwce}--\ref{p:statwcetrunc}:

\begin{proof}[Proof of Proposition \ref{p:statwce}]
The claimed properties of $P_\infty$ follow from Lemma~\ref{l:sym} and the first item of Lemma~\ref{l:decay}, so it remains to prove \eqref{e:varbound}. Abbreviate 
\begin{align}
\label{e:em}
E_m & := Q_m[N_R(\ell)] -  \frac{1}{m!}\sum_{x_1,\dots,x_m\in \Lambda_R}\wick{f(x_1)\cdots f(x_m)} P_\infty(x_1,\ldots,x_m) \\
\nonumber & =   \frac{1}{m!}\sum_{x_1,\dots,x_m\in \Lambda_R}\wick{f(x_1)\cdots f(x_m)} \Gamma_R(x_1,\ldots,x_m)  
\end{align}
where $\Gamma_R(x) = P_R(x) - P_\infty(x)$. By the diagram formula (Theorem \ref{t:df})
\begin{align*}
    \Var[E_m]  &= \frac{1}{(m!)^2} \sum_{x, y \in (\Lambda_R)^m} \Big( \sum_{\sigma \in S_m} \prod_{i=1}^m G(x_i - y_{\sigma(i)} ) \Big) \Gamma_R(x) \Gamma_R(y)\\
    &=\frac{1}{m!} \sum_{x, y \in (\Lambda_R)^m} \Big( \prod_{i=1}^m G(x_i - y_i ) \Big) \Gamma_R(x) \Gamma_R(y),
\end{align*}
where the second equality is by the permutation invariance of $\Gamma_R$. By the first items of Lemmas \ref{l:conv} and \ref{l:decay}, for every $x_1 \in \Lambda_R$
\[  \sum_{x_2,\ldots,x_m \in \Lambda_R} |\Gamma_R(x)| \le \gamma( d_\infty(x_1,\partial \Lambda_R) )   \] 
where $\gamma(k) = c_1 e^{-c_2 k^\rho}$. Then by essentially elementary arguments (formalised in Lemma~\ref{l:rv3} and the second item of Lemma~\ref{l:rv4}), we have that 
\[ \Var[E_m] = O \big( R^{\max\{ 2(d-1) - m(d-2), d-1\}} (\log R)^{\id_{m(d-2)=d-1}} \big) \]
which gives the result.
\end{proof}

\begin{proof}[Proof of Proposition \ref{p:varasymp}]
The fact that 
\[ | P^H_\infty(k) - P_\infty(0) | \le c_1 e^{-c_2 k^\rho}   \]
follows by setting $y = (-R + k, 0,\ldots,0)$ in the first item of Lemma \ref{l:conv} and taking $R \to \infty$. Abbreviate
\[ E_1^H  := Q_1[N_R(\ell)] -  \sum_{x \in \Lambda_R} f(x) P_\infty^H( d_\infty(x,\Lambda_R) )   =   \sum_{x \in \Lambda_R} f(x) \Gamma_R(x) \]
where $\Gamma_R(x) =  P_R(x) - P_\infty^H( d_\infty(x,\Lambda_R))$. By the first item of Lemma \ref{l:decay}, 
\[| \Gamma_R(x)| \le \gamma( d_\infty(x, F_R^{d-2} ) ) \]
where $\gamma(k) = c_1 e^{-c_2 k^\rho}$. Applying the second item of Lemma \ref{l:rv4}, we have that 
\[ \Var[E_1^H] = O \big( R^{d-2} (\log R) \big)  \]
as required.
\end{proof}

\begin{proof}[Proof of Proposition \ref{p:statwcequal}]
Let $E_1$ and $\Gamma_R$ be defined as in \eqref{e:em}. By the first item of Lemma~\ref{l:conv}, $|\Gamma_R(x)| \le \gamma(k)$ for a function $\gamma$ satisfying $\gamma(k) \to 0$. Applying the first item of Lemma \ref{l:rv4} gives the result.
\end{proof}

Before proving Proposition \ref{p:statwcetrunc} we need a tail estimate for the chaos expansion of the truncated cluster count:

\begin{lemma}\label{l:trunctail}
    Let $\epsilon>0$ and $r>1$ be given. Then for sufficiently large $M\in\N$,
    \begin{displaymath}
        \limsup_{R\to\infty}R^{-d}\Var\Big[\sum_{m>M}Q_m\big[N_{R;\leq r}(\ell) \big]\Big]\leq \epsilon.
    \end{displaymath}
\end{lemma}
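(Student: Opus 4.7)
The plan is to apply Proposition~\ref{p:wcerror} with $m = M+1$ to the level-set functional $\Xi^{\leq r}_{\Lambda_R}$, giving
\[ \Var\Big[\sum_{m > M} Q_m[N_{R;\leq r}(\ell)]\Big] = \sum_{x,y \in (\Lambda_R)^{M+1}} \prod_{i=1}^{M+1} G(x_i-y_i) \int_0^1 \cdots \int_0^{t_{M-1}} P^{t_M}_{R;\leq r}(x;y)\,dt_M \cdots dt_0. \]
Two features of the truncated count drive the argument. First, by the same reasoning as in Lemma~\ref{l:stationarytrunc}, the joint pivotal intensity $P^t_{R;\leq r}(x;y)$ vanishes unless $\mathrm{diam}_\infty(\underline{x}),\mathrm{diam}_\infty(\underline{y}) \leq r+2$, so the distinct sets are confined to boxes of cardinality at most $K_r := (2r+5)^d$. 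Second, the iterated $t$-integral contributes a factor $1/M!$ that will dominate any exponential-in-$M$ contribution from the spatial sum.

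The key step is a uniform-in-$M$ bound on $|P^t_{R;\leq r}(x;y)|$. Applying Cauchy--Schwarz to the defining expression in exactly the same way as in the proof of Lemma~\ref{l:jpi}, together with Lemma~\ref{l:bounds1} (which, since $|\underline{x}| \leq K_r$, gives $|d_{\underline{x}}\Xi^{\leq r}_{\Lambda_R}| \leq 2^{K_r-1}\|d\Xi^{\leq r}\|_\infty \leq C_{r,d}$ uniformly in $m$) and the Hermite polynomial moment bound of Proposition~\ref{p:mhpmain} (whose `dimension-dependent' constants depend only on $|\underline{x}\cup\underline{y}| \leq 2K_r$), I would obtain
\[ |P^t_{R;\leq r}(x;y)| \leq C_{r,d,\ell}(1-t)^{-(M+1)+|\underline{x}\cup\underline{y}|/2} \leq C_{r,d,\ell}(1-t)^{-M-1/2}, \]
with constants independent of $M$. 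Using the sharp volume identity $\mathrm{Vol}(E_s) = (1-s)^M/M!$ (a refinement of the bound used in the proof of Lemma~\ref{l:jpi}) and swapping the order of integration, the iterated $t$-integral is then bounded by $2C_{r,d,\ell}/M!$.

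For the spatial sum, parameterising each tuple $x$ (resp.\ $y$) by its first coordinate $x_1 \in \Lambda_R$ (resp.\ $y_1 \in \Lambda_R$), with the remaining $M$ coordinates each confined to a box of side $2r+5$ centred at $x_1$ (resp.\ $y_1$), and using $\prod_i G(x_i-y_i) \leq \tilde G(x_1-y_1)^{M+1}$ with $\tilde G(w) := \sup\{G(w+a-b) : \max_i|a_i|, \max_i|b_i| \leq r+2\}$, the support constraint gives
\[ \sum_{x,y} \prod_{i=1}^{M+1} G(x_i-y_i) \leq (2r+5)^{2dM} |\Lambda_R| \sum_{w \in \Z^d} \tilde G(w)^{M+1}. \]
For $M+1 > d/(d-2)$ the inner sum converges and is bounded by $C_r G(0)^{M+1}$, dominated by the contribution near $w = 0$. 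Combining everything yields a bound of the form $R^d \cdot C'_{r,d,\ell} A_r^M/M!$ with $A_r := (2r+5)^{2d} G(0)$, and since $A_r^M/M! \to 0$ as $M\to\infty$, the lemma follows by taking $M$ sufficiently large. The main subtlety is extracting the $1/M!$ factor and keeping all pivotal-intensity constants uniform in $M$; both of these hinge on the truncation's bound $|\underline{x}|,|\underline{y}|\leq K_r$, which is what allows the estimate $|d_{\underline{x}}\Xi^{\leq r}| \leq C_{r,d}$ and the `dimensional' constants in Proposition~\ref{p:mhpmain} to be independent of $m$.
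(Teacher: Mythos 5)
Your overall strategy differs from the paper's: you apply the iterated interpolation formula (Proposition \ref{p:wcerror}) at order $m=M+1$ and try to beat the spatial sum with the $1/M!$ coming from the volume of the time-simplex, whereas the paper writes $N_{R;\leq r}(\ell)=\sum_{x\in\Lambda_R}\Theta_x$ with $\Theta_x$ a \emph{bounded local} functional of $f|_{x+\Lambda_{r+1}}$, kills the near-diagonal contribution by the trivial fact that $\Var[\sum_{m>M}Q_m[\Theta_0]]\to0$ for the single fixed $L^2$ variable $\Theta_0$, and kills the off-diagonal contribution by the diagram formula together with the decay of $G$ and the bounded support of the intensities of each $\Theta_x$. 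Unfortunately your route has a genuine gap at its central step.

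The gap is the claimed bound $|P^{t}_{R;\leq r}(x;y)|\leq C_{r,d,\ell}(1-t)^{-M-1/2}$ with $C_{r,d,\ell}$ \emph{independent of $M$}. The truncation does confine the distinct points, so $|\underline{x}|,|\underline{y}|\leq K_r$ and $|d_{\underline{x}}\Xi^{\leq r}|\leq C_{r,d}$ uniformly in $M$ — that part is fine. But the joint intensity also contains the multivariate Hermite polynomial $H^{\tilde\alpha^x,\tilde\alpha^y}_{f,f^t}$, and $|\tilde\alpha^x|+|\tilde\alpha^y|=2(M+1)-|\underline{x}|-|\underline{y}|$ grows linearly in $M$ because the tuples $x,y\in(\Lambda_R)^{M+1}$ must contain many \emph{repeated} points; the truncation does nothing to suppress these. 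The constant in Proposition \ref{p:mhpmain} is $c_{X,|I|,|J|,|\bar\alpha|}$ with $|\bar\alpha|\sim 2M$, and tracing its proof (through Propositions \ref{p:mhp}, \ref{p:mhpbound} and ultimately the $\sqrt{|\alpha|!}$ in Proposition \ref{p:mhpmain1}, plus the combinatorial sums over $\hat\alpha_I\leq\alpha_I$ and over the matrices $\theta$ with the factors $(\alpha_I)!(\alpha_J)!\cdots/\theta!$) shows it grows at least like $\sqrt{(2M)!}\gtrsim 2^{M}M!$, and plausibly worse. This is consistent with the paper's own Lemma \ref{l:bounds}, which for the \emph{single} truncated intensity already records the bound $e^{cm}\sqrt{m!}$ rather than a constant. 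Consequently the factor $1/M!$ that you extract from $\int_0^1\mathrm{Vol}(E_s)(1-s)^{-M-1/2}ds$ is entirely consumed by the intensity bound, leaving a residual of the form $B_r^{M}$ with $B_r=(2r+5)^{2d}G(0)e^{c}>1$ from the spatial sum and the exponential factors, which does not tend to $0$ as $M\to\infty$. This is exactly the obstruction the paper alludes to when it says the percolation inputs "do not afford us sufficient uniform control on the higher chaoses directly": Proposition \ref{p:wcerror} is designed to be used at a \emph{fixed} order $m_0$ (as in Proposition \ref{p:statwcetrunc}), not at an order growing with $M$. To repair the proof you would need either a genuinely uniform-in-$M$ bound on the order-$(M+1)$ joint intensities (which the paper's machinery does not supply) or, more simply, to switch to the paper's localisation $N_{R;\leq r}=\sum_x\Theta_x$ and argue via the $L^2$ tail of the fixed random variable $\Theta_0$.
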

\begin{proof}
    Given $R>r>1$ and $x\in\Lambda_R$, we define
    \begin{displaymath}
       \Theta_x =  \Theta_x(R,r):=\frac{1}{\lvert C_x\rvert}\ind_{A(x,r,R)}
    \end{displaymath}
    where $C_x$ is the cluster of $\{f>\ell\}$ or $\{f<\ell\}$ which contains $x$ and $A(x,r,R)$ is the event that $C_x$ does not intersect $\partial\Lambda_R$ and has diameter at most $r$. Note that $\Theta_x$ is determined by $f|_{x+\Lambda_{r+1}}$. Observe also that the identity $N_{R;\le r}(\ell)=\sum_{x\in\Lambda_R}\Theta_x$ holds by definition, from which we have
    \begin{displaymath}
        \Var \big[Q_m[N_{R;\leq r}(\ell)] \big]=\sum_{x,y\in\Lambda_R}\mathrm{Cov}\big[Q_m[\Theta_x],Q_m[\Theta_y]\big].
    \end{displaymath}
    We will control separately the diagonal and off-diagonal contributions to this sum. 

    For the diagonal contribution, fixing a large parameter $L>1$ to be specified later, we have
    \begin{equation}\label{e:trunctail1}
        \sum_{\substack{x,y\in\Lambda_R\setminus\Lambda_{R-r-L-1}\\\lvert x-y\rvert\leq L}}\sum_{m>M}\mathrm{Cov}\big[Q_m[\Theta_x],Q_m[\Theta_y]\big]\leq  \sum_{\substack{x,y\in\Lambda_R\setminus\Lambda_{R-r-L-1}\\\lvert x-y\rvert\leq L}}1 \leq c_d (r+L) R^{d-1}
    \end{equation}
    since $\sup_x\Var[\Theta_x]\leq 1$. We also have    
    \begin{equation}\label{e:trunctail2}
    \begin{aligned}
        \sum_{\substack{\{x,y\}\cap\Lambda_{R-r-L-1}\neq\emptyset\\\lvert x-y\rvert\leq L}}\sum_{m>M}&\mathrm{Cov}\big[Q_m[\Theta_x],Q_m[\Theta_y]\big]\\
        &\qquad\leq (2R+1)^d(2L+1)^d\sup_{x\in\Lambda_{R-r-1}}\Var\left[\sum_{m>M}Q_m[\Theta_x]\right].
    \end{aligned}
    \end{equation}
    Since $\Theta_x$ is determined by $f|_{x+\Lambda_{r+1}}$, for $x\in\Lambda_{R-r-1}$ the distribution of $\Theta_x$ does not depend on $R$. By stationarity of $f$, this distribution also does not depend on $x$, and so
    \begin{equation}\label{e:trunctail3}
        \sup_{x\in\Lambda_{R-r-1}}\Var\left[\sum_{m>M}Q_m[\Theta_x]\right]=\Var\left[\sum_{m>M}Q_m[\Theta_0]\right]\leq \epsilon_M
    \end{equation}
    where $\epsilon_M\to 0$ as $M\to\infty$ uniformly over $R$.

    For the off-diagonal contribution, using the chaos expansion for each $\Theta_x$ (Theorem~\ref{t:cluster_wce}) and the diagram formula, we have
    \begin{equation}\label{e:trunctail4}
    \begin{aligned}
        \sum_{\substack{x,y\in\Lambda_R\\\lvert x-y\rvert> L}}\sum_{m>M}\mathrm{Cov}&[Q_m[\Theta_x],Q_m[\Theta_y]]=\sum_{\substack{x,y\in\Lambda_R\\\lvert x-y\rvert> L}}\sum_{m>M}\frac{1}{m!}\sum_{u,v\in\Lambda_R^m}P_x(u)P_y(u)\prod_{i=1}^mG(u_i-v_i)
    \end{aligned}
    \end{equation}
    where $P_x(u)$ denotes the pivotal intensity associated with $\Theta_x$. Since $\Theta_x$ is determined by $f|_{x+\Lambda_{r+1}}$ and discrete derivatives commute, we have $
        d_{u_1}\dots d_{u_m}\Theta_x=0$ if $d_\infty(u_i,x)>r+1$ for any $i$. Hence by definition of the pivotal intensities, $P_x$ is supported in $x+\Lambda_{r+1}$. Then since $|G(x)| \le c_d |x|^{(d-2)}$, and by the bound on pivotal intensities in Lemma~\ref{l:bounds1}, for every $\rho>0$ \eqref{e:trunctail4} is bounded above by
    \begin{displaymath}
      \sum_{\substack{x,y\in\Lambda_R\\\lvert x-y\rvert> L}}\sum_{m>M}\sum_{\substack{u\in x+(\Lambda_{r+1})^m\\v\in y+(\Lambda_{r+1})^m}}e^{c_{r,\ell}m}\rho^m\prod_{i=1}^m\frac{c_d\lvert u_i-v_i\rvert^{-(d-2)}}{\rho} .
    \end{displaymath}
 Choosing 
    \begin{displaymath}
        \rho=\frac{1}{2 |\Lambda_{r+1}|^2}e^{-c_{r,\ell}} \qquad \text{and}\qquad L>4r 
    \end{displaymath}
    the above sum is bounded by
    \begin{align*}
        \sum_{m>M}2^{-m}\sum_{\substack{x,y\in\Lambda_R\\\lvert x-y\rvert> L}}\Big(\frac{c_d(\lvert x-y\rvert-2(r+1))^{-(d-2)}}{\rho}\Big)^m\leq 2^{-M} R^d \max_{m \ge M} \sum_{|y| > L} \big(c_{d,r,\ell} \lvert y\rvert^{-(d-2)}\big)^m
    \end{align*}
    for some $c_{d,r,\ell}>0$. By choosing $L$ sufficiently large (depending only on $d,r,\ell$), the sum in the above expression can be bounded by $1$. Combining this with \eqref{e:trunctail1}--\eqref{e:trunctail3} proves the statement of the lemma.
\end{proof}

\begin{proof}[Proof of Proposition \ref{p:statwcetrunc}]
The claimed properties of $P_{\infty ; \le r}$ are given in Lemmas \ref{l:stationarytrunc} and \ref{l:bounds}, so it remains to prove the variance bound. Fix a truncation parameter $r>1$ and $\delta\in(0,1/2)$. Recalling that $N_{R;>r}$ and $N_{R;\leq r}$ denote the number of level clusters of diameter greater than $r$ and at most $r$ respectively, by linearity of projection onto each chaos
\begin{equation}\label{e:statwcetrunc1}
 \sum_{m\geq m_0} Q_m[N_R(\ell)]-\sum_{m_0\leq m\leq M}\overline{Q}_m(R,r) = I_1 + I_2 + I_3 
 \end{equation}
 where
 \[ I_1  :=\sum_{m>M}Q_m[N_{R;\leq r}(\ell)] \ , \quad I_2 :=  \sum_{m\geq m_0}Q_m[N_{R;>r}(\ell)] \]
 and
 \[ I_3 :=  \sum_{m_0\leq m\leq M}Q_m[N_{R;\leq r}(\ell)]-\overline{Q}_m(R,r) . \]
The variance of $I_1$ is controlled by Lemma \ref{l:trunctail}, so it remains to bound $I_2$ and $I_3$.

Applying Proposition \ref{p:wcerror} to $N_{R; > r}(\ell)$, and by the first item of Lemma~\ref{l:decay},
\[ \textrm{Var}[I_2]  \le  \tau_{m_0,\delta} \sum_{x,y \in (\Lambda_R)^{m_0} } \Big( \prod_{i=1}^{m_0} G(x_i - y_i)  \Big) |\Gamma_R(x)| |\Gamma_R(y)| \]
where, as in the proof of Lemma \ref{l:jpi},
\begin{align*} 
\tau_{m_0,\delta}  & := \int_0^1 \int_0^{t_0}\dots\int_0^{t_{m_0-2}} ( 1-t_{m_0-1})^{-m_0+1/2+\delta } \;dt_{m_0-1}\dots dt_0 \\
& \le \int_0^1  \frac{1}{(1-s)^{1/2+\delta}} \, ds  = c_\delta  < \infty   
\end{align*}
and
\[ |\Gamma_R(x)| \le   c_1 \min\Big\{  e^{-c_2 r^\rho} ,  e^{-c_2 \textrm{diam}_\infty(\underline{x})^\rho}  \Big\}. \] 
Note also that
\[ \bar{\Gamma}_R(x_1) :=  \sum_{x_2,\ldots,x_{m_0} \in (\Lambda_R)^{m_0-1}} |\Gamma_R(x_1,x_2,\ldots,x_{m_0})|  \le c_3 r^{d(m_0-1)}  e^{-c_2 r^\rho} < \infty. \]
Recalling that $m_0 (d-2) > d$, applying Lemma \ref{l:rv3} gives that  
\begin{equation}
    \label{e:varbound3}
\limsup_{R\to\infty}R^{-d}\Var[ I_2 ]   \le  \eps_r  
\end{equation}
for some $\eps_r \to 0$ as $r \to \infty$.

Next, using the chaos expansion for level-set functionals (Theorem~\ref{t:cluster_wce}) and the diagram formula (Theorem~\ref{t:df}), $\textrm{Var}[I_3]$ equals
\begin{displaymath}
   \sum_{m_0\leq m\leq M}\frac{1}{m!}\sum_{x,y\in\Lambda_R^m}\Big(\prod_{i=1}^mG(x_i-y_i)\Big)(P_{\infty;\leq r}(x)-P_{R;\leq r}(x))(P_{\infty;\leq r}(y)-P_{R;\leq r}(y)).
\end{displaymath}
By the second items of Lemmas~\ref{l:conv} and~\ref{l:decay} the terms in the innermost sum will be zero unless each point of $x$ and $y$ is within distance $r$ of $\partial\Lambda_R$ and the diameters of $x$ and $y$ are at most $r$. Hence 
\begin{equation}
\label{e:varbound4}
   \textrm{Var}[I_3] \le \sum_{m_0\leq m\leq M}c_{m,d} \sum_{x_1,x_2\in\Lambda_{R-2r}}(2r+1)^{2d}\big((\lvert x_1-y_1\rvert-2r)\vee 1\big)^{-m(d-2)}\leq c^{\prime}R^{d-1}
\end{equation}
where $c^\prime>0$ may depend on $M$, $d$ and $r$ and we have used the fact that $m_0(d-2)>d$.

Combining \eqref{e:statwcetrunc1}--\eqref{e:varbound4} and Lemma~\ref{l:trunctail}, choosing first $r>1$ and then $M\in\N$ sufficiently large yields the result.
\end{proof}

\medskip

\section{On the cluster density functional}
\label{s:cdf}

In this section we study the cluster density functional $\mu(\ell)$ defined by the law of large numbers \eqref{e:lln} (and proved to exist in Proposition \ref{p:mu} below). Recall that \cite{ps22} has shown that  $\ell \mapsto \mu(\ell)$ is real-analytic on $\R \setminus \{-\ell_c,\ell_c\}$. Our main result gives an expression for the derivatives of $\mu$ in terms of the stationary pivotal intensities $P_\infty$ introduced in Section \ref{s:sl}. This also constitutes an alternative proof that $\mu$ is smooth on $\R \setminus \{-\ell_c,\ell_c\}$.

\begin{proposition}
\label{p:mudiff}
The function $\mu$ is smooth on $\R \setminus \{-\ell_c,\ell_c\}$ and continuously differentiable on $\R$. Moreover if either (i) $m \ge 1$ and $\ell \in \R \setminus \{-\ell_c,\ell_c\}$, or (ii) $m=1$ and $\ell = \ell_c$,
    \begin{equation}\label{e:mudiff}
        \mu^{(m)}(\ell) = (-1)^m\sum_{x_2,\dots,x_m\in\Z^d} P_\infty(0,x_2,\dots,x_m) 
    \end{equation}
    where $P_\infty$ is the stationary pivotal intensity at level $\ell$ in Definition \ref{d:statpivint}.
\end{proposition}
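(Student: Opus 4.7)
The starting point is to write $\E[N_R(\ell)] = \E[\Xi_{\Lambda_R}(f-\ell\ind)]$ as $F(\ell\ind)$, where $F(\nu):= \E[\Xi_{\Lambda_R}(f-\nu)]$ and $\ind$ is the all-ones vector on $\Lambda_R$. The multivariate chain rule combined with Proposition~\ref{p:derpivint}---whose sign convention satisfies $\partial_y = -\partial_{\nu(y)}$, as one reads off from Lemma~\ref{l:pivotal_derivative}---gives
\begin{equation}\label{e:plan_der}
\frac{d^m}{d\ell^m}\E[N_R(\ell)] \;=\; (-1)^m \!\!\sum_{y_1,\ldots,y_m \in \Lambda_R}\!\! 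P_R(\ell;y_1,\ldots,y_m).
\end{equation}
Setting $F_R(\ell) := \E[N_R(\ell)]/|\Lambda_R|$, so that $F_R \to \mu$ pointwise by Proposition~\ref{p:mu}, the main task becomes to show
\[
F_R^{(m)}(\ell) \longrightarrow \Phi_m(\ell) \;:=\; (-1)^m \sum_{x_2,\ldots,x_m \in \Z^d} P_\infty(\ell;0,x_2,\ldots,x_m)
\]
uniformly on compact subsets of $\R \setminus \{\pm\ell_c\}$. Once this is established, the classical fact that uniform convergence of all derivatives can be transferred to the limit yields $\mu \in C^\infty(\R \setminus \{\pm\ell_c\})$ with $\mu^{(m)} = \Phi_m$.

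To prove the convergence, I would split the outer sum in \eqref{e:plan_der} according to whether $d_\infty(y_1,\partial\Lambda_R) \le r$ or $>r$, for an intermediate scale $r = r(R)$ tending to infinity slowly. The near-boundary contribution is $O(r R^{d-1})$ by the pointwise bound in Lemma~\ref{l:bounds1} and vanishes after division by $|\Lambda_R|$. For $y_1$ in the bulk, the quantitative first statement of Lemma~\ref{l:conv} replaces $P_R(y_1,\ldots,y_m)$ by $P_\infty(y_1,\ldots,y_m)$ with a $\gamma(r)$-error; Lemma~\ref{l:decay} (off-diagonal decay of $P_\infty$) lets me extend the inner sum over $y_2,\ldots,y_m$ from $\Lambda_R$ to $\Z^d$ with exponentially small error; and stationarity of $P_\infty$ (Lemma~\ref{l:sym}) collapses the result to $\sum_{x_2,\ldots,x_m} P_\infty(0,x_2,\ldots,x_m)$. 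Uniformity in $\ell$ on compacts of $\R \setminus \{\pm\ell_c\}$ is supplied by the uniform constants in Lemmas~\ref{l:conv} and~\ref{l:decay}.

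For continuous differentiability at $\ell_c$, I would proceed as follows. Lemma~\ref{l:bounds1} gives $|F_R'(\ell)| \le C$ uniformly in $R,\ell$, so $\{F_R\}$ is equi-Lipschitz; combined with pointwise convergence this forces $F_R \to \mu$ uniformly on compacts of $\R$ and makes $\mu$ Lipschitz. The pointwise version of Lemma~\ref{l:conv}, valid at every $\ell\in\R$, then yields $F_R'(\ell) \to -P_\infty(\ell;0)$ for every $\ell$, and dominated convergence gives
\[
\mu(\ell) - \mu(\ell_c) \;=\; -\int_{\ell_c}^{\ell} P_\infty(s;0)\,ds.
\]
Thus it suffices to establish continuity of $\ell \mapsto P_\infty(\ell;0)$ at $\ell_c$. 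Writing $f = f(0)k_0 + f_\perp$ with $k_0(x) = G(x)/G(0)$ and $f_\perp$ independent of $f(0)$, one has $P_\infty(\ell;0) = \E[d_0\Xi_\infty(\ell(k_0-1)+f_\perp)]\varphi_{f(0)}(\ell)$. Since $\varphi_{f(0)}$ is smooth and $|d_0\Xi_\infty| \le 2d$, a dominated-convergence argument based on the a.s.\ finiteness of the stabilisation domain (Lemma~\ref{l:piv_stabilisation}) together with the non-degeneracy fact $\P[\ell_c(k_0(x)-1)+f_\perp(x)=0]=0$ for every fixed $x\neq 0$ yields the required continuity.

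The technical heart of the proof is the convergence argument in the second paragraph, where the decay rates in Lemmas~\ref{l:conv} and~\ref{l:decay} must be delicately balanced against the width of the boundary region. A subtler point is the continuity of $P_\infty(\cdot;0)$ at $\ell_c$: since quantitative truncated-arm decay is unavailable at criticality, one cannot argue by uniform estimates, and must instead combine qualitative stabilisation with the non-degeneracy of the conditioned field to rule out clusters simultaneously touching a neighbour of $0$ and changing from bounded to unbounded as $\ell$ crosses $\ell_c$.
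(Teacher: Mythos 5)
Your treatment of the non-critical levels is essentially the paper's argument: you derive $\frac{d^m}{d\ell^m}\E[N_R(\ell)]=(-1)^m\sum_y P_R(\ell;y)$ from Proposition~\ref{p:derpivint}, split off a boundary layer of width $r=r(R)$, and use Lemmas~\ref{l:conv} and~\ref{l:decay} together with stationarity to pass to $P_\infty$; whether one then transfers the limit via locally uniform convergence of all derivatives (your route) or via iterated finite differences and the fundamental theorem of calculus (the paper's route) is immaterial. That part is fine.

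The gap is in the continuity of $\ell\mapsto P_\infty(\ell;0)$ at $\ell_c$, which both you and the paper identify as the crux of the $C^1$ statement. Your proposed mechanism --- stabilisation (Lemma~\ref{l:piv_stabilisation}) plus the pointwise non-degeneracy $\P[f(x)=\ell_c\,|\,f(0)=\ell_c]=0$ plus dominated convergence --- does not close it. Stabilisation says $d_0\Xi_\infty(f-\ell_c)$ is computed on a finite random domain $D$, and non-degeneracy says $\{f>\ell\}\cap D=\{f>\ell_c\}\cap D$ for $\ell$ near $\ell_c$; but $D$ is a valid stabilisation domain \emph{for the level $\ell$} only if every bounded cluster of $\{f>\ell\}$ touching a neighbour of $0$ lies in $D$, and this can fail: a cluster that is unbounded at $\ell_c$ may be bounded (and arbitrarily large) at every $\ell>\ell_c$, or vice versa. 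Whether such ``critically infinite'' clusters touch a neighbour of $0$ with positive probability is precisely the question; it is a global, tail-type event that no finite-domain or pointwise argument controls, and you acknowledge it must be ``ruled out'' without supplying a mechanism. The paper sidesteps pathwise continuity entirely: it splits $P_\infty=P_\infty^++P_\infty^-$, shows $P_R^+(\ell;0)$ is non-increasing in $R$ (so $P_\infty^+$ is upper semicontinuous as a decreasing limit of continuous functions), introduces the free-boundary count $\tilde\Xi_R^+$ whose intensity is non-decreasing in $R$ (so $\tilde P_\infty^+$ is lower semicontinuous), and identifies $P_\infty^+=\tilde P_\infty^+$ using the pinned two-arm decay of Proposition~\ref{p:ptwoarm}, i.e.\ uniqueness of the infinite cluster for the conditioned field. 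This wired/free comparison in the style of Aizenman--Kesten--Newman is the missing idea; without it, or a substitute of comparable strength, your proof of differentiability at $\ell_c$ does not go through.
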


\noindent 
Using this expression, we will establish an important qualitative feature of $\mu'(\ell)$:

\begin{lemma}
\label{l:high}
  There exists $\ell_0>0$ such that for all $\lvert\ell\rvert\geq\ell_0$, $\mu^\prime(\ell)\neq 0$. In particular, the number of critical points of $\mu$ outside a neighbourhood of $\{-\ell_c,\ell_c\}$ is finite.
\end{lemma}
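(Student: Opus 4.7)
The plan is to show that $\mu'(\ell) \neq 0$ for all sufficiently large $|\ell|$; the second assertion then follows since real-analyticity of $\mu$ on $\R \setminus \{-\ell_c, \ell_c\}$ ensures that its critical points form a discrete subset of any compact interval avoiding $\{-\ell_c, \ell_c\}$. By the symmetry $\mu(-\ell) = \mu(\ell)$, inherited from the invariance of the GFF under $f \mapsto -f$, it suffices to treat $\ell \to +\infty$. By Proposition~\ref{p:mudiff},
\[ \mu'(\ell) = -P_\infty(0) = -\E[d_0\Xi_\infty(f-\ell) \mid f(0)=\ell]\, \varphi_{f(0)}(\ell), \]
and since $\varphi_{f(0)}(\ell) > 0$, the task reduces to showing $\E[d_0\Xi_\infty(f-\ell) \mid f(0)=\ell] \to 1$ as $\ell \to +\infty$.

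The strategy is to identify a ``good event'' $G_\ell$ on which the discrete derivative equals $1$ deterministically:
\[ G_\ell := \{f(x) < \ell \text{ for every } x \sim 0\} \cap \{\text{every } x \sim 0 \text{ lies in the unique unbounded cluster } U \text{ of } \{f<\ell\}\}. \]
Under the conditioning $f(0) = \ell$ the origin is not in $\{f<\ell\}$; on $G_\ell$ the set $\{f<\ell\} \cup \{0\}$ therefore has the same bounded components as $\{f<\ell\}$ (the point $0$ is merely absorbed into $U$), so the bounded-cluster count of the lower excursion set is unchanged by toggling $0$. On the other hand, $\{f>\ell\} \cup \{0\}$ contains $0$ as an isolated singleton (since all neighbours lie in $\{f<\ell\}$), contributing $+1$ to the upper bounded-cluster count. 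Hence $d_0\Xi_\infty = 1$ on $G_\ell$. Off $G_\ell$, toggling $f(0)$ affects at most $2d+1$ clusters in total, so $|d_0\Xi_\infty| \leq C_d$ uniformly by (a variant of) Lemma~\ref{l:bounds1}, giving
\[ |\E[d_0\Xi_\infty \mid f(0)=\ell] - 1| \leq (C_d + 1)\,\P[G_\ell^c \mid f(0)=\ell]. \]

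It remains to show $\P[G_\ell^c \mid f(0)=\ell] \to 0$. I would decompose the complement into the events ``$\exists x \sim 0 : f(x) \geq \ell$'' and ``$\exists x \sim 0$ lying in a bounded cluster of $\{f<\ell\}$''. For the first, Gaussian regression gives that $f(x) \mid f(0) = \ell$ is Gaussian with mean $\alpha\ell$ and bounded conditional variance, where $\alpha := G(1)/G(0) \in (0,1)$ strictly (since $G(0) = \sum_k \P[X_k = 0] > \sum_k \P[X_k = e_1] = G(1)$ for the simple random walk), so each such probability is $O(e^{-c\ell^2})$ and a union bound suffices. For the second event, for each neighbour $x \sim 0$ I split on the diameter of the would-be cluster at a threshold $r = r(\ell)$ (say $r = \log\ell$): clusters of diameter $\geq r$ imply the pinned truncated-arm event $\widetilde{\mathrm{Arm}}_{r;\{0\}}(\{f<\ell\})$, whose conditional probability is controlled by Proposition~\ref{p:ptad}(2) applied to $-f$ at level $-\ell$ (valid since $\ell \gg 0$ implies $-\ell \neq \ell_c$); clusters of diameter $<r$ have a non-empty boundary contained in $(x+\Lambda_r) \setminus \{0\}$ and lying in $\{f \geq \ell\}$, contributing at most $|\Lambda_r| e^{-c_\star \ell^2}$ by Gaussian regression and a union bound, where $c_\star := \inf_{y \neq 0}(1-G(y)/G(0))^2/(2\Var[f(y)\mid f(0)])$ is uniformly positive.

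The main technical point is that the constants in Proposition~\ref{p:ptad}(2) are only claimed uniform on compact subsets of $\R\setminus\{\ell_c\}$, so letting $\ell \to +\infty$ formally leaves the stated regime. For the qualitative conclusion this is easily repaired: in the high-density regime $\P[f(0) \geq \ell] \ll 1$, a direct Peierls-type argument -- exploiting the i.i.d.\ decomposition in Assumption~\ref{a:sd} to decouple the GFF correlations -- yields an arm-decay rate uniformly exponential in $\ell$, which together with the small-diameter bound forces $\P[G_\ell^c \mid f(0) = \ell] \to 0$. Consequently $\mu'(\ell) < 0$ for all sufficiently large $\ell$, with the sharper asymptotic $\mu'(\ell) \sim -\varphi_{f(0)}(\ell)$ as $\ell \to \infty$.
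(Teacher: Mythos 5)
Your overall architecture is sound and your good event does force $d_0\Xi_\infty=1$, but the proof as written is incomplete at exactly the point you identify as "the main technical point". The only nontrivial input in your route is the claim that the conditional probability that a neighbour of $0$ lies in a bounded cluster of $\{f<\ell\}$ of diameter $\ge \log\ell$ tends to $0$ as $\ell\to\infty$; Proposition~\ref{p:ptad}(2) gives constants uniform only over compact subsets of $\R\setminus\{\ell_c\}$, so it does not cover the limit $\ell\to+\infty$, and your proposed repair (``a direct Peierls-type argument \dots yields an arm-decay rate uniformly exponential in $\ell$'') is asserted rather than carried out. Such a uniform strongly-subcritical estimate is true (cf.\ the discussion of $T_\ell$ for large $|\ell|$ and \cite{ms24} in Section~1.6), but as written the crux of your argument is a placeholder, so the proof has a genuine gap.

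The paper's proof shows that this entire percolation input is unnecessary. Instead of requiring the neighbours of $0$ to lie in the unique unbounded cluster of $\{f<\ell\}$, one takes the good event to be $\{f(y)<\ell \text{ for all } y\in\Lambda_1\setminus\{0\}\}$, i.e.\ the sublevel set contains the \emph{whole punctured cube}, not just the $2d$ nearest neighbours. On this event the neighbours of $0$ are all connected to one another inside $\{f<\ell\}\setminus\{0\}$ via the corners of $\Lambda_1$, so toggling $0$ cannot split or merge any clusters of the lower excursion set, while it still creates an isolated singleton in the upper one; hence $d_0\Xi_R(f-\ell)=1$ \emph{deterministically} on this event, with no reference to boundedness, uniqueness of the infinite cluster, or arm events. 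The complementary event is controlled by a union bound over the finitely many $y\in\Lambda_1\setminus\{0\}$ and Gaussian regression ($f(y)\mid f(0)=\ell$ has mean $(G(y)/G(0))\ell$ with $G(y)<G(0)$), exactly as in your step for the ``some neighbour above level'' part. This yields $\limsup_{\ell\to\infty}\mu'(\ell)/\varphi_{f(0)}(\ell)\le -1$ using only elementary Gaussian estimates. If you want to salvage your version, either substitute the paper's larger good event, or supply the uniform-in-$\ell$ arm decay you invoke.
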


We take up the notation $\Xi_D$ and $P_R$ introduced in Section \ref{s:sl}, and for simplicity we abbreviate $\Xi_R = \Xi_{\Lambda_R}$. It will also be convenient to adjust our notation for $P_R$ by including an argument indicating the level, that is, writing $P_R(\ell;x_1,\dots,x_m)$ instead of $P_R(x_1,\dots,x_m)$, and similarly for $P_\infty$.

\subsection{Existence of the cluster density}
\label{ss:lln}

For completeness we first confirm the existence of~$\mu$, which follows from classical arguments for Bernoulli percolation (see also \cite{ns16} for an extension to smooth Gaussian fields).  

\begin{proposition}\label{p:mu}
    Let $f:\Z^d\to\R$ be a stationary ergodic Gaussian field. Define
    \begin{equation}
        \label{e:muform}
    \mu(\ell) = \E \big[ |C_0(\ell)|^{-1}  \id_{|C_0(\ell)| < \infty} \big] \in [0,1] 
    \end{equation} 
      where $|C_0(\ell)|$ denotes the cardinality of the level-set cluster containing the origin. Then for each $\ell\in\R$
    \begin{displaymath}
        \lim_{R \to \infty} \frac{N_R(\ell)}{\lvert\Lambda_R\rvert} = \mu(\ell)
    \end{displaymath}
    almost surely and in $L^1$. Moreover if $f$ is non-degenerate on $\Lambda_1$ then $\mu(\ell) \in (0,1)$.

    The same statement holds if we replace $N_R(\ell)$ with $N_R^\pm(\ell)$ and $\mu(\ell)$ with
    \begin{displaymath}
        \mu^\pm(\ell):= \E \big[ |C_0(\ell)|^{-1}  \id_{|C_0(\ell)| < \infty}\ind_{\mathrm{sgn}(f(0)-\ell)=\pm 1} \big].
    \end{displaymath}
\end{proposition}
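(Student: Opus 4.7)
\medskip
\noindent\emph{Proof proposal.} The plan is to rewrite $N_R(\ell)$ as an almost-additive functional of the shifted field, apply the multidimensional Birkhoff ergodic theorem, and control a deterministic boundary correction of order $R^{d-1}$.

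First, I observe that any cluster $C$ intersecting $\Lambda_R$ but not $\partial\Lambda_R$ is entirely contained in $\Lambda_R\setminus\partial\Lambda_R$, because by connectivity any path in $C$ from a point of $\Lambda_R$ to a point outside $\Lambda_R$ must traverse $\partial\Lambda_R$. Since $\sum_{x\in C}|C|^{-1}=1$ for each finite cluster $C$, this gives
\[ N_R(\ell)=\sum_{x\in\Lambda_R}\frac{1}{|C_x(\ell)|}\,\id_{C_x(\ell)\subseteq\Lambda_R\setminus\partial\Lambda_R}. \]
I would then compare with the stationary counterpart $M_R(\ell):=\sum_{x\in\Lambda_R}g(\tau_x f)$, where $g(f):=|C_0(\ell)|^{-1}\id_{|C_0(\ell)|<\infty}\in[0,1]$ and $\tau_x$ denotes translation by $x$. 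Since $g$ is bounded and measurable and the $\Z^d$-action on $f$ is measure-preserving and ergodic by assumption, the multidimensional Birkhoff ergodic theorem yields $M_R/|\Lambda_R|\to\E[g(f)]=\mu(\ell)$ almost surely and in $L^1$.

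For the boundary correction, note that $M_R-N_R\ge 0$ and only picks up contributions from $x\in\Lambda_R$ whose cluster $C_x$ is finite yet meets $\partial\Lambda_R$ (if $|C_x|=\infty$ then both indicators vanish since $\Lambda_R$ is finite). Grouping by cluster, each such $C$ contributes $\sum_{x\in C\cap\Lambda_R}|C|^{-1}\le 1$, while the number of distinct clusters meeting $\partial\Lambda_R$ is at most $|\partial\Lambda_R|=O(R^{d-1})$. Hence $0\le M_R-N_R\le cR^{d-1}=o(|\Lambda_R|)$ deterministically, and the convergence $N_R/|\Lambda_R|\to\mu(\ell)$ a.s.\ and in $L^1$ follows immediately.

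Finally, under non-degeneracy of $f$ on $\Lambda_1$, the Gaussian vector $(f(x))_{x\in\Lambda_1}$ has full support in $\R^{\Lambda_1}$, so the events $\{f(0)>\ell,\ f(y)<\ell\text{ for every neighbour }y\text{ of }0\}$ and $\{f(0)>\ell,\ f(e_1)>\ell\}$ both have positive probability: the former forces $|C_0|=1$ (hence $g=1$), giving $\mu(\ell)>0$, while the latter forces $|C_0|\ge 2$ (hence $g\le 1/2$ on the event), giving $\mu(\ell)<1$. The same argument applies verbatim to $N_R^\pm$ and $\mu^\pm$ after replacing $g$ by $g^\pm:=|C_0|^{-1}\id_{|C_0|<\infty}\id_{\mathrm{sgn}(f(0)-\ell)=\pm 1}$. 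The only non-trivial step in the whole argument is verifying the negligibility of $M_R-N_R$, which reduces to the elementary surface-versus-volume estimate above; no substantive obstacle arises.
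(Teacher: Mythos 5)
Your proof is correct and follows essentially the same route as the paper, which simply invokes the classical argument of Grimmett (Theorem 4.2 of \emph{Percolation}): write the count as $\sum_{x}|C_x|^{-1}$ over vertices whose cluster stays in the box, apply the multidimensional ergodic theorem to the stationary summand $g(\tau_x f)$, and absorb the finite clusters meeting $\partial\Lambda_R$ into a deterministic $O(R^{d-1})$ correction. Your treatment of the non-degeneracy claim (isolated site for $\mu>0$, a two-point cluster for $\mu<1$) also matches the paper's.
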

\begin{proof}
The proof of convergence is identical to that given in \cite[Theorem (4.2)]{gri99}. To verify the second statement, if $f$ is non-degenerate on $\Lambda_1$ there is a positive probability that $f>\ell$ at the origin but $f<\ell$ at each of its neighbours, so that $|C_0(\ell)|=1$, and also a positive probability that $f|_{\Lambda_1}>\ell$, so that $|C_0(\ell)|\ge 2$. Given \eqref{e:muform} this completes the proof.
\end{proof}

\subsection{Proof of Proposition \ref{p:mudiff}}
\label{ss:mup}

We consider non-critical and critical levels separately.

\smallskip
\noindent \textbf{Non-critical levels.}  By Proposition~\ref{p:derpivint}, the function
    \begin{displaymath}
        (t_x)_{x\in\Lambda_R}\mapsto \E\Big[N_R\Big(f-\ell+\sum_{x\in\Lambda_R}t_x\ind_x\Big)\Big]
    \end{displaymath}
    is smooth on $\R^{\Lambda_R}$, and hence by the chain rule and Proposition~\ref{p:derpivint}, for $m\in\N$
    \begin{align*}
        \frac{d^m}{d\ell^m}\E[N_R(f-\ell)]&=\Big(\sum_{x\in\Lambda_R}\frac{d}{dt_x}\Big)^m\E\left[N_R\left(f-\ell-\sum t_x\ind_x\right)\right]\Big|_{t_x=0\;\forall x}\\
        &=(-1)^m\sum_{x_1,\dots,x_m\in\Lambda_R}P_R(\ell;x_1,\dots,x_m).
    \end{align*}
    Given a function $h:\R\to\R$ and $\epsilon>0$, we define
    \begin{displaymath}
        D_\epsilon h(\ell)=h(\ell+\epsilon)-h(\ell).
    \end{displaymath}
    Then by iterating the fundamental theorem of calculus, we have for $\epsilon_1,\dots,\epsilon_m>0$
    \begin{align*}
       &  D_{\epsilon_1}\dots D_{\epsilon_m}\E[N_R(f-\ell)] \\
       & \qquad =(-1)^m\int_0^{\epsilon_1}\dots\int_{0}^{\epsilon_m}\sum_{x_1,\dots,x_m\in\Lambda_R}P_R\Big(\ell+\sum_{i=1}^ms_i;x_1,\dots,x_m\Big)\;ds_m\dots ds_1.
    \end{align*}
    We now assume that $\ell\neq\pm\ell_c$ and let $\delta\in(0,1)$. Using the facts that the pivotal intensities are bounded (Lemma~\ref{l:decay}) and converge to their stationary counterparts (Lemma~\ref{l:conv}), the previous expression is equal to
    \begin{displaymath}
        (-1)^m\int_0^{\epsilon_1}\dots\int_{0}^{\epsilon_m}\sum_{x_1\in\Lambda_{R-2R^\delta}}\sum_{x_2,\dots,x_m\in\Lambda_{R-R^\delta}}P_\infty\Big(\ell+\sum s_i;x_1,\dots,x_m\Big)\;ds_m\dots d s_1 +E_R
    \end{displaymath}
    where the error $E_R$ satisfies
    \begin{displaymath}
        \lvert E_R\rvert\leq c_{d,\ell,m}\Big(\prod_i\epsilon_i\Big) (R^{d-1+\delta} + R^{dm}e^{-cR^{\delta\rho}})
    \end{displaymath}
    and $c,\rho>0$ are taken from Lemma~\ref{l:conv}. Let $\sum^\ast_R:=\sum_{x_1\in\Lambda_{R-2R^\delta}}\sum_{x_2,\dots,x_m\in\Lambda_{R-R^\delta}}$. Since $P_\infty$ is stationary
    \begin{align*}
        \sup_{t\in[0,\sum_i\epsilon_i]}\bigg\lvert\frac{1}{\lvert\Lambda_{R-2R^\delta}\rvert}\sum\nolimits^\ast_R P_\infty\Big(\ell+t;x_1,\dots,x_m\Big)-\sum_{x_2,\dots,x_m\in\Z^d}P_\infty\Big(\ell+t;0,x_2,\dots,x_m\Big)\bigg\rvert\\
        \leq \sup_{t\in[0,\sum_i\epsilon_i]}\sum_{x_2,\dots,x_m\notin\Lambda_{R^\delta}}\Big| P_\infty \big(\ell+t;0,x_2,\dots,x_m \big)\Big|.
    \end{align*}
    By Lemma~\ref{l:decay}, the latter expression decays to zero as $R\to\infty$ provided that $\sum_i\epsilon_i$ is sufficiently small. Combining the last four displayed equations, we have
    \begin{multline*}
        \lim_{R\to\infty}\frac{1}{\lvert\Lambda_R\rvert}D_{\epsilon_1}\dots D_{\epsilon_m}\E[N_R(f-\ell)]\\
        =(-1)^m\int_0^{\epsilon_1}\dots\int_0^{\epsilon_m}\sum_{x_2,\dots,x_m\in\Z^d}P_\infty \Big(\ell+\sum s_i;0,x_2,\dots,x_m \Big)\;ds_m\dots ds_1.
    \end{multline*}
    We now claim that the integrand above is continuous in the level. Assuming this claim, we have
    \begin{align*}
        \mu^{(m)}(\ell)&=\lim_{\epsilon_1,\dots,\epsilon_m\to0}\lim_{R\to\infty}\frac{1}{\lvert\Lambda_R\rvert}D_{\epsilon_1}\dots D_{\epsilon_m}\E[N_R(f-\ell)] \\ 
        &=(-1)^m\sum_{x_2,\dots,x_m\in\Z^d}P_\infty(\ell;0,x_2,\dots,x_m)
    \end{align*}
    completing the proof of the lemma.

    It remains to prove the claim. By Lemma~\ref{l:decay}, the pivotal intensities are bounded uniformly in the level by an expression that is summable over $x_2,\dots,x_m\in\Z^d$. Hence by dominated convergence, it is enough to show that for any fixed $x\in(\Z^d)^m$, the pivotal intensity $P_\infty(\ell;x)$ is continuous in $\ell$ on $\R\setminus\{-\ell_c,\ell_c\}$. For fixed $x$ and $R>1$, by applying Gaussian regression to the definition of the pivotal intensity, it follows that $\ell\mapsto P_R(\ell;x)$ is continuous. Since $P_\infty(\ell,x)$ can be approximated by $P_R(\ell;x)$ uniformly over $\ell$ on compacts subsets of $\R\setminus\{-\ell_c,\ell_c\}$ (Lemma~\ref{l:conv}) it follows that $\ell\mapsto P_\infty(\ell;x)$ is continuous away from $\pm\ell_c$, completing the proof.

\smallskip
\vspace{0.2cm}

\noindent \textbf{Critical levels.} To prove differentiability of $\mu$ at $\pm\ell_c$ we require some additional inputs:

\begin{claim}\label{cl:mu_cont_diff}
    The following functions are continuous on $\R$:
    \begin{enumerate}
        \item $\ell\mapsto\mu(\ell)$;
        \item $\ell\mapsto P_\infty(\ell;0)$.
    \end{enumerate}
\end{claim}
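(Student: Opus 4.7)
The plan is to handle each continuity claim by dominated convergence, reducing it to an almost sure convergence statement that exploits the stability of the relevant configuration under small perturbations of the level.

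For (1), I use the representation $\mu(\ell)=\E[|C_0(\ell)|^{-1}\id_{|C_0(\ell)|<\infty}]$ from Proposition~\ref{p:mu}. The integrand is bounded by $1$, so continuity follows from pointwise a.s.\ convergence as $\ell_n\to\ell$. On the a.s.\ event $\{f(x)\neq\ell \text{ for all } x\in\Z^d\}$ I distinguish two cases. If $|C_0(\ell)|=k<\infty$, then $C_0(\ell)$ and its (finite) vertex boundary are sets on which $f$ is strictly bounded away from $\ell$; hence for $n$ large the sign pattern on a finite neighbourhood of $C_0(\ell)$ is preserved and $C_0(\ell_n)=C_0(\ell)$. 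If $|C_0(\ell)|=\infty$, then for $\ell_n\leq\ell$ the monotonicity of excursion sets yields $C_0(\ell_n)\supseteq C_0(\ell)$ so the integrand vanishes, while for $\ell_n\downarrow\ell$ each $y\in C_0(\ell)$ is joined to $0$ by a finite path in $\{f>\ell\}$ which lies in $\{f>\ell_n\}$ for $n$ large; thus $C_0(\ell_n)\uparrow C_0(\ell)$ and $|C_0(\ell_n)|^{-1}\id_{|C_0(\ell_n)|<\infty}\to 0$. An analogous case analysis handles the $\{f<\ell\}$ contribution, and DCT yields continuity of $\mu$.

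For (2), the starting point is $P_\infty(\ell;0)=\E[d_0\Xi_\infty(f-\ell)\mid f(0)=\ell]\,\varphi_{f(0)}(\ell)$. The Gaussian density factor is continuous in $\ell$, so it suffices to handle the conditional expectation. Gaussian regression gives $(f-\ell)\mid f(0)=\ell \equlaw \tilde{f}_\ell := \ell(k-1)+Y$, where $k(x):=G(x)/G(0)$ and $Y$ is an $\ell$-independent centred Gaussian field with $Y(0)=0$. Since $|d_0\Xi_\infty|$ is uniformly bounded (by $2d$), dominated convergence reduces the task to proving $d_0\Xi_\infty(\tilde{f}_{\ell_n})\to d_0\Xi_\infty(\tilde{f}_\ell)$ a.s. On the a.s.\ event $\{\tilde{f}_\ell(x)\neq 0 \text{ for all } x\neq 0\}$, Lemma~\ref{l:piv_stabilisation} supplies a finite $D_\omega\subset\Z^d$ whose interior contains every bounded cluster of $\{\tilde{f}_\ell>0\}\setminus\{0\}$ and $\{\tilde{f}_\ell<0\}\setminus\{0\}$ that meets a neighbour of $0$, and ensures $d_0\Xi_\infty(\tilde{f}_\ell)=d_0\Xi_{D_\omega}(\tilde{f}_\ell)$. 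Since $\tilde{f}_{\ell_n}\to\tilde{f}_\ell$ pointwise, the sign pattern on the finite set $D_\omega$ is preserved for $n$ large; hence the two configurations coincide on $D_\omega$, the stabilisation hypothesis transfers to $\tilde{f}_{\ell_n}$, and $d_0\Xi_\infty(\tilde{f}_{\ell_n})=d_0\Xi_{D_\omega}(\tilde{f}_{\ell_n})=d_0\Xi_{D_\omega}(\tilde{f}_\ell)=d_0\Xi_\infty(\tilde{f}_\ell)$.

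The main obstacle is that at $\ell=\pm\ell_c$ one has no truncated-arm decay or other quantitative localisation tool, so the arguments must be purely measure-theoretic; they rely only on the a.s.\ avoidance of exceptional level values, monotonicity of excursion sets, and the pointwise stabilisation property of Lemma~\ref{l:piv_stabilisation}. The subtle point in (2) is verifying that the stabilisation domain $D_\omega$ chosen for $\tilde{f}_\ell$ continues to work for $\tilde{f}_{\ell_n}$, which is true because the relevant bounded clusters are fully contained in $D_\omega\setminus\partial D_\omega$ and therefore depend only on the configuration inside $D_\omega$, which is eventually identical for both fields.
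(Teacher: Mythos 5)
Your argument for item (1) is correct but takes a genuinely different route from the paper: you apply dominated convergence to the representation $\mu(\ell)=\E\big[|C_0(\ell)|^{-1}\id_{|C_0(\ell)|<\infty}\big]$, using the a.s.\ stability of finite clusters under small level perturbations and the divergence $|C_0(\ell_n)|\to\infty$ when $C_0(\ell)$ is infinite. The paper instead telescopes $N_R(f-\ell-\eps)-N_R(f-\ell)$ over the vertices of $\Lambda_R$ and obtains the quantitative bound $|\mu(\ell+\eps)-\mu(\ell)|\le 2d\,\P(f(0)\in[\ell,\ell+\eps])$. Both are valid; the paper's version yields a modulus of continuity for free, while yours is softer but equally sufficient for the claim.

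Item (2) has a genuine gap at exactly the step you flag as ``the subtle point''. In the limit $R\to\infty$ of \eqref{e:diff_crit} one has $d_0\Xi^+_\infty(E)=-i_\infty+\id_{b_\infty=0}$, where $i_\infty$ (resp.\ $b_\infty$) is the number of bounded (resp.\ unbounded) clusters of $E\setminus\{0\}$ meeting a neighbour of $0$; so $d_0\Xi_\infty$ is \emph{not} determined by the configuration on any finite set, since it depends on whether the clusters through the neighbours of $0$ are bounded or unbounded. Your domain $D_\omega$ is chosen so that the bounded such clusters \emph{of $\tilde f_\ell$} lie in $D_\omega\setminus\partial D_\omega$, but the hypothesis of Lemma~\ref{l:piv_stabilisation} applied to $\tilde f_{\ell_n}$ concerns the bounded clusters \emph{of $\tilde f_{\ell_n}$}, and agreement of the two sign patterns on $D_\omega$ says nothing about these: an \emph{unbounded} cluster of $\{\tilde f_\ell>0\}$ through a neighbour of $0$ may become a \emph{bounded} cluster of $\{\tilde f_{\ell_n}>0\}$ that exits $D_\omega$ (the perturbation $(\ell_n-\ell)(k-1)$ is not compactly supported, so the excursion sets differ at infinitely many sites for every $n$). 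In that scenario the stabilisation hypothesis fails for $(\tilde f_{\ell_n},D_\omega)$ and the claimed a.s.\ convergence $d_0\Xi_\infty(\tilde f_{\ell_n})\to d_0\Xi_\infty(\tilde f_\ell)$ breaks down. Ruling this out amounts to proving continuity in $\ell$ of the conditioned percolation-function-type quantity $\P[b_\infty\ge 1\mid f(0)=\ell]$, which is precisely the delicate part (especially at $\pm\ell_c$) and is what the paper's argument is designed to circumvent: it sandwiches $P_\infty^+$ between the wired limit (non-increasing in $R$, hence upper semicontinuous in $\ell$) and the free limit (non-decreasing in $R$, hence lower semicontinuous), and identifies the two via the pinned two-arm decay of Proposition~\ref{p:ptwoarm}, i.e.\ uniqueness of the infinite cluster. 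Some input of this kind is unavoidable, and your purely local stabilisation argument does not supply it.
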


The first point will follow from elementary considerations, while the second is be a consequence of the (pinned) two-arm decay in Proposition \ref{p:ptwoarm} (itself related to the uniqueness of the infinite cluster).

\smallskip
Let us complete the proof assuming this claim. Since $\mu(\ell)=\mu(-\ell)$, it suffices to consider $\ell=\ell_c$. Given $\epsilon>0$ sufficiently small, we know that $\mu$ is continuous on $[\ell_c,\ell_c+\epsilon]$ (Claim~\ref{cl:mu_cont_diff}) and differentiable on $(\ell_c,\ell_c+\epsilon)$ (by the the non-critical case above). Hence by the mean-value theorem, for some $\tilde{\epsilon}\in(0,\epsilon)$
    \begin{displaymath}
        \frac{\mu(\ell_c+\epsilon)-\mu(\ell_c)}{\epsilon}=\mu^\prime(\ell_c+\tilde{\epsilon})=-P_\infty(\ell_c+\tilde{\epsilon};0).
    \end{displaymath}
    Since $\ell\mapsto P_\infty(\ell;0)$ is continuous, taking $\epsilon$ to zero shows that the right derivative of $\mu$ at $\ell_c$ is $P_\infty(\ell_c;0)$. An analogous argument shows that the left derivative is the same, and hence $\mu^\prime(\ell_c)=P_\infty(\ell_c;0)$, which is continuous (Claim \ref{cl:mu_cont_diff}) as required.

\begin{proof}[Proof of Claim~\ref{cl:mu_cont_diff}]
    For the first item, given $R>1$ we choose some ordering $\prec$ of the points in $\Lambda_R$. For $\ell\in\R$ and $\epsilon>0$, we will decompose $N_R(f-\ell-\epsilon)-N_R(f-\ell)$ by sequentially considering the change at each point in turn. Specifically for $x\in\Lambda_R$ we define
    \begin{displaymath}
        E_x:=\{f>\ell+\epsilon\}\cup\{y\succ x : f(y)>\ell\}\quad\text{and}\quad E^\prime_x:=\{f>\ell+\epsilon\}\cup\{y\succeq x : f(y)>\ell\}.
    \end{displaymath}
    Then letting $\Delta_x=\Xi_R(E_x^\prime)-\Xi_R(E_x)$ we have
    \begin{displaymath}
        N_R(f-\ell-\epsilon)-N_R(f-\ell)=\sum_{x\in\Lambda_R}\Delta_x.
    \end{displaymath}
    Since $\lvert\Delta_x\rvert\leq 2d\ind_{f(x)\in[\ell,\ell+\epsilon]}$ we have
    \begin{align*}
        \lvert\mu(\ell+\epsilon)-\mu(\ell)\rvert\leq\lim_{R\to\infty}(2R)^{-d}\sum_{x\in\Lambda_R}\E[\lvert\Delta_x\rvert]\leq 2d\;\P(f(0)\in[\ell,\ell+\epsilon])
    \end{align*}
    which yields continuity of $\mu$.

    For the second item we use arguments similar to those which appear in \cite{akn87}, which studied related questions for independent percolation models. Recall that for $E\subseteq\Z^d$, $\Xi^+_R(E)$ and $\Xi^-_R(E)$ denote the number of clusters in $E$ and $E^c$ respectively that are contained in $\Lambda_R\setminus\partial\Lambda_R$. We let $P^\pm_R$ and $P^\pm_\infty$ denote the pivotal intensities for these functionals. Since $\Xi_R=\Xi_R^++\Xi_R^-$, we have $P_\infty=P_\infty^++P_\infty^-$ and so it suffices to prove continuity of $P_\infty^+$ and $P_\infty^-$.

    We first argue that $P_R^+(\ell;0)$ is non-increasing in $R$ (for fixed $\ell$). Given $E\subseteq\Z^d$ and $R>1$, we let $i_R$ and $b_R$ be the number of clusters of $E\cap\Lambda_R\setminus\{0\}$ which contain a neighbour of $0$ and do or do not intersect $\partial\Lambda_R$ respectively. We call these `$R$-interior clusters' and `$R$-boundary clusters'. Then by definition of $\Xi_R^+$
    \begin{equation}\label{e:diff_crit}
        d_0\Xi^+_R(E)=\begin{cases}
            -i_R+1 &\text{if }b_R=0,\\
            -i_R &\text{if }b_R\geq 1.
        \end{cases}
    \end{equation}
    For any $S>R$, the $R$-interior clusters must also be $S$-interior clusters and so $i_S\geq i_R$. Therefore if $b_R=0$ or $b_S\geq 1$, \eqref{e:diff_crit} implies that $d_0\Xi^+_S(E)\leq d_0\Xi_R^+(E)$. On the other hand, if $b_R\geq 1$ and $b_S=0$ then there must be an $R$-boundary cluster which is contained in an $S$-interior cluster and so $i_S\geq i_R+1$. Once again by \eqref{e:diff_crit}, we have $d_0\Xi^+_S(E)\leq d_0\Xi_R^+(E)$. Since $E$ is arbitrary, we conclude that $P_R^+(\ell,0)=\E[d_0\Xi_R^+(f-\ell)|f(0)=\ell]\varphi_{f(0)}(\ell)$ is non-increasing in $R$, as required. For a fixed $R>1$, $P_R^+(\ell;0)$ is continuous in $\ell$ (this follows from applying a simple dominated convergence argument along with Gaussian regression) and so $P_\infty^+(\ell;0)=\lim_{R\to\infty}P_R^+(\ell;0)$, as a decreasing pointwise limit of continuous functions, must be upper semi-continuous

    Given $E\subseteq\Z^d$, let $\tilde{\Xi}_R^+(E)$ denote the number of clusters of $E\cap\Lambda_R$. This can be thought of as the cluster count on $\Lambda_R$ when assuming `free' boundary conditions whereas $\Xi_R^+$ is the cluster count (minus one) assuming `wired' boundary conditions. We can define the pivotal intensities $\tilde{P}_R$ and $\tilde{P}_\infty$ for this functional analogously to those for $\Xi^+$. The argument of the previous paragraph can be adapted to show that $\tilde{P}_R^+(\ell;0)$ is non-decreasing in $R$; in fact the argument is somewhat easier in this case as $d_0\tilde{\Xi}_R^+(E)$ is one minus the number of clusters of $E\cap\Lambda_R\setminus\{0\}$ which contain a neighbourhood of $0$ and the latter quantity is clearly non-increasing in $R$. Hence $\tilde{P}_\infty^+(\ell;0)$ is a non-decreasing limit of continuous functions and therefore lower semi-continuous. To complete the proof of the claim, we need only show that $P_\infty^+(\ell;0)=\tilde{P}_\infty^+(\ell;0)$ for every $\ell$.
    
    Using convergence of the pivotal intensities to their stationary counterparts and the reverse Fatou lemma
    \begin{align*}
        \lvert P_\infty^+(\ell;0)-\tilde{P}_\infty^+(\ell;0)\rvert\leq\E\Big[\limsup_{R\to\infty}\big\lvert d_0\Xi_R^+(f-\ell)- d_0\tilde{\Xi}_R^+(f-\ell)\big\rvert\;\Big|\;f(0)=\ell\Big]\varphi_{f(0)}(\ell).
    \end{align*}
    By the pinned two-arm decay given in Proposition~\ref{p:ptwoarm}, conditional on $f(0)=\ell$ there is at most one infinite cluster of $\{f>\ell\}$ which contains a neighbour of $0$. Fixing such a realisation, we let $C_\infty\subseteq\Z^d$ denote this cluster (which may be empty) and $C_1,\dots,C_i$, for some $i\leq 2d$, denote the other clusters of $\{f>\ell\}$ which contain a neighbour of $0$. If $R$ is sufficiently large so that $C_1,\dots,C_i\subseteq\Lambda_{R-1}$, then by definition of $\Xi^+$ and $\tilde{\Xi}^+$
    \begin{displaymath}
        d_0\Xi_R^+(f-\ell)=
        \begin{rcases}
        \begin{dcases}
            -i+1 &\text{if }C_\infty=\emptyset\\
            -i &\text{if }C_\infty\neq\emptyset
        \end{dcases}
        \end{rcases}
        =d_0\tilde{\Xi}_R^+(f-\ell).
    \end{displaymath}
    Hence the right-hand side of the previous displayed equation is zero, which completes the proof of continuity for $P_\infty^+$. The proof for $P_\infty^-$ is near-identical, up to changes of sign.
\end{proof}

\subsection{Proof of Lemma \ref{l:high}}
 Since $\mu(\ell)=\mu(-\ell)$ we assume that $\ell>0$. By Proposition~\ref{p:mudiff}
    \begin{displaymath}
        \mu^\prime(\ell)=-P_\infty(0)=-\lim_{R\to\infty}\E[d_0\Xi_R(f-\ell)|f(0)=\ell]\varphi_{f(0)}(\ell).
    \end{displaymath}
    If $f(y)<\ell$ for all $y\in \Lambda_1\setminus\{0\}$, then $d_0\Xi_R(f-\ell)=1$. Then since $\lvert d_0\Xi_R\rvert\leq 2d$, by the union bound we have
    \begin{displaymath}
        \mu^\prime(\ell)\leq\Big(-\P \big(\cap_{y\in \Lambda_1\setminus\{0\}}\{f(y)<\ell\} \big| f(0)=\ell \big)+2d\sum_{y\in \Lambda_1\setminus\{0\}}\P \big(f(y)>\ell \big|f(0)=\ell \big) \Big)\varphi_{f(0)}(\ell).
    \end{displaymath}
    By Gaussian regression $(f(y)|f(0)=\ell)$ is normally distributed with mean $(G(y)/G(0)) \ell$ and variance that depends only on $y$. Since $G(y) < G(0)$ for every $y \neq 0$ (a general property of stationary ergodic Gaussian fields on $\mathbb{Z}^d$), we have $\P(f(y)<\ell|f(0)=\ell)\to 1$ as $\ell\to\infty$. Hence $\limsup_{\ell\to\infty}\mu^\prime(\ell)/\varphi_{f(0)}(\ell)\leq -1$ which proves the lemma.

\medskip

\section{General bounds on the variance}
\label{s:vb}

In this section we prove general bounds on the variance which hold at all levels (Propositions \ref{p:ex} and \ref{p:varub}). The arguments are similar to (but much simpler than) those appearing in \cite{bmm22} and \cite{bmm24} in the setting of smooth Gaussian fields.

\begin{proposition}[Extensivity of the variance]
\label{p:ex}
For every $\ell \in \R$ there exists $c > 0$ such that, for every $R \ge 1$
\[ \Var[N_R(\ell)] \ge c R^d .\]
\end{proposition}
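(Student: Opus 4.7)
The plan is to combine the i.i.d.\ decomposition $f = \kappa \tilde{f} + \hat{f}$ from Assumption~\ref{a:sd} with a local-modification argument on a linearly-sized family of disjoint boxes. First I would fix $L = 3$ and take $K = c_d R^d$ disjoint sub-boxes $B_1,\ldots,B_K \subset \Lambda_R$ of side $L$, each at distance at least $L$ from $\partial \Lambda_R$ (for small $R$ the bound follows by adjusting the constant, since $N_R(\ell)$ is a non-degenerate integer-valued random variable). Writing $\tilde{Y}_i := \tilde{f}|_{B_i}$, the blocks $\tilde{Y}_1,\ldots,\tilde{Y}_K$ are mutually independent and independent of $\hat{f}$, so the first-order Hoeffding ANOVA inequality yields
\begin{displaymath}
\Var[N_R(\ell) \mid \hat{f}] \ge \sum_{i=1}^K \Var\bigl[\E[N_R(\ell) \mid \hat{f}, \tilde{Y}_i] \bigm| \hat{f}\bigr] ,
\end{displaymath}
and so, after averaging in $\hat{f}$ via the total variance formula, it suffices to bound each summand below by a positive constant uniform in $i$ and $R$.

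For each $i$, I would produce this bound by exhibiting two disjoint events in $\sigma(\tilde{Y}_i, \hat{f})$ across which $N_R(\ell)$ undergoes a \emph{deterministic} jump of $1$. Writing $z_i$ for the centre of $B_i$, define
\begin{align*}
A_1^{(i)} &:= \bigl\{ f(z_i) > \ell + 1 \text{ and } f(y) < \ell - 1 \text{ for all } y \in B_i \setminus \{z_i\} \bigr\}, \\
A_2^{(i)} &:= \bigl\{ f(y) < \ell - 1 \text{ for all } y \in B_i \bigr\}.
\end{align*}
On $A_1^{(i)}$ the vertex $z_i$ forms a singleton cluster of $\{f>\ell\}$, shielded from every other $\{f>\ell\}$-cluster by the surrounding shell of $\{f<\ell\}$-values in $B_i \setminus \{z_i\}$ (all neighbours of $z_i$ sit in $B_i$ when $L = 3$); on $A_2^{(i)}$ no such singleton is present. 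In both events $B_i \setminus \{z_i\}$ lies entirely in $\{f<\ell\}$ and is connected, so any $\{f<\ell\}$-cluster intersecting $B_i$ has identical topology and identical merging with $\{f<\ell\}$-clusters outside $B_i$ in both events. It follows that for every fixed configuration of $f$ outside $B_i$, the value of $N_R(\ell)$ is exactly one larger on $A_1^{(i)}$ than on $A_2^{(i)}$, whence $\E[N_R(\ell) \mid \hat{f}, A_1^{(i)}] - \E[N_R(\ell) \mid \hat{f}, A_2^{(i)}] = 1$. The elementary two-point lower bound $\Var[Y \mid \hat{f}] \ge \P[A_1 \mid \hat{f}] \P[A_2 \mid \hat{f}] (\E[Y \mid \hat{f}, A_1] - \E[Y \mid \hat{f}, A_2])^2$ (applied with $Y = \E[N_R(\ell) \mid \hat{f}, \tilde{Y}_i]$) then gives $\Var[\E[N_R(\ell) \mid \hat{f}, \tilde{Y}_i] \mid \hat{f}] \ge \P[A_1^{(i)} \mid \hat{f}] \P[A_2^{(i)} \mid \hat{f}]$.

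To finish I would lower-bound these conditional probabilities on the event $E_i := \{\|\hat{f}|_{B_i}\|_\infty \le M\}$: there $\tilde{Y}_i$ needs only fall into explicit bounded subsets of $\R^{B_i}$, an event of conditional probability at least some $p(M,\ell,\kappa) > 0$ uniformly in $i$, since $\tilde{Y}_i$ is an i.i.d.\ standard Gaussian vector independent of $\hat{f}$; meanwhile $\P[E_i] \ge 1/2$ for $M$ large by Gaussian tail estimates on $\hat{f}(y)$ (whose variance is at most $K(0) < \infty$) and stationarity. Summing the resulting per-box bound over the $K \gtrsim R^d$ boxes yields $\Var[N_R(\ell)] \ge c R^d$. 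The main obstacle in this plan is the verification of the deterministic $+1$ gap between $A_1^{(i)}$ and $A_2^{(i)}$: this relies crucially on both the isolation of $z_i$ by the $\{f<\ell\}$-shell and the connectedness of $B_i \setminus \{z_i\}$ (guaranteed by $L=3$), so that toggling $z_i$ neither splits nor merges any $\{f<\ell\}$-cluster; once this combinatorial statement is verified, the remaining probabilistic bookkeeping via Hoeffding ANOVA is routine.
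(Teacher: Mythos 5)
Your proposal is correct, and its skeleton — the i.i.d.\ decomposition of Assumption~\ref{a:sd}, a family of $\gtrsim R^d$ disjoint boxes, and the first-order Hoeffding/reverse Efron--Stein lower bound $\Var[N_R]\ge\sum_i\Var[\E[N_R\mid\tilde Y_i]]$ — is exactly the one used in the paper. Where you genuinely diverge is in the per-box lower bound. The paper takes boxes of a \emph{large} side $r$, chosen via the law of large numbers (Proposition~\ref{p:mu}) so that $\E[N_r(\ell)]\ge\mu(\ell)r^d/2$ dominates the boundary term $c_dr^{d-1}$; it then conditions on the i.i.d.\ block being uniformly huge, which collapses the in-box count to $\{0,1\}$ and forces the conditional expectation to drop by at least $1$ from its unconditional value with positive probability. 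You instead work with boxes of side $3$ and exhibit two explicit local patterns ($A_1^{(i)}$: isolated high center surrounded by a low shell; $A_2^{(i)}$: all low) between which $N_R(\ell)$ jumps by a deterministic $+1$ for every fixed exterior configuration — the singleton adds one to $N_R^+$, and connectedness of $B_i\setminus\{z_i\}$ guarantees the $\{f<\ell\}$-cluster neither splits nor changes its boundary-touching status, so $N_R^-$ is unchanged. Your route is more self-contained (it does not need $\mu(\ell)>0$ or the LLN, only non-degeneracy of the i.i.d.\ component and the two-point conditional variance bound), at the cost of a slightly more delicate combinatorial verification; the paper's route trades that verification for an appeal to the already-established ergodic input. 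Both are complete proofs of the extensivity statement.
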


\begin{proof}
Let $c_d$ be such that $\lvert\partial\Lambda_r\rvert\leq c_dr^{d-1}$ for all $r\geq 1$. Fix $r$ large enough so that
\begin{equation}\label{e:varext1}
    \E[N_r(\ell)]\geq\frac{\mu(\ell)}{2}r^d\geq 2c_dr^{d-1}+3
\end{equation}
which is possible by the law of large numbers for the cluster count and the fact that $\mu(\ell)>0$ (Proposition~\ref{p:mu}). For $R > r$, let $(x_i)_{1\le i\le n}$ be $n>c_rR^d$ points in $\Z^d$ that have mutual $d_\infty$ distance at least $3r$ and let $\Lambda^i=x_i+\Lambda_{r-1}$. Recalling \eqref{e:iiddecomp}, one can decompose $f$ as 
\[ f \stackrel{d}{=} f^\prime+\sum_{i \le n} \kappa \tilde{Z}_i ,  \]
where $\kappa>0$, $\tilde{Z}_i$ are i.i.d.\ standard Gaussian vectors supported on $\Lambda^i$, and $f'$ is an independent Gaussian field. Denote $M_i:=\E[N_R(\ell)|\tilde{Z}_1,\dots,\tilde{Z}_i]$. Then using successively the law of total variance, orthogonality of martingale increments, the conditional Jensen inequality, and the tower property
\begin{align*}
    \mathrm{Var}[N_R(\ell)] \ge \mathrm{Var}[M_n] & = \sum_{i=1}^n\E\left[(M_i-M_{i-1})^2\right] \\
    & \geq\sum_{i=1}^n\E\left[(\E[M_i-M_{i-1}|\tilde{Z}_i])^2\right]\\
    &=\sum_{i=1}^n\E\left[(\E[M_n| \tilde{Z}_i]-\E[M_n])^2\right]=\sum_{i=1}^n\Var[\E[N_R(\ell)|\tilde{Z}_i]].
\end{align*}
(This is an instance of the reverse Efron-Stein inequality.) Hence it suffices to show that
\begin{equation}
    \label{e:varext2}
 \Var \big[  \E[ N_R(\ell) | \tilde{Z}_i ] \big] \ge \delta \quad \text{for all } 1 \le i \le n
\end{equation}
where $\delta>0$ is independent of $R$.

Let $\Lambda^i_+=x_i+\Lambda_{r}$, then by definition of the cluster count,
\begin{equation}\label{e:varext3}
N_R(\ell)=\Xi_{\Lambda_R\setminus\Lambda^i}(f-\ell)+\Xi_{\Lambda^i_+}(f-\ell)+\Xi_{\Lambda_R,\partial\Lambda^i_+}(f-\ell)
\end{equation}
where we recall that $\Xi_D$ is the cluster count in $D$, and $\Xi_{\Lambda_R,\partial\Lambda^i_+}$ is the number of clusters contained in $\Lambda_R$ which intersect $\partial\Lambda^i_+$. Since $\Xi_{\Lambda_R\setminus\Lambda^i}$ is independent of $\tilde{Z}_i$, we have $\E[\Xi_{\Lambda_R\setminus\Lambda^i}|\tilde{Z}_i]=\E[\Xi_{\Lambda_R\setminus\Lambda^i}]$. Then using the fact that $\lvert\Xi_{\Lambda_R,\partial\Lambda^i_+}\rvert\leq \lvert\partial\Lambda_+^i\rvert\leq c_dr^{d-1}$, taking conditional and unconditional expectations of \eqref{e:varext3} yields
\begin{displaymath}
    \E[N_R(\ell)| \tilde{Z}_i]-\E[N_R(\ell)]=\E[\Xi_{\Lambda^i_+}| \tilde{Z}_i]-\E[\Xi_{\Lambda^i_+}]+e_r
\end{displaymath}
where $\lvert e_r\rvert\leq 2c_dr^{d-1}$. Now given $s>0$ we define the event $A_s = \{ \|f^\prime|_{\Lambda^i}\|_\infty\leq s\}$ and the event $B_s$ that every entry of $\tilde{Z}_i$ exceeds $(\ell+ s)/\kappa$. On $A_s\cap B_s$, $f|_{\Lambda^i} > \ell$  and so $\Xi_{\Lambda^i_+} \in\{0,1\}$. Moreover by stationarity and \eqref{e:varext1}, $\E[\Xi_{\Lambda^i_+}(f-\ell)] = \E[ N_r(\ell) ] \ge \mu(\ell) r^d / 2$.
Therefore on the event $B_s$
\begin{displaymath}
    \E[N_R(\ell)| \tilde{Z}_i]-\E[N_R(\ell)]\leq 1+(2r+1)^d\P(A_s^c)-\frac{\mu(\ell)}{2}r^d+2c_dr^{d-1}\leq -1
\end{displaymath}
where the final inequality is guaranteed by choosing $s$ sufficiently large (since $\P(A_s^c)\to0$ as $s\to\infty$) and using \eqref{e:varext1} again. Since $\P(B_s)$ is positive and independent of $R$, we have verified~\eqref{e:varext2}.
\end{proof}

\begin{proposition}[General upper bound]
\label{p:varub}
For every $\ell \in \R$,
\[ \limsup_{R\to\infty} \frac{\Var[N_R(\ell)]}{R^{d+2}} \le \frac{d^2 \beta_{d,1}}{ G(0)} \]
where $\beta_{d,k} > 0$ is defined in \eqref{e:beta}. 
\end{proposition}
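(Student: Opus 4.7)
The plan is to deduce the bound from Proposition~\ref{p:wcerror} applied at order $m=1$ to $\Xi_{\Lambda_R}(f-\ell)=N_R(\ell)$, which yields
\begin{equation*}
    \Var[N_R(\ell)] = \sum_{x,y\in\Lambda_R} G(x-y) \int_0^1 P^t(x;y)\,dt,
\end{equation*}
and then to bound the $t$-integral pointwise by $d^2/G(0)$, so that the claim follows (using $G\ge 0$) from the definition of $\beta_{d,1}$ in~\eqref{e:beta}.

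The key combinatorial input is the pointwise bound $|d_y\Xi_{\Lambda_R}(E)|\le 2d$ for every $y\in\Lambda_R$ and $E\subseteq\Z^d$. Writing $\Xi_{\Lambda_R}=\Xi^+_{\Lambda_R}+\Xi^-_{\Lambda_R}$, with $\Xi^+_{\Lambda_R}(E)$ (resp.\ $\Xi^-_{\Lambda_R}(E)$) denoting the number of components of $E$ (resp.\ $\Lambda_R\setminus E$) not touching $\partial\Lambda_R$, analysing how each summand changes when the inclusion of $y$ in $E$ is toggled expresses $d_y\Xi^\pm_{\Lambda_R}(E)$ in terms of the numbers $k_\pm(y,E)$ of distinct clusters of $E\setminus\{y\}$, resp.\ $(\Lambda_R\setminus E)\setminus\{y\}$, adjacent to $y$, and the subcounts $m_\pm\le k_\pm$ of those clusters that also touch $\partial\Lambda_R$. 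Since the neighbours of $y$ in $\Lambda_R$ are partitioned between the two sides, $k_++k_-\le 2d$, and a short case analysis on $(m_+,m_-)$ then confirms that the boundary corrections never push $|d_y\Xi_{\Lambda_R}|$ above $2d$.

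Armed with this bound, Definition~\ref{d:jpi} gives $|P^t(x;y)| \le (2d)^2\,\varphi_{(f(x),f^t(y))}(\ell,\ell)$. The pair $(f(x),f^t(y))$ is a centred Gaussian vector with covariance matrix $\bigl(\begin{smallmatrix}G(0)&tG(x-y)\\tG(x-y)&G(0)\end{smallmatrix}\bigr)$, so on dropping the Gaussian exponential factor, $\varphi_{(f(x),f^t(y))}(\ell,\ell)\le (2\pi)^{-1}(G(0)^2-t^2G(x-y)^2)^{-1/2}$. Integrating this explicitly,
\begin{equation*}
    \int_0^1\frac{dt}{\sqrt{G(0)^2-t^2G(x-y)^2}} \;=\; \frac{\arcsin(G(x-y)/G(0))}{G(x-y)} \;\le\; \frac{\pi}{2G(0)},
\end{equation*}
where the last step uses the elementary inequality $\arcsin(a)/a\le \pi/2$ for $a\in(0,1]$. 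This gives $\bigl|\int_0^1 P^t(x;y)\,dt\bigr| \le (2d)^2/(4G(0)) = d^2/G(0)$ whenever $G(x-y)>0$, and the remaining terms contribute nothing to the variance formula.

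Combining these estimates yields $\Var[N_R(\ell)]\le (d^2/G(0))\sum_{x,y\in\Lambda_R}G(x-y)$, and the asymptotic $\sum_{x,y\in\Lambda_R}G(x-y)\sim \beta_{d,1}R^{d+2}$ (from the definition of $\beta_{d,1}$) gives the claimed $\limsup$. The main obstacle is the combinatorial step of confirming that $2d$, and not a larger constant such as $4d$, is the correct pointwise bound on $|d_y\Xi_{\Lambda_R}|$ in the presence of boundary corrections; once this is established, the remaining steps are routine manipulations with bivariate Gaussian densities.
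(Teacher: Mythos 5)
Your proposal is correct and follows essentially the same route as the paper: apply Proposition~\ref{p:wcerror} at order $m=1$, bound $|d_y\Xi_{\Lambda_R}|\le 2d$ so that $|P^t(x;y)|\le (2d)^2\varphi_{(f(x),f^t(y))}(\ell,\ell)$, control the $t$-integral of the bivariate density by $1/(4G(0))$, and conclude via the definition of $\beta_{d,1}$. The only (cosmetic) difference is that you evaluate the $t$-integral exactly via $\arcsin$ whereas the paper first bounds the correlation by $1$ and uses $\int_0^1(1-t^2)^{-1/2}\,dt=\pi/2$; both yield the same constant $d^2/G(0)$.
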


\begin{proof}
Applying Proposition \eqref{p:wcerror}, we have
\[ \Var[N_R(\ell)] = \sum_{x,y \in \Lambda_R}  G(x-y) \int_0^1 
 P^t_R(x;y) \, dt \]
where $P^t_R(x;y)$ denotes the $1+1$ joint pivotal intensity in Definition \ref{d:jpi} applied to $N_R(\ell)$. Since $|d_y N_R(\ell)| \le 2d$ we have
\[ |P^t_R(x;y)| \le (2d)^2 \varphi_{f(x),f^t(y)}(\ell,\ell) \le \frac{(2d)^2}{2\pi G(0)}  
\frac{1}{\sqrt{ 1 - t^2 \mathrm{Corr}(f(x),f(y)} }  \le  \frac{2d^2}{\pi G(0)}  
\frac{1}{\sqrt{ 1 - t^2 } } . \]
Since $\int_0^1 ( 1 - t^2)^{-1/2}  \, dt = \pi /2 $, we have 
 \[ \Var[N_R(\ell)] \le \frac{d^2}{G(0)} \sum_{x,y \in \Lambda_R}  G(x-y)  , \]
 and we conclude by combining with \eqref{e:beta}.
\end{proof}


\medskip

\section{Proof of the main results}
\label{s:mr}

In this section we complete the proof of Theorems \ref{t:fluc1}, \ref{t:fluc2}, \ref{t:var} and \ref{t:var2}. Recall the chaos expansion $N_R(\ell) = \E[N_R(\ell)] + \sum_{m \ge 1} Q_m[N_R(\ell)]$ of the cluster count. Abbreviate
\[\widetilde{N}_R(\ell) = \frac{N_R(\ell) - \E[N_R(\ell)] }{ \sqrt{\Var[N_R(\ell)]} } \quad \text{and} \quad  \widetilde{Q}_m =  \frac{ Q_m[N_R(\ell)] }{\sqrt{\Var[Q_m[N_R(\ell)] ] }}.\]
We say that $\widetilde{N}_R(\ell)$ is \textit{asymptotically dominated by $\widetilde{Q}_m$} if $\Var[Q_m[N_R(\ell)] ] \sim \Var[N_R(\ell)]$ as $R \to \infty$. This implies in particular that $\widetilde{N}_R(\ell)  - \widetilde{Q}_m \to 0 $ in probability. 

\smallskip Recall that $Z$ denotes a standard Gaussian variable, and $\Rightarrow$ denotes convergence in law. 

\begin{proof}[Proof of Theorems \ref{t:fluc1}, \ref{t:fluc2} and \ref{t:var}]
We divide the analysis into four cases: 
\begin{enumerate}
\item $\mu'(\ell) \neq 0$; 
 \item $\mu'(\ell) = 0$, and either (i) $d = 4$ and $\mu''(\ell) \neq 0$, or (ii) $d = 3$, $\mu''(\ell) = 0$, and $\mu'''(\ell) \neq 0$; 
\item $d = 3$, $\mu'(\ell) = 0$, and $\mu''(\ell) \neq 0$; 
 \item all remaining cases.
    \end{enumerate}
    In the first, second, and third cases we will show that $\widetilde{N}_R(\ell)$ is asymptotically dominated by, respectively, $\widetilde{Q}_1$, $\widetilde{Q}_{6-d}$, and $\widetilde{Q}_2$. In the fourth case we show that all terms $\widetilde{Q}_{m}$ may contribute non-negligibly to $\widetilde{N}_R(\ell)$. 
    
    For $m \in \mathbb{N}$, let $P^m_\infty$ denote the function $P_\infty$ in Proposition \ref{p:statwce} (given in Definition \ref{d:hspivint}) for this choice of $m$, and abbreviate $\sum P^m_\infty := \sum_{x_2,\ldots,x_m \in \Z^d}P^m_\infty(0,x_2,\ldots,x_m)$. Since we assume $\ell \neq \{-\ell_c,\ell_c\}$, we shall use without further mention that Proposition \ref{p:mudiff} identifies $\sum P^m_\infty$ as $(-1)^m \mu^{(m)}(\ell)$.

    Recall that $m_0 = \max\{2,7-d\}$ is the smallest positive integer such that $m_0 (d-2) > d$.  
    
\noindent \textbf{Case (1).} By Proposition \ref{p:statwce}, as $R \to \infty$
\[ \Var[Q_1[N_R(\ell)] ] = P^1_\infty(0)^2 \sum_{x,y \in \Lambda_R} G(x-y) + O(R^d) \sim  \beta_{d,1} P^1_\infty(0)^2  R^{d+2} + O(R^d) . \]
Since $P^1_\infty(0) =-\mu'(\ell) \neq 0$ by assumption, we have
\[ \frac{\Var[Q_1[N_R(\ell)] ]}{  R^{d+2} } \to  \beta_{d,1} (\mu'(\ell))^2 . \]

We next argue that the chaoses of order $m \neq 1$ have smaller order. For $2 \le m < m_0$, combining Proposition \ref{p:statwce} with either Proposition \ref{p:nclt} (if $m (d-2) < d$ and $\sum P_\infty^m \neq 0$),  Proposition \ref{p:clt2} (if $m (d-2) = d$ and $\sum P_\infty^m \neq 0$), or Proposition \ref{p:clt} (if $m (d-2) < d$ and $\sum P_\infty^m = 0$), we have
\begin{equation}
    \label{e:case1e1}
\Var[ Q_m[N_R(\ell) ] ] = o(R^{d+2}) .
\end{equation}
Note that our application of Proposition \ref{p:clt} used that $m(d-2) > d-2$ for $m \ge 2$, and that the Green's function $G$ satisfies \eqref{e:kreg} up to a normalising constant.

To analyse the chaoses of order $m \ge m_0$, fix $\eps > 0$ and recall that
\begin{equation}
    \label{e:barq}
\overline{Q}_m(R,r) := \frac{1}{m!}\sum_{x_1,\dots,x_m\in \Lambda_R} \wick{f(x_1)\cdots f(x_m)} P_{\infty;\le r}(x_1,\ldots,x_m)   
\end{equation} 
where $P_{\infty;\le r}$ is as in Proposition \ref{p:statwcetrunc}. Choosing $r,M>0$ depending on $\epsilon$ as in Proposition~\ref{p:statwcetrunc}, as $R \to \infty$ eventually
\begin{equation}
   \label{e:case1e2}  \Var \Big[ \sum_{m \ge m_0} Q_m[N_R(\ell)]  - \sum_{m_0\leq m\leq M} \overline{Q}_m(R,r) \Big]   \le  \eps R^d.
   \end{equation}
By Proposition~\ref{p:clt3} (recall also that $\|P_{\infty;\le r}\|_\infty \le \sqrt{m!} e^{c_{d,\ell,r} m}$ by Lemma \ref{l:bounds})
\begin{equation}
\label{e:case1e3}
\Var \Big[ \sum_{m_0\leq m \leq M} \overline{Q}_m(R,r) \Big] \le   \sum_{m_0\leq m\leq M} e^{cm}  R^d 
\end{equation}
where $c > 0$ depends only on $d$, $\ell$ and $\eps$. Combining with \eqref{e:case1e1} this shows that, as $R \to \infty$
\[\sum_{ m \ge 2 } \frac{ \Var[ Q_2[N_R(\ell) ]] }{R^{d+2}} \to 0 .\]
Hence $\widetilde{N}_R(\ell)$ is asymptotically dominated by $\widetilde{Q}_1$, which completes the proof since $Q_1[N_R(\ell)]$ is Gaussian by definition.


\noindent \textbf{Case (2).} Recall that $d \in \{3,4\}$, and let $m' = m_0 - 1 = 6-d \in \{2,3\}$. Since $m'(d-2) = d$ and $\sum P^{m'}_\infty = (-1)^{m'} \mu^{(m')} \neq 0$ by assumption, combining Propositions \ref{p:statwce} and \ref{p:clt2} gives that, as $R \to \infty$
\[ \frac{\Var[Q_{m'}[N_R(\ell)] ]}{  R^d (\log R) } \to  \frac{ \beta_{d,m'}  (\sum P^m_\infty)^2}{(m')!}  \qquad \text{and} \qquad \widetilde{Q}_{m'} \Longrightarrow Z .\]
We next argue that the chaoses of order $m \neq m'$ are negligible. As before we have
\[ \Var[Q_1[N_R(\ell)] ] = \beta_{d,1} P^1_\infty(0)^2  R^{d+2} + O(R^d) .\]
Since $P^1_\infty(0) = -\mu'(\ell) = 0$ by assumption, 
\[   \frac{ \Var[ Q_1[N_R(\ell) ] ] }{R^d (\log R)} \to 0. \]
If $d=3$, so that $m' > 2$, we have to analyse the second chaos separately. In that case, since $\sum P^2_\infty = \mu''(\ell) = 0$, combining Propositions \ref{p:statwce} and \ref{p:clt}, as $R \to \infty$
\[   \frac{ \Var[ Q_2[N_R(\ell) ] ] \ }{R^d (\log R)} \to 0.  \]
Finally, we bound the chaoses of order $m \ge m_0$ in the same way as in \eqref{e:case1e2}--\eqref{e:case1e3}. Together, this shows that $\widetilde{N}_R(\ell)$ is asymptotically dominated by $\widetilde{Q}_{m'}$, concluding the proof.

\noindent \textbf{Case (3).} This is similar to the previous case. Since $\sum P^2_\infty = \mu''(\ell)$ by assumption, combining Propositions \ref{p:statwce} and \ref{p:nclt} gives that, as $R \to \infty$
\[ \frac{\Var[Q_{2}[N_R(\ell)] ]}{  R^4  } \to  \frac{ \beta_{3,2}  (\sum P^2_\infty)^2}{2} \qquad \text{and} \qquad \widetilde{Q}_2 \Rightarrow Z', \]
where $Z'$ has order-$2$ Hermite distribution associated to the measure with density $\rho(\lambda) = |\lambda|^{-2}$. On the other hand, as in the previous case we have, as $R \to \infty$
\[  \sum_{m \neq 2} \frac{ \Var[ Q_m[N_R(\ell) ] ] \ }{R^4} \to 0 ,\]
and so $\widetilde{N}_R(\ell)$ is asymptotically dominated by $\widetilde{Q}_{2}$.

\noindent \textbf{Case (4).} By Proposition \ref{p:varasymp}, as $R \to \infty$
\[ \Var \Big[ Q_1[N_R(\ell) ] - \sum_{x \in \Lambda_R} f(x) P_\infty^H( d_{\infty}(x,\partial \Lambda_R)  ) \Big] = o(R^d)  \]
where, since $P^1_\infty(0) = -\mu'(\ell) = 0$ by assumption, $|P_\infty^H(k)| \le c_1 e^{-c_2 k^\rho}$. Applying Lemma \ref{l:rv2}, as $R \to \infty$
\[ \Var[ Q_1[N_R(\ell)] ] \sim \sigma_1^2 R^d , \]
where
 \[ \sigma_1^2  =  c_d \overline{E}_{d,d-2} \Big(\sum_{k \ge 0} P_\infty^H(k) \Big)^2 \in [0,\infty)  ,  \]
 and where $c_d > 0$ is such that $G(x) \sim c_d |x|^{2-d}$, and $\overline{E}_{d,\alpha} \in (0,\infty)$ is defined in \eqref{e:edot}. 
 
 For $2 \le m < m_0$, note that $\sum P_\infty^m = (-1)^m \mu^{(m)} = 0$ by assumption. Then combining Propositions \ref{p:statwce} and \ref{p:clt}, as $R \to \infty$
\[ \Var[ Q_m[N_R(\ell) ] ]  \sim \sigma_m^2 R^d \quad \text{and} \quad Q_m[N_R(\ell) ] / R^{d/2} \Longrightarrow \sigma_m Z \]
for constants $\sigma_m^2 \in [0,\infty)$. 

We now consider the higher orders $m\geq m_0$. Given $\epsilon>0$, we choose $r,M>0$ as in Proposition~\ref{p:statwcetrunc}. Recalling the definition of $\overline{Q}_m(R,r)$ in \eqref{e:barq}, by Proposition \ref{p:clt} for every $m \ge m_0$, as $R \to \infty$
\begin{equation}
   \label{e:clt2}
\frac{ \Var[ \overline{Q}_m(R,r) ] }{ R^d } \to \sigma_{m,r}^2  \qquad \text{and} \qquad  \frac{  \overline{Q}_m(R,r)  }{ R^{d/2}}   \Longrightarrow  \sigma_{m,r} Z 
\end{equation}
for some $\sigma_{m,r}^2 \geq 0$. Given our choices of $M=M_\epsilon$ and $r=r_\epsilon$, we define
\begin{displaymath}
    N_R^{(\epsilon)}(\ell)=\sum_{2\leq m < m_0}  Q_m[N_R(\ell) ]  + \sum_{m_0 \le m \le M}  \overline{Q}_m(R,r)\qquad\text{and}\qquad\widetilde{N}_R^{(\epsilon)}(\ell)=\frac{N_R^{(\epsilon)}(\ell)}{\sqrt{\Var[N_R^{(\epsilon)}(\ell)]}}.
\end{displaymath}
Then since component-wise normal convergence is equivalent to joint convergence for sequences of finite vectors of elements of fixed chaoses (\cite[Theorem 6.2.3]{np12}), we deduce that as $R \to \infty$
\[ \frac{ \Var \big[ N_R^{(\epsilon)}(\ell) \big] } { R^d } \to   \sum_{1 \le m \le M} \sigma_{m,r}^2 =: \sigma^2_\epsilon \qquad\text{and}\qquad
\frac{ N_R^{(\epsilon)}(\ell) }{R^{d/2}} \Longrightarrow \sigma_{\epsilon} Z .  \]
Using the fact that different order chaoses are orthogonal and Proposition~\ref{p:statwcetrunc}
\begin{equation*}
    \limsup_{R\to\infty}\frac{1}{R^d}\left\lvert\Var[N_R(\ell)]-\Var[N_R^{(\epsilon)}(\ell)]\right\rvert\leq \epsilon.
\end{equation*}
Since $\epsilon>0$ was arbitrary, combining the last two equations shows that $\Var[N_R(\ell)]/R^d$ is Cauchy and hence convergent to a limit which we denote by $\sigma^2$. Moreover it follows that $\lim_{\epsilon\to 0}\sigma_\epsilon=\sigma$. Convergence in distribution follows from an elementary argument: for any $x\in\R$ and $\delta_0>0$ by the triangle inequality
\begin{align*}
    \Big\lvert \P\big(\widetilde{N}_R(\ell)\leq x\big)-\Phi(x)\Big\rvert\leq& \sup_{\lvert\delta\rvert\leq\delta_0}\Big\lvert\P\big(\widetilde{N}_R^{(\epsilon)}(\ell)\leq x+\delta\big)-\Phi(x+\delta)\Big\rvert +\left\lvert\Phi(x)-\Phi(x+\delta)\right\rvert\\
    &+\P\left(\left\lvert\widetilde{N}_R(\ell)-\widetilde{N}_R^{(\epsilon)}(\ell)\right\rvert>\delta_0\right)
\end{align*}
where $\Phi$ denotes the standard normal CDF. Choosing $\delta_0>0$ and then $\epsilon>0$ sufficiently small and applying Chebyshev's inequality to the final term shows that this expression can be made arbitrarily small by taking $R\to\infty$. Hence $\widetilde{N}_R(\ell)\Longrightarrow\sigma Z$ as required. To conclude we observe that $\sigma > 0$ by the extensivity of the variance (Proposition \ref{p:ex}).
\end{proof}

\begin{remark}
\label{r:be}
It follows from the above proof that
\begin{equation}
\label{e:sigma}
\sigma^2 =  c_d \overline{E}_{d,d-2} \Big(\sum_{k \ge 0} P_\infty^H(k) \Big)^2 + \sum_{m \ge 2} \sigma^2_m \in [0,\infty)
\end{equation}
where $\sigma^2_m:=\lim_{r\to\infty}\sigma^2_{m,r}$. This expression includes a (possibly) non-negligible boundary effect since the value of $\sum_{k \ge 0} P_\infty^H(k) $ may depend on the choice of boundary conditions (see Definition \ref{d:hspivint}).
\end{remark}

\begin{proof}[Proof of Theorem \ref{t:var2}]
The first statement is a combination of Propositions \ref{p:ex} and \ref{p:varub}. To prove the second statement, observe that by Propositions \ref{p:statwcequal} and \ref{p:mudiff}, as $R \to \infty$,
\begin{equation*} \Var[ N_R(\ell_c)  ]  \ge \Var[Q_1[N_R(\ell_c)]] \sim  \beta_{d,1}   (\mu'(\ell_c))^2 R^{d+2} . \qedhere
\end{equation*}
\end{proof}

\medskip
\appendix

\section{Gaussian vectors and multivariate Hermite polynomials}
\label{a:mhp} 

In this appendix we establish basic properties of Gaussian vectors and multivariate Hermite polynomials that were used in Sections \ref{s:ce} and \ref{s:sl}. 

\subsection{Gaussian vectors}
For a non-degenerate Gaussian vector $X$, recall that $\Sigma_X$ and $\varphi_X$ denote its covariance matrix and density respectively.  Let $\lambda_{\mathrm{min}}(X)$ and $\lambda_{\mathrm{max}}(X)$ denote the smallest and largest eigenvalue of $\Sigma_X$ respectively. For a set of indices $I \subset \N$, let $X_I = (X_i)_{i \in I}$. For $t \in [0,1]$, let $X^t = t X + \sqrt{1-t^2} \tilde{X}$, where $\tilde{X}$ is an independent copy of $X$.

\begin{lemma}
\label{l:lambdamin}
    For a non-degenerate Gaussian vector $(X,Y)$,
    \begin{equation}
    \label{e:lambda1} 
    \lambda_{\mathrm{min}}(X,Y) \le \lambda_{\mathrm{min}}(X|Y) \le \lambda_{\mathrm{min}}(X) \le  \|\Sigma_X^{-1}\|^{-1}_\infty \ , \quad \lambda_{\mathrm{max}}(X) \le \mathrm{dim}(X) \| \Sigma_X\|_\infty ,
    \end{equation}
    and for every $t \in [0,1]$,
    \begin{equation}
        \label{e:lambda2}
   \lambda_{\mathrm{min}}(X,Y) \le  \lambda_{\mathrm{min}} \big(X, Y^t \big) . 
    \end{equation} 
\end{lemma}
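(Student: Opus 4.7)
The plan is to establish the two displays separately, with the second requiring a small convexity trick that I expect to be the main (though still routine) obstacle.

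For \eqref{e:lambda1}, I would proceed along the chain from left to right. For $\lambda_{\min}(X,Y)\le\lambda_{\min}(X|Y)$, recall that by Gaussian regression $\Sigma_{X|Y}=\Sigma_X-\Sigma_{XY}\Sigma_Y^{-1}\Sigma_{YX}$ is the Schur complement. Given any unit vector $u$ in the $X$-space, I would pick $\tilde u=(u,-\Sigma_Y^{-1}\Sigma_{YX}u)$ in the $(X,Y)$-space, note that $\|\tilde u\|\ge\|u\|=1$, and verify by direct block computation that $\tilde u^T\Sigma_{(X,Y)}\tilde u=u^T\Sigma_{X|Y}u$; applying the variational characterisation of $\lambda_{\min}(X,Y)$ then yields the inequality. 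For $\lambda_{\min}(X|Y)\le\lambda_{\min}(X)$, the Schur complement identity shows $\Sigma_{X|Y}\preceq\Sigma_X$ in the Loewner order (since $\Sigma_{XY}\Sigma_Y^{-1}\Sigma_{YX}$ is PSD), which by the min--max characterisation of eigenvalues gives the bound. Finally, for the outer inequalities, I would invoke the elementary fact that for a symmetric PSD matrix $M$ one has $|M_{ij}|=|e_i^TMe_j|\le\sqrt{(e_i^TMe_i)(e_j^TMe_j)}\le\lambda_{\max}(M)$, so $\|M\|_\infty\le\lambda_{\max}(M)$; applying this with $M=\Sigma_X^{-1}$ gives $\|\Sigma_X^{-1}\|_\infty\le\lambda_{\min}(X)^{-1}$, and the bound $\lambda_{\max}(X)\le\mathrm{tr}(\Sigma_X)\le\dim(X)\|\Sigma_X\|_\infty$ handles the last estimate.

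For \eqref{e:lambda2}, the key observation is that the covariance matrix of $(X,Y^t)$ is
\[
M_t=\begin{pmatrix}\Sigma_X & t\Sigma_{XY}\\ t\Sigma_{YX} & \Sigma_Y\end{pmatrix},
\]
which I would write as the convex combination
\[
M_t=\tfrac{1+t}{2}M_1+\tfrac{1-t}{2}M_{-1},\qquad t\in[0,1].
\]
Here $M_{-1}=DM_1D$ with $D=\mathrm{diag}(I_X,-I_Y)$ orthogonal, so $M_{-1}$ is PSD with the same eigenvalues as $M_1$; in particular $\lambda_{\min}(M_{-1})=\lambda_{\min}(M_1)=\lambda_{\min}(X,Y)$. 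Since $\lambda_{\min}$ is concave on the space of symmetric matrices (being the pointwise infimum of the linear functionals $M\mapsto v^TMv$ over unit $v$), applying concavity to the above decomposition yields $\lambda_{\min}(X,Y^t)=\lambda_{\min}(M_t)\ge\lambda_{\min}(X,Y)$, as required.

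The only mildly subtle point is the concavity/sign-flip argument for the second display; everything else reduces to the Schur complement identity and standard matrix norm comparisons for symmetric PSD matrices. I do not foresee any serious obstacle.
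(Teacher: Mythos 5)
Your proof is correct. For \eqref{e:lambda1} your route is essentially the paper's: the chain $\lambda_{\mathrm{min}}(X,Y)\le\lambda_{\mathrm{min}}(X|Y)\le\lambda_{\mathrm{min}}(X)$ is obtained in the paper by the same two ideas (a test vector of the form $(u,-\Sigma_Y^{-1}\Sigma_{YX}u)$ killing the cross term, and the positive semidefiniteness of the Schur correction, phrased there probabilistically as $\Var[v^TX]=\Var[v^T(X-\Sigma_{X\to Y}\Sigma_Y^{-1}Y)]+\Var[v^T\Sigma_{X\to Y}\Sigma_Y^{-1}Y]$), and the outer bounds are the same elementary PSD norm comparisons (the paper uses the Frobenius bound $\lambda_{\mathrm{max}}^2\le\sum_{i,j}(\Sigma_X)_{i,j}^2$ where you use the trace; both give $\dim(X)\|\Sigma_X\|_\infty$). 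Where you genuinely diverge is \eqref{e:lambda2}: the paper takes a unit eigenvector $(u,v)$ of $\Sigma_{X,Y^t}$ and decomposes $\Var[u^TX+v^TY^t]=\Var[u^TX+tv^TY]+(1-t^2)\Var[v^T\tilde Y]$, bounding the two pieces by $(\|u\|^2+t^2\|v\|^2)\lambda_{\mathrm{min}}(X,Y)$ and $(1-t^2)\|v\|^2\lambda_{\mathrm{min}}(Y)$ and then using $\lambda_{\mathrm{min}}(Y)\ge\lambda_{\mathrm{min}}(X,Y)$; you instead observe that $\Sigma_{X,Y^t}=\tfrac{1+t}{2}M_1+\tfrac{1-t}{2}M_{-1}$ with $M_{-1}=DM_1D$ isospectral to $M_1$, and invoke concavity of $\lambda_{\mathrm{min}}$. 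Both are valid; your version is purely matrix-analytic (it uses only that $\mathrm{Cov}(X,Y^t)=t\Sigma_{XY}$ and $\mathrm{Cov}(Y^t)=\Sigma_Y$, not the explicit independent copy), and is arguably cleaner, while the paper's version makes the role of the independent noise $\tilde Y$ explicit and reuses the already-established comparison $\lambda_{\mathrm{min}}(Y)\ge\lambda_{\mathrm{min}}(X,Y)$.
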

\begin{proof}
   By Gaussian regression, $Y$ is independent of $X-\Sigma_{X \to Y}\Sigma_Y^{-1}Y$, which has the same covariance as $X|Y$, and so $ \lambda_\mathrm{min}(X)$ is equal to
   \[ \min_{\lvert v\rvert=1}\Var[v^T X] =\min_{\lvert v\rvert=1}\Var[v^T(X-\Sigma_{X \to Y}\Sigma_Y^{-1}Y)]+\Var[v^T \Sigma_{X \to Y}\Sigma_Y^{-1}Y] \geq\lambda_\mathrm{min}(X|Y). \]
Using similar reasoning
    \begin{align}
     \nonumber  \lambda_\mathrm{min}(X,Y)&=\min_{\lvert u\rvert^2+\lvert v\rvert^2=1}\Var[u^TX+v^TY]\\
       \label{e:lambda} &=\min_{\lvert u\rvert^2+\lvert v\rvert^2=1}\Var[u^T(X-\Sigma_{X \to Y}\Sigma_Y^{-1}Y)]+\Var[(u^T\Sigma_{X \to Y}\Sigma_Y^{-1}+v^T)Y].
    \end{align}
    Let $v_\mathrm{min}$ be a unit eigenvector associated with $\lambda_\mathrm{min}(X|Y)$ and define $w=v_\mathrm{min}^T\Sigma_{X \to Y}\Sigma_Y^{-1}$. Choosing
    \begin{displaymath}
        u=\frac{v_\mathrm{min}}{\sqrt{1+\lvert w\rvert^2}},\qquad v=\frac{-w}{\sqrt{1+\lvert w\rvert^2}},
    \end{displaymath}
    which satisfies $\lvert u\rvert^2+\lvert v\rvert^2=1$ and $u^T\Sigma_{X \to Y}\Sigma_Y^{-1}+v^T=0$, and inserting these into \eqref{e:lambda} we see that $ \lambda_\mathrm{min}(X,Y) \le \lvert u\rvert^2\lambda_\mathrm{min}(X|Y)\leq\lambda_\mathrm{min}(X|Y)$. Finally, if $Z$ denotes a Gaussian vector with variance matrix $\Sigma_Z = \Sigma_X^{-1}$, then
  \begin{equation*}
  \lambda_{\mathrm{min}}(X)^{-1} = \lambda_{\mathrm{max}}(Z) =  \max_{\lvert v\rvert=1}\Var[v^T Z] \ge  \| \Sigma_Z \|_{\infty} . 
  \end{equation*}
Along with the classical bound $(\lambda_{\mathrm{max}}(X))^2 \le \sum_{i,j}(\Sigma_X)_{i,j}^2$, this completes the proof of \eqref{e:lambda1}.

To prove \eqref{e:lambda2}, if $(u,v)$ is a unit eigenvector associated with $\lambda_{\mathrm{min}}(X, Y^t)$ then
\begin{align*}
    \lambda_{\mathrm{min}}\big(X, Y^t\big) &= \Var[u^T X + t v^T  Y] +  \Var[ \sqrt{1-t^2} v^T \tilde{Y} ] \\
    & \ge ( \|u\|_2^2 + t^2 \|v\|_2^2)\lambda_{\mathrm{min}}(X,Y) + (1-t^2) \|v\|_2^2   \lambda_{\mathrm{min}}(Y) \\
    & \ge \lambda_{\mathrm{min}}(X,Y)
    \end{align*}
where the final step used that $\lambda_{\mathrm{min}}(Y) \ge  \lambda_{\mathrm{min}}(X,Y)$.
\end{proof}

\begin{lemma}
\label{l:spectral}
For a non-degenerate Gaussian vector $(X,Y)$, let $U$ be orthogonal and $D$ diagonal such that $U^T\Sigma_X U=D$, and let $V$ be the orthogonal matrix
\[    V=\frac{1}{\sqrt{2}}\begin{pmatrix}
    U &-U\\
    U & U
\end{pmatrix} . \] 
Then 
\[ V^T\Sigma_{X,X^t}V=\begin{pmatrix}
    (1+t)D &0\\
    0 & (1-t)D
\end{pmatrix} \ , \qquad  \Sigma_{X,X^t}^{-1} = \frac{1}{1-t^2} \begin{pmatrix}
    \Sigma_X^{-1} & -t \Sigma_X^{-1}\\
    -t \Sigma_X^{-1} & \Sigma_X^{-1}
\end{pmatrix}, \]
and
\[ \Sigma_{Y \to (X,X^t)}\Sigma_{X,X^t}^{-1} = (\Sigma_{Y \to X} U D^{-1} U^{T} ,0 )= (\Sigma_{Y \to X} \Sigma_X^{-1} ,0 ).  \]
\end{lemma}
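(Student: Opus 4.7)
The approach will be a direct block computation exploiting the definition $X^t = tX + \sqrt{1-t^2}\tilde{X}$, where $\tilde{X}$ is an independent copy of $X$ (and independent of $Y$). Reading off covariances, one immediately has $\mathrm{Cov}(X,X^t)=t\Sigma_X$, $\mathrm{Cov}(X^t,X^t)=\Sigma_X$, and $\mathrm{Cov}(Y,X^t) = t\Sigma_{Y\to X}$, so that
\[ \Sigma_{X,X^t} = \begin{pmatrix} \Sigma_X & t\Sigma_X \\ t\Sigma_X & \Sigma_X \end{pmatrix}, \qquad \Sigma_{Y\to(X,X^t)} = \bigl(\Sigma_{Y\to X},\; t\Sigma_{Y\to X}\bigr). \]
Everything else is linear algebra that factors through the $2\times 2$ block structure.

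For the first identity, I would first check that the block matrix $V$ is orthogonal: a direct $2\times 2$ block multiplication using $U^TU=I$ gives $V^TV = \tfrac{1}{2}\bigl(\begin{smallmatrix}2I & 0\\ 0 & 2I\end{smallmatrix}\bigr) = I$. Then I compute $V^T\Sigma_{X,X^t}V$ block-by-block: the $(1,1)$ and $(2,2)$ entries simplify to $(1+t)U^T\Sigma_X U = (1+t)D$ and $(1-t)U^T\Sigma_X U = (1-t)D$ respectively, while the two off-diagonal blocks cancel to zero. For the inverse, the cleanest route is simply to verify the claimed identity by right-multiplying: the block product
\[ \begin{pmatrix} \Sigma_X & t\Sigma_X \\ t\Sigma_X & \Sigma_X \end{pmatrix}\cdot\frac{1}{1-t^2}\begin{pmatrix} \Sigma_X^{-1} & -t\Sigma_X^{-1}\\ -t\Sigma_X^{-1} & \Sigma_X^{-1}\end{pmatrix} \]
collapses to the identity after the evident cancellations; alternatively one can derive it by conjugating $\mathrm{diag}((1+t)^{-1}D^{-1},(1-t)^{-1}D^{-1})$ by $V$, using the spectral form already established.

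For the final claim, multiply the row-block vector $(\Sigma_{Y\to X},\, t\Sigma_{Y\to X})$ on the right by the explicit inverse. The first block evaluates to $\frac{1}{1-t^2}(1-t^2)\Sigma_{Y\to X}\Sigma_X^{-1}=\Sigma_{Y\to X}\Sigma_X^{-1}$, and the second to $\frac{1}{1-t^2}(-t+t)\Sigma_{Y\to X}\Sigma_X^{-1}=0$. The alternate form $\Sigma_{Y\to X}UD^{-1}U^T$ is just the spectral decomposition of $\Sigma_X^{-1}$. I do not expect any genuine obstacle: the lemma is essentially an exercise in carefully tracking the block structure, and the only point that warrants attention is the (implicit) independence of $\tilde{X}$ from $Y$ used to obtain $\mathrm{Cov}(Y,X^t)=t\Sigma_{Y\to X}$, which I would make explicit at the outset.
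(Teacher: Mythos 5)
Your computation is correct and matches the paper's proof, which likewise just writes down the block forms $\Sigma_{X,X^t}=\bigl(\begin{smallmatrix}\Sigma_X & t\Sigma_X\\ t\Sigma_X & \Sigma_X\end{smallmatrix}\bigr)$ and $\Sigma_{Y\to(X,X^t)}=(\Sigma_{Y\to X},t\Sigma_{Y\to X})$ and declares the rest "straightforward computation." Your explicit remark that $\mathrm{Cov}(Y,X^t)=t\Sigma_{Y\to X}$ relies on $\tilde{X}$ being independent of $(X,Y)$ jointly is a worthwhile clarification the paper leaves implicit.
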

\begin{proof}
Using that 
\[   \Sigma_{Y \to (X,X^t)} = (  \Sigma_{Y \to X}, t  \Sigma_{Y \to X}) \qquad \text{and} \qquad \Sigma_{X,X^t} = \begin{pmatrix}
    \Sigma_X & t \Sigma_X \\
    t \Sigma_X & \Sigma_X
\end{pmatrix}  \]
the claims follow from straightforward computation.
\end{proof}

\begin{proposition}
\label{p:densitybound}
For a non-degenerate Gaussian vector $X$, indices $I, J \subseteq \{1,\ldots,\mathrm{dim}(X)\}$ and $t \in [0,1)$:
\begin{enumerate}
    \item $\lambda_{\mathrm{min}}(X_I, X_J^t) \ge (1-t) \lambda_{\mathrm{min}}(X)$.
    \item  $ \| \varphi_{X_I,X^t_J}(x,x')\|_\infty  \le c_{|I|,|J|} (1-t)^{-|I\cap J|/2}  \lambda_{\mathrm{min}}(X)^{ - |I|/2- |J|/2} $.
\item For $\ell \in \R$$, \varphi_{X_I,X^t_J}(\ell,\ell)  \ge  c_{|I|,|J|} \|\Sigma_X\|_\infty^{-|I| - |J|} e^{ -2\ell^2 (|I| + |J|)^2  \lambda_{\mathrm{min}}(X)^{-1} }$.
\end{enumerate}
\end{proposition}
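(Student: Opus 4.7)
Part~(1) is immediate: $(X_I,X_J^t)$ is a principal subvector of $(X,X^t)$, so by the min--max principle (compare Lemma~\ref{l:lambdamin}) we have $\lambda_{\min}(X_I,X_J^t)\ge\lambda_{\min}(X,X^t)$. By Lemma~\ref{l:spectral}, $\Sigma_{(X,X^t)}$ has eigenvalues $(1\pm t)\lambda$ as $\lambda$ ranges over $\mathrm{spec}(\Sigma_X)$, so $\lambda_{\min}(X,X^t)=(1-t)\lambda_{\min}(X)$.

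For part~(2), since $\|\varphi\|_\infty=(2\pi)^{-(|I|+|J|)/2}(\det\Sigma)^{-1/2}$, it suffices to prove $\det\Sigma\ge (1-t)^{|I\cap J|}\lambda_{\min}(X)^{|I|+|J|}$. Let $A=I\setminus J$, $B=J\setminus I$, $C=I\cap J$, and reparametrise the $(X_C,X_C^t)$-block via its spectral diagonalisation (Lemma~\ref{l:spectral}) by
\begin{displaymath}
Y_C:=(X_C+X_C^t)/\sqrt{2(1+t)}, \qquad Z_C:=(X_C-X_C^t)/\sqrt{2(1-t)}.
\end{displaymath}
The induced linear map $T:(X_A,X_C,X_C^t,X_B^t)\mapsto(X_A,Y_C,Z_C,X_B^t)$ has $|\det T|=(1-t^2)^{-|C|/2}$, so
\begin{displaymath}
\det\Sigma_{(X_I,X_J^t)}=(1-t^2)^{|C|}\det\Sigma_{(X_A,Y_C,Z_C,X_B^t)},
\end{displaymath}
reducing the task to $\lambda_{\min}(X_A,Y_C,Z_C,X_B^t)\ge\lambda_{\min}(X)$. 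For a unit vector $(u,v_Y,v_Z,w)$, substituting $X_C^t=tX_C+\sqrt{1-t^2}\tilde X_C$ (and similarly for $X_B^t$) decomposes the variance of the corresponding linear combination as
\begin{displaymath}
\Var[u^T X_A+\gamma^T X_C+tw^T X_B]+(1-t^2)\Var[\beta^T\tilde X_C+w^T\tilde X_B],
\end{displaymath}
where $\gamma=v_Y\sqrt{(1+t)/2}+v_Z\sqrt{(1-t)/2}$ and $\beta=v_Y/\sqrt{2(1+t)}-v_Z/\sqrt{2(1-t)}$ satisfy the algebraic identity $\|\gamma\|^2+(1-t^2)\|\beta\|^2=\|v_Y\|^2+\|v_Z\|^2$. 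Bounding each variance below by $\lambda_{\min}(X)$ times the squared norm of its coefficient vector and applying this identity gives a total lower bound of $\lambda_{\min}(X)$, as required.

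For part~(3), write $\varphi_{(X_I,X_J^t)}(\ell\mathbf{1})=(2\pi)^{-(|I|+|J|)/2}(\det\Sigma)^{-1/2}\exp(-\tfrac12\ell^2\mathbf{1}^T\Sigma^{-1}\mathbf{1})$. Hadamard's inequality and $\Sigma_{ii}\le\|\Sigma_X\|_\infty$ give $(\det\Sigma)^{-1/2}\ge\|\Sigma_X\|_\infty^{-(|I|+|J|)/2}$, which dominates $\|\Sigma_X\|_\infty^{-(|I|+|J|)}$ up to an absolute constant (absorbed into $c_{|I|,|J|}$, possibly after replacing $\|\Sigma_X\|_\infty$ by $1+\|\Sigma_X\|_\infty$). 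For the exponent, writing $(X_I,X_J^t)=\Phi W$ with $W=(X,\tilde X)$ of covariance $\mathrm{diag}(\Sigma_X,\Sigma_X)$, the dual characterisation
\begin{displaymath}
\ell^2\mathbf{1}^T\Sigma^{-1}\mathbf{1}=\min\bigl\{v_1^T\Sigma_X^{-1}v_1+v_2^T\Sigma_X^{-1}v_2:\Phi(v_1,v_2)=\ell\mathbf{1}\bigr\}
\end{displaymath}
reduces matters to producing one admissible $v=(v_1,v_2)$. Taking $v_1=\ell\mathbf{1}_{I\cup J}$ and $v_2=\ell\sqrt{(1-t)/(1+t)}\,\mathbf{1}_J$, the identity $\sqrt{1-t^2}\cdot\sqrt{(1-t)/(1+t)}=1-t$ verifies $\Phi v=\ell\mathbf{1}$, while the two summands are bounded by $\ell^2|I\cup J|/\lambda_{\min}(X)$ and $\ell^2|J|/\lambda_{\min}(X)$ respectively, delivering the claim.

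\textbf{Main obstacle.} The delicate point is (3): a naive bound $\mathbf{1}^T\Sigma^{-1}\mathbf{1}\le(|I|+|J|)/\lambda_{\min}(\Sigma)$ combined with~(1) would introduce a $(1-t)^{-1}$ factor incompatible with the claim. The escape is that $\mathbf{1}$ lies in a direction in which $\Sigma^{-1}$ stays bounded as $t\to 1$; the variational formula, together with the specific choice $v_2=O(\sqrt{1-t})$ (rather than the naive $O(1/\sqrt{1-t})$), exactly cancels the would-be singularity. Parts (1) and (2) reduce to routine eigenvalue/variance calculations once the $(Y_C,Z_C)$-diagonalisation is in place.
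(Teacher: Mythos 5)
Your proposal is correct, and while part (1) coincides with the paper's argument (principal-subvector monotonicity of $\lambda_{\min}$ from Lemma~\ref{l:lambdamin} plus the spectral computation of Lemma~\ref{l:spectral}), parts (2) and (3) take a genuinely different route. The paper factorises the joint density as $\varphi_{Z,Z^t}\cdot\varphi_{Y,Y'\mid Z,Z^t}$ with $Z=X_{I\cap J}$ and bounds each factor separately, using Gaussian regression and the explicit block form of $\Sigma_{Z,Z^t}^{-1}$ from Lemma~\ref{l:spectral}; in particular, for (3) the cancellation of the $(1-t)^{-1}$ singularity comes from the identity $(\ell,\ell)^T\Sigma_{Z,Z^t}^{-1}(\ell,\ell)=\tfrac{2\ell^2}{1+t}\,\mathbf{1}^T\Sigma_Z^{-1}\mathbf{1}$. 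You instead work with the full $(|I|+|J|)$-dimensional covariance at once: for (2) you rotate only the $I\cap J$ block into $(Y_C,Z_C)$ coordinates, peel off the Jacobian factor $(1-t^2)^{|C|}$ from the determinant, and reduce to the single eigenvalue bound $\lambda_{\min}(X_A,Y_C,Z_C,X_B^t)\ge\lambda_{\min}(X)$, which your variance decomposition and the identity $\|\gamma\|^2+(1-t^2)\|\beta\|^2=\|v_Y\|^2+\|v_Z\|^2$ establish correctly; for (3) you use the variational (RKHS pushforward) characterisation of $\mathbf{1}^T\Sigma^{-1}\mathbf{1}$ and exhibit an explicit preimage with $v_2=O(\sqrt{1-t})$, which is exactly the same cancellation the paper obtains from the block inverse, seen from the dual side. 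Your diagnosis of the "main obstacle" is precisely the crux of the paper's proof of (3). Each approach has its merits: the paper's factorisation recycles machinery (conditional densities, Schur complements) already set up for Lemma~\ref{l:lambdamin} and Proposition~\ref{p:mhpbound}, while yours is more self-contained and makes the source of the $(1-t)$ powers transparent in a single determinant identity.

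One small remark, which applies equally to the paper's own proof: your Hadamard bound yields the exponent $-(|I|+|J|)/2$ on $\|\Sigma_X\|_\infty$ (the paper's factorisation yields $-|I\cup J|$), and passing to the stated exponent $-(|I|+|J|)$ requires $\|\Sigma_X\|_\infty\ge 1$ or an adjustment of the constant, consistent with the $\max\{1,\|\Sigma_X\|_\infty\}$ appearing in Proposition~\ref{p:mhpmain} where the estimate is consumed. This is not a gap in your argument relative to the paper.
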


\begin{proof} $\,$

$\textbf{(1).}$ By Lemma \ref{l:lambdamin}, $\lambda_{\mathrm{min}}(X_I, X_J^t) \ge \lambda_{\mathrm{min}}(X, X^t)$ and the result follows from Lemma \ref{l:spectral}.

$\textbf{(2).}$
Let $K = I \cap J$, and abbreviate $Z = X_K$, $Z^t = X^t_K$, $Y = X_{I\setminus K}$, and $Y' = X^t_{J \setminus K}$, and similarly $z = x|_K$, $z' = x'|_K$, $y = x|_{I \setminus K}$, and $y' = x'|_{J \setminus K}$. Then
\[  \varphi_{X_I,X^t_J}(x,x') =  \varphi_{Z,Z^t}(z,z')   \varphi_{Y,Y'  | Z,Z^t }(y,y' |z,z').  \]  
We bound these terms separately.

For the first term, observe that, using Lemma \ref{l:spectral} and then \eqref{e:lambda1},
\[ | \Sigma_{Z,Z^t} |  = (1-t^2)^{|K|} |\Sigma_Z|^2 \ge (1-t)^{|K|}  \lambda_{\mathrm{min}}(Z)^{2|K|} \ge (1-t)^{|K|} \lambda_{\mathrm{min}}(X)^{2|K|} , \]
and hence $  \varphi_{Z,Z^t}(z,z') \le | \Sigma_{Z,Z^t} |^{-1/2} \le (1-t)^{-|K|/2} \lambda_{\mathrm{min}}(X)^{-|K|} $.

Similarly for the second term  we have
\begin{equation}
\label{e:lambdamin1}
\lambda_{\mathrm{min}}(Y,Y' | Z,Z^t)  \ge  \lambda_{\mathrm{min}}(X_{(I \cup J) \setminus K} | Z,Z^t)  =  \lambda_{\mathrm{min}}(X_{(I \cup J) \setminus K} | X|_K)  \ge \lambda_{\mathrm{min}}(X)
\end{equation}
where the first inequality is by \eqref{e:lambda2} applied to $X|_{(I \cup J) \setminus K}$ conditional on $(Z,Z^t) = 0$, and the second inequality is by \eqref{e:lambda1}. In particular 
\[  \varphi_{Y,Y'  | Z,Z^t }(y,y' |z,z')  \le |\Sigma_{Y,Y' | Z,Z^t} |^{-1/2} \le \lambda_{\mathrm{min}}(X)^{-|(I \cup J) \setminus K|/2} .  \]
Combining these bounds with the identity $2|K| + |(I \cup J) \setminus K| = |I| + |J|$ gives the statement.

$\textbf{(3).}$ With the notation from the proof of the previous item, we have 
\[  \varphi_{X_I,X^t_J}(\ell,\ell) =  \varphi_{Z,Z^t}(\ell,\ell)   \varphi_{Y,Y'  | Z,Z^t }(\ell,\ell |\ell,\ell) .  \]
Applying Lemma \ref{l:spectral} and then \eqref{e:lambda1} we have
\[ (\ell, \ell)^T \Sigma_{Z,Z^t}^{-1}(\ell,\ell) = \frac{2 \ell^2}{1+t} \sum_{1 \le i,j \le |K|} (\Sigma^{-1}_Z)_{i,j} \le 2 \ell^2 |K|^2 \| \Sigma^{-1}_Z \|_\infty \le  2 \ell^2 |K|^2  \lambda_{\mathrm{min}}(X)^{-1} , \]
and also
\[  |\Sigma_{Z,Z^t}| = (1-t^2)^{|K|} |\Sigma_Z|^2 \le \lambda_{\mathrm{max}}(Z)^{2 |K|} \le  c_{|K|} \|\Sigma_{X} \|_\infty^{2 |K|}  .\]
Similarly we have
\begin{align*}
(\ell, \ell)^T \Sigma_{Y,Y' | Z,Z^t}^{-1} (\ell,\ell) & \le 2 \ell^2 |(I \cup J) \setminus K |^2 \lambda_{\mathrm{min}}(Y,Y' | Z,Z^t)^{-1} \\
& \le 2 \ell^2  |(I \cup J) \setminus K |^2  \lambda_{\mathrm{min}}(X)^{-1} \end{align*}
where the final step used \eqref{e:lambdamin1}, and also
\begin{align*} 
|\Sigma_{Y,Y' | Z,Z^t}| & \le \lambda_{\text{max}}(Y,Y' | Z,Z^t)^{2 |(I \cup J) \setminus K |}  \\
& \le  c_{|I|,|J|}   \|\Sigma_{Y,Y' | Z,Z^t} \|_\infty^{2 |(I \cup J) \setminus K |}   \le c_{|I|,|J|}   \|\Sigma_X \|_\infty^{2 |(I \cup J) \setminus K |}  
\end{align*}
where we used that $\|\Sigma_{Y,Y' | Z,Z^t} \|_\infty \le \| \Sigma_{Y,Y'} \|_\infty \le \|\Sigma_X\|_\infty$ by Gaussian regression. Hence
\[  \varphi_{Z,Z^t}(\ell,\ell) \ge c_{|I|,|J|} \|\Sigma_X\|_\infty^{-|K|} e^{-\ell^2 |K|^2\lambda_{\mathrm{min}}(X)^{-1} } \]
and
\[  \varphi_{Y,Y'  | Z,Z^t }(\ell,\ell |\ell,\ell) ] \ge  c_{|I|,|J|} \|\Sigma_X\|_\infty^{-|(I \cup J) \setminus K |} e^{-\ell^2 |(I \cup J) \setminus K |^2\lambda_{\mathrm{min}}(X)^{-1} }  .   \]
Combining with the inequality $|K| + |(I \cup J) \setminus K| \le |I| + |J|$ gives the result.
\end{proof}

\subsection{Hermite polynomials}
 We next establish  two bounds (Propositions \ref{p:mhpmain1} and \ref{p:mhpmain}) on multivariate Hermite polynomials; see \eqref{e:mhp} for the definition of these polynomials. The first is an elementary pointwise estimate:

\begin{proposition}
\label{p:mhpmain1}
    For a non-degenerate Gaussian vector $X$, multi-index $\alpha\in\N_0^{\mathrm{dim}(X)}$, and $x\in\R^{\mathrm{dim}(X)}$
    \begin{displaymath}
        \lvert H^\alpha_X(x)\rvert\leq \mathrm{dim}(X)^{\lvert\alpha\rvert/2}\sqrt{\lvert\alpha\rvert!}e^{c \sqrt{ \lvert\alpha\rvert} (\| x \|_2 +1)} 
    \end{displaymath}
    where $c>0$ depends only on $\lambda_{\mathrm{min}}(X)$.
\end{proposition}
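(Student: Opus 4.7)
The plan is to apply Cauchy's integral formula to the generating function of the multivariate Hermite polynomials. Writing $K = \Sigma_X$, a direct computation (expanding the shifted density around $x$) shows that
\[
F(t) := \exp\!\Big(t^T K^{-1}x - \tfrac{1}{2}t^T K^{-1}t\Big) = \sum_\alpha \frac{t^\alpha}{\alpha!}H^\alpha_X(x)
\]
as an entire function of $t\in\mathbb{C}^{\mathrm{dim}(X)}$. Writing $n := \mathrm{dim}(X)$ and applying Cauchy's formula on the polycircle $\{|t_i| = r\}$ yields
\[
|H^\alpha_X(x)| \le \frac{\alpha!}{r^{|\alpha|}}\sup_{|t_i|=r}|F(t)|
\]
for any $r>0$, which we then optimise.

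To bound the supremum, I would decompose $t = u + iv$ with $u,v \in \mathbb{R}^n$ and $u_i^2+v_i^2 = r^2$, so $\|u\|_2^2 + \|v\|_2^2 = n r^2$. Since $K^{-1}$ is real symmetric,
\[
|F(t)| = \exp\!\Big(u^T K^{-1}x - \tfrac{1}{2}u^T K^{-1}u + \tfrac{1}{2}v^T K^{-1}v\Big).
\]
Using Cauchy--Schwarz together with the spectral bound $\|K^{-1}\|_{\mathrm{op}} \le \lambda_{\mathrm{min}}^{-1}$ from Lemma~\ref{l:lambdamin}, and then maximising under the constraint on $(\|u\|_2,\|v\|_2)$, one obtains two complementary regimes: $\sup_{|t_i|=r}|F(t)| \le \exp(r\sqrt{n}\,\|x\|_2/\lambda_{\mathrm{min}})$ whenever $r\sqrt{n} \le \|x\|_2$, and $\sup|F(t)| \le \exp((\|x\|_2^2 + n r^2)/(2\lambda_{\mathrm{min}}))$ otherwise.

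The contour radius is then chosen to balance the combinatorial prefactor against the exponential, with a case split on $\|x\|_2$. In the regime $\|x\|_2 \le \sqrt{|\alpha|\lambda_{\mathrm{min}}}$ take $r = \sqrt{|\alpha|\lambda_{\mathrm{min}}/n}$, and in the regime $\|x\|_2 > \sqrt{|\alpha|\lambda_{\mathrm{min}}}$ take $r = |\alpha|\lambda_{\mathrm{min}}/(\|x\|_2 \sqrt{n})$. In each case, applying Stirling's formula in the form $\alpha!/|\alpha|^{|\alpha|/2} \le C\sqrt{|\alpha|!}\,|\alpha|^{1/4}e^{-|\alpha|/2}$ (together with $\alpha! \le |\alpha|!$) produces a bound of the form $C n^{|\alpha|/2}\sqrt{|\alpha|!}\,e^{c\sqrt{|\alpha|}(\|x\|_2 + 1)}$ with $c>0$ depending only on $\lambda_{\mathrm{min}}$. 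The main obstacle is verifying that the two regimes yield matching forms at the transition $\|x\|_2 \sim \sqrt{|\alpha|\lambda_{\mathrm{min}}}$, and that the residual polynomial factor $|\alpha|^{1/4}$ and any $\lambda_{\mathrm{min}}^{-|\alpha|/2}$ factor arising from the optimisation are absorbed into the exponential $e^{c\sqrt{|\alpha|}}$ via an appropriate ($\lambda_{\mathrm{min}}$-dependent) choice of $c$; in particular $|\alpha|^{1/4} \le e^{\varepsilon\sqrt{|\alpha|}}$ holds for any $\varepsilon>0$ and all $|\alpha|\ge 1$, which handles the polynomial factor at the cost of increasing $c$.
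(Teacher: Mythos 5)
Your route — the generating function $\varphi_X(x-t)/\varphi_X(x)=\sum_\alpha \tfrac{t^\alpha}{\alpha!}H^\alpha_X(x)$ combined with Cauchy estimates on a polydisc — is genuinely different from the paper's proof, which diagonalises $\Sigma_X$ by an orthogonal matrix, factorises the Hermite polynomial of the independent vector $Y=U^TX$ into univariate ones, and invokes the classical bound $|H_n(y)|\le\sqrt{n!}e^{\sqrt{n}|y|}$. The generating function identity, the Cauchy estimate $|H^\alpha_X(x)|\le \alpha!\,r^{-|\alpha|}\sup_{|t_i|=r}|F(t)|$, the formula for $|F(u+iv)|$, and the Stirling manipulations are all correct (up to harmless constants in your two regimes for $\sup|F|$, e.g.\ the $v^TK^{-1}v$ term is not actually negligible when $r\sqrt n\le\|x\|_2$ but only contributes another factor of the same form).

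The genuine gap is the final absorption step. Writing $p=|\alpha|$, your optimisation unavoidably produces factors of the form $\lambda_{\mathrm{min}}^{-p/2}$ and $e^{p/2}$ (e.g.\ in your first regime, $r=\sqrt{p\lambda_{\mathrm{min}}/n}$ gives $r^{-p}=n^{p/2}p^{-p/2}\lambda_{\mathrm{min}}^{-p/2}$ and $\exp(\lambda_{\mathrm{min}}^{-1}nr^2)=e^{p}$), and $e^{cp}$ cannot be absorbed into $e^{c'\sqrt{p}}$ for any $c'$; no choice of $r$ fixes this, since minimising $\alpha!r^{-p}e^{ar+br^2}$ always leaves $br^2\asymp p$ at the optimum. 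Only the polynomial factor $p^{1/4}$ is absorbable as you say. So your argument proves the weaker bound $\mathrm{dim}(X)^{|\alpha|/2}\sqrt{|\alpha|!}\,e^{c(|\alpha|+\sqrt{|\alpha|}\|x\|_2)}$ with $c$ depending on $\lambda_{\mathrm{min}}(X)$, not the stated one. You should be aware, however, that the same defect is present in the paper's own proof: the displayed bound there carries the factor $\lambda_{\mathrm{min}}(X)^{-p/2}$, which silently disappears from the final statement and likewise cannot be hidden in $e^{c\sqrt{|\alpha|}}$ when $\lambda_{\mathrm{min}}<1$ (indeed, for $\mathrm{dim}(X)=1$, $x=0$ one has $|H^p_X(0)|\asymp\lambda_{\mathrm{min}}^{-p/2}\sqrt{p!}\,p^{-1/4}$, so the proposition as stated fails). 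The weaker $e^{c|\alpha|}$ form is all that is used downstream (Lemmas \ref{l:bounds1}, \ref{l:jpi}, \ref{l:decay} all settle for $e^{cm}\sqrt{m!}$), so your proof, with the final sentence corrected to claim only $e^{c(|\alpha|+\sqrt{|\alpha|}\|x\|_2)}$, would serve the paper's purposes; the paper's univariate route is sharper by the factor $e^{p/2}$ that Cauchy estimates inherently lose.
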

\begin{proof}
We shall deduce the estimate from a classical bound for univariate Hermite polynomials \cite[Eq.(1.2)]{em90}:
    \begin{equation}
    \label{e:uni}
        \lvert H_n(y)\rvert\leq \sqrt{n!}e^{\sqrt{n}\lvert y\rvert}.
    \end{equation}
    Abbreviate $k = \mathrm{dim}(X)$, let $U$ be a $k\times k$ orthogonal matrix such that $U^T\Sigma_X U$ is diagonal, and let $Y=U^TX$. Let $\partial^\alpha=\partial_{i_1}\dots\partial_{i_p}$ where $p=\lvert\alpha\rvert$. By the definition of the multi-variate Hermite polynomials and the chain rule
    \begin{displaymath}
        H_X^\alpha(x)=\frac{\partial^\alpha\varphi_Y(U^Tx)}{\varphi_Y(U^Tx)}=\sum_{j_1=1}^k\dots\sum_{j_p=1}^kU_{i_1,j_1}\dots U_{i_p,j_p}(\partial_{j_1}\dots\partial_{j_p}\varphi_Y)(U^Tx)/\varphi_Y(U^Tx).
    \end{displaymath}
    Since $U$ is orthogonal, the $L^1$ norm of any column is at most $\sqrt{k}$ and so by the triangle inequality
    \begin{equation}\label{e:mhp_eq1}
        \lvert H^\alpha_X(x)\rvert\leq k^{p/2}\sup_{\lvert\beta\rvert=p}\lvert H^\beta_Y(U^Tx)\rvert.
    \end{equation}
    Since the components of $Y$ are independent, for any $\beta\in\N_0^k$
    \begin{displaymath}
        H_Y^\beta(U^Tx)=\prod_{i=1}^kH^{\beta_i}_{Y_i}((U^Tx)_i)=\prod_{i=1}^k\Var[Y_i]^{-\beta_i/2}H_{\beta_i}(\Var[Y_i]^{-1/2}(U^Tx)_i). 
    \end{displaymath}
     Then using \eqref{e:uni} and the fact that $\Var[Y_i]\leq\lambda_{\mathrm{min}}(X)$, we have
    \begin{displaymath}
        \lvert H^\beta_Y(U^Tx)\rvert\leq \lambda_{\mathrm{min}}(X)^{-p/2}\sqrt{p!}e^{\sqrt{p}\lambda_{\mathrm{min}}(X)^{-1/2} \sum_i | (U^Tx)_i | }.
    \end{displaymath}
    Since $\sum_i | (U^Tx)_i | \le \| x \|_2$, combining this with \eqref{e:mhp_eq1} completes the proof.
\end{proof}

The second bound is tailored to our application (see the proofs of Lemmas \ref{l:jpi} and \ref{l:decay}):

\begin{proposition}
\label{p:mhpmain}
For a non-degenerate Gaussian vector $X$, indices $I, J \subseteq \{1,\ldots,\mathrm{dim}(X)\}$, multi-indices $\alpha_I \in \N_0^I$ and $\alpha_J \in \N_0^J$, $t \in [0,1)$, and $\ell \in \R$,
\begin{align}
    \label{e:mhp5}
&  \sqrt{ \E \Big[ \big(H^{\alpha_I,\alpha_J,0,0}_{X_I,X^t_J,X_{I^c},X^t_{J^c}}(\ell,\ell,X_{I^c},X^t_{J^c}) \big)^2 \; \Big| \; X_I =\ell,  X^t_J = \ell \Big] }  \varphi_{X_I,X_J^t}(\ell,\ell)  \\
\nonumber & \qquad \qquad \qquad \qquad  \qquad \qquad \qquad \qquad \qquad \le  c_{X,|I|,|J|,|\bar{\alpha}|}  (1-t)^{-|\bar{\alpha}|/2 - |I\cap J|/2}  
\end{align}
where $\bar{\alpha} = \alpha_I + \alpha_J\in\N_0^{I\cup J}$ and
\[ c_{X,|I|,|J|,|\bar{\alpha}|} := c_{|I|,|J|,|\bar{\alpha}|} \max\big\{1, \|\Sigma_X\|_\infty\big\}^{|\bar{\alpha}| } \max \big\{1, \lambda_{\mathrm{min}}(X)^{-1} \big\}^{2|\bar{\alpha}|+\frac{|I|+|J|}{2}} \]
for a constant $c_{|I|,|J|,|\bar{\alpha}|} > 0$.
\end{proposition}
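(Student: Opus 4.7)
I'll begin by reducing via Gaussian conditioning. Set $W := (X_I, X_J^t)$, $Z := (X_{I^c}, X_{J^c}^t)$, and $\Sigma_0 := \Sigma_{W|Z=0}$. The identity $H^\beta_Y(y)\varphi_Y(y) = (-1)^{|\beta|}\partial^\beta\varphi_Y(y)$ combined with the factorisation $\varphi_Y = \varphi_Z\,\varphi_{W|Z}$ gives
\[ H^{(\alpha,0)}_{(W,Z)}(w, z) = H^{\alpha}_{\Sigma_0}\bigl(w - \Sigma_{W\to Z}\Sigma_Z^{-1} z\bigr), \]
so that the square of the left-hand side of \eqref{e:mhp5} equals $\varphi_W(\ell,\ell)^2\,\E[H^{(\alpha_I,\alpha_J)}_{\Sigma_0}(N)^2]$, where $N := (\ell,\ell) - \Sigma_{W\to Z}\Sigma_Z^{-1} Z$ is taken under the conditioning $W = (\ell,\ell)$. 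Writing $W = M + \Sigma_{W\to Z}\Sigma_Z^{-1} Z$ with $M \sim \mathcal{N}(0,\Sigma_0)$ independent of $Z$, Gaussian regression identifies the conditional law of $N$ as $\mathcal{N}(\mu,\Gamma)$ with $\mu = \Sigma_0\Sigma_W^{-1}(\ell,\ell)$ and $\Gamma = \Sigma_0 - \Sigma_0\Sigma_W^{-1}\Sigma_0$.

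Next I would compute $\E[H^\alpha_{\Sigma_0}(N)^2]$ via the Hermite generating function $G_\Sigma(s,y) = \exp(s^T\Sigma^{-1}y - \tfrac12 s^T\Sigma^{-1}s)$, which satisfies $H^\alpha_\Sigma(y) = \alpha!\,[s^\alpha] G_\Sigma(s,y)$ and hence $\E[H^\alpha_{\Sigma_0}(N)^2] = (\alpha!)^2\,[s^\alpha t^\alpha]\,\E[G_{\Sigma_0}(s,N)G_{\Sigma_0}(t,N)]$. A direct Gaussian integration, together with the Schur-complement identities $\Sigma_0^{-1}\Gamma\Sigma_0^{-1} = \Sigma_0^{-1}-\Sigma_W^{-1}$ and $\Sigma_0^{-1}\mu = \Sigma_W^{-1}(\ell,\ell)$, yields
\[ \E[G_{\Sigma_0}(s,N)G_{\Sigma_0}(t,N)] = \exp\bigl((s+t)^T\Sigma_W^{-1}(\ell,\ell) + s^T(\Sigma_0^{-1}-\Sigma_W^{-1})t - \tfrac12 s^T\Sigma_W^{-1}s - \tfrac12 t^T\Sigma_W^{-1}t\bigr). \]

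To bound the $[s^\alpha t^\alpha]$ coefficient I expand the exponential and carry out a combinatorial count. Each matrix entry appearing in the exponent is bounded by a universal constant times $((1-t)\lambda_{\mathrm{min}}(X))^{-1}$, via Proposition \ref{p:densitybound}(1) and Lemma \ref{l:lambdamin} (which give $\lambda_{\mathrm{min}}(\Sigma_0),\lambda_{\mathrm{min}}(\Sigma_W) \geq (1-t)\lambda_{\mathrm{min}}(X)$), while the linear term $(s+t)^T\Sigma_W^{-1}(\ell,\ell)$ contributes an additional $\ell$-dependent factor. Since producing the total $s$-degree $|\bar\alpha|$ in the expansion consumes exactly $|\bar\alpha|$ such factors (and similarly on the $t$-side), the extraction yields
\[ \E[H^\alpha_{\Sigma_0}(N)^2] \le |\bar\alpha|!\cdot C^{|\bar\alpha|}\bigl((1-t)\lambda_{\mathrm{min}}(X)\bigr)^{-|\bar\alpha|}(1+|\ell|)^{2|\bar\alpha|} \]
for some $C = C(|I|,|J|,\|\Sigma_X\|_\infty)$. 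Taking the square root, multiplying by $\varphi_W(\ell,\ell)$, and using the explicit Gaussian form $\varphi_W(\ell,\ell) \le \|\varphi_W\|_\infty \exp(-c\ell^2/\|\Sigma_X\|_\infty)$ (with the prefactor bounded by Proposition \ref{p:densitybound}(2)) absorbs the $(1+|\ell|)^{|\bar\alpha|}$ factor into an $\ell$-independent constant depending on $|\bar\alpha|$, yielding the claimed bound. The main obstacle is the combinatorial bookkeeping in the coefficient extraction, where one must verify both the sharp $(1-t)^{-|\bar\alpha|}$ scaling (rather than a larger power) and that the Gaussian decay of $\varphi_W(\ell,\ell)$ suffices to tame the polynomial $\ell$-growth.
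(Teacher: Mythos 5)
Your overall architecture is close to the paper's: the paper likewise computes the conditional second moment of the Hermite polynomial by a generating-function identity (its Proposition~\ref{p:mhp}, which conditions on one block and integrates out the other) and then reduces everything to a pointwise bound on $H^{\bar\alpha}_{X_I,X_J^t}(\ell,\ell)\,\varphi_{X_I,X_J^t}(\ell,\ell)$ (Proposition~\ref{p:mhpbound}). The gap is in your coefficient extraction. Your stated inputs are that every matrix entry in the exponent is $O\big(((1-t)\lambda_{\mathrm{min}}(X))^{-1}\big)$ and that the linear term contributes ``an additional $\ell$-dependent factor''; but extracting $[s^{\alpha}t^{\alpha}]$ entirely from the linear term $(s+t)^T\Sigma_W^{-1}(\ell,\ell)$ consumes $|\bar\alpha|$ factors on the $s$-side \emph{and} $|\bar\alpha|$ on the $t$-side, i.e.\ $2|\bar\alpha|$ entries of $\Sigma_W^{-1}(\ell,\ell)$. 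With only the entrywise bound $\|\Sigma_W^{-1}\|_\infty\le C((1-t)\lambda_{\mathrm{min}})^{-1}$ this yields $((1-t)\lambda_{\mathrm{min}})^{-2|\bar\alpha|}$, not the $((1-t)\lambda_{\mathrm{min}})^{-|\bar\alpha|}$ you write down, and the loss is fatal for the application: it would give $(1-t)^{-|\bar\alpha|-|I\cap J|/2}$ in \eqref{e:mhp5}, which destroys the integrability over $t\in[0,1)$ needed in Lemma~\ref{l:jpi} and Proposition~\ref{p:wcerror}.

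The intermediate bound you assert is in fact true, but only because of a cancellation you never invoke: the vector $(\ell,\dots,\ell)$ is asymptotically orthogonal to the directions in which $\Sigma_W=\Sigma_{X_I,X_J^t}$ collapses as $t\to1$ (the antisymmetric combinations of the $I\cap J$ coordinates), so $\Sigma_W^{-1}(\ell,\dots,\ell)$ stays bounded rather than growing like $(1-t)^{-1}$; one can see this from $\|\Sigma_W^{-1}v\|_2^2\le\lambda_{\mathrm{min}}(\Sigma_W)^{-1}\,v^T\Sigma_W^{-1}v$ together with the uniform-in-$t$ bound on $v^T\Sigma_W^{-1}v$ established in the proof of the third item of Proposition~\ref{p:densitybound}, or from the explicit diagonalisation in Lemma~\ref{l:spectral}. (A one-dimensional check with $I=J$, $\alpha_I=\alpha_J=(1)$ exposes the issue: $(\Sigma_W^{-1}(\ell,\ell))_i=\ell\sigma^{-2}/(1+t)$ is bounded, whereas $\|\Sigma_W^{-1}\|_\infty\sim(1-t)^{-1}$; your entrywise estimate would bound $H^{(1,1)}(\ell,\ell)^2$ by $(1-t)^{-4}$ when the truth is $(1-t)^{-2}$.) This evaluation-at-the-symmetric-point cancellation is exactly what the paper's Proposition~\ref{p:mhpbound} arranges, by rotating into the eigenbasis of Lemma~\ref{l:spectral} and using that the rotated argument vanishes in the degenerate coordinates, so only even derivatives of a centred one-dimensional Gaussian at $0$ contribute, at cost $\lambda_j^{-\beta/2-1/2}$ instead of $\lambda_j^{-\beta-1/2}$. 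To close your argument you must add this ingredient and redo the degree count so that each unit of $s$- or $t$-degree costs at most $(1-t)^{-1/2}$, whichever of the linear, diagonal-quadratic or cross terms supplies it.
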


We build towards the proof of Proposition \ref{p:mhpmain}, beginning with the following:

\begin{proposition}
\label{p:mhp}
For a non-degenerate Gaussian vector $(X_I,Y_J)$, multi-indices $\alpha_I,\alpha'_I \in \N_0^I$ and $\alpha_J,\alpha'_J \in \N_0^J$, and $x \in \R^{|I|}$,
\begin{align}
 \label{e:mhp2} &\E \Big[ H^{\alpha_I,\alpha_J}_{X_I,Y_J}(x,Y_J) H_{X_I,Y_J}^{\alpha_I',\alpha_J'} (x, Y_J) \; \Big| \; X_I = x \Big]   \\
 \nonumber & \qquad = 
 \sum_{ \substack{ \hat{\alpha}_I \le \alpha_I, \hat{\alpha}_I' \le \alpha_I' , \\ |\hat{\alpha}_I + \alpha_J| = |\hat{\alpha}_I' + \alpha_J'| } } \frac{ (\alpha_I)!(\alpha_J)! (\alpha_I')! (\alpha'_J)!   }{(\alpha_I-\hat{\alpha}_I)!(\alpha_I'-\hat{\alpha}_I')!} \sum_{ \theta \in M_{\hat{\alpha}_I,\alpha_J}^{\hat{\alpha}_I',\alpha_J'} } \frac{ (\Sigma_{X_I,Y_J}^{-1})^\theta }{ \theta!}  H_{X_I}^{\bar \alpha}(x) .
\end{align}
 where $\bar \alpha := \alpha_I - \hat{\alpha}_I + \alpha'_I - \hat{\alpha}'_I$, $M_{\hat{\alpha}_I,\alpha_J}^{\hat{\alpha}'_I,\alpha_J'} $ is the set of all $(|I|+|J|) \times (|I|+|J|)$ matrices of non-negative integers whose row sum is $(\hat{\alpha}_I,\alpha_J)$ and whose column sum is $(\hat{\alpha}'_I,\alpha_J')$,  $A^\theta=\prod_{i,j}A_{i,j}^{\theta_{i,j}}$ for a matrix $A$, $\theta! = \prod_{i,j} \theta_{i,j}!$, and $\le$ denotes pointwise ordering over multi-indices. In particular, 
\begin{align}
 \label{e:mhpreduce} &\E \Big[ \big( H^{\alpha_I,\alpha_J}_{X_I,Y_J}(x,Y_J) \big)^2 \; \Big| \; X_I = x \Big]  \le c \max_{ \substack{ \hat{\alpha}_I,\hat{\alpha}_I' \le \alpha_I \\ |\hat{\alpha}_I|=|\hat{\alpha}_I'|}  }  (\lambda_{\mathrm{min}}(X_I,Y_J))^{-|\hat{\alpha}_I+\alpha_J|}    | H_{X_I}^{2\alpha_I - \hat{\alpha}_I -\hat{\alpha}_I'}(x) | .
\end{align} 
where $c > 0$ depends only on $|\alpha_I|$ and $|\alpha_J|$.
\end{proposition}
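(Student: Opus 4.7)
The plan is to derive both \eqref{e:mhp2} and \eqref{e:mhpreduce} from two ingredients: a product formula for multivariate Hermite polynomials, and a conditional-expectation identity that kills Hermite polynomials with non-trivial $J$-index.

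The first ingredient is the polynomial identity
\begin{displaymath}
H^\alpha_X(x)\,H^\beta_X(x) = \sum_\theta \frac{\alpha!\,\beta!}{\theta!\,(\alpha - r(\theta))!\,(\beta - c(\theta))!}\,(\Sigma^{-1})^\theta\,H^{\alpha+\beta-r(\theta)-c(\theta)}_X(x),
\end{displaymath}
valid for any non-degenerate Gaussian vector $X$ with covariance $\Sigma$, where $\theta$ ranges over non-negative integer matrices with $r(\theta)\le\alpha$ and $c(\theta)\le\beta$. I would prove this by multiplying the generating function $F(t) := \sum_\alpha (t^\alpha/\alpha!)\,H^\alpha_X(x) = \exp(t^T\Sigma^{-1}x - \tfrac12 t^T\Sigma^{-1}t)$ with itself to get $F(t)F(s) = \exp(t^T\Sigma^{-1}s)\,F(t+s)$, expanding $\exp(t^T\Sigma^{-1}s) = \sum_\theta (\Sigma^{-1})^\theta t^{r(\theta)} s^{c(\theta)}/\theta!$ and $(t+s)^\gamma$ via the binomial theorem, and matching coefficients of $t^\alpha s^\beta$.

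The second ingredient is
\begin{displaymath}
\E\big[H^{\beta_I,\beta_J}_{X_I,Y_J}(x,Y_J)\,\big|\,X_I=x\big] = \ind_{\beta_J=0}\,H^{\beta_I}_{X_I}(x),
\end{displaymath}
which follows by writing $H^\beta_{X_I,Y_J}(x,y) = (-1)^{|\beta|}\partial_x^{\beta_I}\partial_y^{\beta_J}\varphi_{X_I,Y_J}(x,y)/\varphi_{X_I,Y_J}(x,y)$ and integrating against the conditional density $\varphi_{X_I,Y_J}(x,y)/\varphi_{X_I}(x)$: the joint densities cancel, one can pull $\partial_x^{\beta_I}$ out of the $y$-integral (Gaussian decay justifies the interchange), and $\int\partial_y^{\beta_J}\varphi_{X_I,Y_J}(x,y)\,dy$ vanishes for $\beta_J\ne 0$ by integration by parts and equals $\varphi_{X_I}(x)$ for $\beta_J=0$.

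Applying the product formula to $W=(X_I,Y_J)$ with $\alpha=(\alpha_I,\alpha_J)$ and $\beta=(\alpha'_I,\alpha'_J)$, evaluating at $(x,Y_J)$, and then taking conditional expectation term by term using the second identity, only those $\theta$ survive for which the $J$-part of the surviving multi-index $(\alpha_I,\alpha_J) + (\alpha'_I,\alpha'_J) - r(\theta) - c(\theta)$ vanishes, i.e.\ $r(\theta)_J=\alpha_J$ and $c(\theta)_J=\alpha'_J$. Parametrising by $\hat\alpha_I:=r(\theta)_I\le\alpha_I$ and $\hat\alpha'_I:=c(\theta)_I\le\alpha'_I$, so that $\theta\in M^{\hat\alpha'_I,\alpha'_J}_{\hat\alpha_I,\alpha_J}$, the existence condition $|r(\theta)|=|c(\theta)|$ becomes $|\hat\alpha_I+\alpha_J|=|\hat\alpha'_I+\alpha'_J|$, the combinatorial prefactor (in which the $J$-blocks contribute $(\alpha_J - \alpha_J)!=(\alpha'_J - \alpha'_J)!=1$) simplifies to $\alpha_I!\alpha_J!\alpha'_I!\alpha'_J!/[(\alpha_I-\hat\alpha_I)!(\alpha'_I-\hat\alpha'_I)!\theta!]$, and the surviving Hermite polynomial is $H^{\bar\alpha}_{X_I}(x)$; this is precisely \eqref{e:mhp2}. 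For \eqref{e:mhpreduce}, specialising to $\alpha=\alpha'$ reduces the constraint to $|\hat\alpha_I|=|\hat\alpha'_I|$; taking absolute values and using Lemma~\ref{l:lambdamin} to bound $|(\Sigma^{-1})^\theta|\le\|\Sigma^{-1}\|_\infty^{|\theta|}\le\lambda_{\mathrm{min}}(X_I,Y_J)^{-|\hat\alpha_I+\alpha_J|}$, together with crude estimates on the remaining combinatorial factors and the number of matrices $\theta$ with prescribed marginals (all bounded by constants depending only on $|\alpha_I|$ and $|\alpha_J|$), dominates the sum by its maximum over $(\hat\alpha_I,\hat\alpha'_I)$ and yields the stated bound. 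The main obstacle is bookkeeping—tracking the pinning of the $J$-marginals of $\theta$ to $\alpha_J,\alpha'_J$ and the reindexing by $(\hat\alpha_I,\hat\alpha'_I)$—rather than any substantive analysis.
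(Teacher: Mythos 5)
Your proposal is correct and rests on exactly the same two facts as the paper's proof — the generating-function identity $F(s)F(t)=e^{s^{T}\Sigma^{-1}t}F(s+t)$ for $F(t)=\varphi_X(x-t)/\varphi_X(x)$, and the Gaussian marginalization $\int \partial_y^{\beta_J}\varphi_{X,Y}(x,y)\,dy=0$ for $\beta_J\neq 0$ — so it is essentially the same argument, merely factored into a standalone product formula for multivariate Hermite polynomials followed by a conditional-expectation lemma, rather than the paper's single conditional generating-function computation adapted from Rahman. The bookkeeping matching the surviving $\theta$'s to $M_{\hat{\alpha}_I,\alpha_J}^{\hat{\alpha}_I',\alpha_J'}$ and the use of $\|\Sigma^{-1}\|_\infty\le\lambda_{\mathrm{min}}^{-1}$ for \eqref{e:mhpreduce} are both correct.
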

\begin{proof} 
We adapt the proof of \cite[Proposition 8]{rahman2017wiener} which gives a similar formula for unconditioned Hermite polynomials. To reduce notation we abbreviate $X = X_I$ and $Y = Y_J$. For $(s,s'),(t,t') \in \R^{|I|+|J|}$, consider the expression
\begin{align} 
\label{e:mhp3} & \int e^{(s,s')^T \Sigma_{X,Y}^{-1} (x,y)  -  \frac{1}{2} (s,s')^T \Sigma_{X,Y}^{-1} (s,s')  + (t,t')^T \Sigma_{X,Y}^{-1} (x,y) -  \frac{1}{2} (t,t')^T \Sigma_{X,Y}^{-1} (t,t') } \varphi_{Y |X = x}(y) \, dy   \\
\nonumber & \qquad = e^{(s,s')^T  \Sigma_{X,Y}^{-1} (t,t') } \frac{1}{\varphi_X(x)} \int  \varphi_{X,Y}(x-s-t,y-s'-t') \, dy \\
\label{e:mhp4} & \qquad = e^{(s,s')^T  \Sigma_{X,Y}^{-1} (t,t') } \frac{ \varphi_X(x-s-t)}{\varphi_X(x)} .
\end{align}
By a Taylor expansion
\[ e^{(w,w')^T \Sigma_{X,Y}^{-1} (x,y) - \frac{1}{2} (w,w')^T \Sigma_{X,Y}^{-1} (w,w')} = \frac{ \varphi_{X,Y}(x-w,y-w') }{ \varphi_{X,Y}(x,y) } = \sum_{\alpha,\alpha'} \frac{H_{X,Y}^{\alpha,\alpha'}(x,y)  }{(\alpha)! (\alpha')!}  (w,w')^{\alpha,\alpha'}.\]
Now setting $(w,w^\prime)=(s,s^\prime),(t,t^\prime)$ and substituting this expansion into \eqref{e:mhp3} we see that \eqref{e:mhp2} is equal to
\begin{equation}
    \label{e:c2}
c (\alpha_I)!(\alpha_J)! (\alpha_I')! (\alpha'_J)! 
\end{equation}
where $c$ is the coefficient of $(s,s',t,t')$ of order $(\alpha_I,\alpha_J,\alpha_I',\alpha'_J)$ in the expansion of \eqref{e:mhp3} (or equivalently of \eqref{e:mhp4}). Similarly, we can expand
\[  \frac{ \varphi_X(x-s-t)}{\varphi_X(x)}  = \sum_{\alpha} \frac{ (s+t)^{\alpha} }{(\alpha)! } H_{X}^{\alpha}(x) = \sum_{\alpha,\alpha'} \frac{H_X^{\alpha+\alpha'}(x)}{(\alpha)! (\alpha')!}  s^\alpha t^{\alpha'} .\]
Moreover, according to \cite[Eq.(15)]{rahman2017wiener} we also have the convergent expansion
\[  e^{(s,s')^T  \Sigma_{X,Y}^{-1} (t,t') }  = \sum_{|\hat{\alpha}_X + \hat{\alpha}_Y|=|\hat{\alpha}_X' + \hat{\alpha}_Y'|} \sum_{ \theta \in M_{\hat{\alpha}_X,\hat{\alpha}_Y }^{\hat{\alpha}_X',\hat{\alpha}'_Y} } \frac{ (\Sigma_{X,Y}^{-1})^\theta }{ \theta!} (s,s')^{\hat{\alpha}_X,\hat{\alpha}_Y}(t,t')^{\hat{\alpha}_X',\hat{\alpha}_Y'} . \] 
Combining the previous two displays, and recalling that $\bar \alpha := \alpha_I - \hat{\alpha}_I + \alpha'_I - \hat{\alpha}'_I$, we see that the coefficient of $(s,s',t,t')$ of order $(\alpha_I,\alpha_J,\alpha_I',\alpha'_J)$ in the expansion of \eqref{e:mhp4} is equal to 
\[  \sum_{ \substack{ \hat{\alpha}_I \le \alpha_I, \hat{\alpha}_I' \le \alpha_I' , \\ |\hat{\alpha}_I + \alpha_J| = |\hat{\alpha}_I' + \alpha_J'| } }    \sum_{ \theta \in M_{\hat{\alpha}_I,\alpha_J}^{\hat{\alpha}_I',\alpha_J'} } \frac{ (\Sigma_{X,Y}^{-1})^\theta }{\theta!} \frac{  H^{\bar \alpha}_X(x)  }{{(\alpha_I-\hat{\alpha}_I)!(\alpha_I'-\hat{\alpha}_I')!}} . \] 
Since this must be equal to the coefficient $c$ in \eqref{e:c2}, this establishes \eqref{e:mhp2}. The bound \eqref{e:mhpreduce} follows immediately from \eqref{e:mhp2} and the third inequality in \eqref{e:lambda1}.
\end{proof}

\begin{proposition}
\label{p:mhpbound}
For a non-degenerate Gaussian vector $X$, indices $I, J \subseteq \{1,\ldots,\mathrm{dim}(X)\}$, multi-indices $\alpha_I \in \N_0^I$ and $\alpha_J \in \N_0^J$, $t \in [0,1)$, and $(x,x') \in \R^{|I|+ |J|}$ such that $x_{I \cap J},x'_{I \cap J} \equiv \ell \in \R$,
\begin{align*}
| H_{X_I,X^t_J}^{\alpha_I,\alpha_J}(x,x')| &\varphi_{X_I,X^t_J}(x,x')\\
&\le  c_{|I|,|J|,|\bar{\alpha}|} \max\{1,\|\Sigma_X\|_\infty\}^{|\bar{\alpha}|} (1-t)^{- \frac{|\bar{\alpha}| + |I \cap J|}{2}  }  \min\{1,\lambda_{\mathrm{min}}(X)\}^{-2|\bar{\alpha}| - \frac{|I\cup J|}{2}} 
\end{align*}
where $\bar{\alpha} = \alpha_I+\alpha_J$.
\end{proposition}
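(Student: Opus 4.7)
The plan is to bound $|\partial^{\alpha_I,\alpha_J}_{(x,x')} \varphi_{X_I,X^t_J}(x,x')| = |H^{\alpha_I,\alpha_J}_{X_I,X^t_J}(x,x')| \varphi_{X_I,X^t_J}(x,x')$ via two successive changes of variables that extract the $t \to 1$ degeneracy of $\varphi_{X_I,X^t_J}$ and reduce matters to bounding derivatives of a uniformly non-degenerate Gaussian density. Set $K := I \cap J$, $I' := I \setminus K$, $J' := J \setminus K$, and decompose $\alpha_I = (\alpha_K^I,\alpha_{I'})$, $\alpha_J = (\alpha_K^J,\alpha_{J'})$, with $\bar\alpha_K := \alpha_K^I + \alpha_K^J$ (so $|\bar\alpha_K| \le |\bar\alpha|$). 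First apply the orthogonal transformation $(X_K,X^t_K) \mapsto (S_K,D_K)$ defined by $S_K := (X_K+X^t_K)/\sqrt{2}$, $D_K := (X^t_K-X_K)/\sqrt{2}$, so that $\mathrm{Cov}(S_K) = (1+t)\Sigma_{X_K}$, $\mathrm{Cov}(D_K) = (1-t)\Sigma_{X_K}$, and $\mathrm{Cov}(S_K,D_K) = 0$. Setting $\tilde Y := (S_K, X_{I'}, D_K, X^t_{J'})$, orthogonality yields $\varphi_{X_I,X^t_J}(x,x') = \varphi_{\tilde Y}(\tilde y)$ at the transformed point $\tilde y := (\sqrt{2}\ell, x_{I'}, 0, x'_{J'})$. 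The chain-rule identities $\partial_{x_{K,i}} = \tfrac{1}{\sqrt{2}}(\partial_{s_i} - \partial_{d_i})$ and $\partial_{x'_{K,i}} = \tfrac{1}{\sqrt{2}}(\partial_{s_i} + \partial_{d_i})$ then give, on expanding the product and using the triangle inequality,
\[ |\partial^{\alpha_I,\alpha_J}_{(x,x')} \varphi_{X_I,X^t_J}(x,x')| \le 2^{|\bar\alpha_K|/2} \max_{\beta_S + \beta_D = \bar\alpha_K} |\partial^{\beta_S,\alpha_{I'},\beta_D,\alpha_{J'}}_{\tilde y} \varphi_{\tilde Y}(\tilde y)|. \]

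Next, rescale the degenerate direction: set $\hat D_K := D_K/\sqrt{1-t}$ and $\hat Y := (S_K, X_{I'}, \hat D_K, X^t_{J'})$. Using $X^t_K = tX_K + \sqrt{1-t^2}\,\tilde X_K$, one computes $\hat D_K = (-\sqrt{1-t}\,X_K + \sqrt{1+t}\,\tilde X_K)/\sqrt{2}$ and $S_K = ((1+t) X_K + \sqrt{1-t^2}\,\tilde X_K)/\sqrt{2}$, so $\hat Y = A \cdot (X_{K \cup I' \cup J'}, \tilde X_{K \cup J'})$ for an explicit matrix $A$ satisfying $AA^T = \mathrm{diag}((1+t)I_K, I_{I'}, I_K, I_{J'})$. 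In particular, the singular values of $A$ lie in $[1,\sqrt{2}]$ uniformly in $t \in [0,1]$, and hence (using Lemma~\ref{l:lambdamin})
\[ \lambda_{\min}(\hat Y) \ge \lambda_{\min}(X) \qquad \text{and} \qquad \lambda_{\max}(\hat Y) \le c_{|I|,|J|}\|\Sigma_X\|_\infty, \]
uniformly in $t$. The Jacobian identity $\varphi_{\tilde Y}(\tilde y_1, d) = (1-t)^{-|K|/2} \varphi_{\hat Y}(\tilde y_1, d/\sqrt{1-t})$ combined with $\partial_{d_{K,i}} = (1-t)^{-1/2}\partial_{\hat d_{K,i}}$ then yields, on evaluating at $d = 0$,
\[ |\partial^{\beta_S,\alpha_{I'},\beta_D,\alpha_{J'}}_{\tilde y} \varphi_{\tilde Y}(\tilde y)| = (1-t)^{-(|K| + |\beta_D|)/2}\, |\partial^{\beta_S,\alpha_{I'},\beta_D,\alpha_{J'}} \varphi_{\hat Y}(\tilde y_1, 0)|. \]

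Because $\hat Y$ is uniformly non-degenerate, its density derivatives are uniformly bounded: since $\partial^\gamma \varphi_{\hat Y}(\hat y) = (-1)^{|\gamma|} H^\gamma_{\hat Y}(\hat y) \varphi_{\hat Y}(\hat y)$, applying Proposition~\ref{p:mhpmain1} to $\hat Y$ and completing the square against the Gaussian decay of $\varphi_{\hat Y}$ gives
\[ \sup_{\hat y}\, |\partial^\gamma \varphi_{\hat Y}(\hat y)| \le c_{|\gamma|, |I|, |J|} \max\{1,\|\Sigma_X\|_\infty\}^{|\gamma|} \min\{1,\lambda_{\min}(X)\}^{-(|\gamma| + |I|+|J|)/2} \sqrt{|\gamma|!} \]
for $|\gamma| = |\bar\alpha|$ (the degree of each term from the previous display). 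Combining the three displays, using $|\beta_D| \le |\bar\alpha_K| \le |\bar\alpha|$, absorbing $2^{|\bar\alpha_K|/2}$ into the combinatorial constant, and loosening the power of $\lambda_{\min}(X)$ via $|I| + |J| = |I \cup J| + |K|$ to match the form stated yields the claimed bound. The main technical step is the explicit block-structure computation of $AA^T$ verifying uniform non-degeneracy of $\hat Y$; the remaining arguments are routine changes of variables and standard Gaussian estimates once that decomposition is in place.
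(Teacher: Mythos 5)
Your overall strategy is sound and is in fact a streamlined variant of the paper's argument: both proofs hinge on the sum/difference rotation $(X_K,X_K^t)\mapsto(S_K,D_K)$ (this is exactly the matrix $V$ of Lemma~\ref{l:spectral}) together with the observation that at the evaluation point $z=z'=\ell$ the difference coordinate vanishes, so each $D_K$-derivative costs only $(1-t)^{-1/2}$ rather than $(1-t)^{-1}$. The paper implements this by factoring $\varphi_{X_I,X_J^t}=\varphi_{Z,Z^t}\cdot\varphi_{Y,Y'|Z,Z^t}$ and splitting the derivative by Leibniz; you instead rotate and rescale the whole joint vector at once, which avoids the conditioning and the regression-matrix chain rule. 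Your bookkeeping of the $(1-t)$ powers (Jacobian $(1-t)^{-|K|/2}$ plus $(1-t)^{-|\beta_D|/2}$ from the rescaled derivatives, with $|\beta_D|\le|\bar\alpha|$) is correct, and the verification that $AA^T=\mathrm{diag}((1+t)I_K,I_{I'},I_K,I_{J'})$, hence $\lambda_{\mathrm{min}}(\hat Y)\ge\lambda_{\mathrm{min}}(X)$ uniformly in $t$, is the right key computation.

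The one genuine gap is the final step. Invoking Proposition~\ref{p:mhpmain1} and "completing the square against the Gaussian decay" does \emph{not} yield the stated polynomial constant: the pointwise Hermite bound grows like $e^{c\sqrt{|\gamma|}\|\hat y\|_2}$ with $c\asymp\lambda_{\mathrm{min}}(X)^{-1/2}$, and optimising this against $e^{-\|\hat y\|_2^2/(2\lambda_{\mathrm{max}}(\hat Y))}$ produces a factor $e^{c^2|\gamma|\lambda_{\mathrm{max}}(\hat Y)/2}\approx e^{c'|\bar\alpha|\,\|\Sigma_X\|_\infty\lambda_{\mathrm{min}}(X)^{-1}}$, which is exponential in $\|\Sigma_X\|_\infty\lambda_{\mathrm{min}}(X)^{-1}$ and cannot be absorbed into $c_{|I|,|J|,|\bar\alpha|}\max\{1,\|\Sigma_X\|_\infty\}^{|\bar\alpha|}\min\{1,\lambda_{\mathrm{min}}(X)\}^{-2|\bar\alpha|-|I\cup J|/2}$. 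The fix is routine and is what the paper does in its proof (see the bound \eqref{e:unifbound}): diagonalise $\hat Y=UW$ with $U$ orthogonal and use the elementary one-dimensional estimate $\sup_u|\partial^\beta\varphi_{W_j}(u)|\le c_\beta\,\Var[W_j]^{-\beta/2-1/2}$ coordinatewise, together with $\|U\|_\infty\le 1$; since $\Var[W_j]\ge\lambda_{\mathrm{min}}(\hat Y)\ge\lambda_{\mathrm{min}}(X)$ this gives $\sup_{\hat y}|\partial^\gamma\varphi_{\hat Y}(\hat y)|\le c_{|\gamma|,|I|,|J|}\min\{1,\lambda_{\mathrm{min}}(X)\}^{-|\gamma|-(|I|+|J|)/2}$ with no dependence on $\|\Sigma_X\|_\infty$ at all, after which your three displays combine to give the proposition. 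With that substitution your proof is complete and, in my view, somewhat cleaner than the conditioning argument in the paper.
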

\begin{proof}
Let $K = I \cap J$, and abbreviate $Z = X_K$, $Z^t = X^t_K$, $Y = X_{I\setminus K}$, and $Y' = X^t_{J \setminus K}$, and similarly $z = x|_K$, $z' = x'|_K$, $y = x|_{I \setminus K}$, and $y' = x'|_{J \setminus K}$. We let $c = c_{|I|,|J|,|\bar{\alpha}|}$ be a constant that may change from line to line. 

Then we write
\begin{align*}
  | H_{X_I,X^t_J}^{\alpha_I,\alpha_J}(x,x')| \varphi_{X_I,X^t_J}(x,x') &=   \Big|   \partial^{\alpha_I,\alpha_J}  \varphi_{X_I,X^t_J}(x,x') \Big| \\ 
  &=  \Big|   \partial^{\alpha_I,\alpha_J} \Big(   \varphi_{Z,Z^t}(z,z')   \varphi_{Y,Y'  | Z,Z^t }(y,y' | z,z')    \Big) \Big|  \\
  & \le c \max_{\alpha,\alpha' : \alpha + \alpha' = (\alpha_I,\alpha_J) }  \Big|   \partial^{\alpha}    \varphi_{Z,Z^t}(z,z')  \Big| \Big|  \partial^{\alpha'}  \varphi_{Y,Y'  | Z,Z^t }(y,y' | z,z')    \Big) \Big| .
\end{align*}
We then claim that
\begin{equation}
\label{e:mult1}
    \left\lvert \partial^\alpha\varphi_{Z,Z^t}(z,z^\prime) \big|_{z,z^\prime=\ell}\right\rvert\leq c_{|I|,|J|,|\alpha|} (1-t)^{-|\alpha|/2 - |K|/2}\min\{1,\lambda_\mathrm{min}(X)\}^{-\lvert\alpha\rvert-|K|/2}.
\end{equation}
and
\begin{equation}
\label{e:mult2}
    \left\lvert \partial^\alpha \varphi_{Y,Y'  | Z,Z^t }(y,y' | z,z') \right\rvert\leq c_{|I|,|J|,|\alpha|} \max\{1, \|\Sigma_X\|_\infty\}^{|\alpha|}  \lambda_{\mathrm{min}}(X)^{-2|\alpha|-|(I \cup J) \setminus K|/2}.
\end{equation}
which together establish the statement of the proposition.

To prove \eqref{e:mult1}, let $V$ and $D$ be defined as in Lemma \ref{l:spectral} applied to $Z$. Let $W$ denote the Gaussian vector $W \sim \mathcal{N}(0,V^T\Sigma_{Z,Z^t}V)$ which has independent components. Then $\varphi_{Z,Z^t}(z,z^\prime)=\varphi_W(V^T(z,z^\prime)) = \prod_j \varphi_{W_j}((V^T(z,z^\prime))_j)$, and so by the chain rule
\begin{align}
  \nonumber  \big| \partial^\alpha\varphi_{Z,Z^t}(z,z^\prime)\big|_{z,z^\prime=\ell} \big| & = \Big| \sum_{\lvert\beta\rvert=\lvert\alpha\rvert} \Big( \prod_{i=1}^{\lvert\alpha\rvert}V_{\beta_i,\alpha_i} \Big) \partial^\beta\varphi_W(u) \big|_{u  = V^T\underline{\ell}} \Big| \\   
  \label{e:hermite_non_deg_1} & \leq  c_{|I|,|J|,|\alpha|} \|V\|_\infty^{|\alpha|} \prod_j  \big| \partial^{\beta_j} \varphi_{W_j}(u) \big|_{ u =  (V^T\underline{\ell})_{j} } 
\end{align}
where $\underline{\ell}$ is the vector with all elements equal to $\ell$. Letting $\lambda_j$ denote the variance of $W_j$,
it is easily verified that
\begin{equation}
    \label{e:unifbound}
|\partial^\beta \varphi_{W_j}(0) | = \begin{cases}  c_\beta   \lambda_{j}^{-\beta/2-1/2} & \beta \text{ even,} \\ 0 &  \beta \text{ odd,} \end{cases}   \quad \text{and} \quad  \sup_{u \in \R} |\partial^\beta \varphi_{W_j}(u) | \le  c_\beta  \lambda_{j}^{-\beta-1/2}   .
\end{equation}
There are then two cases to consider:
\begin{itemize}
    \item $j >\lvert K\rvert$: Recalling Lemma \ref{l:spectral}, in this case $\lambda_j \ge (1-t)\lambda_\mathrm{min}(Z) \geq (1-t)\lambda_\mathrm{min}(X)$, and $(V^T\underline{\ell})_j=0$. Hence
\[  \big| \partial^{\beta_j}   \varphi_{W_{j}} (u) \big|_{ u =  (V^T\underline{\ell})_{j} }   =  \big| \partial^{\beta_j} \varphi_{W_{j}} (0) \big| \le c_{\beta_j} ((1-t)\lambda_\mathrm{min}(X))^{-\beta_j/2-1/2} . \]
 \item $j \le \lvert K\rvert$: In this case $\lambda_j \ge \lambda_\mathrm{min}(Z) \geq\lambda_\mathrm{min}(X)$, and so 
\[  \big| \partial^{\beta_j}  \varphi_{W_{j}} (u) \big|_{ u =  (V^T\underline{\ell})_{j} } \leq c_{\beta_j} \lambda_\mathrm{min}(X)^{-\beta_j -1/2 } .\]
\end{itemize}
Combining these bounds with \eqref{e:hermite_non_deg_1}, and using that $\|V\|_\infty \le 1$, we prove \eqref{e:mult1}.

To prove \eqref{e:mult2} we similarly let $U$ be orthogonal such that $U^T \Sigma_{Y,Y' | Z,Z^t} U $ is diagonal, and let $W$ denote the Gaussian vector $W \sim \mathcal{N}(0, U^T \Sigma_{Y,Y' | Z,Z^t} U)$. Notice that, by \eqref{e:lambdamin1}, the components of $W$ have variance bounded below by $\lambda_{\mathrm{min}}(X)$. Then 
\[ \varphi_{Y,Y'  | Z,Z^t }(y,y' | z,z') = \varphi_W \big( U^T \big( (y,y') - \Sigma_{(Y,Y')\to (Z,Z^t)} \Sigma_{Z,Z^t}^{-1} (z,z') \big) \big) \] 
and so, recalling \eqref{e:unifbound}, by the chain rule
\begin{align*} 
| \partial^\alpha  \varphi_{Y,Y'  | Z,Z^t }(y,y' | z,z')| & \le c_{|I|,|J|,|\alpha|}  C^{|\alpha|} \sup_{u} |\partial^\alpha \varphi_W(u) | \\
& \le c_{|I|,|J|,|\alpha|} C^{|\alpha|}   \lambda_\mathrm{min}(X)^{-|\alpha| -(|I \setminus K| + |J\setminus K|)/2 } \end{align*}  
where 
\[C := \max \big\{ \|U\|_\infty , \| U^T \Sigma_{(Y,Y')\to (Z,Z^t)} \Sigma_{Z,Z^t}^{-1}  \|_\infty \big\} . \]
Since $\|U\|_\infty \le 1$, and by Lemma \ref{l:spectral} and \eqref{e:lambda1} 
\[ \|\Sigma_{(Y,Y')\to (Z,Z^t)} \Sigma_{Z,Z^t}^{-1}\|_\infty \le c_{|I|,|J|}  \| \Sigma_{X}\|_\infty \| \Sigma_{Z}^{-1} \|_\infty \le  c_{|I|,|J|}  \| \Sigma_{X}\|_\infty \lambda_{\mathrm{min}}(X)^{-1},\]
we deduce \eqref{e:mult2}.
\end{proof}

We are now ready to complete the proof of Proposition \ref{p:mhpmain}:

\begin{proof}[Proof of Proposition \ref{p:mhpmain}]
Applying Proposition \ref{p:mhp}, specifically \eqref{e:mhpreduce}, with the substitutions $X_I \to (X_I,X_J^t)$, $Y_J \to (X_{I^c},X^t_{J^c}),$ $ \alpha_I\to (\alpha_I, \alpha_J)$ and $\alpha_J \to 0$, the left-hand side of \eqref{e:mhp5} is bounded above by  
\[ c_{|\bar{\alpha}|}   \max_{ \substack{  \hat{\alpha}_I, \hat{\alpha}_I' \le \alpha_I, \hat{\alpha}_J,\hat{\alpha}_J' \le \alpha_J \\ |\hat{\alpha}_I+\hat{\alpha}_J|=|\hat{\alpha}_I' +\hat{\alpha}_J'|} }    \lambda_{\mathrm{min}}(X,X^t)^{-|\hat{\alpha}_I + \hat{\alpha}_J|/2} \times \sqrt{ |H_{X_I,X_J^t}^{2\alpha_I - \hat{\alpha}_I - \hat{\alpha}_I', 2\alpha_J - \hat{\alpha}_J-\hat{\alpha}_J'}(\ell,\ell)|}  \varphi_{X_I,X_J^t}(\ell,\ell)  . \]
Since $\lambda_{\mathrm{min}}(X,X^t)  = (1-t)  \lambda_{\mathrm{min}}(X)$, and applying Propositions \ref{p:densitybound} and \ref{p:mhpbound}, the above is bounded by
\begin{align*}
    c\max\big\{1, \|\Sigma_X\|_\infty\big\}^{|\bar{\alpha}| }&\max_{ \substack{  \hat{\alpha}_I, \hat{\alpha}_I' \le \alpha_I, \hat{\alpha}_J,\hat{\alpha}_J' \le \alpha_J \\|\hat{\alpha}_I+\hat{\alpha}_J|=|\hat{\alpha}_I' +\hat{\alpha}_J'|} } (1-t)^{-\frac{\lvert\hat{\alpha}_I+\hat{\alpha}_J\rvert}{2}-\frac{\lvert 2\alpha_I-\hat{\alpha}_I-\hat{\alpha}_I^\prime\rvert}{4}-\frac{\lvert 2\alpha_J-\hat{\alpha}_J-\hat{\alpha}_J^\prime\rvert}{4} -\frac{\lvert I\cap J\rvert}{2}}\\
    &\qquad\qquad\times\min\{1,\lambda_{\mathrm{min}}(X)\}^{-\frac{\lvert\hat{\alpha}_I+\hat{\alpha}_J\rvert}{2}-\lvert 2\alpha_I-\hat{\alpha}_I-\hat{\alpha}_I^\prime\rvert-\lvert 2\alpha_J-\hat{\alpha}_J-\hat{\alpha}_J^\prime\rvert-\frac{|I| +| J|}{2}}
\end{align*}
which implies the result.
\end{proof}

\section{Semi-local additive functionals of stationary Gaussian fields}
\label{a:slaf}

In this appendix we extend the classical theory of local additive functionals of stationary Gaussian fields \cite{dm79,bm83} to \textit{semi-local} additive functions. Let $f$ be a stationary Gaussian field on $\Z^d$ with covariance kernel $K(x-y) = \E[f(x) f(y)]$ satisfying $K(x) \sim |x|^{-\alpha}$ for $\alpha > 0$ and $K \ge 0$. We consider functionals of the form
\[ Q_R =  \frac{1}{m!}\sum_{x_1,\ldots,x_m \in \Lambda_R} \wick{f(x_1)\cdots f(x_m)}  P(x_1,\ldots,x_m) \]
where $m \ge 1$ and $P: (\Z^d)^m \to \R$ is a signed kernel which is stationary, permutation invariant, symmetric in the sense that $P(x) = P(-x)$, and integrable in the sense that $\sum |P| := \sum_{x \in (\Z^d)^{m-1}} |P(0,x)|  < \infty$. This reduces to the `local' case considered in \cite{dm79,bm83} if $P(x_1,\ldots,x_m) = \id_{x_1=\cdots=x_m}$.

\smallskip 
By definition $Q_R$ is an element of the $m$-th homogeneous chaos of the Gaussian Hilbert space generated by $f$. In parallel to the results in \cite{dm79,bm83}, the limit theory of $Q_R$ depends on whether $m \alpha \ge d$ (central limit) or $2 \le m \alpha < d$ (non-central limit). As we show, if $2 \le m \alpha \le d$ the limit theory also depends on whether 
\begin{equation*}
\sum P := \sum_{x \in (\Z^d)^{m-1}} P(0,x) 
\end{equation*}
vanishes; the case $\sum P = 0$ exhibits new behaviour compared to the local setting. 

\smallskip 
For our results we will assume that $P$ has rapid off-diagonal decay in the sense that there exists a $\kappa > d$ such that as $R\to\infty$
\begin{equation}
    \label{e:pdecay}
\Gamma(R) := \sum_{x_2,\ldots,x_m \in \Z^d: \max_i \|x_i\|_\infty > R} |P(0,x_2,\ldots,x_m)| = O(R^{-\kappa}) .
 \end{equation}

\subsection{Central limit}
The following are generalisations of \cite[Theorems 1 and 1']{bm83} which treated the local case. For part of the result we refine our assumption that $K(x) \sim |x|^{-\alpha}$ by supposing that, as $|x| \to \infty$,
\begin{equation}
    \label{e:kreg}
  K(x) = |x|^{-\alpha}  + O \big(|x|^{-\alpha-2} \big)  .
\end{equation} 
Recall that $Z$ denotes a standard Gaussian random variable, and $\Rightarrow$ convergence in law.

\begin{proposition}
\label{p:clt}
Suppose either: (i) $m \alpha > d$ and \eqref{e:pdecay} holds for some $\kappa > 3d$; or (ii) $m\alpha > d-2$, $\sum P = 0$, \eqref{e:pdecay} holds for every $\kappa$, and \eqref{e:kreg} holds. Then there exists a constant $\tau = \tau_{K,P} \in [0,\infty)$ such that, as $R \to \infty$
\[  \frac{ \Var[Q_R]  }{ R^d } \to \tau  \qquad \text{and} \qquad   \frac{Q_R}{R^{d/2}}\quad \Longrightarrow  \quad \tau Z . \]
\end{proposition}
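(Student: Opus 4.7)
My plan is to treat the variance asymptotics and the distributional convergence separately: compute $\Var[Q_R]/R^d \to \tau$ directly via the diagram formula, then deduce the central limit theorem from the fourth moment theorem of Nualart-Peccati \cite{np12}, which applies because $Q_R$ lies in the fixed $m$-th homogeneous Wiener chaos.

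By Theorem~\ref{t:df} and the permutation invariance of $P$,
\begin{displaymath}
\Var[Q_R]=\frac{1}{m!}\sum_{x,y\in\Lambda_R^m}P(x)P(y)\prod_{i=1}^{m}K(x_i-y_i).
\end{displaymath}
First I would change variables to $w=x_1-y_1$, $u_i=x_{i+1}-x_1$ and $v_i=y_{i+1}-y_1$ for $i=1,\dots,m-1$, and invoke stationarity of $P$ to rewrite the above as $\frac{|\Lambda_R|}{m!}\sum_w F(w)+\mathcal{E}_R$, where
\begin{displaymath}
F(w):=K(w)\sum_{u,v\in(\Z^d)^{m-1}}P(0,u)P(0,v)\prod_{i=1}^{m-1}K(w+u_i-v_i)
\end{displaymath}
and $\mathcal{E}_R$ is a boundary error of size $o(R^d)$ thanks to the off-diagonal decay \eqref{e:pdecay} of $P$. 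In case~(i) the absolute summability $\sum_w|F(w)|<\infty$ is immediate from the bound $|F(w)|\leq C(\sum|P|)^2|w|^{-m\alpha}$ together with $m\alpha>d$. In case~(ii) I would Taylor expand each factor $K(w+u_i-v_i)$ via \eqref{e:kreg}: the leading constant $K(w)^m$ integrates to zero against $\sum_{u,v}P(0,u)P(0,v)$ because $\sum P=0$; the first-order correction vanishes by the antisymmetry $\sum_u u_j P(0,u)=0$ (implied by $P(0,u)=P(0,-u)$); and the quadratic remainder is bounded by $C|w|^{-m\alpha-2}$, summable precisely when $m\alpha>d-2$. This identifies $\tau=(m!)^{-1}\sum_w F(w)$ and confirms $\tau\in[0,\infty)$.

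For the distributional convergence, the fourth moment theorem reduces $Q_R/R^{d/2}\Longrightarrow\sqrt{\tau}\,Z$ to showing that, for each $r=1,\dots,m-1$, the $r$-fold contraction norms of the kernel $h_R(x):=(m!)^{-1}P(x)\id_{x\in\Lambda_R^m}$ of $Q_R$ satisfy $\|h_R\otimes_r h_R\|^2=o(R^{2d})$. Expanding $\|h_R\otimes_r h_R\|^2$ via the diagram formula yields sums of the form
\begin{displaymath}
\sum_{x,y,x',y'\in\Lambda_R^m}P(x)P(y)P(x')P(y')\prod_{\gamma}K(\cdot),
\end{displaymath}
indexed by complete Feynman pairings $\gamma$ on $4m$ vertices that are \emph{not} of the trivial ``pair-up-within-groups'' type. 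I would repeat the change of variables above (now with four recentering parameters) and apply the same power counting to show each such sum is $O(R^{2d-\eps})$ for some $\eps>0$, invoking the $\sum P=0$ cancellation whenever the decay of $K$ alone is insufficient.

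The hard part will be these contraction bounds in case~(ii), where the marginal summability $m\alpha>d-2$ forces us to exploit the $\sum P=0$ cancellation inside each non-trivial Feynman diagram, and the combinatorics of pairings rapidly become intricate. A convenient workaround would be to first truncate $P$ to a finite-range kernel $P_L$ supported on $\{\max_i|x_i|\leq L\}$ (adjusted so that $\sum P_L=0$ is preserved), carry out the contraction analysis for $P_L$ where no subtle cancellation is needed, and finally let $L\to\infty$, controlling the difference $Q_R-Q_R^{(L)}$ in $L^2$ by applying the variance analysis of the first step to $P-P_L$.
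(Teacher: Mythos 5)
Your variance computation is essentially the paper's own (it is the content of case (3) of Lemma~\ref{l:rv}, i.e.\ \eqref{e:ba4}): diagram formula, recentring by stationarity, absolute summability when $m\alpha>d$, and in case~(ii) a Taylor expansion of $K$ via \eqref{e:kreg} whose zeroth and first order terms are killed by $\sum P=0$ and the symmetry of $P$, leaving a remainder $O(|w|^{-m\alpha-2})$ summable exactly when $m\alpha>d-2$. Your route to the CLT is also the paper's: the contraction criterion $\|h_R\otimes_r h_R\|=o(R^{d})$ is equivalent to the fourth-moment condition $\E[Q_R^4]\sim 3\tau^2R^{2d}$ used in the paper, and the contractions are precisely the non-regular Feynman diagrams appearing in \eqref{e:ba5}.

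The gap is the workaround you propose for case~(ii). Truncating $P$ to a finite-range kernel $P_L$ with $\sum P_L=0$ does \emph{not} make the contraction bounds follow from decay alone, because the obstruction is the slow decay of $K$, not the range of $P$. Take $d=3$, $\alpha=1$, $m=2$ (a configuration genuinely used in the main proof, with $d-2<m\alpha<d$) and any finite-range $P_L$. The $r=1$ contraction corresponds to the $4$-cycle pairing of the four $P$-groups, and bounding $|P_L|$ and the $K$-factors in absolute value gives, up to the bounded internal offsets,
\begin{displaymath}
\sum_{a,b,c,e\in\Lambda_R}|a-b|^{-\alpha}|b-c|^{-\alpha}|c-e|^{-\alpha}|e-a|^{-\alpha}\;\asymp\;R^{4d-2m\alpha}=R^{8}\;\gg\;R^{2d}=R^{6}.
\end{displaymath}
So for finite-range $P_L$ the naive power counting still fails whenever $m\alpha\le d$, and the statement that ``no subtle cancellation is needed'' after truncation is false. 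The cancellation $\sum P_L=0$ must still be exploited \emph{inside each non-regular diagram}: summing with signs over the internal structure of a $P$-group converts the product of the $K$-edges incident to that group into a discrete gradient, and only after distributing these gradients over the edges does the exponent budget close. That is exactly the intricate diagram combinatorics you were hoping to sidestep; the truncation buys you nothing here (it is only useful for the final $L^2$-approximation step, which is fine). To close case~(ii) you must either carry out this cancellation-within-diagrams argument — this is what the paper's (admittedly terse) treatment of \eqref{e:ba5}, modelled on the proof of \eqref{e:ba3}, is doing — or find a genuinely different bound on the contractions, e.g.\ a spectral one exploiting the singularity of $\mu_0$ at the origin.
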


\begin{proposition}
\label{p:clt2}
Suppose $m \alpha = d$, $\sum P \neq 0$, and \eqref{e:pdecay} holds for some $\kappa > 3d$. Then there exists a constant $c = c_d > 0$ such that, as $R \to \infty$
\[  \frac{ \Var[Q_R]  }{ R^d (\log R)^{\id_{m\alpha = d} } } \to c_d \big(\sum P \big)^2  \qquad \text{and} \qquad \frac{Q_R}{\sqrt{\Var[Q_R]}} \quad \Longrightarrow  \quad Z .\]
\end{proposition}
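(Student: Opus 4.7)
The strategy is classical: first establish the variance asymptotic directly from the diagram formula, then deduce the CLT via the Nualart-Peccati fourth moment theorem, exploiting that $Q_R$ lies in the fixed $m$-th homogeneous Wiener chaos.

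For the variance, the diagram formula (Theorem~\ref{t:df}) combined with the permutation invariance of $P$ (a standard re-indexing collapses the $|S_m|$ permutations into a single representative) yields
\[ \Var[Q_R] = \frac{1}{m!}\sum_{x,y\in(\Lambda_R)^m} P(x) P(y) \prod_{i=1}^m K(x_i-y_i). \]
I will introduce a slowly growing truncation scale $r=r(R)\to\infty$ (e.g.\ $r = (\log R)^2$) and use \eqref{e:pdecay} to restrict to configurations with $\mathrm{diam}_\infty(\underline{x}),\mathrm{diam}_\infty(\underline{y})\le r$; the truncation error is $O(R^d\log R\cdot\Gamma(r)) = o(R^d \log R)$ since $\kappa>d$. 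Designating $x_1,y_1$ as anchors, the short-range regime $|x_1-y_1|\le r^2$ contributes at most $O(R^d r^{2d})=o(R^d\log R)$. In the long-range regime $|x_1-y_1|>r^2$, the asymptotic $K(z)\sim|z|^{-\alpha}$ gives $\prod_i K(x_i-y_i)=K(x_1-y_1)^m(1+o(1))$ uniformly over the truncated support (since the $x_i,y_i$ deviate from their anchors by at most $r$ and $|x_1-y_1|/r\to\infty$), and stationarity of $P$ lets one sum over $x_2,\ldots,x_m$ and $y_2,\ldots,y_m$ to produce $(\sum P)^2$ up to boundary corrections $o(R^d \log R)$ from anchors within distance $r$ of $\partial \Lambda_R$. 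The classical polar-coordinate estimate (in the spirit of Lemma~\ref{l:rv}) gives $\sum_{x_1,y_1\in\Lambda_R} K(x_1-y_1)^m \sim C_d R^d \log R$, which combines with the above to yield $\Var[Q_R]\sim c_d(\sum P)^2 R^d\log R$ for $c_d:=C_d/m!$.

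For the CLT, since $Q_R$ lies in the $m$-th homogeneous chaos and its normalised variance tends to a positive constant, the Nualart-Peccati fourth moment theorem reduces convergence in law to $Z$ to showing $\E[Q_R^4]/\Var[Q_R]^2 \to 3$. Expanding $\E[Q_R^4]$ by the diagram formula produces a sum over complete Feynman diagrams on $4m$ Wick indices labelled by four $P$-factors. The ``non-crossing'' diagrams, whose edges pair indices only within distinct pairs of $P$-factors, contribute exactly $3\,\Var[Q_R]^2$; the remaining diagrams are cross-pairings. Each cross-pairing diagram contains a cluster of $m+p$ propagators (for some $p\ge 1$) sharing a common Wick index, so after the $r$-truncation of $P$ the corresponding factor is bounded by $\|P\|_\infty^4 \sum_{u\in\Z^d}|K(u)|^{m+p}$; since $(m+p)\alpha > d$ this sum is finite, and summing the remaining free vertices over $\Lambda_R$ gives a contribution of order at most $R^{d}\log R \cdot O(1) = o(\Var[Q_R]^2)$. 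This establishes the CLT.

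The main obstacle is the uniformity in the variance approximation $\prod_i K(x_i-y_i) \sim K(x_1-y_1)^m$: one must quantify the $(1+o(1))$ error well enough that, when summed against $|x_1-y_1|^{-d}$ over $|x_1-y_1|\in(r^2, 2R)$, it remains of order $o(R^d\log R)$. This forces a joint calibration of the truncation scale $r$ against the kernel asymptotics—specifically, one needs $r\to\infty$ fast enough that $\Gamma(r)\log R\to 0$ (easily satisfied for $\kappa>d$) yet slow enough that both $r^{2d}/\log R \to 0$ and $\sup_{|z|>r^2} |K(z)/|z|^{-\alpha}-1|\to 0$ hold, which is possible under the hypothesis $\kappa > 3d$ by taking e.g.\ $r=(\log R)^2$.
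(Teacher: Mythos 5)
Your overall strategy is exactly the paper's: compute $\Var[Q_R]$ from the diagram formula, then invoke the fourth moment theorem for a fixed Wiener chaos, reducing the CLT to $\E[Q_R^4]\sim 3\Var[Q_R]^2$ (the paper delegates both asymptotics to the second item of Lemma~\ref{l:rv}). However, two steps as written do not hold up.

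First, your calibration of the truncation scale is internally inconsistent. You correctly identify that the short-range regime contributes $O(R^d r^{2d})$ under your crude bound and that you therefore need $r^{2d}/\log R\to 0$, but the choice $r=(\log R)^2$ gives $R^d r^{2d}=R^d(\log R)^{4d}$, which dominates the target order $R^d\log R$. The fix is easy: either take $r$ to be a power $(\log R)^{\theta}$ with $1/\kappa<\theta<1/(2d)$ (possible since $\kappa>3d>2d$, so that $\Gamma(r)\log R\to0$ still holds), or bound the short-range block using the kernel decay, $\sum_{|z|\le r^2}K(z)^m=O(\log r)$ for $m\alpha=d$, which makes that contribution $O(R^d\log\log R)$ for any $r\to\infty$. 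Either repair works, but as stated the step fails. (For comparison, the paper's Lemma~\ref{l:rv}(2) instead localises the whole configuration in a box of radius $r=\eps R$ around $x_1$, shows the per-site constant grows like $c_d(\sum P)^2\log r$, and sends $\eps\to0$.)

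Second, the bound on the cross-pairing (non-regular) diagrams in $\E[Q_R^4]$ is not justified. In a non-regular diagram the four groups of Wick indices form a connected multigraph with $2m$ edges distributed among up to six anchor pairs; it is not true that some $m+p$ propagators all join a single pair of anchors, so the factor $\sum_u K(u)^{m+p}<\infty$ does not appear, and "summing the remaining free vertices" does not give $R^d\log R\cdot O(1)$. The correct accounting (as in the paper's proof of \eqref{e:ba3}) redistributes the exponents via $s^at^b\le s^{a+b}+t^{a+b}$ to reduce to $R^d\cdot\sum_{x}|x|^{-m\alpha}\cdot\sum_u|u|^{-q\alpha}\cdot\sum_v|v|^{-r\alpha}$ with $q+r=m$ and $q,r>0$, which for $m\alpha=d$ is $O(R^{2d}\log R)=o(R^{2d}(\log R)^2)$. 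Your conclusion is right, but this is the genuinely delicate step of the argument and your sketch skips the combinatorial content that makes it work.
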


In the case $m \alpha > d$, we also provide a uniform bound on the variance:

\begin{proposition}
\label{p:clt3}
There exists a constant $c_K > 0$ such that, for every $m \alpha > d$ and $R \ge 1$
\[ \Var[Q_R] \le \frac{1}{m!} e^{cm} \Big( \sum |P| \Big)^2  R^d. \]
In particular if $P$ is supported on $\{ x \in (\Z^d)^m : \textrm{diam}_\infty(\underline{x}) \le r \}$ then
\[ \Var[Q_R] \le \frac{1}{m!} e^{cm} \|P\|^2_\infty  R^d \]
where $c > 0$ depends only on $K$ and $r$.
\end{proposition}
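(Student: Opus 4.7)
\textbf{Proof plan for Proposition \ref{p:clt3}.} The plan is to combine the diagram formula with a single application of H\"older's inequality. First, applying Theorem~\ref{t:df} to $\E[Q_R^2]$ (using $\E[Q_R]=0$) gives
\[ \Var[Q_R] = \frac{1}{(m!)^2} \sum_{\sigma \in S_m} \sum_{x, y \in (\Lambda_R)^m} P(x) P(y) \prod_{i=1}^m K(x_i - y_{\sigma(i)}). \]
For each $\sigma \in S_m$, relabelling the dummy variable $y$ using $\sigma$ and invoking the permutation invariance of $P$ shows that every $\sigma$-summand contributes the same amount; combining with $K \ge 0$ yields
\[ \Var[Q_R] \le \frac{1}{m!} \sum_{x, y \in (\Lambda_R)^m} |P(x)| |P(y)| \prod_{i=1}^m K(x_i - y_i). \]
Writing $u_i = x_i - x_1$, $v_i = y_i - y_1$ (with $u_1 = v_1 = 0$) and using stationarity of $P$, I rewrite $|P(x)| = |\tilde P(u)|$ where $\tilde P(u_2,\dots,u_m) := P(0, u_2, \ldots, u_m)$; relaxing the constraints on $u_i, v_i$ to $\Z^d$, the bound becomes
\[ \Var[Q_R] \le \frac{1}{m!} \sum_{x_1, y_1 \in \Lambda_R} \sum_{u, v \in (\Z^d)^{m-1}} |\tilde P(u)| |\tilde P(v)| \prod_{i=1}^m K\big((x_1 - y_1) + (u_i - v_i)\big). \]

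Next I decouple the internal differences via H\"older's inequality with all exponents equal to $m$. Relaxing $y_1 \in \Lambda_R$ to $z := x_1 - y_1 \in \Z^d$, the inner sum satisfies
\[ \sum_{z \in \Z^d} \prod_{i=1}^m K(z + u_i - v_i) \le \prod_{i=1}^m \Big( \sum_{z \in \Z^d} K(z + u_i - v_i)^m \Big)^{1/m} = \sum_{z \in \Z^d} K(z)^m \]
by translation invariance. Summing $x_1$ over $\Lambda_R$ and factorising $\sum_{u,v} |\tilde P(u)||\tilde P(v)| = \big(\sum|P|\big)^2$ produces
\[ \Var[Q_R] \le \frac{c_d R^d}{m!} \Big(\sum |P|\Big)^2 \sum_{z \in \Z^d} K(z)^m . \]

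The main (modest) obstacle is the uniform bound $\sum_z K(z)^m \le e^{c m}$ with $c > 0$ depending only on $K$, simultaneously for all $m$ satisfying $m\alpha > d$. For this I choose $N$ large enough that $K(z) \le 1/2$ whenever $|z| \ge N$ (possible since $K(z) \sim |z|^{-\alpha} \to 0$), and split the sum at radius $N$. The near part $\sum_{|z| < N} K(z)^m$ is bounded crudely by $(2N+1)^d K(0)^m \le e^{c_1 m}$ for some $c_1$ depending only on $K$. For the tail, fix $m_0 = \lceil d/\alpha\rceil + 1$; for $m \ge m_0$, $K(z)^m \le K(z)^{m_0}$ on $|z| \ge N$, so $\sum_{|z| \ge N} K(z)^m \le \sum_{|z| \ge N} K(z)^{m_0} < \infty$ by the decay $K(z) \le c|z|^{-\alpha}$ (with $m_0 \alpha > d$). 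For the finitely many integers $m$ with $d/\alpha < m < m_0$, a crude direct bound suffices. Both contributions absorb into $e^{cm}$ after enlarging $c$.

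For the final statement, if $P$ is supported on $\{ \mathrm{diam}_\infty(\underline{x}) \le r\}$, then $\tilde P$ is supported on $(\Lambda_r)^{m-1}$, whence $\sum|P| \le (2r+1)^{d(m-1)} \|P\|_\infty$. The polynomial factor $(2r+1)^{2d(m-1)}$ absorbs into $e^{cm}$ upon enlarging $c$ (now depending on both $K$ and $r$), yielding the claim.
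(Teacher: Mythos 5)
Your proof is correct and follows essentially the same route as the paper: the diagram formula plus permutation invariance of $P$ collapses the sum over $\sigma\in S_m$ to the single product $\prod_i K(x_i-y_i)$ (this is exactly \eqref{e:cltvar}), the product of $m$ kernels is then decoupled into a single $m$-th power sum, and the uniform-in-$m$ bound $\sum_z K(z)^m\le e^{cm}$ for $m\alpha>d$ finishes the argument. The only (cosmetic) difference is that you decouple via H\"older's inequality with exponents all equal to $m$ together with translation invariance, whereas the paper's Lemma~\ref{l:rv3} uses the equivalent elementary inequality $\prod_i s_i\le\sum_i s_i^m$ to reduce to the kernel $K_{m\alpha}$.
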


\begin{proof}[Proof of Proposition \ref{p:clt}]
We follow the approach of \cite{np12}, which streamlined the `method of moments' analysis of \cite{bm83}. To compute the variance, by the diagram formula (Theorem~\ref{t:df}) and permutation invariance
\begin{equation}
    \label{e:cltvar}
 \Var[Q_R]  =   \frac{1}{m!}  \sum_{x,y \in (\Lambda_R)^m}  \prod_{i=1}^m K(x_i - y_i ) P(x) P(y) .
\end{equation}
Applying the third item of Lemma \ref{l:rv} we have  $\Var[Q_R] \sim  \tau R^d$.  Again by the diagram formula and the third item of Lemma \ref{l:rv} we also have
\[ \E[ Q_R^4 ] \sim 3 \tau^2 R^{2d}  .\]
Since $Q_R$ is an element of a chaos of fixed order, by the fourth moment theorem \cite[Theorem 5.1.7]{np12} this implies the Gaussian limit.
\end{proof}

\begin{proof}[Proof of Proposition \ref{p:clt2}]
This is the same as for Proposition \ref{p:clt} except applying the second item of Lemma \ref{l:rv} in place of the third item.  
\end{proof}

\begin{proof}[Proof of Proposition \ref{p:clt3}]
By \eqref{e:cltvar} and Lemma \ref{l:rv3},
\[ \Var[Q_R]  \le  \frac{1}{m!} c^m  R^d  \Big( \sum |P| \Big)^2   \sum_{x \in \Z^d} \max\{1,|x|\}^{-m\alpha}  \]
for a constant $c = c_K > 0$, and the result follows.
\end{proof}

\subsection{Non-central limit}

We now consider the non-central limit theory in the case $2 \le m \alpha < d$, following closely the approach of \cite{dm79}. Let $\mu$ denote the spectral measure of $f$, i.e.\ the finite measure on $[-\pi,\pi]^d \subset \R^d$ such that
 \[ K(x) = \mathcal{F}[\mu](x) = \int e^{i \langle x, \lambda \rangle} d\mu(\lambda) .\]
By \cite[Proposition 1]{dm79} there exists a locally finite non-atomic measure $\mu_0$ on $\R^d$ such that
\begin{equation}
    \label{e:mu0}
 \mu_0(\cdot) = \lim_{R \to \infty} \mu_R(\cdot) := \lim_{R \to \infty} R^\alpha  \mu(R^{-1} \cdot) 
 \end{equation}
in the sense of weak convergence on compact sets. 
 
\begin{definition}[Hermite distributions]
\label{d:ros}
 For $2 \le m < d/\alpha$, the \textit{$m$-th order Hermite distribution (associated to $\mu_0$)} is the distribution of
\begin{equation}
    \label{e:z'}
Z' = c \int_{(\R^d)^m} S_0(\lambda_1 + \cdots + \lambda_m)  W_{\mu_0}(d\lambda_1) \cdots W_{\mu_0}(d\lambda_m)   
\end{equation}
where $W_{\nu}$ is the (complex) white noise on the space $L^2_\text{sym}(\nu)$ of Hermitian functions $h$ such that $\int |h|^2 d\nu < \infty$ , $\int$ denotes the multiple Weiner-It\^{o} integral with respect to $W_{\nu}$ (see \cite{dob79} for the definition and basic properties), $S_0 = 2^{-d}\mathcal{F}[\id_{[-1,1]^d}]$, and $c = c_{m, \mu_0} > 0$ is a normalising constant chosen so that $\Var[Z'] = 1$. The Weiner-It\^{o} integral is well-defined since, as verified in \cite{dm79},
\[ \int_{(\R^d)^m} |S_0(\lambda_1 + \cdots + \lambda_m)|^2  \mu_0(d\lambda_1) \cdots \mu_0(d\lambda_m)  < \infty .\]
\end{definition}

In the case that $K$ is the Green's function $G$, $\mu_0(d\lambda)$ has density proportional to $|\lambda|^{\alpha-d}$, and so \eqref{e:z'} is equivalent in law to
\[Z' = c' \int_{(\R^d)^m} S_0(\lambda_1 + \cdots + \lambda_m)  \frac{ W(d\lambda_1) \cdots W(d\lambda_m) }{ |\lambda_1|^{(d-\alpha)/2} \cdots |\lambda_m|^{(d-\alpha)/2} } , \]
where $W$ is the standard white noise in $L^2_{\mathrm{sym}}(\R^d)$.
 
 \smallskip 
 The following is a generalisation of \cite[Theorem 1']{dm79}, which treated the local case:

\begin{proposition}
\label{p:nclt}
Suppose $2 \le m \alpha < d$, $\sum P \neq 0$, and \eqref{e:pdecay} holds for some $\kappa > d$. Then there exists a constant $E_{d,m\alpha} > 0$ such that, as $R \to \infty$,
\[  \frac{\Var[Q_R]}{R^{2d-m\alpha}} \to \frac{ E_{d,m\alpha} ( \sum P)^2 }{m!}  \qquad \text{and} \qquad \frac{Q_R}{\sqrt{\Var[Q_R]}} \quad \Longrightarrow \quad Z'  \]
where $Z'$ has the $m$-th order Hermite distribution associated to $\mu_0$. The constant $E_{d,\alpha}$ is defined for $\alpha < d$ as
\begin{equation}
    \label{e:e}
 E_{d,\alpha} := \int_{x,y \in [-1,1]^d} |x-y|^{-\alpha} \, dx dy  = \int_{\R^d} \mathcal{S}_d(x) |x|^{-\alpha} \, dx  \in (0,\infty),
 \end{equation}
 where $\mathcal{S}_d = \big(\id_{[-1,1]^d} \star \id_{[-1,1]^d}\big)$ with $\star$ denoting convolution, and the second equality in \eqref{e:e} is by the identity $\int f (g \star h) = \int (f \star g) h$.
\end{proposition}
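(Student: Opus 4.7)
The plan is to adapt the Dobrushin--Major spectral approach \cite{dm79} to the semi-local setting, with the additional off-diagonal structure of $P$ contributing only an overall factor of $\sum P$ in the limit. Let $W_\mu$ denote the Hermitian white noise on $L^2_{\text{sym}}(\mu)$ associated to the spectral measure $\mu$ of $f$, so that $f(x) = \int e^{i\langle x,\lambda\rangle} W_\mu(d\lambda)$. Using the standard identification of Wick products with multiple Wiener--It\^o integrals (e.g.\ \cite[Thm.~7.26]{jan97}) one obtains
\begin{displaymath}
Q_R = \frac{1}{m!} \int_{([-\pi,\pi]^d)^m} \widehat{P}_R(\lambda_1,\ldots,\lambda_m) \, W_\mu(d\lambda_1)\cdots W_\mu(d\lambda_m),
\end{displaymath}
where $\widehat{P}_R(\lambda) := \sum_{x\in(\Lambda_R)^m} e^{i \sum_j \langle x_j,\lambda_j\rangle} P(x)$. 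By stationarity we can write $P(x_1,\ldots,x_m) = \widetilde P(x_2-x_1,\ldots,x_m-x_1)$ and then, setting $x_1 = z$ and $x_j = z+u_j$,
\begin{displaymath}
\widehat{P}_R(\lambda) = \sum_{z,u_2,\ldots,u_m} e^{i\langle z, \sum_j \lambda_j \rangle} \, e^{i \sum_{j\ge 2} \langle u_j,\lambda_j\rangle} \, \widetilde P(u_2,\ldots,u_m) \prod_j \ind_{z\in\Lambda_R, z+u_j\in\Lambda_R}.
\end{displaymath}
The decay assumption \eqref{e:pdecay} with $\kappa > d$ ensures that the coupling $\ind_{z+u_j\in\Lambda_R}$ only contributes a $O(R^{d-1}\log R)$ boundary correction to the $z$-sum, which is negligible after rescaling; up to this error $\widehat P_R$ factorises as $g_R(\sum_j \lambda_j) \, h_R(\lambda_2,\ldots,\lambda_m)$ with $g_R(\eta) := \sum_{z\in \Lambda_R} e^{i\langle z,\eta\rangle}$ and $h_R$ the trigonometric polynomial associated to $\widetilde P$.

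The variance asymptotic is first established directly in the space domain, which will serve as the target for the spectral convergence. By the diagram formula,
\begin{displaymath}
\Var[Q_R] = \frac{1}{m!} \sum_{x,y \in (\Lambda_R)^m} \Big( \prod_{i=1}^m K(x_i-y_i) \Big) P(x) P(y).
\end{displaymath}
Setting $x_1=z$, $y_1=w$, $x_j=z+u_j$, $y_j=w+v_j$, and using that $K(z-w+u_i-v_i) = |z-w|^{-\alpha}(1+o(1))$ whenever $|z-w| \to \infty$ with $u,v$ in a bounded set, the rapid decay of $\widetilde P$ lets us sum out the $u,v$ variables independently. This produces a leading term $\frac{(\sum P)^2}{m!} \sum_{z,w\in\Lambda_R} K(z-w)^m$, and a Riemann sum argument (using $K(x)^m \sim |x|^{-m\alpha}$ and the integrability of $|x|^{-m\alpha}$ over $[-1,1]^d\times[-1,1]^d$ when $m\alpha<d$) gives $\sum_{z,w\in\Lambda_R} K(z-w)^m \sim R^{2d-m\alpha} E_{d,m\alpha}$. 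Boundary and remainder contributions are $o(R^{2d-m\alpha})$ by \eqref{e:pdecay} and the assumption $m\alpha < d$. This yields the claimed variance asymptotic.

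For the distributional limit, I would change variables $\lambda_j \mapsto \lambda_j/R$ and rescale the white noise using the identity $\int g(\lambda) W_\mu(d\lambda) = R^{-\alpha/2}\int g(\lambda/R) W_{\mu_R}(d\lambda)$, where $\mu_R = R^\alpha \mu(R^{-1}\cdot)$. This gives
\begin{displaymath}
\frac{Q_R}{R^{d-m\alpha/2}} = \frac{1}{m!} \int_{(R[-\pi,\pi]^d)^m} \Big( R^{-d}\widehat P_R(\lambda/R) \Big) W_{\mu_R}(d\lambda_1)\cdots W_{\mu_R}(d\lambda_m).
\end{displaymath}
The integrand converges pointwise (and in $L^2(\mu_R^{\otimes m})$ on compacts) to $(\sum P) \cdot \big(\int_{[-1,1]^d} e^{i\langle x, \sum_j \lambda_j\rangle}\,dx\big) = 2^d(\sum P) S_0(\sum_j \lambda_j)$: indeed $R^{-d} g_R(\eta/R) \to \int_{[-1,1]^d} e^{i\langle x,\eta\rangle}\,dx$ is a Riemann-sum convergence, while $h_R(\lambda_2/R,\ldots,\lambda_m/R) \to \sum P$ follows from dominated convergence using the $\kappa>d$ decay. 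Combined with the Dobrushin--Major theorem on the convergence of multiple Wiener--It\^o integrals along rescaled spectral measures (\cite{dm79}; see also \cite[Thm.~5.1]{dob79}), which applies here because $\mu_R \to \mu_0$ weakly on compacts by \eqref{e:mu0} and the limit kernel $S_0(\sum \lambda_j)$ lies in $L^2_{\text{sym}}(\mu_0^{\otimes m})$, one obtains
\begin{displaymath}
\frac{Q_R}{R^{d-m\alpha/2}} \ \Longrightarrow\  \frac{2^d(\sum P)}{m!} \int_{(\R^d)^m} S_0(\lambda_1+\cdots+\lambda_m) \, W_{\mu_0}(d\lambda_1)\cdots W_{\mu_0}(d\lambda_m),
\end{displaymath}
which after normalising by $\sqrt{\Var[Q_R]}$ is precisely $Z'$.

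The main obstacle is the uniform-in-$R$ control of the spectral kernel $R^{-d}\widehat P_R(\lambda/R)$ needed to pass to the limit under the multiple stochastic integral, in particular near $\lambda = 0$ where $S_0$ is bounded but the limit measure $\mu_0$ typically has a non-integrable singularity. The standard Dobrushin truncation argument reduces the matter to proving $L^2(\mu_R^{\otimes m})$-convergence on bounded sets plus a uniform $L^2$-tail estimate on $\{|\lambda| > T\}$; the latter is exactly the obstacle handled by $\kappa > d$, which gives uniform control of $h_R$ together with an integrable envelope for $g_R$ in terms of $\min(R^d, |\sum\lambda_j|^{-d-1})$ (a standard Dirichlet-kernel estimate). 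The remaining technical issue is to quantify the boundary/coupling error when $\widehat P_R$ is approximated by the product $g_R h_R$; this is a direct application of \eqref{e:pdecay} combined with the fact that $\lvert g_R\rvert\le|\Lambda_R|$.
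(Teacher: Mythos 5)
Your overall strategy is the same as the paper's: both follow Dobrushin--Major, writing $Q_R$ as a multiple Wiener--It\^{o} integral against the rescaled spectral measures $\mu_R$, identifying the limit kernel as $(\sum P)\,S_0(\lambda_1+\cdots+\lambda_m)$, and establishing the variance asymptotics by summing out the off-diagonal variables of $P$ in the space domain (your Riemann-sum argument is exactly the content of the first item of Lemma \ref{l:rv}, and that part of your plan is fine). The identification of the rescaled kernel limit on compacts is also correct; the paper obtains the same local uniform convergence by viewing $S_R$ as the Fourier transform of a probability measure concentrating on the diagonal of $([-1,1]^d)^m$.

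The gap is in your treatment of the uniform tail estimate
\[
\lim_{A\to\infty}\ \sup_R \int_{(\R^d)^m\setminus([-A,A]^d)^m}|S_R(\lambda)|^2\,d\mu_R(\lambda_1)\cdots d\mu_R(\lambda_m)=0,
\]
which is condition (2) of \cite[Lemma 3]{dm79} and is the crux of the whole method. Your proposed mechanism --- a pointwise envelope $\min(R^d,|\sum_j\lambda_j|^{-d-1})$ for $g_R$ together with boundedness of $h_R$ --- does not work. First, the Dirichlet-kernel bound is $|R^{-d}g_R(\eta/R)|\le\prod_{j}\min(c,\pi/|\eta_j|)$, which decays only like $|\eta|^{-1}$ per coordinate (and anisotropically), not like $|\eta|^{-d-1}$. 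Second, and more fundamentally, this envelope decays only in the \emph{sum} $\eta=\sum_j\lambda_j$, whereas the tail region is $\max_j|\lambda_j|>A$; on the set where, say, $\lambda_1\approx-\lambda_2$ are both of size $\gg A$ but $\sum_j\lambda_j=O(1)$, your bound gives no smallness at all, and $h_R(\lambda_2/R,\ldots,\lambda_m/R)\to\sum P$ provides no decay either (after the rescaling its argument tends to $0$). This is precisely why Dobrushin--Major, and the paper, do not attempt pointwise bounds: instead one shows that the finite measures $\tilde\mu_R:=|S_R|^2\,d\mu_R^{\otimes m}$ converge \emph{weakly} (not merely on compacts) by computing their Fourier transforms $\varphi_R(t)$, which unfold into the shifted space-domain sums $\sum_{x,y}\prod_i K(x_i-y_i+[t_iR])P(x)P(y)$; these converge uniformly in $t$ to the continuous function $E^m_{d,\alpha}(t)$ by Lemma \ref{l:rv}(1), and \cite[Lemma 2]{dm79} then upgrades this to weak convergence, whose tightness yields the tail bound. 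To repair your plan you should replace the envelope argument by this Fourier/weak-convergence step; note that this is also the reason Lemma \ref{l:rv}(1) is stated with the shifts $[t_iR]$ and with uniformity in $t$.
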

\begin{proof}
The variance asymptotics follow from \eqref{e:cltvar} and the first item of Lemma \ref{l:rv}. To prove the convergence in distribution, we closely follow the proof of \cite[Theorem 1']{dm79}.  Recall the measure $\mu_R$ defined in \eqref{e:mu0}. Noting that
\[ \wick{f(x_1) \cdots f(x_m)}  \, \stackrel{d}{=} \, \int_{(\R^d)^m} e^{ i ( \langle x_1, \lambda_1 \rangle + \cdots \langle x_m, \lambda_m \rangle )} W_\mu(d\lambda_1) \cdots W_\mu(d\lambda_m) \]
we have

\begin{align*}
     & \frac{1}{(\sum P) R^{d-m\alpha/2} } Q_R \\
     & \qquad \stackrel{d}{=} R^{-d}   \frac{1}{m!\sum P} \sum_{x \in (\Lambda_R)^m } P(x) \int_{(\R^d)^m }  e^{ \frac{i}{R} ( \langle x_1, \lambda_1 \rangle + \cdots + \langle x_m, \lambda_m \rangle )} W_{\mu_R}(d\lambda_1) \cdots W_{\mu_R}(d\lambda_m) \\
     & \qquad =  \frac{1}{m!}\int_{(\R^d)^m} S_R(\lambda_1,\ldots,\lambda_m) W_{\mu_R}(d\lambda_1) \cdots W_{\mu_R}(d\lambda_m)
     \end{align*}
     where 
     \[ S_R(\lambda_1,\cdots,\lambda_m) =  R^{-d} \frac{1}{\sum P}  \sum_{x \in (\Lambda_R)^m } P(x) e^{ \frac{i}{R} ( \langle x_1, \lambda_1 \rangle + \cdots + \langle x_m, \lambda_m \rangle )}. \]
     \cite[Lemma 3]{dm79} states that the desired convergence in distribution holds provided that the following two conditions are satisfied:
     \begin{enumerate}
 \item As $R \to \infty$
  \begin{equation}
      \label{e:dm1}
  S_R(\lambda_1,\ldots,\lambda_m) \to S_0(\lambda_1+\cdots+\lambda_m)  
  \end{equation}
  uniformly over compact sets;
  \item  Uniformly over $R \ge 1$
  \begin{equation}
      \label{e:dm2}
\lim_{A \to \infty} \int_{(\R^d)^m \setminus ([-A,A]^{d})^m } |S_R(\lambda_1,\ldots,\lambda_m)|^2 \, d\mu_R(\lambda_1) \cdots d\mu_R(\lambda_m) = 0  .   \end{equation}
\end{enumerate}

To verify \eqref{e:dm1}, we first observe that $S_0(\lambda_1+\ldots+\lambda_m)$ is the Fourier transform of the probability measure $\eta_0$ that is uniformly distributed on $\{x \in ([-1,1]^d)^m : x_1 = x_2 = \ldots x_m \}$, which can be seen from the change of coordinates $(x_1,\dots,x_m)\mapsto(x_1,x_2-x_1,\dots,x_m-x_1)$. By splitting $P$ into its positive and negative parts, we may also assume that $P \ge 0$. Then since the measures 
\[ \eta_R=  R^{-d}  \frac{1}{\sum P} \sum_{x \in (\Lambda_R)^m } P(x) \delta_{x/R}    \]
converge weakly to $\eta_0$, we conclude by using the standard fact that weak convergence of probability measures implies local uniform convergence of their Fourier transforms.

To verify \eqref{e:dm2} define the measures 
\[ \tilde{\mu}_R(\lambda) :=  |S_R(\lambda_1,\ldots,\lambda_m)|^2 \, d\mu_R(\lambda_1) \cdots d\mu_R(\lambda_m)    \]
and
\[ \tilde{\mu}_0(\lambda) := |S_0(\lambda_1 + \cdots + \lambda_m)|^2 d\mu_0(\lambda_1) \cdots d\mu_0(\lambda_m)  .\]
By \eqref{e:dm1} we know that $\tilde{\mu}_R \to \tilde{\mu}_0$ in the sense of weak convergence on compact sets, and it suffices to show that in fact $\tilde{\mu}_R$ converges weakly. For $t \in (\R^d)^m$ define
\[ \varphi_R(t)  = \int_{(\R^d)^m} e^{ \frac{i}{R} (\langle [t_1 R],\lambda_1 \rangle + \cdots + \langle [t_mR], \lambda_m \rangle )} |S_R(\lambda_1,\ldots,\lambda_m)|^2 \, d\mu_R(\lambda_1) \cdots d\mu_R(\lambda_m)   . \]
Expanding out $S_R$, one see that 
   \[  \varphi_R(t) := \frac{1}{R^{2d-m\alpha}} \frac{1}{(\sum P)^2} \sum_{x,y\in (\Lambda_R)^m } \prod_{i=1}^m K(x_i-y_i+[t_i R])  P(x) P(y) . \] 
By the first item of Lemma \ref{l:rv}, 
 \[ \varphi_R(t) \to   E^m_{d,\alpha}(t) \]
 uniformly, where $E^m_{d,\alpha}(t)$ is defined in \eqref{e:et}. Since $E^m_{d,\alpha}(t)$ is continuous (\cite[Lemma 1]{dm79}), this implies the weak convergence of $\tilde{\mu}_R$ to the Fourier transform of $E^m_{d,\alpha}(t)$ (\cite[Lemma 2]{dm79}) which must therefore be $\mu_0$.
\end{proof}

\section{Semi-local extensions of standard kernel computations}
\label{a:kc}
In this appendix we give semi-local extensions of some computations involving sums over a stationary kernel $K: \Z^d \to \R$ satisfying $K(x) = K(-x)$, $K \ge 0$, and $K(x) \sim |x|^{-\alpha}$ as $|x| \to \infty$ for some $\alpha > 0$. These results were used extensively in Sections \ref{s:sl} and \ref{s:mr} and Appendix \ref{a:slaf}.

\subsection{Stationary asymptotics}
\label{ss:ba}
We first study asymptotics when $K$ is weighted by a signed kernel $P : (\Z^d)^m \to \R$ which is stationary, permutation invariant, symmetric, and integrable (in the sense of Appendix \ref{a:slaf}).

Recall the decay assumptions \eqref{e:pdecay} and \eqref{e:kreg} on $P$ and $K$ respectively from Appendix \ref{a:slaf}, and the constant $E_{d,\alpha}$ from \eqref{e:e}. We extend \eqref{e:e} by defining, for $m\alpha < d$, the function $E^m_{d,\alpha}: (\R^d)^m \to (0,\infty)$
\begin{align}
\nonumber E^m_{d,\alpha}(t) & := \int_{x,y \in [-1,1]^d} |x-y+t_1|^{-\alpha} \cdots |x-y+t_m|^{-\alpha} \, dx dy  \\
\label{e:et}  & = \int_{\R^d} \mathcal{S}_d(x)  |x + t_1|^{-\alpha} \cdots |x+t_m|^{-\alpha} \, dx  
\end{align}
so that $E_{d,m\alpha} =  E^m_{d,\alpha}(0)$. In \cite[Lemma 1]{dm79} it is shown that $t \mapsto E^m_{d,\alpha}(t)$ is continuous. We also extend $E_{d,\alpha}$ by setting
\[ E_{d,d} := \lim_{R \to \infty} \frac{ \sum_{x \in \Lambda_R} |x|^{-d} }{\log R} \in (0,\infty) .\]

For $m \ge 1$ and $4m$ vertices labelled $x_1,\ldots,x_m$, $y_1,\ldots,y_m$, $u_1,\ldots,u_m$ and $v_1,\ldots,v_m$, a \textit{valid diagram} $\gamma$ is a perfect matching of the vertices such that no edge has both endpoints with label having the same letter (these are precisely the Feynman diagrams used to compute the expected product of four Wick polynomials: see Theorem~\ref{t:df}). Let $\mathfrak{D}_m$ denote the set of all such valid diagrams. For $\gamma \in \mathfrak{D}_m$ and points $x,y,u,v \in (\Z^d)^m$, the \textit{value} $v_\gamma(x,y,u,v)$ of $\gamma$ is $\prod_{e \in \gamma} K_e$ where $K_e = K(a-b)$ for each edge $e = (a,b)$ in $\gamma$ (identifying the point $x_i \in \R^d$ with the vertex $x_i$ in the natural way).

Let $[t] \in \Z^d$ denote the integer part of $t \in \R^d$. 

\begin{lemma}
\label{l:rv}
$\,$
\begin{enumerate}

\item Suppose $m \alpha < d$,  $\sum P \neq 0$, and \eqref{e:pdecay} holds for some $\kappa > d$. Then as $R \to \infty$
\begin{equation}
    \label{e:ba1}
  \sum_{x,y \in (\Lambda_R)^m} \prod_{i=1}^m K \big(x_i - y_i + [t_i R] \big) P(x) P(y) \sim  E^m_{d,\alpha}(t)  \Big( \sum P \Big)^2  R^{2d - m\alpha}  
  \end{equation}
uniformly over $t \in (\R^d)^m$. In particular, setting $P(x) = \id_{x_1 = \cdots =x_m}$ and $t=0$,
\[  \sum_{x,y \in \Lambda_R}  K(x-y)^m  \sim  E_{d,m\alpha}   R^{2d - m\alpha} . \]

\item Suppose $m \alpha = d$, $\sum P \neq 0$, and \eqref{e:pdecay} holds for some $\kappa > d$. Then as $R \to \infty$
\begin{equation}
    \label{e:ba2}
    \sum_{x,y \in (\Lambda_R)^m} \prod_{i=1}^m K(x_i - y_i ) P(x) P(y)  \sim  E_{d,d} \big(\sum P \big)^2 R^d (\log R) 
    \end{equation}
and if \eqref{e:pdecay} holds for some $\kappa > 3d$,
\begin{equation}
    \label{e:ba3}
   \frac{1}{(m!)^2} \sum_{\gamma \in \mathfrak{D}_m} \sum_{x,y,u,v \in (\Lambda_R)^m}   v_\gamma(x,y,u,v)  P(x) P(y) P(u) P(v) \sim  3 E_{d,d}^2  \big(\sum P \big)^4 R^{2d} (\log R)^2 .
    \end{equation}

\item Suppose either (i) $m\alpha > d$ and \eqref{e:pdecay} holds for some $\kappa > d$, or (ii) $m \alpha > d-2$, $\sum P = 0$, \eqref{e:pdecay} holds for every $\kappa$, and \eqref{e:kreg} holds. Then there exists a constant  $\tau = \tau_{K,P} \in [0,\infty)$ such that, as $R \to \infty$
\begin{equation}
    \label{e:ba4}
\sum_{x,y \in (\Lambda_R)^m} \prod_{i=1}^m K(x_i - y_i ) P(x) P(y) \sim \tau R^d 
\end{equation}
and if \eqref{e:pdecay} also holds for some $\kappa > 3d$,
\begin{equation}
    \label{e:ba5}
   \frac{1}{(m!)^2} \sum_{\gamma \in \mathfrak{D}_m}  \sum_{x,y,u,v \in (\Lambda_R)^m}  v_\gamma(x,y,u,v) P(x) P(y) P(u) P(v) \sim  3 \tau^2   R^{2d}  .
    \end{equation}
\end{enumerate}
\end{lemma}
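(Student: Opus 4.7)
The strategy is to \emph{localize} each $P$-weighted sum by exploiting the tail bound \eqref{e:pdecay}, effectively replacing $P(x)P(y)$ by $(\sum P)^2$ supported on the diagonals $x_1 = \cdots = x_m$ and $y_1 = \cdots = y_m$, and then reducing to classical two-point kernel sums. Concretely, I use stationarity and permutation invariance to write $P(x) = P(0, x_2 - x_1, \ldots, x_m - x_1)$ and change variables $u_i = x_i - x_1$, $v_j = y_j - y_1$. Contributions with $\max_i \|u_i\|_\infty \vee \max_j \|v_j\|_\infty > R^\delta$ (for some small $\delta > 0$) are controlled by $\Gamma(R^\delta)$ and are negligible under \eqref{e:pdecay}; on the bulk part where $|u_i|, |v_j| \le R^\delta \ll R$, the factor $K(x_1 - y_1 + u_i - v_j + [t_iR])$ is well approximated by $K(x_1 - y_1 + [t_iR])$.

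For item (1), after rescaling $x_1 \mapsto Rx_1$, $y_1 \mapsto Ry_1$ and using $R^\alpha K(Rz) \to |z|^{-\alpha}$, the localized sum normalized by $R^{2d-m\alpha}$ becomes a Riemann sum for
\[ \Big( \sum P \Big)^2 \int_{x, y \in [-1,1]^d} \prod_{i=1}^m |x - y + t_i|^{-\alpha} \, dx \, dy = E^m_{d,\alpha}(t) \Big(\sum P\Big)^2 , \]
with uniformity in $t$ following from continuity of $E^m_{d,\alpha}$ on $(\R^d)^m$ and uniform tail control from \eqref{e:pdecay} (integrable singularities at $x-y+t_i=0$ are handled locally uniformly). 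For item (2), the same localization yields $(\sum P)^2 \cdot |\Lambda_R| \cdot \sum_{z \in \Lambda_R} K(z)^m$; since $m\alpha = d$, the inner sum is $\sim E_{d,d} \log R$ by definition. For the 4th-moment identity \eqref{e:ba3}, I would apply the Isserlis--Wick counting to the four Wick-product factors: the dominant contribution comes from the three \emph{disconnected} pairings of blocks---$\{x,y\}$ with $\{u,v\}$, $\{x,u\}$ with $\{y,v\}$, and $\{x,v\}$ with $\{y,u\}$---each yielding $(\sum P)^4 (E_{d,d} \log R)^2 R^{2d}$; any diagram with a "bridge" edge linking two different Wick blocks produces a convergent $z$-sum in at least one coordinate instead of a logarithmically divergent one, and is thus of strictly smaller order (the $\kappa > 3d$ assumption is used here to control the $P$-tails uniformly across the $4m$ vertices).

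For item (3) with $m\alpha > d$, localization gives $\sim (\sum P)^2 R^d \sum_{z \in \Z^d} K(z)^m$ with the $z$-sum absolutely convergent, yielding $\tau \ge 0$. The delicate subcase is $m\alpha > d-2$ with $\sum P = 0$, where the naive leading constant vanishes: I would apply summation-by-parts twice, writing
\[ \sum_u P(0,u) F(u) = \sum_u P(0,u) \big[F(u) - F(0)\big] \]
and similarly in $v$. Using \eqref{e:kreg} in the expanded form $K(z+u) - K(z) = -\alpha |z|^{-\alpha-2} (z \cdot u) + O\big((|u|^2 + 1)|z|^{-\alpha-2}\big)$ for bounded $|u|$, the first-order terms vanish under summation by the reflection symmetry $P(0,u) = P(0,-u)$, which forces $\sum_u u \, P(0,u) = 0$; the surviving second-order remainder produces effective decay $|z|^{-m\alpha - 2}$ in the twice-cancelled product, whose $z$-sum converges precisely under $m\alpha > d - 2$. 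The 4th-moment identity \eqref{e:ba5} then follows by the same Isserlis--Wick argument as in item (2). The main obstacle I anticipate is this double-cancellation computation: coordinating the $u$- and $v$-cancellations with the reflection symmetry while propagating the error terms from \eqref{e:kreg} through an $m$-fold product of kernels requires careful bookkeeping to confirm the effective decay is uniform in $u, v$ across the (slowly truncated) support of $P$.
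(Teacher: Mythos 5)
Your overall strategy coincides with the paper's: localize the $P$-weighted sums near the diagonal via \eqref{e:pdecay}, rescale to a Riemann sum for $E^m_{d,\alpha}(t)(\sum P)^2$ in item (1) (the paper implements the same computation with an $\eps$/$r$ truncation around the singularity $x-y+t_i=0$ and a set $A_R(u,v,t)$ controlling when the offsets are negligible), classify the Feynman diagrams for \eqref{e:ba3} and \eqref{e:ba5} into the three block-disconnected pairings versus bridged ones, and for item 3(ii) exploit $\sum P=0$ together with the reflection symmetry $P(0,u)=P(0,-u)$ and the expansion \eqref{e:kreg} to produce effective decay $|z|^{-m\alpha-2}$, summable precisely when $m\alpha>d-2$. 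That last mechanism is exactly the paper's treatment of the error term $E_1$ and of the Cauchy property of $\tau_r$.

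There is, however, a genuine error in your item 3(i). For $m\ge 2$ and $m\alpha>d$ the limit is \emph{not} $(\sum P)^2\,R^d\sum_z K(z)^m$: since $\sum_z K(z)^m<\infty$, the sum $\sum_{x,y}\prod_iK(x_i-y_i)P(x)P(y)$ is dominated by configurations with $|x_1-y_1|=O(1)$, which is precisely where the offsets $x_i-x_1$, $y_i-y_1$ are \emph{not} small relative to $x_1-y_1$, so the replacement $K(x_1-y_1+u_i-v_i)\approx K(x_1-y_1)$ incurs an error of the same order $R^d$ as the putative main term. The correct constant is the fully coupled local sum
\[
\tau=\sum_{x_2,\dots,x_m\in\Z^d}\;\sum_{y_1,\dots,y_m\in\Z^d}K(y_1)\prod_{i=2}^{m}K(x_i-y_i)\,P(0,x_2,\dots,x_m)\,P(y_1,\dots,y_m),
\]
which does not factorize through $\sum P$; one must instead prove absolute convergence of this series (the paper does so by splitting according to whether the offsets exceed $\|y_1\|_\infty/4$ and using \eqref{e:pdecay}) and then show the boundary corrections are $o(R^d)$. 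Since the lemma only asserts existence of some $\tau\in[0,\infty)$, the statement survives, but your argument as written would not produce it. A smaller imprecision: in your bridge-diagram bound for \eqref{e:ba3}, the sub-sums $\sum_{u\in\Lambda_{2R}}|u|^{-q\alpha}$ with $0<q<m$ and $m\alpha=d$ are not convergent but grow like $R^{d-q\alpha}$; the correct bookkeeping shows a non-regular diagram contributes $O(R^{2d}\log R)$, i.e.\ one logarithm is lost rather than a whole sum becoming finite, which still suffices for \eqref{e:ba3}.
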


\begin{remark}
    \label{r:beta}
    Comparing Lemma \ref{l:rv} with \eqref{e:beta}, we see that 
    \[ \beta_{d,k} = \begin{cases}
        c_d E_{d,k(d-2)} &  \text{if } k(d-2) < d , \\
        c_d E_{d,d} &  \text{if } k(d-2) = d ,
    \end{cases} \]
    where $c_d$ is such that $G(x) \sim c_d |x|^{2-d}$.
\end{remark}

\begin{remark}
Among the statements \eqref{e:ba1}--\eqref{e:ba5}, the permutation invariance of $P$ is only used to prove \eqref{e:ba3} and \eqref{e:ba5}, and symmetry only for the cases $\sum P = 0$ of \eqref{e:ba4}--\eqref{e:ba5}.
\end{remark}

\begin{proof}[Proof of Lemma \ref{l:rv}]
We will use $x_{[i,j]}$ to denote the vector $(x_i,\ldots,x_j)$.

\textbf{(1).} For $R\in\N$, writing $x_1 = [uR]$ and $y_1 = [vR]$, by stationarity the left-hand side of \eqref{e:ba1} can be expressed as
\begin{equation*}
\begin{aligned}
    R^{2d-m\alpha}  \int_{u,v \in [-1,1+1/R]^d}  R^\alpha &K([uR]-[vR] + [t_1R]) \sum_{w,z \in (\Z^d)^{m-1}}    F_R(u,v,t_{[2,m]},w,z) \, du dv
\end{aligned}
\end{equation*}
where $ F_R(u,v,t_{[2,m]},w,z) $ equals
\[   \id_{ x_1+w , y_1 +z  \in ( \Lambda_R)^{m-1}} \prod_{i=1}^{m-1} R^\alpha K\big( x_1-y_1 + w_i - z_i + [t_iR] \big) P(0, w) P(0,z)  . \]
Define the set
\[ A_R(u,v,t) = \big\{ (w,z) \in (\Z^d)^{m-1} :  \|w\|_\infty, \|z\|_\infty  \leq  \min_i \|[uR]-[vR]+[t_iR]\|_\infty^{1-\delta/(2d)}  \big\} \]
where $\delta \in (0,1)$ is such that \eqref{e:pdecay} holds for $\kappa > d + \delta$. Since $K(x) \sim |x|^{-\alpha}$, there exists a constant $c_1 > 0$ depending only on $K$ such that, for all $u,v,t$,
\[ (w,z) \in A_R(u,v,t) \quad \Longrightarrow  \quad  R^\alpha K\big(w_i - z_i + x_1-y_1+ [t_iR] \big) \le  c_1 \max\{1,|u-v+t_i |\}^{-\alpha} . \]
Moreover for fixed $u,v,t$ such that $u-v+t_i\neq 0$, as $R \to \infty$
\[ R^\alpha K\big(w_i - z_i + x_1-y_1+ [t_iR] \big) \to    |u-v+t_i |^{-\alpha} \]
uniformly on $A_R(u,v,t)$.

Fix truncation parameters $\eps,r > 0$. Define 
\[ E^m_{d,\alpha}(t; \eps) := \int_{x,y \in [-1,1]^d, \|x-y+t_i\|_\infty \ge \eps} |x-y+t_1|^{-\alpha} \cdots |x-y+t_m|^{-\alpha} \, dx dy  . \]
Since $m \alpha < d$, an application of H\"{o}lder's inequality shows that $E^m_{d,\alpha}(t; \eps) \to E^m_{d,\alpha}(t)$ as $\eps \to 0$ uniformly over $t$. Moreover define $\sum_{\le r} P := \sum_{\|x\|_\infty \le r} P(0,x)$ which satisfies $\sum_{\le r} P \to \sum P$ as $r \to \infty$. 

We first consider the contribution from the set $\| u-v + t_i \|_\infty \ge \eps \, \forall i$, and $\|w\|_\infty, \|z\|_\infty \le r$. Note that there exists $R_0 > 0$ depending only on $\eps,r$ such that, if $R \ge R_0$, then $(w,z) \in A_R(u,v,t)$ holds on this set. Then by boundedness and compactness, 
we have that
\[\int_{ \substack{u,v \in [-1,1+1/R]^d \\ \|u - v + t_i\|_\infty \ge \eps}}   R^\alpha K([uR]-[vR]+[t_1R]) \!\! \sum_{ \substack{ w, z\in (\Z^d)^{m-1} \\ \|w\|_\infty, \|z\|_\infty \le r}}   F_R(u,v,t_{[2,m]},w,z) \, du dv  \]
converges to $E^m_{d,\alpha}(t; \eps)  (\sum_{\le r} P)^2$ as $R \to \infty$, uniformly over $t$. Since this limit converges to $E^m_{d,\alpha}(t)  (\sum P)^2$ as $\eps \to 0$ and $r \to \infty$, uniformly over $t$, it remains to show the negligibility of the contribution from the sets  (i)  $\exists i \text{ s.t. } \|u - v + t_i\| \le  \eps$ and $(w,z) \in A_R(u,v,t)$, (ii) $\max\{ \|w\|_\infty, \|z\|_\infty \} > r $ and $(w,z) \in A_R(u,v,t)$, and (iii)  $(w,z) \notin A_R(u,v,t)$.

Let $c_i > 0$ be constants independent of $R$ and $t$ that may change from line to line. Since $P$ is bounded and $m \alpha < d$, the contribution from the first set is bounded by
\begin{align*}
&  \int_{ \substack{u,v \in [-2,2]^d \\ \exists i \text{ s.t. } \|u - v + t_i\| \le \eps} }  R^\alpha K([uR]-[vR]+[t_1R]) \sum_{w, z \in (\Z^d)^{m-1}}  \id_{A_R(u,v,t)}   \big| F_R(u,v,t_{[2,m]},w,z) \big| \, du dv  \\
& \qquad \le  c_2  \int_{ \exists i \text{ s.t. } \|u - v + t_i\| \le \eps}   \prod_i  \max\{1,|u-v+t_i |\}^{-m \alpha}\, dudv \le c_\eps
\end{align*}
for some $c_\eps \to 0$ as $\eps \to 0$, independent of $R$ and $t$.

Similarly the contribution from the second set is bounded by
\begin{align*}
&  \int_{ u,v \in [-2,2]^d}   R^\alpha K([uR]-[vR]+[t_1R]) \sum_{\substack{ w, z \in (\Z^d)^{m-1} \\ \max\{ \|w\|_\infty, \|z\|_\infty \} > r  } }  \id_{A_R(u,v,t)}   \big| F_R(u,v,t_{[2,m]},w,z) \big| \, du dv  \\
& \qquad \le  c_2 \big(\sum |P| \big) \Gamma(r)  \int_{ u,v \in [-2,2]^d  } \prod_i  \max\{1,|u-v+t_i |\}^{-m \alpha} \, dudv \\
& \qquad \le c_3 \Gamma(r) .
\end{align*}

Finally, since $K$ and $P$ are bounded, and recalling that \eqref{e:pdecay} holds for $\kappa > d + \delta$, we have
\begin{align*}
&  \int_{u,v \in [-2,2]^d}  R^\alpha K([uR]-[vR]+[t_1R]) \sum_{w, z \in (\Z^d)^{m-1}}  \id_{A_R(u,v,t)^c}   \big| F_R(u,v,t_{[2,m]},w,z) \big| \, du dv \\
& \qquad \le  c_2 R^{m\alpha}  \int_{u,v \in [-2,2]^d} \Gamma \big( (c_3 R \min_i\|u-v+t_i\|_\infty)^{1-\delta/(2d)} \big)\, dudv \\
& \qquad  \le  c_4  R^{m\alpha}  \int_{u,v\in[-2,2]^d}\ind_{\min_i\| u-v+t_i\|_\infty\leq 1/(c_3 R)}\\
&\qquad\qquad\qquad\qquad\qquad\qquad\qquad+(R\min_i\|u-v+t_i\|_\infty)^{-(d+\delta/4)}\ind_{\min_i\|u+v-t_i\|_\infty\geq 1/(c_3 R)}\;dudv\\
&\qquad\leq c_5 R^{m\alpha}\left(\int_{\|u\|\leq 1/(c_3 R)}\;du+\int_{\|u\|\geq 1/(c_3 R)}(R\|u\|)^{-(d+\delta/4)}\;du\right)\leq c_6 R^{m\alpha-d} .
\end{align*}
Taking $\eps \to 0$ and $r \to \infty$ completes the proof.

\textbf{(2).} Let $r \in (0,R)$ and define the set 
\[ B_{R,r} = \big\{ (x,y) \in (\Lambda_R)^m : x_1 \in \Lambda_{R - r}, x_{[2,m]} \in (x_1 + \Lambda_r)^{m-1}, y \in (x_1 + \Lambda_r)^m   \big\} . \]
By stationarity, the contribution to \eqref{e:ba2} from $B_{R,r}$ is equal to $\tau_r |\Lambda_{R - r}|$ where
\begin{equation}
    \label{e:tau}
 \tau_r :=   \sum_{x_{[2,m]} \in (\Lambda_r)^{m-1}, y \in (\Lambda_r)^m}  K(y_1) \prod_{i=2}^m K\big(x_i - y_i \big) P(0, x_{[2,m]}) P(y) .  
 \end{equation}
On the other hand, the contribution outside $B_{R,r}$  is at most 
\[ e_{2R} |\Lambda_R \setminus \Lambda_{R - r}|  + (e_{2R} - e_r) |\Lambda_R|   \] 
where
\begin{equation}
    \label{e:er}
e_r :=   \sum_{x_{[2,m]} \in (\Lambda_r)^{m-1}, y \in  (\Lambda_r)^{m} }  K(y_1) \prod_{i=2}^m K\big(x_i - y_i \big) |P(0, x_{[2,m]})| |P(y)|  .  
\end{equation}
Let $c_d > 0$ be defined as 
\[ c_d := \lim_{R \to \infty} (\log R)^{-1} \sum_{x \in \Lambda_R} |x|^{-d} .\]
Then we claim that as $r\to\infty$
\begin{equation}
\label{e:logcase}
\tau_r \sim c_d \big(\sum P \big)^2 \log r  \quad \text{and} \quad e_r \sim c_d \big(\sum |P| \big)^2 \log r  .
\end{equation}
Now fix $\eps \in (0,1)$ and set $r = \eps R$. Given \eqref{e:logcase}, the left-hand side of \eqref{e:ba2} is asymptotic to
\[ c_d \big(\sum P \big)^2  |\Lambda_{R - \eps R}| (\log \eps R)   + O(1) \Big(  |\Lambda_R \setminus \Lambda_{R - \eps R} | (\log 2R)  + |\Lambda_R|   \big( \log (2R) - \log (\eps R) \big) \Big) ,\]
which gives \eqref{e:ba2} by sending $\eps \to 0$. 

To show \eqref{e:logcase}, for $y_1 \in \Z^d$ define the set
\[ A(y_1) = \{  (x,y) \in (\Z^d)^{m-1} :  \max\{  \|x\|_\infty,  \|y\|_\infty \} \le  \| y_1 \|_\infty^{1-\delta/(2d)}   \} \]
where $\delta \in (0,1)$ is such that \eqref{e:pdecay} holds for $\kappa > d + \delta$. Observe that, as $|y_1| \to \infty$
\[ R^\alpha K\big(x_i - (y_1 + y_i ) \big) \sim    |y_1|^{-\alpha} \]
uniformly for $(x_{[2,m]},y_{[2,m]}) \in A(y_1)$. Then 
\begin{align}
\label{e:logcase2}
\nonumber & \sum_{y_1 \in \Lambda_{r/2}, (x_{[2,m]},y_{[2,m]}) \in  A(y_1)}  K(y_1) \prod_{i=2}^m K\big(x_i - (y_1 + y_i) \big) P(0, x_{[2,m]}) P(0, y_{[2,m]}])  \\
& \qquad \qquad \qquad \qquad \qquad \qquad \qquad \qquad \sim   \sum_{y_1 \in \Lambda_{r/2}} |y_1|^{-d}  (P'_{y_1})^2 
\end{align}
where 
\[ P'_{y_1} : = \sum_{x_{[2,m]} \in \big( \Lambda_{ \|y_1\|_\infty^{1-\delta/(2d)} } \big)^m } P(0,x_{[2,m]} )  . \]
Since $P'_{y_1} \to \sum P$ as $|y_1| \to \infty$, \eqref{e:logcase2} is asymptotic to 
\[ \big(\sum P \big)^2 \sum_{y_1 \in \Lambda_{r/2}} |y_1|^{-d} \sim c_d \big(\sum P\big)^2 \log r   . \]
On the other hand, the difference between $\tau_r$ and \eqref{e:logcase2} is at most
\[ c   \big(\sum |P| \big)^2 \sum_{ y_1 \in \Lambda_r \setminus \Lambda_{r/2} } |y_1|^{-d}  + \sum_{y_1 \in \Lambda_r } \Gamma \big(\|y_1\|_\infty^{1-\delta/(2d)} \big)  \]
which is bounded. This shows that $\tau_r$ satisfies \eqref{e:logcase}, and the proof for $e_r$ is identical.

We now turn to \eqref{e:ba3}. We say that a valid diagram $\gamma \in \mathfrak{D}_m$ is \textit{regular} if the endpoint labels partition $\{x,y,u,v\}$. For example the valid diagram with edges $\{ (x_1,y_1), (x_2,y_2),(u_1,v_1),(u_2,v_2)\}$ is regular, but the one with edges $\{ (x_1,y_1), (x_2,u_2),(y_2,v_2),(u_1,v_1)\}$ is not. Letting $\mathfrak{R}_m \subseteq \mathfrak{D}_m$ be the subset of regular valid diagrams, by permutation invariance of $P$ we have
\begin{align*}
&  \frac{1}{(m!)^2}   \sum_{\gamma \in \mathfrak{R}_m} \sum_{x,y,u,v \in (\Lambda_R)^m} v_{\gamma}(x,y,u,v) P(x) P(y) P(u) P(v)  \\
& \qquad  =  3 \sum_{x,y,u,v \in (\Lambda_R)^m} \prod_{i=1}^m K(x_i-y_i) \prod_{i=1}^m K(u_i - v_i) P(x) P(y) P(u) P(v) .
\end{align*}
Combining with \eqref{e:ba2}, to complete the proof of \eqref{e:ba3} it suffices to show that the contribution to \eqref{e:ba3} from every non-regular valid diagram is of negligible order $o(R^{2d} (\log R)^2)$. 

If $m=1$ all valid diagrams are regular so we suppose that $m \ge 2$ and fix a non-regular valid diagram $\gamma$. Let $x,y \in (\Lambda_R)^m$ and suppose that $\max\{ \|x_{[2,m]}-x_1 \|_\infty, \| y_{[2,m]}-y_1 \|_\infty \} \le \|x_1-y_1\|_\infty / 4$. Then every edge in $\gamma$ of the form $(x_i,y_i)$ contributes a factor of at most $c |x_i - y_i|^ {-\alpha}$ to $v_\gamma(x,y,u,v)$. Arguing similarly for all pairs, and using that \eqref{e:pdecay} holds for $\kappa > 3d$, the contribution to \eqref{e:ba3} from $x_1,y_1,u_1,v_1 \in \Lambda_R$ is at most 
\[ W(x_1,y_1,u_1,v_1) = c_4 \prod_{(w_1,z_1)} |w_1-z_1|^{-k_{w,z} \alpha}   \]
where the sum is over all six of the ordered pairs of $\{x_1,y_1,u_1,v_1\}$, $c_4 > 0$ depends only on $K$ and $P$, and $k_{w,z} \ge 0$ is the number of edges in the diagram whose endpoints have labels $w$ and $z$, and hence $k_{w,z}$ sum to $2m$ and satisfy $k_{x,y} + k_{x,u} + k_{x,v} = m$. It remains to show that $\sum_{x_1,y_1,u_1,v_1 \in \Lambda_R} W(x_1,y_1,u_1,v_1) = o(R^{2d} (\log R)^2)$, which is similar to computations carried out in, e.g., \cite[p. 435]{bm83} and \cite[p. 132]{np12}. To give the main idea, by repeatedly using the inequality $s^a t^b \le \max\{ s^{a+b}, t^{a+b} \} \le s^{a+b} + t^{a+b}$, which holds for $s,t,a,b \ge 0$, one sees that
\begin{equation}
    \label{e:w}
\sum_{x_1,y_1,u_1,v_1 \in \Lambda_R} W(x_1,y_1,u_1,v_1) \le c_5 R^d \sum_{x \in \Lambda_{2R}} |x_1|^{-m\alpha}  \sum_{u \in \Lambda_{2R} } |u|^{-q \alpha}   \sum_{v \in \Lambda_{2R} } |v|^{-r \alpha}   
\end{equation}
for some $q,r \ge 0$ and $q + r = m$. Moreover since $\gamma$ is non-regular, one has $q,r > 0$. To conclude, recalling that $m \alpha = d > \max\{q\alpha,r\alpha\}$, \eqref{e:w} is bounded by 
\[ c_6 R^d (\log R) R^{d-q\alpha} R^{d-r\alpha} = O \big(R^{2d} (\log R) \big) . \]

\textbf{(3).} We first consider the case $m\alpha > d$, which is similar to the previous item. Let $\tau_r$ and $e_r$ be defined as in \eqref{e:tau} and \eqref{e:er}. We will prove that the following limits exist
\begin{equation}
\label{e:limits}
 \tau_\infty := \lim_{\tau \to \infty} \tau_r \qquad \text{and} \qquad  e_\infty := \lim_{r \to \infty} e_r. 
 \end{equation} 
 Given \eqref{e:limits}, fixing $r > 0$ and arguing as in the previous item shows that, as $R \to \infty$, the left-hand side of \eqref{e:ba4} is 
\[ \tau_r  |\Lambda_{R - r}| + O \big( |\Lambda_R \setminus \Lambda_{R - r} | \big)  + O\big( (e_{2R} - e_r) |\Lambda_R| \big).\]
 Sending $r \to \infty$ gives \eqref{e:ba4} for $\tau = \tau_\infty$ (we must have $\tau \ge 0$ since \eqref{e:ba1} is non-negative by \eqref{e:cltvar}).

It is sufficient to show \eqref{e:limits} for $e_r$ since this implies absolute convergence of the series defining $\tau_r$. For $y_1 \in \Z^d$ define the set
\[ A(y_1) = \{  x,y \in (\Z^d)^{m-1} :  \max\{  \|x\|_\infty,  \|y\|_\infty \} \le  \| y_1 \|_\infty / 4   \} . \]
Since $K(x) \sim |x|^{-\alpha}$, there exists a $c_1 > 0$ such that, for all $y_1$, 
\[ (x_{[2,m]},y_{[2,m]}) \in A(y_1) \quad \Longrightarrow  \quad   K\big(x_i - (y_1+y_i)  \big) \le  c_1 \min\{1,  |y_1|^{-\alpha} \} . \]
Since $P$ is bounded, and recalling \eqref{e:pdecay}, 
\[ e_\infty < c_2 \sum_{y_1 \in \Z^d} \Big( \min\{1,  |y_1|^{-m\alpha} \}  + \Gamma(\| y_1 \|_\infty / 4 ) \Big)  \]
is finite, as required. 

The proof of \eqref{e:ba5} is essentially the same as that of \eqref{e:ba3}, except since we assume $m\alpha > d$, \eqref{e:w} is bounded instead by
\[ c_3 R^d R^{\max\{0,d-q\alpha\}}(\log R)^{\id_{q\alpha=d}} R^{\max\{0,d-r\alpha\}}(\log R)^{\id_{r\alpha=d}} = o(R^{2d}) \]
as required.

We turn to the case in which $m \alpha > d-2$, $\sum P = 0$, \eqref{e:pdecay} holds for all $\kappa$, and \eqref{e:kreg} holds. Let $\delta > 0$ be such that $\alpha + 2 - 2\delta > d$, let $\beta \in (0,1)$ be such that $\beta(d+1+2md\delta) < 1$, and let $r = R^\beta$. Define the set 
\[ B_R = \big\{ x,y \in (\Lambda_R)^m : x_1 \in \Lambda_{R - 2r}, y_1 \in x_1 + \Lambda_r, x_{[2,m]} \in (x_1 + \Lambda_{r^\delta})^{m-1}, y_{[2,m]}\in (y_1 + \Lambda_{r^\delta})^{m-1}  \big\} . \]
By stationarity, the contribution to \eqref{e:ba4} from $B_R$ is equal to $\tau_r |\Lambda_{R - 2r}|$ where
\[  \tau_r :=   \sum_{y_1 \in \Lambda_{r}} \sum_{x_{[2,m]}, y_{[2,m]} \in (\Lambda_{r^\delta})^{m-1}}  K(y_1) \prod_{i=2}^m K\big(x_i - (y_1+ y_i) \big) P(0, x_{[2,m]}) P(0,y_{[2,m]}]) .  \]
On the other hand, since $K$ and $P$ are bounded, the contribution outside $B_R$  is at most $E_1 + E_2 + E_3$ where
\[ E_1 = \Big| \sum_{(x,y) \in B'_R}  K(x_1-y_1) \prod_{i=2}^m K\big(x_1 - y_1 + x_i - y_i \big) P(0, x_{[2,m]}) P(0,y_{[2,m]}) \Big|  \]
with 
\[B'_R =  \big\{ x,y \in (\mathbb{Z}^d)^m : x_1,y_1 \in \Lambda_R, \|y_1 - x_1\|_\infty > r, x_{[2,m]},y_{[2,m]} \in (\Lambda_{r^\delta})^{m-1}  \big\} , \] 
\[ E_2 \le c_1 |\Lambda_R|^2 \Gamma(r^\delta)  =o(R^{d}) ,\]
and 
\[ E_3 \le  c_2 |\Lambda_R \setminus \Lambda_{R-2r}|  |\Lambda_r||\Lambda_{r^\delta}|^{2m} =     o(R^d)  ,\]
where to bound $E_2$ and $E_3$ we used respectively \eqref{e:pdecay} and our choice of $\beta$. By \eqref{e:kreg},
\[ K\big(x_1 - y_1 + x_i - y_i \big) = |x_1-y_1|^{-\alpha} + \frac{ - \alpha \langle x_i-y_i , x_1-y_1\rangle }{|x_1-y_1|^{\alpha+2}} + O \big( (r^\delta)^2 |x_1-y_1|^{-\alpha-2} \big) \]
 uniformly for every $(x,y) \in B'_R$, and so $ K(x_1-y_1)  \prod_{i=2}^m K (x_1 - y_1 + x_i - y_i )$ is
 \[  |x_1-y_1|^{-m\alpha} +  \frac{ - \alpha \sum_{i=2}^m \langle x_i-y_i , x_1-y_1\rangle }{|x_1-y_1|^2}  + O \big( r^{2\delta} |x_1-y_1|^{-\alpha-2} \big) .  \]
  Pairing up each $x_{[2,m]}$ with $-x_{[2,m]}$ to eliminate the first order correction, $E_1$ is at most a constant times
  \[  |\Lambda_R|^2 \Big( \sum P - \Gamma(r^\delta ) \Big)^2  +   |\Lambda_R| r^{2\delta}  \sum_{x \in \Z^d \setminus \Lambda_r} |x|^{-\alpha-2}   .\]
  Since we assume $\sum P = 0$ and \eqref{e:pdecay} holds for all $\kappa$, and since $\alpha + 2 - d > 2\delta $, we have $E_1 = o(R^{d})$. 
  
To complete the proof of \eqref{e:ba4} it remains to show that $\tau_r \to \tau_\infty < \infty$. Using that $K$ and $P$ are bounded we have
\[ |\tau_{r+1} - \tau_r | \le E_4 + O(|\Lambda_{r+1}| \Gamma(r^\delta)) \]
where, using \eqref{e:kreg} as above
\begin{align*}
E_4 &= \Big| \sum_{y_1 \in \partial \Lambda_{r+1}} \sum_{x_{[2,m]}, y_{[2,m]} \in (\Lambda_{r^\delta})^{m-1}}  K(y_1) \prod_{i=2}^m K\big(x_i - (y_1+ y_i) \big) P(0, x_{[2,m]}) P(0,y_{[2,m]}]) \Big| \\
& \le c_3 r^{d-1}  \Big( \sum P - \Gamma(r^\delta) \Big)^2 + c_3 r^{2\delta} \sum_{y_1 \in \partial \Lambda_{r+1}} |y_1|^{-\alpha-2} = O(r^{d-1-\alpha-2+2\delta}) .
\end{align*} 
By \eqref{e:pdecay}, and since $\alpha + 2 - d > 2\delta $, we conclude that $\tau_r$ is a Cauchy sequence.

The proof of \eqref{e:ba5} is similar to in previous cases and we omit the details.
\end{proof}

\subsection{Boundary asymptotics}

We next study a boundary variant of the first item of Lemma~\ref{l:rv}, for which we need a stronger decay assumption on the kernel. If $\alpha < d-1$, define
\begin{equation}
\label{e:edot}
 \overline{E}_{d,\alpha} = \int_{x,y \in \partial [-1,1]^d} |x-y|^{-\alpha } \in (0,\infty) .
 \end{equation}

\begin{lemma}
\label{l:rv2}
Suppose $\alpha < d-1$ and let $\gamma(k) : \N_0 \to \R$ be such that $\gamma(k) k^\kappa \to 0$ for every $\kappa > 0$. Then as $R \to \infty$, 
\begin{equation}
    \label{e:erv2}
\sum_{x,y \in \Lambda_R} K(x-y ) \gamma \big( d_\infty(x, \partial \Lambda_R ) \big) \gamma \big( d_\infty(y, \partial \Lambda_R ) \big) \sim  \overline{E}_{d,\alpha}   \Big( \sum \gamma \Big)^2  R^{2(d-1)-\alpha}     . 
\end{equation} 
\end{lemma}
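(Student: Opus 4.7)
To prove Lemma~\ref{l:rv2} I would adapt the proof strategy of item (1) of Lemma~\ref{l:rv}, with the bulk Riemann sum replaced by a sum over the $2d$ boundary faces of $\partial[-1,1]^d$. The super-polynomial decay of $\gamma$ guarantees that $\sum\gamma := \sum_{k\ge0}\gamma(k)$ is absolutely convergent, so the right-hand side of \eqref{e:erv2} is well-defined.

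The first step will be to localize the sum to a thin shell around $\partial\Lambda_R$. Fixing $\delta\in(0,1/2)$ and setting $r = R^\delta$, I would use $\sum_{x,y \in \Lambda_R} K(x-y) = O(R^{2d-\alpha})$ (from item (1) of Lemma~\ref{l:rv} with $m=1$) together with the super-polynomial bound $|\gamma(k)| \le c_\kappa(k+1)^{-\kappa}$ to show the contribution from pairs with $\max\{d_\infty(x, \partial\Lambda_R), d_\infty(y, \partial\Lambda_R)\} > r$ is $O(R^{2d-\alpha-\kappa\delta})$, which is $o(R^{2(d-1)-\alpha})$ for $\kappa$ chosen large.

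Next I would decompose the remaining thin slab $\{x \in \Lambda_R : d_\infty(x, \partial\Lambda_R) \le r\}$ into $2d$ pieces $T_1,\dots,T_{2d}$, each associated with a single boundary face $F_i$ (using a fixed tie-breaking rule at edges and corners). The edge/corner region where the face assignment is non-unique has cardinality $O(R^{d-2}r^2)$, and an elementary bound shows it contributes at most $O(R^{2(d-2)+4\delta-\alpha}) = o(R^{2(d-1)-\alpha})$ once $\delta$ is small. Parameterising $x \in T_i$ as $x = \pi_i(x) + k(x)\mathbf{n}_i$ with $\pi_i(x) \in F_i$ and $k(x) \in \{0,\dots,r\}$, and rescaling $u = \pi_i(x)/R$, $v = \pi_j(y)/R$, I would follow the proof of item (1) of Lemma~\ref{l:rv} almost verbatim: use that $R^\alpha K(R(u-v) + O(r)) \to |u-v|^{-\alpha}$ uniformly on $\{|u-v|\ge\eta\}$, with the singular region $\{|u-v|<\eta\}$ contributing at most $O(\eta^{d-1-\alpha})$ uniformly in $R$, to obtain
\[\sum_{(x,y) \in T_i \times T_j} K(x-y)\gamma(k(x))\gamma(k(y)) \;\sim\; R^{2(d-1)-\alpha}\Big(\sum\gamma\Big)^2 \int_{F_i^\ast \times F_j^\ast}|u-v|^{-\alpha}\,du\,dv,\]
where $F_i^\ast \subset \partial[-1,1]^d$ is the face corresponding to $F_i$ after rescaling. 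Summing over $(i,j)$ and using the Lebesgue-almost-disjoint decomposition $\partial[-1,1]^d = \bigcup_i F_i^\ast$ then yields the claimed leading constant $\overline{E}_{d,\alpha}$.

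The main obstacle will be the diagonal singularity $|u-v|=0$ arising in the $i=j$ terms: the integrals $\int_{F_i^\ast \times F_i^\ast}|u-v|^{-\alpha}\,du\,dv$ are finite precisely because $\alpha < d-1$ and each face is $(d-1)$-dimensional, and this same condition provides the uniform $O(\eta^{d-1-\alpha})$ tail bound needed to justify the Riemann sum convergence near the diagonal. Everything else is either a direct transcription of the proof of item (1) of Lemma~\ref{l:rv} to the boundary setting, or elementary codimension-counting for the edge/corner corrections.
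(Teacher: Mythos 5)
Your proposal is correct and follows essentially the same route as the paper: localize to a thin boundary shell using the super-polynomial decay of $\gamma$, set up a Riemann sum over $\partial[-1,1]^d$ via nearest-face projection, and control the near-diagonal contribution using $\alpha<d-1$. The paper organizes the error terms slightly differently (it restricts to the region where the nearest-boundary-point projection is unambiguous and disposes of close pairs by a direct counting bound $R^{d-1}R^{\delta^2}R^{d\delta}=o(R^{2(d-1)-\alpha})$ rather than a uniform small-$\eta$ tail), and your stated exponent $O(R^{2(d-2)+4\delta-\alpha})$ for the edge/corner pairs is not justified as written (a crude cardinality bound loses the $-\alpha$, and one must also handle pairs with only one point near the skeleton) --- but that term is genuinely negligible by the same $\gamma$-weighted column-summing you already use, so nothing essential is missing.
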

\begin{proof}
Fix $\delta > 0$ such that $\delta^2 + d\delta < d-1 -\alpha$. For $i = 0,\ldots,d-1$, recall that $F_R^i$ is the union of the $i$-dimensional boundary faces of $\Lambda_R$. Define the subset $D_R =\{x \in \Lambda_R : d_\infty(x,\partial\Lambda_R) < d_\infty(x,F_R^{d-2})^\delta \}$. Each $x \in D_R$ can be uniquely projected onto its nearest boundary point $\pi_R(x) \in \partial \Lambda_R$. Define
\[ B_R = \big\{(x,y) \in D_R^2 : \max\{ |x- \pi_R(x)| , |y-\pi_R(y) | \}  \le |x-y|^\delta \big\} .\]

Writing $x_1 = [uR]$ and $y_1 = [vR]$, the contribution to \eqref{e:erv2} from $(x,y) \in B_R$ can be expressed as
\[ R^{2(d-1)-\alpha}  \int_{u,v \in \partial[-1,1]^d}     F_R(u,v) \, du dv \]
where $F_R(u,v) $ equals
\[  \sum_{(x,y) \in B_R : \pi_R(x) = x_1, \pi_R(y) = y_1}  R^\alpha K(x-y) \gamma(|x-x_1|)  \gamma(|y-y_1|). \]
Since $K(x) \sim |x|^{-\alpha}$, there exists a constant $c_1 > 0$ such that, for all $u,v$, 
\[  R^\alpha K(x - y ) \le  c_1 \max\{1,|u-v |\}^{-\alpha}  \quad \text{and} \quad R^\alpha K(x-y) \sim    |u-v|^{-\alpha} \]
uniformly on $\{(x,y) \in B_R : \pi_R(x) = x_1, \pi_R(y) = y_1\}$.  Since $\sum |\gamma| < \infty$  and  $|u-v|^{-\alpha}$ is integrable on $u,v \in \partial[-1,1]^d$, by dominated convergence
\[ \int_{u,v \in \partial[-1,1]^d}     F_R(u,v) \, du dv \to   \overline{E}_{d,\alpha}   \Big( \sum \gamma \Big)^2  \]
as $R \to \infty$. 

On the other hand, the contribution to \eqref{e:erv2} from $(x,y) \notin B_R$ is bounded by $E_1 + E_2$ where
\[ E_1 \le 2\|\gamma\|_\infty \sum_{x,y \in \Lambda_R  : d_\infty(y, \partial \Lambda_R) > R^{\delta^2}} K(x-y) \sup_{k\geq R^{\delta^2}}\lvert \gamma(k)\rvert = o(1) \]
and, using that $K$ and $\gamma$ are bounded,
\[ E_2 \le c_{K,\gamma} \sum_{x,y \in \{w \in \Lambda_R : d_\infty(w, \partial \Lambda_R) \le R^{\delta^2}  \} } \id_{d_\infty(x,y) \le  R^\delta } \le c'_{K,\gamma} R^{d-1} R^{\delta^2} R^{d\delta} = o(R^{2(d-1)-\alpha}) \]
by our choice of $\delta$.
\end{proof}

\subsection{General bounds}
Finally we establish general bounds on some related quantities. For simplicity, in this section we work with the kernel $K_\alpha(x) = \max\{1, |x|^{-\alpha}\}$.

\smallskip
For $R \ge 1$ and $m \ge 1$, let $\Gamma_R: (\Lambda_R)^m \to \R$ be permutation invariant. First we give a reduction to the $m=1$ case:

\begin{lemma}
\label{l:rv3}
For $m \ge 2$ and $x_1 \in \Lambda_R$ define 
\[ \bar{\Gamma}_R(x_1) =  \sum_{x_2,\ldots,x_m \in \Lambda_R} |\Gamma_R(x_1,x_2,\ldots,x_m)| . \]
Then
\[ \sum_{x,y \in (\Lambda_R)^m} \prod_{i=1}^m K_\alpha(x_i - y_i ) \Gamma_R(x) \Gamma_R(y) \le m  \sum_{x,y \in \Lambda_R} K_{m \alpha}(x-y)  \bar{\Gamma}_R(x) \bar{\Gamma}_R(y) . \]
\end{lemma}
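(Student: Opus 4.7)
The plan is to reduce the $m$-fold product of kernel factors $\prod_i K_\alpha(x_i-y_i)$ to a single factor $K_{m\alpha}(x_1-y_1)$, at the cost of a combinatorial constant $m$, by combining an elementary pointwise bound with the permutation invariance of $\Gamma_R$.

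First, since $\prod_i K_\alpha(x_i-y_i)\ge 0$, I would upper bound the left-hand side by replacing $\Gamma_R(x)\Gamma_R(y)$ with $|\Gamma_R(x)||\Gamma_R(y)|$. The key pointwise estimate is then
\[
\prod_{i=1}^m K_\alpha(x_i-y_i) \;\le\; \Big(\max_{1\le i\le m}K_\alpha(x_i-y_i)\Big)^m \;=\; \max_{1\le j\le m} K_{m\alpha}(x_j-y_j) \;\le\; \sum_{j=1}^m K_{m\alpha}(x_j-y_j),
\]
where the middle identity is immediate from the definition of $K_\alpha$: for any $z$, $K_\alpha(z)^m=\max\{1,|z|^{-\alpha}\}^m=\max\{1,|z|^{-m\alpha}\}=K_{m\alpha}(z)$.

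Next I would substitute this bound and interchange the order of summation:
\[
\sum_{x,y\in(\Lambda_R)^m}\prod_{i=1}^m K_\alpha(x_i-y_i)\,|\Gamma_R(x)||\Gamma_R(y)| \;\le\; \sum_{j=1}^m \sum_{x,y\in(\Lambda_R)^m} K_{m\alpha}(x_j-y_j)\,|\Gamma_R(x)||\Gamma_R(y)|.
\]
By the permutation invariance of $|\Gamma_R|$, relabelling via the transposition that simultaneously swaps coordinates $1$ and $j$ in $x$ and in $y$ (the summation domain being invariant) shows that each $j$-summand on the right coincides with the $j=1$ summand, so the total equals $m$ times the latter.

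Finally, I would sum out $x_2,\dots,x_m$ and $y_2,\dots,y_m$ in the $j=1$ term; since $K_{m\alpha}(x_1-y_1)$ does not involve these coordinates, the inner sums factor and produce $\bar{\Gamma}_R(x_1)\bar{\Gamma}_R(y_1)$, yielding the claim. No step presents a genuine obstacle — the whole argument is essentially the elementary inequality $\prod a_i\le(\max_i a_i)^m$ combined with the symmetry of $\Gamma_R$.
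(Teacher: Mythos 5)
Your proof is correct and is essentially the paper's argument: the paper's entire proof is the one-line observation that $\prod_i s_i\le\max_i s_i^m\le\sum_i s_i^m$ for $s_i\ge 0$, and your write-up simply makes explicit the identity $K_\alpha^m=K_{m\alpha}$, the use of permutation invariance to collapse the $m$ summands, and the factorisation of the inner sums into $\bar{\Gamma}_R$.
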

\begin{proof}
This follows from the inequality $\prod_{i=1}^ms_i\leq \max_i s_i^m\leq\sum_{i=1}^ms_i^m$ for $s_1,\dots,s_m\geq 0$.
\end{proof}

Next we consider bounds in the case $m=1$. For $i = 0,\ldots,d-1$ recall that $F_R^i$ is the union of the $i$-dimensional boundary faces of $\Lambda_R$, and let $\gamma_i(k)$ be such that $0 \le \Gamma_R(x) \le  \gamma_i(d_\infty(x,F_R^i))$. 

\begin{lemma}
\label{l:rv4}
$\,$
\begin{enumerate}
\item If $i = d-1$ and $\gamma_{d-1}(k) \to 0$ as $k \to \infty$, then as $R \to \infty$
\[ \sum_{x,y \in \Lambda_R} K_\alpha(x-y ) \Gamma_R(x) \Gamma_R(y)  = o \big(R^{\max\{ 2d-\alpha, d \} } (\log R)^{\id_{\alpha = d} }  \big). \]
\item  If $\sum_{k \ge 0} \gamma_i(k) k^{d-i-1} < \infty$, then there exists $c = c_{K,i} > 0$ such that
\[ \sum_{x,y \in \Lambda_R} K_\alpha(x-y ) \Gamma_R(x) \Gamma_R(y)  \le c \Big( \sum_{k \ge 0} \gamma_i(k) k^{d-i-1}  \Big)^2 R^{\max\{ 2i-\alpha, i \} } (\log R)^{\id_{\alpha = i} }  . \]
\end{enumerate}
\end{lemma}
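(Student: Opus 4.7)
For part (1), I would use a standard truncation argument. Fix $\epsilon > 0$ and choose $M = M(\epsilon)$ so that $\gamma_{d-1}(k) < \epsilon$ for $k > M$. Decompose $\Gamma_R = \Gamma_R^{\mathrm{int}} + \Gamma_R^{\mathrm{bdry}}$, where $\Gamma_R^{\mathrm{int}}(x) := \Gamma_R(x) \id_{d_\infty(x,\partial \Lambda_R) > M}$ is bounded by $\epsilon$, and $\Gamma_R^{\mathrm{bdry}}$ is supported on the $M$-layer of $\partial \Lambda_R$ (cardinality $O(MR^{d-1})$) and bounded by $\|\gamma_{d-1}\|_\infty$. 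Expanding the product $\Gamma_R(x) \Gamma_R(y)$ into four cross-terms and using the standard estimate $\sum_{y \in \Lambda_R} K_\alpha(x-y) = O(R^{\max\{d-\alpha, 0\}}(\log R)^{\id_{\alpha=d}})$, the interior-interior contribution is $\le c \epsilon^2 R^{\max\{2d-\alpha, d\}}(\log R)^{\id_{\alpha=d}}$, while each of the other three cross-terms is $O(M R^{\max\{2d-\alpha, d\}-1}(\log R)^{\id_{\alpha=d}})$. Dividing by the target order, taking $R \to \infty$ and then $\epsilon \to 0$ yields (1).

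For (2), the key is to exploit the low-dimensional structure of $F_R^i$. I would decompose $\Lambda_R$ into $O_d(1)$ disjoint ``wedges'' $W(F)$, one per $i$-dimensional face $F$, where $W(F)$ consists of those $x \in \Lambda_R$ whose nearest point in $F_R^i$ lies in $F$ (with some canonical tie-breaking). For $F$ defined by fixing coordinates $J \subseteq \{1, \ldots, d\}$ (with $|J| = d-i$) to values $\epsilon_j R$, each $x \in W(F)$ decomposes uniquely as $x = u + \perp_x$ with $u \in F$, $\perp_x$ supported on the $J$-coordinate subspace $T_F^\perp$, and $h(x) = |\perp_x|_\infty$. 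Writing $S = \sum_{F, F'} S_{F, F'}$ with $S_{F, F'} = \sum_{x \in W(F), y \in W(F')} K_\alpha(x-y) \gamma_i(h(x)) \gamma_i(h(y))$, it suffices to bound each $S_{F, F'}$ by the claimed order.

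The main tool is the inequality $K_\alpha(v) \le K_\alpha(P_{T_F} v)$, where $P_{T_F}$ denotes orthogonal projection onto $T_F$; this follows from $|v|_2 \ge |P_{T_F} v|_2$ and the monotonicity of $K_\alpha$ in $|\cdot|_2$. In the diagonal case $F = F'$, since $\perp_x, \perp_y \in T_F^\perp$ we have $P_{T_F}(\perp_x - \perp_y) = 0$, so $P_{T_F}(x - y) = u - u'$ and the sum factorizes:
\[S_{F, F} \le \Big(\sum_{u, u' \in F} K_\alpha(u - u')\Big) \cdot \Big(\sum_{\perp \in T_F^\perp:\, u + \perp \in \Lambda_R} \gamma_i(|\perp|_\infty)\Big)^2.\]
The first factor is bounded by $c R^i \cdot R^{\max\{i-\alpha, 0\}}(\log R)^{\id_{\alpha=i}}$ via an $i$-dimensional $K_\alpha$-sum, and each of the latter factors is $\le c S_\gamma$ with $S_\gamma = \sum_k \gamma_i(k) k^{d-i-1}$, since the perpendicular shell in $T_F^\perp$ at height $k$ has cardinality $\lesssim k^{d-i-1}$. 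This gives $S_{F, F} \le c S_\gamma^2 R^{\max\{2i-\alpha, i\}}(\log R)^{\id_{\alpha=i}}$.

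For the off-diagonal case $F \ne F'$, $P_{T_F}(\perp_y)$ is generally nonzero but inherits from $y$'s restricted $J$-coordinates a shift of magnitude $\asymp R$ in at least one coordinate (whenever $J \cap J'^c \ne \emptyset$); this forces $|P_{T_F}(x-y)|_2 \gtrsim R$ and hence $K_\alpha(x-y) \lesssim R^{-\alpha}$, yielding $S_{F, F'} \le c R^{2i-\alpha} S_\gamma^2$, which is of no larger order than the diagonal contribution. The remaining sub-case $J = J'$ (opposite or coincident faces) is handled identically to the diagonal case. Summing over the $O_d(1)$ face pairs completes the proof. The main obstacle is this careful off-diagonal analysis together with the accounting of the wedge constraints, though both reduce to essentially book-keeping once the projection inequality is in hand.
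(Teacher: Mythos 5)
Part (1) is correct and is essentially the paper's argument (truncate $\gamma_{d-1}$ at a level $r_\eps$, bound the $r_\eps$-boundary-layer contribution by $O(r_\eps R^{d-1})$ times a single $K_\alpha$-sum, and absorb everything else into an $\eps$-factor of the full sum).

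For part (2) the skeleton — slicing by the distance $k$ to $F_R^i$, the shell count $k^{d-i-1}$, and the projection/monotonicity inequality $K_\alpha(v)\le K_\alpha(P v)$ — is the right one, and your diagonal case $F=F'$ is fine. But the off-diagonal step contains a false claim. It is not true that $x\in W(F)$, $y\in W(F')$ with $J\neq J'$ forces $|P_{T_F}(x-y)|\gtrsim R$: the coordinate $j'\in J'\setminus J$ gives $(P_{T_F}(x-y))_{j'}=x_{j'}-y_{j'}$ with $y_{j'}$ pinned near $\pm R$ but $x_{j'}$ ranging freely over $[-R,R]$, so this difference vanishes exactly when $x$ sits near the common boundary of the two wedges. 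Concretely, in $d=2$, $i=1$, with $F$ the bottom side and $F'$ the right side, the points $x=(R-k_2,-R+k_1)$ and $y=(R-k_2',-R+k_1')$ can be adjacent while lying in different wedges, so $K_\alpha(x-y)=1$, not $O(R^{-\alpha})$. These near-corner pairs cannot be discarded: already in that example their total contribution is $\asymp\sum_{s,t\le R}(s^2+t^2)^{-\alpha/2}\asymp R^{2i-\alpha}$ for $\alpha<2i$, i.e.\ the full target order, so the cross-face term genuinely needs an estimate rather than the uniform bound $K_\alpha\lesssim R^{-\alpha}$. (Your final displayed bound $S_{F,F'}\le cR^{2i-\alpha}S_\gamma^2$ also fails as stated in some ranges, e.g.\ $d=3$, $i=2$, $\alpha=3$, where the cross-face sum is of order $R\log R$, though this still sits below the target.) The clean fix is to apply the projection on the $y$-side rather than the $x$-side: cover $W_{k_2}$ by $O(k_2^{d-i-1})$ translates of $i$-dimensional boxes and project $x-y$ onto the tangent directions of the box containing $y$; this yields $\sum_{y\in W_{k_2}}K_\alpha(x-y)\le c\,k_2^{d-i-1}\sum_{u\in\Lambda_{2R}\cap(\Z^i\times\{0\}^{d-i})}K_\alpha(u)$ uniformly in $x\in\Lambda_R$, and multiplying by $|W_{k_1}|\le cR^ik_1^{d-i-1}$ and summing $\gamma_i(k_1)\gamma_i(k_2)$ gives the claimed bound with no case distinction between faces. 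This is the route the paper takes.
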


\begin{proof}
$\,$
\textbf{(1).} Fix $\eps > 0$ and let $r = r_\eps > 0$ be such that $|\gamma(k)| < \eps$ if $k > r$. Then  $\sum_{x,y \in \Lambda_R} K_\alpha(x-y ) \Gamma_R(x) \Gamma_R(y)$ is at most
\begin{align*}
    &  \|\gamma\|_\infty^2 \sum_{x,y \in \{w: d_{\infty}(w, \partial \Lambda_R) \le r \} } K_\alpha(x-y)  + \eps \|\gamma\|_\infty \sum_{x,y \in \Lambda_R} K_\alpha(x-y)   \\
    & \qquad \le c_{d,\gamma}(rR^{d-1}+\epsilon R^d)\sum_{x\in\Lambda_{2R}}K_\alpha(x)\\
    &\qquad\le c'_{d,\gamma}(r/R+\eps) R^{\max\{2d-\alpha, d\} } (\log R)^{\id_{\alpha = d} } ,
\end{align*}
and taking $\eps \to 0$ gives the result. 

\textbf{(2).} Let $W_k$ denote the subset of $\Lambda_R$ such that $d_\infty(x,F_R^i) = k$. Then 
\[ \sum_{x,y \in \Lambda_R} K_\alpha(x-y ) \Gamma_R(x) \Gamma_R(y) \le  \sum_{k_1,k_2 = 0}^R \gamma_i(k_1) \gamma_i(k_2)  \sum_{x \in W_{k_1}, y \in W_{k_2} } K_\alpha(x-y) . \]
Since $| W_k|  \le c_{d,i} R^i k^{d-i-1}$, and by the monotonicity of $K_\alpha$, 
\[   \sum_{x \in W_{k_1}, y \in W_{k_2} } K_\alpha(x-y) \le c'_{d,i} R^i k_1^{d-i-1} k_2^{d-i-1}  \sum_{y \in \Lambda_{2R} \cap (\Z^i \times \{0\}^{d-i}) } K_\alpha(y)    .\]
Combining we have
\[  \sum_{x,y \in \Lambda_R} K_\alpha(x-y ) \Gamma_R(x) \Gamma_R(y) \le c'_{d,i}  \Big( \sum_{k=0}^\infty \gamma_i(k)  k^{d-i-1}  \Big)^2  R^i \sum_{y \in \Lambda_{2R} \cap (\Z^i \times \{0\}^{d-i}) } K_\alpha(y) \]
and we conclude since 
\begin{equation*}
    \sum_{y \in \Lambda_{2R} \cap (\Z^i \times \{0\}^{d-i} ) } K_\alpha(y)  \le c_{K,i} R^{\max\{ i -\alpha,0 \} } (\log R)^{\id_{\alpha=i}} .  \qedhere
    \end{equation*}
\end{proof}

\bigskip

\bibliographystyle{halpha-abbrv}
\bibliography{scgff.bib}

\end{document}